\newtheorem{theorem}{Theorem}[section]
\newtheorem{prop}[theorem]{Proposition}
\newtheorem{hypothesis}[theorem]{Hypothesis}
\newtheorem{lemma}[theorem]{Lemma}
\newtheorem{corollary}[theorem]{Corollary}
\newtheorem{assumption}[theorem]{Assumption}
\theoremstyle{definition}
\newtheorem{definition}[theorem]{Definition}
\theoremstyle{remark}
\newenvironment{example}
{\pushQED{\qed}\examplex}
{\popQED\endexamplex}
\newenvironment{remark}
{\pushQED{\qed}\remarkx}
{\popQED\endremarkx}
\newcommand{\beg}{\begin{equation}}
\newcommand{\ee}{\end{equation}}
\newcommand{\cA}{\mathcal{A}}
\newcommand{\sX}{\mathscr{X}}
\newcommand{\sP}{\mathscr{P}}
\newcommand{\cH}{\mathcal{H}}
\newcommand{\cP}{\mathcal{P}}
\newcommand{\cL}{\mathcal{L}}
\newcommand{\cPt}{\tilde{\mathcal{P}}}
\newcommand{\cLt}{\tilde{\mathcal{L}}}
\newcommand{\lref}{\lambda_{\mathrm{r}}}
\newcommand{\lswitch}{\lambda_{\mathrm{b}}}
\newcommand{\Lref}{L_{\mathrm{r}}}
\newcommand{\Lswitch}{L_{\mathrm{b}}}
\newcommand{\Ham}{E^{\sX,\sP}}
\newcommand{\PDMP}{\mathrm{PDMP}}
\newcommand{\sH}{\mathscr{H}}
\newcommand{\Proj}{\mathrm{Proj}}
\newcommand{\kref}{k_r}
\newcommand{\ksw}{k_b}
\newcommand{\Dext}{D_{\mathrm{ext}}}
\newcommand{\DCo}{D_{C_0}}
\newcommand{\DL}{D_{L^2_\mu}}
\newcommand{\cylfunc}{\mathcal{F}C_b^\infty}
\newcommand{\cylfunccomp}{\mathcal{F}C_c^\infty}
\newcommand{\R}{\mathbb{R}}
\newcommand{\N}{\mathbb{N}}
\newcommand{\Tr}{\mathrm{Tr}}
\newcommand{\Sv}{\Sigma_v}
\newcommand{\C}{{\Sigma}}
\newcommand{\refr}{\mathrm{refr}}
\newcommand{\Qrefr}{Q_{\mathrm{r}}}
\newcommand{\Qrefl}{Q_{\mathrm{b}}}
\newcommand{\ha}{\mathcal{H}^{\alpha}}
\newcommand{\cN}{{\mathcal N}}
\newcommand{\E}{{\mathbb E}}
\newcommand{\cVZZ}{\mathcal{V}_{\mathrm{ZZS}}}
\newcommand{\cVBP}{\mathcal{V}_{\mathrm{BPS}}}
\newcommand{\LX}{L_{\mathfrak{X}}}
\newcommand{\fX}{\mathfrak{X}}
\newcommand{\cZ}{{\mathcal Z}}
\newcommand{\les}{\lesssim}
\newcommand{\h}{\mathcal{H}}
\renewcommand{\P}{\mathbb{P}}
\newcommand{\lv}{\left\vert}
\newcommand{\rv}{\right\vert}
\newcommand{\ran}{\rangle}
\newcommand{\lan}{\langle}
\newcommand{\bee}{\begin{equation}}
\newcommand{\eee}{\end{equation}}
\newcommand{\lt}{\tilde{\lambda}}
\newcommand{\vperp}{v^N_\perp}
\newcommand{\vN}{v^N}
\newcommand{\xperp}{x^N_\perp}
\newcommand{\xN}{x^N}
\newcommand{\cB}{\mathcal{B}}
\newcommand{\core}{\mathscr{C}}
\begin{document}
\title{Infinite Dimensional Piecewise Deterministic Markov Processes}

\author[Paul Dobson, Joris Bierkens]{Paul Dobson$^{(1)}$, Joris Bierkens$^{(2)}$}
\

\address{(1) School of Mathematics and Maxwell Institute for Mathematical Sciences, University of
Edinburgh, Edinburgh EH9 3FD, UK, pdobson@ed.ac.uk}
\address{(2) Delft Institute of Applied Mathematics, TU Delft, Mekelweg 4, 2628 CD Delft, the Netherlands, joris.bierkens@tudelft.nl}

\maketitle

\begin{abstract}
In this paper we aim to construct infinite dimensional versions of well established Piecewise Deterministic Monte Carlo methods, such as the Bouncy Particle Sampler, the Zig-Zag Sampler and the Boomerang Sampler.
In order to do so we provide an abstract infinite-dimensional framework for Piecewise Deterministic Markov Processes (PDMPs) with unbounded event intensities. We further develop exponential convergence to equilibrium of the infinite dimensional Boomerang Sampler, using hypocoercivity techniques. Furthermore we establish how the infinite dimensional Boomerang Sampler admits a finite dimensional approximation, rendering it suitable for computer simulation.
\end{abstract}

	\vspace{5pt}
    {\sc Keywords.} Piecewise Deterministic Markov Processes, Infinite Dimensional Stochastic Process, Hypocoercivity, Uniform in time approximation.

\vspace{5pt}
{\sc AMS Classification (MSC 2020).} 60J25 (primary); 
60G53, 65C05, 46N30 (secondary).

\tableofcontents

\section{Introduction}

A \emph{piecewise deterministic Markov process (PDMP)} in a topological space $\mathcal Z$ is a Markov process with deterministic trajectories, with random jumps at random times. The deterministic trajectories are described by a deterministic semigroup flow $\varphi_t : \mathcal Z \rightarrow \mathcal Z$, $t \geq 0$. 
% These trajectories are  interrupted at random times by Markov transitions. 
The random times are distributed according to a space-dependent event rate $\lambda : \mathcal Z \rightarrow [0,\infty)$. At these random times the process jumps according to a Markov transition operator $Q : \mathcal Z \times \mathcal B(Z) \rightarrow [0,1]$. Together the flow $(\varphi_t)_{t \geq 0}$, the jump intensity $\lambda$ and the transition operator fully describe the dynamics of the piecewise deterministic Markov process.

PDMPs have been introduced in the probability literature in the work by  Davis \cite{Davis1984, DavisMarkov}, motivated as providing a versatile probabilistic model in operations research. Later PDMPs have found many other applications in, e.g., mathematical biology \cite{Fontbona2012}, finance and statistical survival analysis \cite{MR2189574}. Recent years have seen a strongly emerging interest in PDMPs  from the field of computational statistics \cite{bierkens2019zig,Doucet}, where PDMPs are considered as viable alternatives for classical Markov Chain Monte Carlo methods such as the Metropolis-Hastings algorithm and the Gibbs sampler.

Traditionally PDMPs have been considered on (subsets of) a finite dimensional space $\mathcal Z$. However PDMPs can naturally be defined on, e.g., nonlinear manifolds or infinite dimensional spaces. In this work we carry out the latter task of defining and analyzing PDMPs on infinite dimensional Banach spaces. To our knowledge the only work concerning PDMPs on infinite dimensional spaces so far \cite{Riedler, Riedler2012}, restricts the jump intensities to be globally bounded. One of the key aspects in this work is therefore the extension to unbounded intensities, which is highly relevant from a practical perspective, especially for applications in computational statistics. For much of this work we have in mind an infinite-dimensional extension of piecewise deterministic Monte Carlo methods; in particular extensions of the Zig-Zag process \cite{bierkens2019zig}, the Bouncy Particle Sampler \cite{Doucet} and the Boomerang Sampler \cite{bierkens2020boomerang}. As it turns out these processes are not all easily extendable to infinite dimensions.

\subsection{Structure of this work}

In Section~\ref{sec:construction} we provide the general construction of a PDMP assuming values in a Banach space. Also, we construct the accompanying extended generator and establish the Feller property under the stated assumptions.

Next in Section~\ref{sec:PDMPforsampling} we determine a general condition for an infinite dimensional PDMP to have an invariant measure with a Gibbs density relative to a reference measure.

In Section~\ref{sec:PDMP-examples} we discuss several examples of infinite dimensional extensions of (traditionally) finite dimensional PDMPs: the Zig-Zag Sampler \cite{bierkens2019zig}, Bouncy Particle Sampler \cite{Doucet}, and the Boomerang Sampler \cite{bierkens2020boomerang}. As it turns out the Zig-Zag Sampler can be put into an infinite dimensional framework, but the Bouncy Particle Sampler does not satisfy the conditions of the general theory: a natural bound to use for contour reflections is the Cameron-Martin norm however this is only  finite on a space of measure zero. The Boomerang Sampler is particularly well suited for extension to infinite dimensions, as it allows a Gaussian reference measure which remains well-defined in an infinite dimensional space.
In the remainder of this work, we will therefore focus on the Boomerang Sampler. In Section~\ref{sec:core} we establish that cylindrical functions (see \eqref{eq:cylindricalfunction} for a definition) are a core for the generator of the Boomerang Sampler. In Section~\ref{sec:expconv} we provide conditions under which the Boomerang Sampler converges exponentially fast in $L^2(\mu)$, with $\mu$ the stationary measure, based on the hypocoercivity approach initiated by \cite{Dolbeault}, while being careful about its extension to an infinite dimensional setting.

Finally in Section~\ref{sec:finite-dimensional-approximation} we consider how infinite dimensional PDMPs may be approximated in finite dimensions, allowing for direct computer simulation. This may be motivated from the viewpoint of e.g. \cite{Stuart}, where the approach in Bayesian inverse problems is to formulate these in an infinite dimensional setting, design a suitable infinite dimensional sampling algorithm, and only at the time of implementation consider a suitable finite dimensional approximation. In Section \ref{sec:proof} we have collected the remaining proofs.

\subsection{Notation}\label{sec:notation}

For a given topological space, measurability is always considered relative to the Borel $\sigma$-algebra, unless stated otherwise.

Given two normed vector spaces $H_1,H_2$ we define the following function spaces:
\begin{itemize}
	\item The set of all bounded and measurable (respectively continuous) functions $f:H_1\to H_2$ we denote $B_b(H_1;H_2)$ (resp. $C_b(H_1;H_2)$) and we endow this space with the supremum norm, if $H_2=\R$ then we write $B_b(H_1)$ (resp. $C_b(H_1)$);
	\item For $k\geq 1$ or $k=\infty$ we denote by $C_b^k(H_1)$ to be the space of $k$ times differentiable bounded functions with bounded derivatives up to order $k$ (here and throughout when we say differentiable we mean in the sense of the Fr\'echet derivative). Similarly we denote by $C_c^k(H_1)$ to be the set of $k$ times continuously differentiable functions with compact support.
	\item Given a measure $\mu$ we denote the space of square integrable functions $f:H_1\to H_2$ by $L_\mu^2(H_1;H_2)$, if $H_2=\R$ write $L_\mu^2(H_1)$ and will often abbreviate this to $L_\mu^2$;  
\end{itemize}
	
As explained in the introduction, we will mostly be working in appropriate Hilbert spaces. However, the results of Section \ref{sec:construction} are stated in the setting of more general Banach spaces.  Throughout the paper we denote by   $(\mathcal Z, \| \cdot\|_{\mathcal Z})$ a (possibly) infinite dimensional Banach space and 
let ($\cH, \langle \cdot, \cdot \rangle, \| \cdot \|$) be an infinite dimensional separable Hilbert space.  Consider the probability measure $\pi$ on $\cH$, defined as follows:  
\begin{equation}\label{targetmeasure}
\frac{d\pi}{d\pi_0}  \propto \exp({-\Phi}), \qquad \pi_0:=\cN(0,\C).
\end{equation}
That is, $\pi$ is  absolutely continuous with respect to a Gaussian measure $\pi_0$ with mean zero and covariance operator $\C$. 
Here $\Phi$ is a real valued functional defined on $\cH$. { Measures of the form \eqref{targetmeasure}  naturally arise in Bayesian nonparametric statistics and in the study of conditioned diffusions \cite{Stuart,Hair:etal:05}.} 
  The covariance operator $\C$ is a positive definite, self-adjoint, trace class operator on 
$\cH$, with eigenbasis $\{\gamma_j^2, e_j\} $:
\begin{equation}\label{cphi}
\C e_j= \gamma_j^2 e_j, \quad \forall j \in\mathbb{N},
\end{equation}
and we assume that the set $\{e_j\}_{j \in \N}$ is an orthonormal basis for $\cH$, so that every $x \in \cH$  can be expressed as  $x = \sum_{j\geq1} \; x_j e_j$, where $x_j:=\langle x,e_j\rangle$ and the norm of $x \in \cH$ is
$$
\| x\|= \left(\sum_{j=1}^{\infty} \lv x_j \rv^2 \right)^{1/2}.
$$
For a function $\phi:\h\to\R$ we will write $\partial_i \phi(x):= \langle \nabla \phi(x), e_i \rangle = \lim_{h \rightarrow 0} \frac{\phi(x+h e_i) - \phi(x)}{h}$, whenever the limit exists.

If $A, B \in \R$ we write $A \les B$ if there exists a constant $K>0$ such that $A \leq K B$.
Finally, for any $a \in \R$, $a^+ = \max(a,0)$ and $a^- = \max(-a,0)$ denote the positive and negative part of $a$, respectively. 

For a Hilbert space $\h$ let $\cylfunc(H)$ (respectively $\cylfunccomp$) denote the set of all functions $F$ of the form
	\begin{equation}\label{eq:cylindricalfunction}
	F(z) = f(\langle h_1, z\rangle_{H}, \ldots, \langle h_n, z\rangle_{H})
	\end{equation}
	for some $n\in \N, h_1,\ldots, h_n\in H$ and $f\in C_b^\infty(\R^n)$ (respectively $f\in C_c^\infty(\R^n)$).

\section{Construction of infinite-dimensional PDMPs}\label{sec:construction}
Let us first describe how we can construct a PDMP on an infinite dimensional space. We will follow the construction given by \cite{DavisMarkov}. Let $\cZ$ be a Banach space, equipped with its Borel $\sigma$-algebra. We can define a PDMP by its characteristics $(\fX,\lambda,Q)$, where $\fX$ denotes the vector field of the deterministic flow, $\lambda$ denotes the jump intensity and $Q$ denotes the jump kernel. More specifically, $\fX: \mathcal Z \rightarrow \mathcal Z$ is a vector field on $\cZ$  which generates a flow map $\{\varphi_t\}_{t\geq 0}$, i.e.,  $\varphi_t : \mathcal Z \rightarrow \mathcal Z$ for each $t \geq 0$, and for each $t \geq 0$,
\[ \frac{d}{dt} \varphi_t(z) = \fX \varphi_t(z), \quad \varphi_0(z) = z.\] 
Note that we are assuming that $\varphi_t$ is well-defined for all $t\geq 0$, that is, there is no finite time explosion of the flow $\varphi_t(z)$ for any $z\in \cZ$. The function $\lambda$ maps from $\cZ$ into $[0,\infty)$ and has the property that for each $z\in \cZ$ there exists some $\varepsilon(z)>0$ such that $s\mapsto \lambda(\varphi_s(z))$ is integrable on $[0,\varepsilon(z))$. $Q$ is a transition kernel such that 
\begin{enumerate}
\item For every Borel measurable set $A\subseteq \cZ$ the map $z\mapsto Q(z,A)$ is measurable.
\item There are no phantom jumps, i.e. $Q(z,\{z\})=0$ for all $z\in H$.
\end{enumerate}

Let $(\Omega,\mathcal{F},\mathbb{P})$ denote the Hilbert cube, that is the canonical space for a sequence of i.i.d. uniformly distributed random variables $\{U_n\}_{n=1}^\infty$ taking values on $[0,1]$. Fix an initial condition $z\in \cZ$, and set 
\begin{equation*}
F(t,z) = \exp\left(-\int_0^t\lambda(\varphi_s(z))ds\right).
\end{equation*}
That is the survivor function of the first jump time $T_1$. Set 
\begin{equation*}
\psi_1(u,z) = \inf\{t: F(t,z)\leq u\}
\end{equation*}
where we use the convention that $\inf\emptyset=+\infty$. Define $T_1(\omega)=\psi_1(U_1(\omega),z)$. Now by \cite[Lemma 2.1.1]{Riedler} there exists a Borel measurable function $\psi_2^z:[0,1]\to \cZ$ such that the pushforward of the Lebesgue measure with respect to $\psi$ on $H$ equals $Q(z,\cdot)$. We define the sample path up to the first jump time by
\begin{equation*}
Z_t(\omega) = \begin{cases}
\varphi_t(z), & 0\leq t< T_1(\omega)\\
\psi_2^{\varphi_{T_1(\omega)}(z)}(U_2(\omega)), & t=T_1(\omega).
\end{cases}
\end{equation*}
We now iterate this procedure to construct $Z_t(\omega)$ for all $t\geq 0$ provided $\sup_n T_n=+\infty$, that is the process is {\em non-explosive}. %Let us now summarise the assumptions we have made in order to construct a PDMP.
%\begin{hypothesis}\label{hyp:gen}
%\begin{enumerate}
%\item The operator $\mathfrak{X}$ generates a $C_0$-semigroup $\{\varphi_t\}_{t\geq 0}$ on $\cZ$;
%\item The function $\lambda$ has the property $\varepsilon(z)>0$ such that $s\mapsto \lambda(\varphi_s(z))$ is integrable on $[0,\varepsilon(z))$;
%\item The transition kernel $Q$ has no phantom jumps and $z\mapsto Q(z,A)$ is measurable for each measurable set $A$.
%\end{enumerate}
%Let 
%\begin{equation*}
%N_t(\omega)=\sum_{i=1}^\infty \chi_{\{t\geq T_i\}}
%\end{equation*}
%Then we assume that for any initial conditions we have $\mathbb{E}[N_t]<\infty$.
%\end{hypothesis}
If $\lambda$ is bounded then the process is non-explosive. In order to consider unbounded $\lambda$ we will split the transition kernel $Q$ into two parts $\Qrefr$ and $\Qrefl$ which will later correspond to a refreshment kernel and a reflection kernel respectively.

We will associate with this PDMP (at least formally) a generator
\begin{equation}\label{eq:formalgen}
    \cL f(z)=\LX f(z) + \lref(z) \int (f(y)-f(z)) \Qrefr(z,dy) + \lswitch(z) \int (f(y)-f(z)) \Qrefl(z,dy) , \quad z\in \cZ.
\end{equation}
Here $\LX$ is a linear differential operator on a space of functions from $\cZ$ to $\R$ defined by
\begin{equation*}
\LX f(z) = \left. \frac{d}{dt}f(\varphi_t(z)) \right|_{t = 0} = d f(z; \fX(z)),
\end{equation*}
where $df(z, \cdot)$ denotes the Fr\'echet derivative of $f$ at $z$, whenever this is well-defined. 

% we are abusing notation by also denoting by $\mathfrak{X}$ a differential operator on functions $f:\cZ\to \R$ defined by
% \begin{equation*}
    % \mathfrak{X}f(z) = \frac{d}{dt}\rvert_{t=0}f(\varphi(t)).
% \end{equation*}

%We emphasize that at this stage the generator is just a notation to summarize the dynamics. We specify assumptions on the intensities and on the direction change kernels below.

\begin{assumption}[Wellposedness of abstract PDMP]
\label{ass:general}
With the notation introduced so far, we make the following assumptions. 
\begin{enumerate}[(i)]
\item The intensities $\lref : \cZ \rightarrow [0,\infty)$ and $\lswitch : \cZ \rightarrow [0,\infty)$ are measurable.
% Furthermore for all $z \in \cZ$, the mapping $t \mapsto (\lref(\varphi_t(z)), \lswitch(\varphi_t(z)))$ is locally integrable (i.e. integrable on any compact subset of the positive real line $[0, \infty)$)\, ;
\item for any fixed  $z\in\cZ$, $\Qrefl(z, \cdot), \Qrefr(z, \cdot)$ are Markov transition kernels on $\cZ$, such that $z \mapsto \Qrefl f(z)$ and $z \mapsto \Qrefr f(z))$ are measurable for every bounded measurable function $f$ on $\cZ$\,;
%\item[(iii)] for all $(x,v) \in \mathcal X \times \mathcal V$,
%\[  \int_0^{\infty} \overline \lambda(x+vs, v) \, ds = \infty \, . \]
\item the reflection Markov kernel $\Qrefl$ is non-expansive with respect to the norm $\|\cdot\|_{\cZ}$ i.e.
\[ \Qrefl(z, \{ y \in \cZ: \|y\|_{\cZ} \leq \|z\|_{\cZ}\}) = 1 \quad \text{for all $z \in \cZ$} \, ;\] \label{ass:reflnormpreserving}
\item the refreshment Markov kernel $\Qrefr $ is locally uniformly bounded in probability, i.e., for all $\varepsilon > 0$ there exists a $R > 0$ such that 
\[ \Qrefr(z, B_R) > 1 - \varepsilon \quad \text{for all $z \in B_R$}, \]
where $B_R$ denotes the open ball with radius $R$ in $\cZ$ with respect to the $\|\cdot\|_{\cZ}$-norm,\label{ass:refrboundedinprob}
\item
the total reflection intensity $\lswitch(z)$ is bounded on bounded sets in $\cZ$, i.e. for all $R > 0$ 
\[ \sup_{z\in B_R} \lswitch(z) < \infty \, ;\]\label{ass:lboundedonbounded}
\item the total refreshment intensity $\lref(z)$ is globally bounded: there is a constant $M > 0$ such that $\lref(z) \leq M$ for all $z \in \cZ$;\label{ass:refrboundedrate}
\item there is a continuous function $t \mapsto c_t \in (0,\infty)$ such that the flow $\varphi_t(z)$ grows at a rate $c_t$ with respect to the $\lVert\cdot\rVert_\mathcal{\cZ}$-norm, i.e for $z \in \mathcal Z$,
\begin{align*}
    \lVert \varphi_t(z) \rVert_{\cZ}&\leq c_t(1+\lVert z\rVert_{\cZ}).
\end{align*}\label{ass:flowbound}
\item the functions $\lref,\lswitch$ are continuous and for any continuous function $f$ we have that $\Qrefl f,\Qrefr f$ are both continuous.
\end{enumerate} 
\end{assumption}

\begin{remark}[On Assumption \ref{ass:general}] \label{note:ZZSorBPSwellposed}
These conditions are intended to be as general as possible. From a practical viewpoint it seems that (iii) is the most difficult condition to satisfy. In particular, we will see in Section~\ref{sec:PDMP-examples} that this condition is not satisfied by the Bouncy Particle Sampler.
\end{remark}
The purpose of this section is to prove the following statement.
\begin{theorem}
\label{thm:normedspace}
Suppose Assumption~\ref{ass:general} is satisfied. There exists a cadlag Markov Process process $(Z_t)_{t \geq 0}$ in $\cZ$ with characteristics $(\fX,\lambda,Q)$.

\end{theorem}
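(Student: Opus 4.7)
The plan is to (i) realise the iterative Davis--Riedler construction sketched above the statement to produce a càdlàg process on the random interval $[0, T_\infty)$ with $T_\infty := \sup_n T_n$, (ii) observe that the resulting object is Markov (by the standard regenerative argument: conditional on $Z_{T_n}$, the post-$T_n$ trajectory is built from the i.i.d.\ sequence $(U_k)_{k>2n}$ and hence has the same law as the process started from $Z_{T_n}$, which, combined with the deterministic flow between jumps, yields the Markov property at every $t\geq 0$), and then (iii) verify the non-explosion property $T_\infty = +\infty$ almost surely. Steps (i) and (ii) are essentially bookkeeping using the measurability provisions in conditions (i)--(ii), the factorisation $\lambda = \lref + \lswitch$ (to decide jump type by an extra uniform), and the local integrability of $s\mapsto(\lref+\lswitch)(\varphi_s(z))$; the substance lies in step (iii).

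For (iii) I would localise: for $R>0$ set $\sigma_R := \inf\{t<T_\infty : \|Z_t\|_{\cZ} \geq R\}$, equal to $T_\infty$ if the infimum is over the empty set. Conditions (v) and (vi) then give the uniform bound $\lref(Z_s)+\lswitch(Z_s)\leq \Lambda_R := M + \sup_{z\in B_R}\lswitch(z)$ for $s<\sigma_R$; comparison with a homogeneous Poisson process of rate $\Lambda_R$ shows that only finitely many jumps occur on $[0,\sigma_R\wedge t]$, so $T_\infty \geq \sigma_R$ almost surely. Consequently, for any $T>0$,
\[
\P(T_\infty \leq T) \leq \P(\sigma_R \leq T),
\]
and it suffices to show $\P(\sigma_R\leq T)\to 0$ as $R\to\infty$.

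To estimate $\P(\sigma_R\leq T)$ I would intersect three high-probability events. First, since $\lref\leq M$, the refreshment count $N^{\mathrm{ref}}_T$ is stochastically dominated by a Poisson$(MT)$ variable, so given $\eta>0$ one can pick $K=K(\eta,T)$ with $\P(N^{\mathrm{ref}}_T\leq K)\geq 1-\eta/3$. Second, condition (iv) supplies a union bound: at the cost of an event of probability at most $\eta/3$, each of the (at most $K$) refreshments lands in a ball whose radius depends a priori on the pre-jump norm. Third, between two consecutive refreshment epochs the dynamics are flow and reflections only: by condition (iii) reflections are $\|\cdot\|_\cZ$-non-expansive, and by condition (vii) pure flow satisfies $\|\varphi_t(z)\|_\cZ\leq c_t(1+\|z\|_\cZ)$, so iterating the flow bound across $k$ reflection events in an interval of length $\leq T$ yields a deterministic estimate of the form $\|Z_\cdot\|_\cZ\leq \Psi_{T,k}(\|Z_\tau\|_\cZ)$, with $\tau$ the previous refreshment time and $\Psi_{T,k}$ a continuous function built from $c_\cdot$ whose growth in $k$ is at worst geometric.

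The crux is closing the loop with the reflection count $k$, which itself depends on $\sup_{[0,T]}\|Z_t\|_\cZ$: I would resolve this circularity with a fixed-point bootstrap. Starting from a preliminary level $R_0$ one first argues that on $\{\sigma_{R_0}\geq T\}$ the reflection count is dominated by a Poisson variable of parameter $T\sup_{B_{R_0}}\lswitch$ (condition (v)), hence bounded by some $L=L(\eta,R_0,T)$ with probability at least $1-\eta/3$; then $\Psi_{T,L}$ composed $K+1$ times with the refreshment radii from step (iv) produces a final $R=R(\eta,T,Z_0)$ on which all three events simultaneously force $\sigma_R\geq T$, giving $\P(\sigma_R\leq T)\leq \eta$. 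Letting $\eta\downarrow 0$ concludes. The main obstacle I anticipate is tracking the nested radii in this bootstrap cleanly; everything else reduces to standard PDMP-type Poisson comparisons.
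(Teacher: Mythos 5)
Your skeleton coincides with the paper's: the theorem is reduced to non-explosion (Proposition~\ref{lem:noexplosion}), the Markov property is taken over from Davis' construction, and non-explosion is proved by intersecting three high-probability events — a Poisson$(Mt)$ bound on the refreshment count from the global bound on $\lref$, confinement of the refreshment landings via Assumption~\ref{ass:general}~\ref{ass:refrboundedinprob}, and a bound on the reflection count via Assumption~\ref{ass:general}~\ref{ass:lboundedonbounded} — with non-expansiveness of $\Qrefl$ and the flow bound controlling the norm in between. The genuine divergence, and the place where your argument has a gap, is the interplay between the norm bound and the reflection count. In the paper there is no loop to close: once all refreshments are known to land in $B_{R'}$, non-expansiveness of the reflections together with the flow bound are used to give $\|Z_s\|_{\cZ}\leq c_s(1+R')$ for all $s\in[0,t]$ \emph{irrespective of how many reflections occur} (the flow bound is read as controlling the whole inter-refreshment trajectory, since reflections never increase the norm), and a single Poisson comparison at rate $\sup_{B_{c_t(1+R')}}\lswitch$ then produces $\ksw$. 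You instead let the norm bound $\Psi_{T,k}$ grow with the reflection count $k$ and try to close the resulting circularity with a bootstrap, but as sketched the bootstrap does not close: your Poisson domination of the reflection count is only valid up to the exit time $\sigma_{R_0}$ of the preliminary ball, whereas the radius $R$ you subsequently build from $\Psi_{T,L}$ and the refreshment radii exceeds $R_0$. On the event $\{\sigma_{R_0}<T\}$ nothing controls the reflection intensity, so the intersection of your three events does not force $\sigma_R\geq T$, and you cannot conclude $\P(\sigma_R\leq T)\leq\eta$; choosing $R_0$ a posteriori reintroduces the same circularity, since $L$ depends on $\sup_{B_{R_0}}\lswitch$.

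The repair is to make the confining radius reflection-count-free \emph{before} counting reflections, exactly as in the paper: condition on the refreshment count being at most $\kref$ and on every refreshment landing in a fixed ball (your nested choice of refreshment radii, composed $K+1$ times, is a legitimate — indeed arguably more careful — reading of Assumption~\ref{ass:general}~\ref{ass:refrboundedinprob} than the paper's single $R'$, but either way the radius must be fixed independently of the reflection count); on that event the trajectory stays in $B_{c_t(1+R')}$ by (iii) and (vii), the total reflection intensity on $[0,t]$ is bounded by a deterministic constant, and one Poisson comparison yields $\ksw$ with probability at least $1-\varepsilon/3$. With that modification your localisation via $\sigma_R$, the reduction $\P(T_\infty\leq T)\leq\P(\sigma_R\leq T)$, and steps (i)–(ii) go through and essentially reproduce the paper's argument.
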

\begin{proof} At the beginning of this section we gave a construction of the PDMP; however this construction is only valid for $t\leq \sup_nT_n$, therefore it remains to show that $\sup_nT_n=\infty$. This is a consequence of Proposition \ref{lem:noexplosion} below. The proof that this construction of a PDMP is a strong Markov process is analogous to \cite[Theorem 25.5]{DavisMarkov}.
\end{proof}

\begin{prop}
	\label{lem:noexplosion}
Suppose Assumption~\ref{ass:general} is satisfied. Write $T_{\mathrm{explode}} = \sup_{i\geq 1} T_i$. Then
\begin{enumerate}[(i)]
\item For all $\varepsilon > 0$, $t \geq 0$ and initial condition $z_0 \in \cZ$ there exists an $R > 0$ such that $\P_{z_0}(T_{\mathrm{explode}} > t \text{ and } Z_s\in B_R \ \text{for all $s \in [0,t]$}) > 1 - \varepsilon$.
\item $T_{\mathrm{explode}} = \infty$ almost surely.
\item For all $t \geq 0, r\geq 0$  \[\lim_{n\to\infty} \sup_{z_0\in B_r}\P_{z_0} \left( T_{n} > t \right) =1.\]
\end{enumerate}
\end{prop}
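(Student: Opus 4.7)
The plan is to prove (i) first, since (ii) and (iii) follow from it by fairly standard arguments. For (i), I would fix $\varepsilon > 0$, $t \geq 0$, and $z_0 \in \cZ$, and aim to find a radius $R$ so that the trajectory is confined to $B_R$ on $[0,t]$ with probability at least $1 - \varepsilon$. Since $\lref \leq M$ globally by Assumption \ref{ass:general}(\ref{ass:refrboundedrate}), the number of refreshments in $[0,t]$ is stochastically dominated by a $\mathrm{Poisson}(Mt)$ random variable, so I pick $N$ with the probability of more than $N$ refreshments being less than $\varepsilon/2$. By Assumption \ref{ass:general}(\ref{ass:refrboundedinprob}) applied with parameter $\varepsilon/(4(N+1))$, there is $R_0 \geq \|z_0\|_{\cZ}$ with $\Qrefr(z, B_{R_0}) > 1 - \varepsilon/(4(N+1))$ for all $z \in B_{R_0}$. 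Between two consecutive refreshments the trajectory alternates flow and reflections: reflections are norm non-expansive by Assumption \ref{ass:general}(\ref{ass:reflnormpreserving}), and the flow obeys $\|\varphi_s(y)\|_{\cZ} \leq C_t(1+\|y\|_{\cZ})$ for $s \in [0,t]$ with $C_t := \sup_{s\in[0,t]} c_s < \infty$ by the continuity in Assumption \ref{ass:general}(\ref{ass:flowbound}).

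The delicate step is bounding the trajectory over a refresh-free segment: because reflections change direction, iterating the inhomogeneous flow bound $C_t(1+\|\cdot\|_{\cZ})$ can in principle compound across reflections, so the number of reflections in a segment must be controlled. I would handle this by a bootstrap based on the stopping time $\tau_{R_1} = \inf\{s : \|Z_s\|_{\cZ} > R_1\}$ for a candidate $R_1$: on $[0,\tau_{R_1})$ the total event intensity is bounded by $M + K_{R_1}$ with $K_{R_1} := \sup_{B_{R_1}} \lswitch < \infty$ (Assumption \ref{ass:general}(\ref{ass:lboundedonbounded})), so the number of reflections per segment is Poisson-dominated. A compounded flow-reflection bound of the form $C_t^{m+1}(1+R_0)$ (with $m$ the number of reflections) then stays below $R_1$ with probability at least $1 - \varepsilon/(4(N+1))$ once $R_1$ is chosen large relative to $R_0$, $N$, and $C_t$. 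Inductively applying Assumption \ref{ass:general}(\ref{ass:refrboundedinprob}) at every refreshment (each post-jump position lies in $B_{R_0}$ with probability at least $1 - \varepsilon/(4(N+1))$) and the segment bound on every flow piece, a union bound over the at most $N$ refreshments and $N+1$ segments yields $\P_{z_0}(Z_s \in B_{R_1}\ \text{for all}\ s \in [0,t]) > 1 - \varepsilon$. Since on this event the total intensity is bounded by $M + K_{R_1}$, a further Poisson comparison shows the event-counting process is finite, hence $\Te > t$ on the same event, proving (i) with $R = R_1$.

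For (ii), (i) gives $\P_{z_0}(\Te \leq t) \leq \varepsilon$ for arbitrary $\varepsilon > 0$, whence $\P_{z_0}(\Te \leq t) = 0$ for every $t$, and letting $t \to \infty$ gives $\Te = \infty$ almost surely. For (iii), the radius $R_1$ produced in (i) depends on $z_0$ only through $\|z_0\|_{\cZ}$ and hence is uniform over $z_0 \in B_r$; on the confinement event, the total intensity is bounded by $M + K_{R_1}$, so the event count on $[0,t]$ is stochastically dominated by $\mathrm{Poisson}((M + K_{R_1})t)$, giving
\[
\sup_{z_0 \in B_r} \P_{z_0}(T_n \leq t) \leq \varepsilon + \P\bigl(\mathrm{Poisson}((M+K_{R_1})t) \geq n\bigr),
\]
which can be made less than $2\varepsilon$ by taking $n$ large. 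The hard part of the argument is the refresh-free segment estimate in (i): the inhomogeneity of the flow bound together with repeated reflections means that one cannot bound the norm segment-wise without simultaneously controlling the reflection count, and the stopping-time bootstrap combining Assumption \ref{ass:general}(\ref{ass:reflnormpreserving}), Assumption \ref{ass:general}(\ref{ass:lboundedonbounded}) and Assumption \ref{ass:general}(\ref{ass:flowbound}) is precisely what closes this loop.
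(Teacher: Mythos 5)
Your skeleton coincides with the paper's proof: Poisson-dominate the refreshment count via the global bound $M$ of Assumption~\ref{ass:general}~\ref{ass:refrboundedrate}, use Assumption~\ref{ass:general}~\ref{ass:refrboundedinprob} to force every refreshment to land in a fixed ball, confine the trajectory using non-expansiveness of $\Qrefl$ and the flow growth bound, then invoke Assumption~\ref{ass:general}~\ref{ass:lboundedonbounded} to bound the number of reflections; your deductions of (ii) and (iii) from (i) are exactly the paper's. The divergence is at the confinement step, and there your argument has a genuine gap. In the paper the confinement radius is $c_t(1+R')$, obtained from Assumptions~\ref{ass:general}~\ref{ass:reflnormpreserving} and \ref{ass:flowbound} alone; crucially it does \emph{not} depend on how many reflections occur, and only afterwards is Assumption~\ref{ass:general}~\ref{ass:lboundedonbounded} applied on the fixed ball $B_{c_t(1+R')}$ to Poisson-dominate the reflection count $\ksw$. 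You instead let the candidate radius $R_1$ absorb a compounded factor $C_t^{m+1}(1+R_0)$ with $m$ the number of reflections before $\tau_{R_1}$, and then control $m$ through the intensity bound $M+K_{R_1}$ with $K_{R_1}=\sup_{B_{R_1}}\lswitch$. This is circular in a way that cannot be closed under the stated hypotheses: the constraint $C_t^{m+1}(1+R_0)\le R_1$ only tolerates $m\lesssim \log R_1/\log C_t$ reflections (when $C_t>1$), whereas the Poisson parameter $(M+K_{R_1})t$ dominating $m$ may grow arbitrarily fast in $R_1$, since Assumption~\ref{ass:general}~\ref{ass:lboundedonbounded} imposes no growth rate whatsoever on $R\mapsto K_R$ (take $K_R\sim e^{e^R}$). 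Hence ``choose $R_1$ large relative to $R_0$, $N$ and $C_t$'' does not drive the failure probability below $\varepsilon/(4(N+1))$; the bootstrap does not close, and with it the proof of (i), on which your (ii) and (iii) rest.

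The repair is not a cleverer choice of $R_1$ but the paper's ordering of the argument: reflections never increase the norm, so on the event that all (at most $\kref$) refreshments land in $B_{R'}$ the norm of the trajectory is governed by the flow bound applied from points of norm at most $R'$, giving a radius that is independent of the reflection count; boundedness of $\lswitch$ on that fixed ball then yields the reflection bound, and $\Te>t$ follows on the intersection of the three events. To be fair, you have put your finger on a real subtlety: read strictly segment by segment, $\|\varphi_s(z)\|\le c_s(1+\|z\|)$ does compound across reflection times, and the paper's one-line deduction of $\|Z_s\|\le c_t(1+R')$ implicitly uses that the growth bound controls the whole reflected trajectory, as it does in the paper's examples (e.g.\ $c$ constant for the Boomerang flow, where the Hamiltonian seminorm is preserved). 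But your proposed substitute replaces a terse step with an argument that fails, rather than with a proof; as written the proposal does not establish the proposition.
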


Statement (ii) of Proposition \ref{lem:noexplosion} is equivalent to saying that only finitely many events can occur in any finite time interval. 

\begin{proof}
Suppose $t \geq 0, n\in \mathbb{N}$ and $\varepsilon >0$, and let $r > 0$ sufficiently large so that $z_0 \in B_r$. 
% Using Assumption~\ref{ass:general} (iv) and (v), there exists a constant $\overline \lambda_{\refr} := \sum_{i \in \mathcal I_{\refr}} \lambda_i < \infty$. 
Let $M$ be an upper bound for the total refreshment intensity, using Assumption~\ref{ass:general} \ref{ass:refrboundedrate}.
Let $N$ denote a Poisson random variable with parameter $M t$ and let $\kref \in \N$ such that $\P(N \geq \kref) < \varepsilon/3$.  By Assumption~\ref{ass:general} \ref{ass:refrboundedrate} with probability at least $1-\varepsilon/3$ there will be fewer than $\kref$ jumps due to the refreshment kernels.
Using Assumption~\ref{ass:general} \ref{ass:refrboundedinprob}, let $R'$ be sufficiently large so that $r<R'$ and $\Qrefr(z,B_{R'}) > (1 - \varepsilon/3)^{1/\kref}$ for all $z \in B_{R'}$, so that with probability at least $1- \varepsilon/3$ all the observed refreshments map into $B_{R'}$.
Conditional on the event that all refreshments map into $B_{R'}$, since all reflections are norm preserving (using Assumption \ref{ass:general} \ref{ass:reflnormpreserving}) it follows from Assumption \ref{ass:general} \ref{ass:flowbound} that $\|Z_t\| \leq c_t(1+R')$.  Since $\lswitch$ is bounded on bounded sets (Assumption~\ref{ass:general} \ref{ass:lboundedonbounded}), it follows that there is a constant $\ksw$ such that with probability at least $1 - \varepsilon/3$ there are at most $\ksw$ reflection events.
We conclude that, for all $\varepsilon > 0$ and $t \geq 0$ we have 
\[ \P_{z_0} \left(T_{\ksw + \kref} > t \text{ and } Z_s \in B_{c_t(1+R')}  \ \text{for all $s \in [0,t]$}\right) > 1 - \varepsilon,\]
establishing the first statement of the lemma. The second statement follows from the first since for all $\varepsilon$ there exists $R > 0$ such that
\[ \P_{z_0}(T_{\mathrm{explode}} > t)\geq \P_{z_0}(T_{\mathrm{explode}} > t \text{ and } Z_t \in B_R) > 1 - \varepsilon\] where $t$ is arbitrary and  $\varepsilon$ can be chosen arbitrarily small. Moreover, as $R=c_t(1+R')$ is independent of the choice of $z_0$ in $B_r$ we have that
\[ \sup_{z_0\in B_r}\P_{z_0} \left( T_{\ksw + \kref} > t \right) > 1 - \varepsilon.\]
As $\kref$ and $\ksw$ are deterministic and depend only on $\varepsilon, r, R'$ for each $\varepsilon>0$ we may take $n$ sufficiently large that $n\geq \kref+\ksw$ which gives
\[\lim_{n\to\infty} \sup_{z_0\in B_r}\P_{z_0} \left( T_n > t \right) > 1 - \varepsilon.\]
Now since $\varepsilon$ was arbitrary we get the third statement of the lemma.
\end{proof}

Define the semigroup $\{\cP_t\}_{t\geq 0}$ on the space $B_b(\cZ)$, the set of all bounded and measurable functions $f:\cZ\to \R$ endowed with the supremum norm, by setting 
\begin{equation}\label{eq:semigroupdef}
\cP_tf(z) = \mathbb{E}_z[f(Z_t^z)].
\end{equation}

\begin{lemma}\label{lem:Feller}
	If Assumption \ref{ass:general} holds, then $\{\cP_t\}_{t\geq 0}$ defined by \eqref{eq:semigroupdef} is a Feller semigroup, i.e. $\cP_t(C_b(\cZ))\subseteq C_b(\cZ)$.
\end{lemma}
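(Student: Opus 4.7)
My plan is to reduce continuity of $\cP_t f$ at an arbitrary point $z_0\in\cZ$ to continuity of the expectation restricted to the event $\{N_t < K\}$, where $N_t$ denotes the number of jumps up to time $t$, and then establish this by induction on the number of allowed jumps. Boundedness $\|\cP_t f\|_\infty\le \|f\|_\infty$ is immediate from the definition \eqref{eq:semigroupdef}. For the continuity, fix $z_0\in\cZ$, $\varepsilon>0$, and $r>0$ with $z_0\in B_{r/2}$, so that any sequence $z_n\to z_0$ eventually lies in $B_r$. By Proposition~\ref{lem:noexplosion}(iii) there exists $K\in\N$ such that
\[ \sup_{z\in B_r}\P_z(N_t\ge K)\;=\;\sup_{z\in B_r}\P_z(T_K\le t)\;<\;\frac{\varepsilon}{4(1+\|f\|_\infty)}.\]
Setting $\cP_t^K f(z):=\sum_{k=0}^{K-1}\mathbb{E}_z[f(Z_t)\1_{N_t=k}]$, we then have $|\cP_t f(z)-\cP_t^K f(z)|\le\varepsilon/2$ uniformly over $z\in B_r$, so continuity of $\cP_t f$ at $z_0$ reduces to continuity at $z_0$ of each term $z\mapsto \mathbb{E}_z[f(Z_t)\1_{N_t=k}]$.

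I will then prove by induction on $k$ that $G_k(z,\tau):=\mathbb{E}_z[f(Z_\tau)\1_{N_\tau=k}]$ is bounded by $\|f\|_\infty$ and continuous in $z$ for every $\tau\ge 0$. Writing $\lambda:=\lref+\lswitch$, the base case is
\[ G_0(z,\tau)\;=\;f(\varphi_\tau(z))\exp\!\Big(-\int_0^\tau \lambda(\varphi_s(z))\,ds\Big),\]
which is continuous in $z$ using continuity of $f$, of $\lambda$ (Assumption~\ref{ass:general}(viii)), of the flow $z\mapsto\varphi_\tau(z)$ (inherited from the vector field $\fX$), and dominated convergence. For the inductive step, conditioning on the first jump time and on which kernel fires gives
\begin{align*}
G_{k+1}(z,\tau)\;=\;\int_0^\tau e^{-\int_0^s \lambda(\varphi_u(z))\,du}\Big[ &\lref(\varphi_s(z))\,(\Qrefr G_k(\cdot,\tau-s))(\varphi_s(z))\\
&+\lswitch(\varphi_s(z))\,(\Qrefl G_k(\cdot,\tau-s))(\varphi_s(z))\Big]\,ds.
\end{align*}
By the induction hypothesis $G_k(\cdot,\tau-s)$ is continuous and bounded by $\|f\|_\infty$, so Assumption~\ref{ass:general}(viii) implies $\Qrefr G_k(\cdot,\tau-s)$ and $\Qrefl G_k(\cdot,\tau-s)$ are continuous; composing with the continuous flow and continuous intensities $\lref,\lswitch$ makes the integrand continuous in $z$ pointwise in $s$, and dominated convergence—with the integrable majorant $\|f\|_\infty\sup_{y\in B_{R_\tau}}\lambda(y)$ for $z\in B_r$, where $R_\tau:=\max_{s\in[0,\tau]}c_s(1+r)$, using Assumptions~\ref{ass:general}(v), (vi), (vii)—yields continuity of the integral in $z$.

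A standard three-epsilon argument then gives $\limsup_{n\to\infty}|\cP_t f(z_n)-\cP_t f(z_0)|\le\varepsilon$ for any sequence $z_n\to z_0$, and since $\varepsilon$ was arbitrary the Feller property follows. The main obstacle is carrying the dominated-convergence argument through the induction: although $\lref,\lswitch$ are only locally bounded, the growth bound of Assumption~\ref{ass:general}(vii) confines trajectories started in $B_r$ to the larger ball $B_{R_\tau}$, on which local boundedness of the intensities supplies the required uniform majorant, while the uniform no-explosion estimate of Proposition~\ref{lem:noexplosion}(iii) is precisely what allows the infinite sum to be truncated uniformly over a neighborhood of $z_0$.
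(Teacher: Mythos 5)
Your proposal is correct and follows essentially the same route as the paper: your truncated sum $\cP_t^K f=\sum_{k<K}\mathbb{E}_z[f(Z_t)\1_{N_t=k}]$ is exactly the $K$-th iterate $G^K g$ (with $g\equiv 0$) of the first-jump operator $G$ used in the paper, your induction on $k$ is the paper's observation that $G$ preserves continuity (with the dominated-convergence details spelled out), and the uniform-on-bounded-sets tail bound from Proposition~\ref{lem:noexplosion}(iii) plays the same role as the estimate $|G^n g-\cP_t f|\le 2c\,\P_z(T_n\le t)$. No changes needed.
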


\begin{proof}[Proof of Lemma \ref{lem:Feller}]
	Note that $\cP_t$ is a contraction with respect to the supremum norm so it is sufficient to show that $\cP_tf$ is continuous for any $f\in C_b(\cZ)$. We will follow the strategy of \cite[Theorem 27.6]{DavisMarkov}. Fix $f\in C_b(\cZ)$ and define for any $g\in C_b([0,\infty)\times \cZ)$
	\begin{equation*}
	Gg(t,z) = \mathbb{E}_z[f(Z_t)\mathbbm{1}_{t<T_1} + g(t-T_1,Z_{T_1})\mathbbm{1}_{t\geq T^1}].
	\end{equation*}
	Here $\mathbbm{1}_A$ denotes the characteristic function on the set $A$.
%Recall 
%	\begin{align*}
%	Qf(x,v) &= \frac{\lref}{\tilde{\lambda}(x,v)}\int_{\cH} f(x,w)-f(x,v) \nu_0(dw) + \frac{1}{\tilde{\lambda}(x,v)}\sum_{n=1}^\infty\lambda_n(x,v)\left(f(x,R_n(x)v)-f(x,v)\right)\\
%	\tilde{\lambda}(x,v) &= \lref+\sum_{n=1}^\infty\lambda_n(x,v).
%	\end{align*}
	With this notation we can also express $Gg(t,z)$ as
	\begin{align*}
	Gg(t,z) = f(\varphi_t(z)) e^{-\int_0^t \lambda(\varphi_s(z))ds} + \int_0^t Qg(t-s,\cdot)(\varphi_s(z)) \lambda(\varphi_s(z))e^{-\int_0^s \lambda(\varphi_r(z))dr} ds.
	\end{align*}
	Notice that $Gg(t,x,v)$ is continuous in $(x,v)$ provided $f,g$ are both continuous. Following the proof of \cite[Lemma 27.3]{DavisMarkov} we have 
	\begin{equation*}
	\lvert G^ng(t,z)-\cP_tf(z)\rvert \leq 2c \mathbb{P}_z(T_n\leq t)
	\end{equation*}
	where $c = \max(\lVert f\rVert_\infty, \lVert g\rVert_{\infty})$. By Proposition \ref{lem:noexplosion} we have that $G^ng(t,z)$ converges to $\cP_tf(z)$ uniformly on bounded sets; therefore $\cP_tf(z)$ is continuous.
\end{proof}

 \subsection{Extended generator}
 
 For PDMPs the domain of the extended generator can be explicitly characterised as shown in \cite{DavisMarkov, Riedler}
 \begin{definition}
 	Let $\Dext$ denote the set of measurable functions $f:H\to \R$ with the property that there exists a measurable function $h:\cZ\to\R$ such that $t\mapsto h(Z_t)$ is integrable almost surely and the process
 	\begin{equation}
 	C_t^f=f(Z_t)-f(Z_0)-\int_0^th(Z_s)ds
 	\label{eq:defofCtf}\end{equation}
 	is a $\mathbb{P}_z$-local martingale. Then we set $\cL f=h$ and call $(\cL,\Dext)$ the extended generator.
 \end{definition}

\begin{theorem}\label{thm:extgen}
	Let $Z_t$ denote the PDMP with characteristics $(\fX,\lambda,Q)$ and assume that Assumption \ref{ass:general} holds. Then the domain of the extended generator $\Dext$ is given by all measurable functions $f$ such that
	\begin{enumerate}[(i)]
		\item For each $z\in \cZ$ the function $t\mapsto f(\varphi_t(z))$ is differentiable for almost every $t$;\label{item:diffoff}
		\item $(z,t,\omega)\mapsto f(z)-f(Z_{t-}(\omega))$ is a valid integrand for the compensating measure $\tilde{p}$. Here $\tilde{p}(t,A) = \int_0^t Q(Z_s,A)\lambda(Z_s)ds$.\label{item:integrabilityoff}
	\end{enumerate}
Then, for $f\in \Dext$ the extended generator is given by 
\begin{equation}\label{eq:extgen}
\cL f(z) = \LX f(z) + \lambda(z) \int_\cZ (f(y) - f(z)) Q(z,dy).
\end{equation}
\end{theorem}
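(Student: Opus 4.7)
The plan is to adapt the classical PDMP computation of Davis \cite[Theorem 26.14]{DavisMarkov}, relying on the non-explosion statement (Proposition \ref{lem:noexplosion}) to make the argument run in the Banach space $\cZ$. The core idea is to separate $f(Z_t) - f(Z_0)$ into its absolutely continuous part between jumps and its pure-jump part, and then identify the compensator of the latter.

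First I would use condition (\ref{item:diffoff}). On each inter-jump interval $[T_{i-1}, T_i)$ the path equals the deterministic flow $\varphi_{s - T_{i-1}}(Z_{T_{i-1}})$, and at every point of differentiability one has $\tfrac{d}{dt} f(\varphi_t(z)) = \LX f(\varphi_t(z))$. Integrating and summing over $i$, together with Proposition \ref{lem:noexplosion}(ii) so that $N_t := \#\{i : T_i \leq t\}$ is finite a.s., gives
\[ f(Z_t) - f(Z_0) = \int_0^t \LX f(Z_s)\, ds + \sum_{i : T_i \leq t} \bigl( f(Z_{T_i}) - f(Z_{T_i-}) \bigr). \]

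Next I would analyse the jump part via the random counting measure $p(ds, dy) := \sum_i \delta_{(T_i, Z_{T_i})}(ds,dy)$. As in \cite[Section 26]{DavisMarkov}, the measurability and local boundedness content of Assumption \ref{ass:general} suffices to show that the predictable compensator of $p$ (with respect to the natural filtration of $Z$) is $\tilde{p}(ds, dy) = \lambda(Z_{s-})\, Q(Z_{s-}, dy)\, ds$. Condition (\ref{item:integrabilityoff}) is precisely the integrability requirement that makes $\psi(s, y, \omega) := f(y) - f(Z_{s-}(\omega))$ an admissible integrand against $p - \tilde p$, so that
\[ M_t^f \;:=\; \int_0^t \int_{\cZ} \bigl( f(y) - f(Z_{s-}) \bigr) \bigl( p - \tilde{p} \bigr)(ds, dy) \]
is a local martingale, with localizing sequence $T_n \uparrow \infty$ provided by Proposition \ref{lem:noexplosion}. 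Writing $p$ as a sum of Diracs on the left and $\tilde p$ explicitly on the right, and combining with the previous display, yields $C_t^f = M_t^f$ for $h(z) = \LX f(z) + \lambda(z) \int_\cZ (f(y) - f(z))\, Q(z,dy)$, which is exactly \eqref{eq:extgen}. The converse direction, that any $f \in \Dext$ must satisfy (\ref{item:diffoff})--(\ref{item:integrabilityoff}), follows from the uniqueness of the special semimartingale decomposition of $f(Z_t)$: the absolutely continuous and pure-jump pieces separately identify $\LX f$ along the flow and the integrand against $p$, forcing both conditions.

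The main obstacle is the rigorous identification of $\tilde p$ and the local martingale property of $M_t^f$ in the infinite-dimensional state space, where Davis's original proofs are formulated in a finite-dimensional or Borel-standard setting. Here the measurability clauses of Assumption \ref{ass:general}(i)--(ii) and (viii) ensure that the integrands against $p$ and $\tilde p$ are predictable; the non-explosion in Proposition \ref{lem:noexplosion}(iii), which is uniform over bounded initial conditions, supplies the localization needed to promote the stopped martingales $M_{t \wedge T_n}^f$ to a local martingale globally. With these two inputs the Banach-space structure plays no further role, and the Davis calculation goes through verbatim.
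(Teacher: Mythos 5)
Your proposal is correct and follows essentially the same route as the paper: the paper proves this theorem simply by citing the proof of \cite[Theorem 2.2.1]{Riedler} (which is precisely the Davis-style computation you sketch, splitting $f(Z_t)-f(Z_0)$ into the flow part and the jump part compensated by $\tilde p$), noting that boundedness of $\lambda$ is used there only to guarantee well-posedness, which is here replaced by the non-explosion result of Proposition~\ref{lem:noexplosion} — the same substitution you use for the localization. In short, you have unrolled the citation rather than taken a different approach.
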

This follows from the proof of \cite[Theorem 2.2.1]{Riedler}. We note that \cite{Riedler} assumes that $\lambda$ is bounded, however for characterising the extended generator they only use that $\lambda$ is bounded to ensure the process is well-defined. Since we have established the PDMP is well-defined the proof of \cite[Theorem 2.2.1]{Riedler} holds. From \cite[Remark 26.16]{DavisMarkov} if $f$ is bounded and measurable then condition (2) is satisfied. 
% Also note that previously we defined $\LX $ to be a linear operator on $\cZ$. 
%Consider $\LX $ to be a linear differential operator on a space of functions from $\cZ$ to $\R$ by setting 
%\begin{equation*}
%\LX f(z) = \left. \frac{d}{dt}f(\varphi_t(z)) \right|_{t = 0} = d f(z; \fX (z)),
%\end{equation*}
%where $df(z, \cdot)$ denotes the Gateaux derivative of $f$ at $z$, whenever this is well-defined. 
% As an abuse of notation we shall denote both operators by $\mathfrak{X}$.

Recall the semigroup $\{\cP_t\}_{t\geq 0}$ defined by \eqref{eq:semigroupdef} defined on the space $B_b(\cZ)$. Note that $\{\cP_t\}_{t\geq 0}$ is a contraction semigroup however it need not be a strongly continuous semigroup, therefore we shall consider a smaller space on which $\{\cP_t\}_{t\geq 0}$ is strongly continuous. By Lemma \ref{lem:Feller} if Assumption \ref{ass:general} holds then $\{\cP_t\}_{t\geq 0 }$ is a Feller semigroup, i.e. for every $t\geq 0$ we have that $\cP_t(C_b(\cZ))\subseteq C_b(\cZ)$. Set $C_0(\cZ)=\{f\in C_b(\cZ): \lim_{t\to 0}\lVert \cP_tf-f\rVert_\infty=0\}$. Note this space is closed by the same arguments used in \cite[pg 28-29]{DavisMarkov} and as $\{\cP_t\}_{t\geq 0}$ is a Feller semigroup $\cP_t(C_0(\cZ))\subseteq C_0(\cZ)$ for every $t\geq 0$. On the space $C_0(\cZ)$ the semigroup $\{\cP_t\}_{t\geq 0}$ is strongly continuous. Let $(\cL,\DCo)$ be the generator of $\{\cP_t\}_{t\geq 0}$ on the space $C_0(\cZ)$. We can see from the definition of the extended generator that $\cL$ is the restriction\footnote{Let $f\in \DCo$ and set $C_t^f$  to be defined as in \eqref{eq:defofCtf}. Then we have
\begin{equation*}
    \mathbb{E}[C_t^f\rvert \mathscr{F}_s] = \cP_{t-s}f(Z_s)-f(Z_0)-\int_s^t \cP_{r-s}\cL f(Z_s)dr-\int_0^s\cL f(Z_r)dr.
\end{equation*}
As $f\in \DCo$ we can write $\int_s^t \cP_{r-s}\cL f(Z_s)dr=\cP_{t-s}f(Z_s)-f(Z_s)$ which gives
\begin{equation*}
    \mathbb{E}[C_t^f\rvert \mathscr{F}_s] = f(Z_s)-f(Z_0)-\int_0^s\cL f(Z_r)dr=C_s^f.
\end{equation*}
Therefore $f\in \Dext$.} of the extended generator $L$ to the set $\DCo$.

\section{Invariant measure of PDMP}\label{sec:PDMPforsampling}

In this section we give conditions to ensure a PDMP has an invariant measure $\mu$. The measures we consider will be absolutely continuous with respect to a probability measure $\mu_0$ and we will assume there exists a lower semicontinuous function $\Phi:\cZ \to \R$ which is bounded from below such that
\begin{equation}\label{eq:defofmu}
    \frac{d\mu}{d\mu_0}(z) = \frac{e^{-\Phi(z)}}{\int e^{-\Phi(z)} \mu_0(dz)}.
\end{equation}
%for some normalising constant $\mathcal{Z}_\Phi$. \joris{double usage of $\mathcal Z$.}

\begin{example}[Inverse problem for a diffusion coefficient]
	This example is based on \cite[Section 3.3]{Stuart}. Consider the inverse problem of determining the diffusion coefficient from observations of the solutions of the PDE
	\begin{eqnarray}\label{eq:PDEfordiffcoef}
	\frac{d}{dt}\left(\kappa(t)\frac{dv}{dt}\right)=0,\\
	v(0)=0, v(1)=1.
	\end{eqnarray}
	We make observations $\{v(t_k)\}_{k=1}^q$ subject to Gaussian measurement error. Write observations as
	$$
	y_j=v(t_j)+\eta_j, j=1,\ldots,q
	$$
	where $\eta_j$ are i.i.d. and distributed according to $N(0,1)$. To ensure that $\kappa$ is strictly positive we set $u(x)=\ln(\kappa(x))$ and view $u\in L^2((0,1))$. Now define $J_x:L^\infty((0,1)) \to \R$ by 
	$$
	J_t(w) = \int_0^t \exp(-w(z))dz.
	$$
	Then the solution of \eqref{eq:PDEfordiffcoef} may be written as
	\begin{equation*}
	v(x)=\frac{J_t(u)}{J_1(u)}.
	\end{equation*}
	Set
	\begin{equation*}
	\Phi(u) = \frac{1}{2}\sum_{j=1}^q\left\lvert y_j-\frac{J_{t_j}(u)}{J_1(u)}\right\rvert^2
	\end{equation*}
% 	Note that $\Phi$ is Fr\'echet differentiable with derivative
% 	\begin{equation*}
% 	\nabla_u\Phi(u) = \sum_{j=1}^q\left( y-\frac{J_{t_j}(u)}{J_1(u)}\right) \left(\frac{J_{t_j}(u)e^{-u}}{J_1(u)^2}-\frac{e^{-u}\chi_{[0,t_j]}}{J_1(u)}\right)
% 	\end{equation*}
% 	and second order derivative
% $$\nabla_u^2\Phi(u) = \frac{e^{-2u}}{J_1(u)^2}\left(\frac{J_t(u)}{J_1(u)}-\chi_{[0,t]}\right)^2+\left(\frac{e^{-u} \chi (t) e^{-u}}{J_1(u)^2}-\frac{2 e^{-2u} J_t(u)}{J_1(u)^3}+\frac{e^{-2u} }{J_1(u)^2}+\frac{e^{-u} \chi (t)}{J_1(u)}-\frac{e^{-u} J_t(u)}{J_1(u)^2}\right) \left(y-\frac{J_t(t)}{J_1(u)}\right)$$
	
	Now we place a Gaussian prior $\pi_0$ on the space $L^2((0,1))$ with mean zero and covariance operator $\beta (-\Delta)^{-\alpha}$. Here we set the domain of $\Delta$ to be
\begin{equation*}
	\begin{cases}
	\Delta u =u'',
	D(\Delta) = \left\{u \in H^2((0,1)) : u(0)=u(1), \int_0^1 u(s)ds =0\right\}.
	\end{cases}
\end{equation*}
	From \cite[Lemma 6.25]{Stuart} we have that $\pi_0(C([0,1]))=1$ provided $\alpha>1/2$. Then by \cite[Theorem 3.4]{Stuart} the posterior distribution $\pi^y(du)=\mathbb{P}(du\lvert y)$ given the data $y$ is absolutely continuous with respect to $\pi_0$ and
	\begin{equation*}
	\frac{d\pi^y}{d\pi_0}(u) \propto \exp(-\Phi(u)).
	\end{equation*}
%	In order to sample from $\pi^y$ we set $\nu_0=\pi_0$, $\Sigma_p=\Sigma_x$, $R(x)p=-p$ and $\lambda(x,p) = \max\{0,\langle \nabla_x\Phi(x),p\rangle_{\cH}\}$.
\end{example}

%In many samplers it is useful to introduce an auxiliary variable so we extend the space and sample from the measure $\mu=\pi^y \otimes \nu_0$ for some probability measure $\nu_0$, for example $\nu_0=\pi_0$. 

\begin{theorem}\label{thm:invmeas}
	Assume that Assumption \ref{ass:general} holds. Suppose $\mathcal Z$ is equipped with a probability measure $\mu_0$. Define a transition kernel $Q^*$ by
	\begin{equation}\label{eq:Qadjoint}
	\int_{\cZ} \int_\cZ g(y,z) Q^*(y,dz)\mu_0(dy) = \int_\cZ \int_\cZ g(y,z) Q(z,dy)\mu_0(dz) 
	\end{equation}
	for all $g\in C_b(\cZ\times \cZ)$. Let $\mu$ denote a probability measure satisfying $\mu\propto e^{-\Phi}\mu_0$. Assume that
	\begin{align}\label{eq:absinvmeasurecond}
	e^{\Phi(z)}\LX ^*(e^{-\Phi(\cdot)})(z)+\int_\cZ \lambda(y)e^{\Phi(z)-\Phi(y)}Q^*(z,dy) -\lambda(z) =0.
	\end{align}
 Here $\LX ^*$ denotes the adjoint of $(\LX, D(\LX)) $ as an operator acting on the space $L_{\mu_0}^2$. Then $\mu$ is formally an invariant measure for the semigroup $\cP_t$ in the sense that
	\begin{equation}\label{eq:Lmu=0onC0}
	\int_\cZ \cL f(z) \mu(dz) = 0
	\end{equation}
	for all $f\in D(\LX)\cap \DCo$.
\end{theorem}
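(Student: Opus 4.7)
The plan is a direct integration-by-parts style computation: split $\int_\cZ \cL f\,d\mu$ into the deterministic drift part and the jump part, use the adjoint of $\LX$ on the drift, use the definition of $Q^*$ in \eqref{eq:Qadjoint} on the jump part, and then recognize condition \eqref{eq:absinvmeasurecond} in what remains.

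First, writing $\mu = e^{-\Phi}\mu_0/Z$ with $Z = \int e^{-\Phi}\,d\mu_0$ and using \eqref{eq:formalgen} (or equivalently \eqref{eq:extgen} with $\lambda Q = \lref Q_{\mathrm r} + \lswitch Q_{\mathrm b}$), I would write
\begin{equation*}
Z \int_\cZ \cL f(z)\,\mu(dz) = \int_\cZ \LX f(z)\,e^{-\Phi(z)}\mu_0(dz) + \int_\cZ \lambda(z)\!\int_\cZ \bigl(f(y)-f(z)\bigr) Q(z,dy)\,e^{-\Phi(z)}\mu_0(dz).
\end{equation*}
The first term, by the definition of the adjoint of $\LX$ on $L^2_{\mu_0}$ (available because $f\in D(\LX)$), equals $\int_\cZ f(z)\,\LX^{\!*}(e^{-\Phi})(z)\,\mu_0(dz)$.

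For the jump term I would apply \eqref{eq:Qadjoint} with the test function $g(y,z) = \lambda(z) f(y) e^{-\Phi(z)}$ (boundedness of $f\in \DCo$ and the local integrability assumed for $\lambda$ justify Fubini here) to obtain
\begin{equation*}
\int_\cZ \lambda(z)\!\int_\cZ f(y) Q(z,dy)\,e^{-\Phi(z)}\mu_0(dz) = \int_\cZ\!\int_\cZ \lambda(z) f(y)\,e^{-\Phi(z)} Q^*(y,dz)\,\mu_0(dy).
\end{equation*}
After swapping the dummy variables $y\leftrightarrow z$ on the right, this becomes $\int_\cZ f(z)\!\int_\cZ \lambda(y)\,e^{-\Phi(y)} Q^*(z,dy)\,\mu_0(dz)$. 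The ``$-f(z)$'' contribution from the jump term is simply $-\int_\cZ f(z)\lambda(z) e^{-\Phi(z)}\mu_0(dz)$.

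Collecting everything and factoring $e^{-\Phi(z)}$ out yields
\begin{equation*}
Z\int_\cZ \cL f\,d\mu = \int_\cZ f(z)\,e^{-\Phi(z)}\Bigl[e^{\Phi(z)}\LX^{\!*}(e^{-\Phi})(z) + \int_\cZ \lambda(y) e^{\Phi(z)-\Phi(y)} Q^*(z,dy) - \lambda(z)\Bigr]\mu_0(dz),
\end{equation*}
and the bracket is precisely the left-hand side of \eqref{eq:absinvmeasurecond}, hence vanishes identically, giving \eqref{eq:Lmu=0onC0}. The only genuine subtleties — and the main points to verify — are the application of Fubini to the double integral against $Q(z,dy)\mu_0(dz)$ (which requires joint integrability of $\lambda(z)\,|f(y)-f(z)|\,e^{-\Phi(z)}$, guaranteed here since $\Phi$ is bounded below, $f\in\DCo$ is bounded, and the combined total intensity $\lambda = \lref+\lswitch$ is $\mu_0$-integrable against $e^{-\Phi}$ under the hypotheses) and the validity of the adjoint identity for $\LX$ on $f$, which is why we restrict to $f\in D(\LX)\cap\DCo$; this is exactly the reason the invariance is called ``formal.''
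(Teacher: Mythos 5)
Your proposal is correct and follows essentially the same route as the paper's proof: split $\int \cL f\,d\mu$ into the drift and jump parts, use the $L^2_{\mu_0}$-adjoint of $\LX$ on the drift, rewrite the jump integral via the defining relation for $Q^*$, and recognize the bracket as the left-hand side of \eqref{eq:absinvmeasurecond}. Your added remarks on Fubini and on extending \eqref{eq:Qadjoint} beyond bounded test functions are exactly the points the paper leaves implicit under the word ``formally,'' so nothing further is needed.
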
 

In order to verify that $\mu$ is an invariant measure it is necessary that \eqref{eq:Lmu=0onC0} holds for all $f$ in a core for $(\cL,\DCo)$. In general for PDMP it is extremely difficult to determine a core for the generator, in Section \ref{sec:core} we prove that sufficiently smooth functions are a core for the Boomerang Sampler and find the invariant measure. 

\begin{proof}[Proof of Theorem \ref{thm:invmeas}]
    % First we show that $\DCo\subseteq D_{L_{\mu_0}}(\LX )$ where $D_{L_{\mu_0}}(\LX )$ denotes the domain of $\LX $ in the space $L_{\mu_0}^2$. Fix $f\in \DCo$ then $f$ belongs to the domain of the extended generator so by Theorem \ref{thm:extgen} $t\mapsto f(\varphi_t(z))$ is differentiable for almost every $t$ and $\cL f$ is given by \eqref{eq:extgen}. Since $f\in \DCo$ we have that $\cL\in C_b(\cZ)$ and hence $\LX f\in C(\cZ)$ with 
    % \begin{equation*}
    %     \lVert \LX f(z) \rVert_{\cZ} \leq 2\lambda(z)\lVert f\rVert_\infty+\lVert \cL f\rVert_\infty.
    % \end{equation*}

	Fix $f\in D(\LX)\cap \DCo$ then we have
\begin{align*}
	\int_\cZ Lf(z) \mu(dz) &= \int_\cZ f(z) \LX ^*(e^{-\Phi(\cdot)})(z) \mu_0(dz) + \int_\cZ\int_\cZ \lambda(z)f(y)Q(z,dy) e^{-\Phi(z)} \mu_0(dz) \\
	&\quad - \int_\cZ \lambda(z) f(z) e^{-\Phi(z)}\mu_0(dz)\\
	&= \int_\cZ f(z)\left[ e^{\Phi(z)}\LX ^*(e^{-\Phi(\cdot)})(z)+\int_\cZ \lambda(y)e^{\Phi(z)-\Phi(y)}Q^*(z,dy) -\lambda(z)\right] \mu(dz)\\
	&=0.
\end{align*}
\end{proof}

\begin{example}\label{ex:detrefl}
	Suppose the transition kernel $Q$ corresponds to a deterministic transition, i.e. $Q(z,dy) = \delta_{R(z)}(dy)$ for some invertible function $R$. Assume that
	\begin{enumerate}
		\item The measure $\mu_0$ is invariant under $R$, i.e. $\mu_0\circ R=\mu_0$;
		\item The mapping $\Phi$ is invariant under $R^{-1}$, i.e. $\Phi\circ R^{-1}=\Phi$;
		\item The map $z\mapsto e^{-\Phi(z)}$ belongs to the domain of $\LX ^*$, and 
		\begin{equation}\label{eq:detswitchrelcond}
		e^{\Phi}\LX ^*e^{-\Phi} + \lambda\circ R^{-1}-\lambda=0.
		\end{equation}
	\end{enumerate}
	Then $\mu(\cL f)=0$ for all $f\in \DCo$. Indeed, in this case $Q^*(y,dz) = \delta_{R(y)}$ and we can rewrite the left hand side of \eqref{eq:absinvmeasurecond} as
	\begin{equation*}
	 e^{\Phi(z)}\LX ^*(e^{-\Phi(\cdot)})(z)+\lambda(R^{-1}(z))e^{\Phi(z)-\Phi(R^{-1}(z))} -\lambda(z).
	\end{equation*}
	Using that $\Phi$ is invariant under $R^{-1}$ this simplifies to
	\begin{equation*}
	 e^{\Phi(z)}\LX ^*(e^{-\Phi(\cdot)})(z)+\lambda(R^{-1}(z)) -\lambda(z)
	\end{equation*}
	which is equal to zero by \eqref{eq:detswitchrelcond} therefore the measure $e^{-\Phi}\mu_0$ is an invariant measure.
\end{example}

\begin{example}[Finite dimensional Zig-Zag process]\label{ex:finitedimzigzag}
	We now consider a finite dimensional example, set $\cZ=\R^d\times\R^d$, and for any $z\in \R^d\times\R^d$ we write $z=(x,p)$ for some $x\in\R^d, v\in\R^d$. Suppose we wish to sample from the measure $\pi(dx)=e^{-\Phi(x)} dx$, which we extend to a measure $\mu$ on $\R^d\times\R^d$ by $\mu(dx,dv)=\pi(dx)\nu_0(dv)$ where $\nu_0$ is the uniform distribution on $\{1,-1\}^d$. Let
	\begin{equation*}
	\sX\left(\begin{array}{c}
	x\\v
	\end{array}\right) = \left(\begin{array}{c}
	v\\0
	\end{array}\right)
	\end{equation*}
	and set
	\begin{equation*}
	Q(z,dy) = \sum_{i=1}^d\frac{\lambda_i(z)}{\lambda(z)} \delta_{F_iz}
	\end{equation*}
	where $F_i(x,v)= (x,v_1,\ldots,v_{i-1},-v_i,v_{i+1},\ldots,v_d)$, $\lambda(z) = \sum_{i=1}^d \lambda_i(z)$ and $\lambda_i(x,v) = ( v_i \partial_i\Phi(x))^+$. Note that $\LX $ has an invariant measure $\mu_0=\nu_0(dv)dx$ which is not a probability measure. By \cite[Theorem 27.6]{DavisMarkov} the semigroup associated to this process is a Feller semigroup.
	
	For this example we have that
	\begin{equation*}
	Q^*(z,dy) = \sum_{i=1}^d\frac{\lambda_i(F_iz)}{\lambda(F_iz)} \delta_{F_iz}
	\end{equation*}
	in which case, showing that \eqref{eq:absinvmeasurecond} holds reduces to
	\begin{equation*}
	    \partial_i\Phi(x) v_i+ \lambda_i(F_iz) -\lambda_i(z)=0.
	\end{equation*}
	It is immediate to check that this holds for our choice of $\lambda_i$.
	%becomes
% 	\begin{align}
% 	\int_{\R^d\times\R^d} Lf(x,v) \mu(dx,dv)= \int_{\R^d\times\R^d} f(z)\left[ \nabla_x\Phi(x)\cdot v+\sum_{i=1}^d \lambda_i(F_iz) -\lambda(z)\right] \mu(dz).
% 	\end{align}
% 	Now by using the definition of $\lambda$ we have
% 	\begin{align}
% 	\int_{\R^d\times\R^d} Lf(x,v) \mu(dx,dv)= \sum_{i=1}^d\int_{\R^d\times\R^d} f(z)\left[\partial_i\Phi(x) v_i+ \lambda_i(F_iz) -\lambda_i(z)\right] \mu(dz).
% 	\end{align}
% 	Finally using that $\lambda_i(z)-\lambda_i(F_iz) = v_i\partial_i\Phi(x)$ we have 
% 	\begin{align}
% 	\int_{\R^d\times\R^d} Lf(x,v) \mu(dx,dv)=0.
% 	\end{align}
\end{example}
\section{Examples of infinite dimensional PDMPs}\label{sec:PDMP-examples}
\subsection{Zig-Zag Sampler in infinite dimensional Hilbert spaces}\label{sec:ZZ}

In this section we present an infinite dimensional version of the Zig-Zag Sampler, which we refer to as IDZZ,  evolving in the Hilbert space $(\h, \langle \cdot, \cdot \rangle)$. This algorithm is designed to target the measure $\pi$ as defined in \eqref{targetmeasure}, however as in the finite dimensional ZZ we extend the space to be of the form $(x,v)$. That is, we set the state space to be
\begin{equation}\label{defofV}
\cZ = \mathcal H \times\cVZZ
\end{equation}
where
\begin{equation*}
\cVZZ=\left\{ v \in \cH : 
 \C^{-1}v\in \h \right\},
\end{equation*}
and we recall (from Section~\ref{sec:notation}) that $\Sigma$ is a self-adjoint trace class operator on $\mathcal H$. We equip $\cVZZ$ with the inner product $(v,w) \mapsto \langle \C^{-1} v, \C^{-1} w \rangle$, making it into a Hilbert space.
Note that in order to ensure well-posedness of the IDZZ we require that the auxiliary variable $v$ belongs to a smoother space than $\h$, indeed the space $\cVZZ$ is also contained within the Cameron-Martin space of $\pi_0$ and hence is a null set with respect to $\pi_0$. On this extended state space $\mathcal Z$ we design the IDZZ to have invariant measure $\mu$ defined by \eqref{eq:defofmu} where
\begin{equation}\label{def:muv}
 \mu_0= \pi_0 \times \nu_{a}, \,\,\, \mbox{ where } \,\,\, \pi_0 \sim \mathcal N(0,\Sigma), \,\, 
\nu_{a} = \bigotimes_{i=1}^{\infty} (\tfrac 1 2  \delta_{-a_i} + \tfrac 1 2 \delta_{a_i}) \, , a\in \cVZZ.
\end{equation}
Note here that different choices of $a$ can lead to different invariant measures so the process we define is not ergodic on the space $\h\times\cVZZ$. This could be resolved by restricting $\cVZZ$ to the space
\begin{equation*}
\{v\in \h: v=(v_i)_{i=1}^\infty,  v_i\in \{a_i,-a_i\}\}
\end{equation*}
for some fixed $a=(a_i)_{i=1}^\infty\in \cVZZ$. However to remain consistent with the framework of this paper it is more convenient to have that $\cVZZ$ is a Hilbert space.

In this section we will make the following assumptions on the functional $\Phi$. Suppose $\Phi$ is everywhere Fr\'echet differentiable. For each $x \in \h$ the Fr\'echet derivative $d\Phi(x)$ is identified with an element $\nabla \Phi(x)$ of $\h$.
\begin{assumption} \label{ass:1}
The functional $\Phi$ satisfies the following:
\begin{enumerate}
%\item {\bf Decay of Eigenvalues $\gamma_j^2$ of $\C$:}  there exists some $\alpha_0 \in [0,1]$ such that the series \eqref{tralpha} is convergent for all $\alpha \in [0,\alpha_0)$.
\item {\bf Domain of $\Phi$:} the functional $\Phi$ is defined everywhere on $\h$ and 
$$
0\leq \Phi(x) \les 1+ \lVert x\rVert^2.
$$
\item {\bf Derivatives of $\Phi$:} \label{item:derivativesofPhi}
The function $\Phi$ is differentiable  and $\nabla_x\Phi$  is locally (on bounded sets) Lipschitz and grows at most linearly:
\begin{equation}\label{eq:C2}
\|{ \nabla_x \Phi(x)}\| \lesssim 1+\lVert x\rVert \,.
\end{equation}
\item {\bf Growth of $\Phi$:} The functional $\Phi$ is can be written as $\Phi=\Phi_1+\Phi_2$ where $\Phi_1$ is convex and bounded from below, and $\Phi_2$ is bounded with bounded first and second order derivatives. %\paul{I have added this everywhere to play safe but we should discuss it}
\end{enumerate}
\end{assumption}

The generator of the IDZZ is then defined as follows:
\begin{equation} \label{eq:zigzaggenerator-noniid}
 \cL f(x,v) = \langle \nabla f(x), v \rangle + \sum_{i=1}^{\infty} \lambda_i(x,v) \left( f(x,F_i v) - f(x,v) \right), \quad (x,v) \in \h \times \cVZZ
\end{equation}
where $f$ is in a class of suitably smooth functions. The switching intensities are given by
\[ \lambda_i(x,v) = \left( v_i \left(\frac {x_i} {\gamma_i^2} + \partial_i \Phi(x) \right) \right)^+.\]

In terms of the abstract framework in Section~\ref{sec:construction}, we have 
\begin{align*}&\lambda(x,v) = \sum_{i=1}^\infty \lambda_i(x,v), \quad  Q_i(x,v,dw) = \delta_{F_i v}(dw),  \\ &Q(x,v,dy,dw)=\frac{1}{\lambda(x,v)}\sum_{i=1}^\infty \lambda_i(x,v)\delta_x(dy) Q_i(x,v,dw). 
\end{align*}

\begin{remark}
	As $\lambda$ is defined by an infinite series we must verify that it is well defined for all $(x,v)\in \h\times\cVZZ$. Note that 
	\begin{align}\label{eq:boundforlambdaZZ}
	\sum_{i=1}^{\infty} \lambda_i(x,v) & \leq \sum_{i=1}^{\infty}  \frac{\lvert v_i\rvert}{\gamma_i^2} |x_i| + \sum_{i=1}^{\infty} \lvert v_i\rvert |\partial_i \Phi(x) | \leq \lVert \C^{-1} v\rVert \lVert x\rVert + \lVert v\rVert \lVert \nabla\Phi(x)\rVert.
	\end{align}
	Therefore $\lambda$ is well defined.
\end{remark}

%\joris{we should also include the formulation of the stationary distribution.}

%\begin{assumption}
%\label{ass:zz-generaldensity}
%\begin{itemize}
%
%
%\item[(iii)] $\Phi : \mathcal H^s \rightarrow \R$ is continuously Fr\'echet differentiable and $x \mapsto \|\nabla U(x)\|_{\mathcal H^{-(2-s)}}$ is bounded on bounded sets in $\mathcal H^s$.
%\end{itemize}
%\end{assumption}
%Note that so that Assumption (iii) makes sense, and represents a weaker condition than requiring that $x \mapsto \|\nabla U(x)\|_{\mathcal H^{-s}}$ is bounded on bounded sets in $\mathcal H^s$. \paul{What is s?} \joris{Also, previous sentence is garbled.}

\begin{theorem}\label{thm:ZZnonGausscase}
Let Assumption \ref{ass:1}  hold. Then there exists a piecewise deterministic Markov process in $\h \times \mathcal V$ with (formal) generator~\eqref{eq:zigzaggenerator-noniid}. Moreover, for any $a\in \cVZZ$ the product measure $\mu$ given by \eqref{eq:defofmu} with $\mu_0$ given by \eqref{def:muv} is invariant for such a process.  
\end{theorem}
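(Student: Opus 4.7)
The plan is to obtain existence by verifying Assumption~\ref{ass:general} and applying Theorem~\ref{thm:normedspace}, and then to obtain invariance by checking the criterion \eqref{eq:absinvmeasurecond} of Theorem~\ref{thm:invmeas}. In the kernel splitting of Section~\ref{sec:construction} there is no refreshment component, so we set $\lref \equiv 0$, take $\Qrefr(z,\cdot) = \delta_z$ (making conditions (iv) and (vi) vacuous), and put $\lswitch(x,v) = \sum_{i=1}^\infty \lambda_i(x,v)$ with $\Qrefl(x,v;\,\cdot\,)$ proportional to $\sum_i \lambda_i(x,v)\, \delta_{(x,F_i v)}$. We equip $\cZ = \h \times \cVZZ$ with the product Hilbert norm $\|(x,v)\|_\cZ^2 = \|x\|^2 + \|\C^{-1}v\|^2$.

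Conditions (i), (ii) of Assumption~\ref{ass:general} are routine. For (iii), since $F_i$ merely flips the sign of $v_i$ we have $\|F_i v\|_{\cVZZ} = \|v\|_{\cVZZ}$, so $\Qrefl$ is even norm-preserving. For (v), the estimate \eqref{eq:boundforlambdaZZ} combined with the linear growth of $\nabla\Phi$ from Assumption~\ref{ass:1} shows that $\lswitch$ is bounded on $\|\cdot\|_\cZ$-bounded sets. For (vii), the flow is explicit, $\varphi_t(x,v) = (x+tv, v)$, and trace-class of $\C$ forces the embedding $\cVZZ \hookrightarrow \h$ to be continuous, yielding $\|\varphi_t(x,v)\|_\cZ \lesssim c_t(1+\|(x,v)\|_\cZ)$ with a continuous $c_t$. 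Condition (viii) is the most delicate: each $\lambda_i$ is continuous as the positive part of a continuous function, and continuity of the series $\lswitch$ follows from a Cauchy--Schwarz tail estimate that is uniform on a neighbourhood of any fixed base point, exploiting the fact that $\C^{-1}v\in\h$ together with the local Lipschitz property of $\nabla\Phi$. The analogous argument applied to the series $\sum_i \lambda_i(z)(f(F_i z) - f(z))$ gives continuity of $\Qrefl f$ for $f\in C_b(\cZ)$.

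For invariance the strategy mirrors Example~\ref{ex:finitedimzigzag}. The maps $F_i$ are involutions, $\mu_0 = \pi_0\otimes\nu_a$ is invariant under $F_i$ (as $\pi_0$ sees only $x$ and $\nu_a$ is symmetric in each coordinate), and $\Phi(F_i z) = \Phi(z)$. A direct change of variables then gives
\[
Q^*(z,dy) = \sum_{i=1}^\infty \frac{\lambda_i(F_i z)}{\lambda(F_i z)}\,\delta_{F_i z}(dy),\qquad \int \lambda(y)e^{\Phi(z)-\Phi(y)}Q^*(z,dy) = \sum_{i=1}^\infty \lambda_i(F_i z).
\]
Gaussian integration by parts in the $x$-variable, applied against cylindrical test functions, yields
\[
e^{\Phi(z)}\LX^* e^{-\Phi}(z) = \sum_{i=1}^\infty v_i\Bigl(\partial_i\Phi(x) + \tfrac{x_i}{\gamma_i^2}\Bigr).
\]
Inserting both expressions into \eqref{eq:absinvmeasurecond} reduces it to the term-by-term scalar identity $a_i + a_i^- - a_i^+ = 0$ with $a_i = v_i(\partial_i\Phi(x) + x_i/\gamma_i^2)$, which is trivially true.

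The main obstacles are the infinite-dimensional subtleties. For continuity in (viii) one must produce a genuine uniform tail bound, and \eqref{eq:boundforlambdaZZ} supplies the needed control only after a cutoff depending on the base point is chosen. For the invariance calculation, $e^{-\Phi}$ is not cylindrical, so $\LX^* e^{-\Phi}$ must be interpreted weakly: test against cylindrical $f$, perform the Gaussian integration by parts in finitely many coordinates, and justify the interchange of $\sum_i$ with $\int$ by dominated convergence, using \eqref{eq:boundforlambdaZZ} together with the polynomial growth afforded by Assumption~\ref{ass:1}.
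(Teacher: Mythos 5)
Your proposal is correct and follows essentially the same route as the paper: you verify Assumption~\ref{ass:general} (with the refreshment component absent) to invoke Theorem~\ref{thm:normedspace}, and then check the criterion \eqref{eq:absinvmeasurecond} of Theorem~\ref{thm:invmeas} with the same adjoint kernel $Q^*(z,dy)=\sum_i \frac{\lambda_i(F_iz)}{\lambda(F_iz)}\delta_{F_iz}(dy)$ and the same coordinatewise cancellation $a_i + a_i^- - a_i^+ = 0$ for $a_i = v_i(\partial_i\Phi(x)+x_i/\gamma_i^2)$. The only cosmetic difference is that the paper obtains $\LX^* (e^{-\Phi})$ by citing the known formula for the adjoint of the drift operator in $L^2_{\pi_0}$ (Da Prato--Zabczyk) rather than by your direct Gaussian integration by parts against cylindrical test functions.
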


%\begin{remark} On a technical note, let us point out that the proof of the theorem only needs $\|\nabla \Phi\|_{\alpha-2} \les 1+ \|x\|_{\alpha}$; this does hold under Assumption \ref{ass:1} (3), but the full strength of the latter assumption is never actually used in this proof. \paul{When do we use Assumption 4.2?} 
%\end{remark}

\begin{proof}
    The proof is deferred to Section \ref{sec:proofs_zzs}.
\end{proof}

\subsubsection{Choice of velocities}

In the following example we discuss how to choose optimal velocity magnitudes $(a_i)$ for the infinite dimensional zig-zag process, in order to minimize the computational complexity for estimation of the functional $x \mapsto \|x \|_r^2$, where for $r \ge 0$
\[ \|x\|_r^2 := \langle \Sigma^{-r} x, x \rangle.\]
I.e., we will derive below the limiting normal distribution of
\[ \frac 1 {\sqrt T} \int_0^T \left\{ \|X(s)\|_r^2 - \pi_0( \|\cdot\|_r^2)\right\}\, d s,  \]
as $T \rightarrow \infty$, depending on the choice of $(a_i)$,
and determine how to choose $(a_i)$ in order to minimize the asymptotic mean squared error given a fixed computational budget (i.e., for a fixed maximum number of switches).

Let $\sigma^2_f$ denote the asymptotic variance for a one-dimensional canonical (i.e., no refreshments) Zig-Zag process with position $(X(t))_{t \geq 0}$ and velocities $V(t) \in \{-1,1\}$ targetting the standard normal distribution, evaluated with respect to the function $f$, i.e.
\[ \frac 1 {\sqrt{T}} \int_0^T  \left\{ f(X(s)) \, ds - \pi(f) \right\} \, d s \stackrel{d}{\longrightarrow} \mathcal N(0,\sigma^2_f),\]
where $\pi$ denotes the standard normal distribution. Write $\pi_{\gamma}$ for the distribution $\mathcal N(0,\gamma^2)$.

\begin{lemma}
\label{lem:asymptoticvariance}
A one-dimensional Zig-Zag process with velocities $\{-a,a\}$ with stationary distribution $\mu_{\gamma,a} = \pi_{\gamma} \otimes \text{Uniform}(\{-a,+a\})$ and canonical switching intensities, evaluated with respect to the function $\widetilde f(x) = c f(x/\gamma)$ has asymptotic variance  $c^2 \gamma \sigma_f^2 / a$. The expected number of switches compared to the canonical Zig-Zag process with unit speed velocities is proportional to $a/\gamma$ (per unit time, in stationarity).
\end{lemma}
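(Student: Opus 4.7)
My plan is to derive the lemma via a space-time rescaling that reduces the scaled Zig-Zag to the canonical one. Let $(X_s, V_s)_{s \ge 0}$ denote the canonical unit-speed Zig-Zag targeting $\pi = \mathcal{N}(0,1)$, and define
$$
Y_t := \gamma X_{at/\gamma}, \qquad W_t := a\, V_{at/\gamma}, \qquad t \ge 0.
$$
I would first verify that $(Y_t, W_t)_{t \ge 0}$ coincides in law with the process featured in the lemma. Clearly $W_t \in \{-a, a\}$, and $\dot Y_t = \gamma\cdot(a/\gamma)\, V_{at/\gamma} = W_t$, so the deterministic flow is correct. Since the canonical $s$-time switching rate of $V$ is $(V_s X_s)^+$, the $t$-time rate under the deterministic time change $s = at/\gamma$ is
$$
(a/\gamma)\,(V_{at/\gamma} X_{at/\gamma})^+ = (a/\gamma)\bigl((W_t/a)(Y_t/\gamma)\bigr)^+ = (W_t Y_t/\gamma^2)^+,
$$
which is exactly the canonical Zig-Zag intensity for the Gaussian potential $y \mapsto y^2/(2\gamma^2)$ with velocities $\pm a$. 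Uniqueness of the PDMP determined by its characteristics identifies the two processes.

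Next I would transport the CLT through this change of variables. Using $\widetilde f(Y_t) = c f(X_{at/\gamma})$ and $\pi_\gamma(\widetilde f) = c\pi(f)$, the substitution $s = at/\gamma$ yields
$$
\frac{1}{\sqrt T}\int_0^T \{\widetilde f(Y_t) - \pi_\gamma(\widetilde f)\}\, dt
= \frac{c\gamma}{a\sqrt T}\int_0^{aT/\gamma} \{f(X_s) - \pi(f)\}\, ds
= c\sqrt{\gamma/a}\; Z_{aT/\gamma},
$$
where $Z_S := S^{-1/2}\int_0^S \{f(X_s) - \pi(f)\}\, ds$. By the very definition of $\sigma_f^2$, $Z_S$ converges in distribution to $\mathcal N(0, \sigma_f^2)$; since $aT/\gamma \to \infty$ with $T$, Slutsky yields the asymptotic variance $c^2 \gamma \sigma_f^2 / a$, proving the first claim.

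For the switch-count claim, the expected number of switches per unit $t$-time in stationarity equals
$$
\mathbb{E}_{\mu_{\gamma, a}}\bigl[(WY/\gamma^2)^+\bigr]
= \tfrac{1}{\gamma^2}\,\mathbb{E}[(WY)^+]
= \tfrac{a}{2\gamma^2}\,\mathbb{E}|Y|
= (a/\gamma)\,\mathbb{E}_\pi[X^+],
$$
where I used $W \perp Y$, $W$ uniform on $\{\pm a\}$, and symmetry of $Y \sim \pi_\gamma$. The right-hand side is exactly $a/\gamma$ times the stationary switch rate $\mathbb{E}_\pi[X^+]$ of the canonical unit-speed Zig-Zag, giving the advertised proportionality.

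The whole argument is essentially algebraic; the only place requiring care is the simultaneous bookkeeping of the three scalings (space $\gamma$, velocity $a$, and time $\gamma/a$) so that both the flow and the switching intensity emerge in canonical form in the rescaled coordinates. Beyond this verification, the conclusion reduces to the defining CLT for $\sigma_f^2$, so there is no serious analytic obstacle.
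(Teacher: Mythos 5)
Your proposal is correct and follows essentially the same route as the paper's proof: a space--time rescaling (space by $\gamma$, velocity by $a$, time by $\gamma/a$) identifying the scaled Zig-Zag with the canonical unit-speed one, followed by the same change of variables in the ergodic average to transport the CLT. The only differences are cosmetic — you build the scaled process from the canonical one rather than rescaling the given process, and you verify the switch-rate factor $a/\gamma$ by an explicit stationary computation of $\mathbb{E}\bigl[(WY/\gamma^2)^+\bigr]$ where the paper simply cites the slowing-down factor.
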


\begin{proof}
Let $\widetilde X(t)$ denote the Zig-Zag process as specified. The stationary distribution of $\widetilde X(t)/\gamma$ is then a standard normal distribution, and the process $\widetilde X(t)/\gamma$ has velocities $\pm a/\gamma$. Slowing down the clock by a factor $a/\gamma$, the process $\Xi(t) := \widetilde X(\gamma t/a)/\gamma$ is a Zig-Zag process at unit speed targetting the standard normal distribution. We compute
\begin{align*}
    \frac 1 {\sqrt{T}} \int_0^T \{ c f(\widetilde X(s)/\gamma) - \E_{\pi_{\gamma}} f(\widetilde X/\gamma) \} \, d s &  = \frac 1 {\sqrt{T}} \int_0^T \{ c f(\Xi(a s/\gamma)) - c \pi(f) \} \, d s \\
    & = \frac {\gamma} {a \sqrt{T }} \int_0^{a T/\gamma} \{ c f(\Xi(r)) - c \pi(f) \} \, d r \\
    & = \frac {c \sqrt{\gamma}} {\sqrt{a}} \frac 1 {\sqrt{\widetilde T}} \int_0^{\widetilde T} \{  f(\Xi(r)) - \pi(f) \} \, d r \\
    & \stackrel{d}{\longrightarrow} \mathcal N(0, c^2 \gamma \sigma^2_f/a).
    \end{align*}
The number of switches follows directly from the slowing down factor.
\end{proof}

\begin{example}
\label{ex:asymptoticvariance-1dgaussian}
Define $\nu^2$ to be the asymptotic variance of the canonical Zig-Zag process with unit velocities targetting the standard normal distribution, evaluated with respect to the function $x \mapsto x^2$. By \cite[Example 1]{BierkensDuncan2016}, we have $\nu^2 = 4\sqrt{2/\pi}$, but the exact value is of no importance here. If we consider now a zigzag process targetting $\mathcal N(0,\gamma^2)$ with speeds $\{-a,a\}$ evaluated with respect to the function $x \mapsto x^2/\gamma^{2r} = c x^2/\gamma^2$ for $c = \gamma^{2(1-r)}$, we obtain from Lemma~\ref{lem:asymptoticvariance} that the associated asymptotic variance is $\gamma^{5 -4r} \nu^2/a$. 
% The exponent $5-4r$ may be  unexpected, but choosing $r = 1$ (i.e., $\|\cdot\|_r$ is the Cameron-Martin norm) and $a = \gamma$ (speed proportional to standard deviation) reduces this to $\nu^2$ as one would expect.
\end{example}

We will now consider an infinite dimensional Zig-Zag process targeting the function $x \mapsto \|x\|_r^2$, for $r > 0$. To ensure that all the required operations are welldefined, we introduce the following assumption.

\begin{assumption}
\label{ass:functional-smoothness}
Suppose $r \in [0,\tfrac 1 2)$ is such that $\sum_{i=1}^{\infty} \gamma_i^{2-4r} < \infty$.
\end{assumption}

\begin{example}
In the case of a Wiener process in $L^2[0,1]$, we have $\gamma_i^2 = (i-1/2)^{-2} \pi^{-2} \sim i^{-2}$. Consequently, for any $r <1/4$ we have
$\sum_{i=1}^{\infty} \gamma_i^{2-4r} < \infty,$
so that Assumption~\ref{ass:functional-smoothness} is satisfied.
\end{example}

\begin{prop}
Suppose Assumption~\ref{ass:functional-smoothness} is satisfied. Consider the zigzag process of Theorem~\ref{thm:ZZnonGausscase} with $\Phi(x) = 0$ for all $x$. The asymptotic variance $\sigma_r^2$ of the Zig-Zag process with respect to the function $x \mapsto \|x\|^2_{r}$ is given by $\sigma^2_r = \nu^2\sum_{i=1}^{\infty} \gamma_i^{5-4r}/a_i$, where $\nu^2$ is as defined in Example~\ref{ex:asymptoticvariance-1dgaussian}. The expected number of switches per unit time interval is proportional to $\sum_{i=1}^{\infty} a_i/\gamma_i$.
\end{prop}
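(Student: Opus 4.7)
The plan is to exploit that, since $\Phi=0$, the infinite dimensional Zig-Zag process decouples into independent one-dimensional components. Indeed, the generator \eqref{eq:zigzaggenerator-noniid} reduces to $\cL f(x,v) = \sum_i v_i \partial_i f + \sum_i \lambda_i(x,v)(f(x,F_iv)-f(x,v))$, where $\lambda_i(x,v) = (v_i x_i/\gamma_i^2)^+$ depends only on the pair $(x_i,v_i)$ and the flow acts coordinatewise. Since $\mu$ factorises as $\bigotimes_i \mu_i$ with $\mu_i = \mathcal N(0,\gamma_i^2) \otimes \tfrac{1}{2}(\delta_{-a_i}+\delta_{a_i})$, in stationarity the coordinate processes $\{(X_i(t),V_i(t))\}_{t\ge 0}$ are mutually independent across $i$, each being a canonical one-dimensional Zig-Zag process targeting $\mathcal N(0,\gamma_i^2)$ with velocity magnitudes $\pm a_i$.

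Next, under Assumption \ref{ass:functional-smoothness}, the identity
$$\|X(t)\|_r^2 - \pi_0(\|\cdot\|_r^2) = \sum_{i=1}^\infty \gamma_i^{-2r}\bigl(X_i(t)^2-\gamma_i^2\bigr)$$
holds in $L^2(\mu)$ (since $\mathrm{Var}_{\pi_0}(X_i^2) = 2\gamma_i^4$ gives the $L^2(\mu)$ squared norm of the left hand side equal to $2\sum_i \gamma_i^{4-4r} < \infty$, using that $\gamma_i \to 0$). Independence across $i$ then yields
$$\mathrm{Cov}_\mu\bigl(\|X(0)\|_r^2,\|X(t)\|_r^2\bigr) = \sum_i \gamma_i^{-4r}\,\mathrm{Cov}_{\mu_i}\bigl(X_i(0)^2, X_i(t)^2\bigr),$$
with no cross terms. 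Applying Example \ref{ex:asymptoticvariance-1dgaussian} to the $i$-th component (with test function $x\mapsto \gamma_i^{-2r}x^2$, equivalently $c=\gamma_i^{2(1-r)}$) gives per-component asymptotic variance $\gamma_i^{5-4r}\nu^2/a_i$. Summing formally produces $\sigma_r^2 = \nu^2\sum_i \gamma_i^{5-4r}/a_i$. For the switch count, each one-dimensional component produces switches at expected rate proportional to $a_i/\gamma_i$ per unit time by Lemma \ref{lem:asymptoticvariance}, so by independence the total expected switch rate is proportional to $\sum_i a_i/\gamma_i$.

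The main technical obstacle is justifying the interchange of the temporal limit defining the asymptotic variance with the infinite sum over coordinates. I would proceed via finite truncations $S_N(t) := \sum_{i=1}^N \gamma_i^{-2r}(X_i(t)^2-\gamma_i^2)$. For each fixed $N$, since $S_N$ depends on only finitely many independent one-dimensional Zig-Zag processes, the finite dimensional CLT together with Example \ref{ex:asymptoticvariance-1dgaussian} gives asymptotic variance $\nu^2\sum_{i=1}^N \gamma_i^{5-4r}/a_i$. Passing $N \to \infty$ requires a uniform-in-$T$ bound on $T^{-1}\mathrm{Var}_\mu\!\bigl(\int_0^T [\|X(s)\|_r^2 - S_N(s)]\,ds\bigr)$ that vanishes as $N \to \infty$; this follows from the exponential decay of the one-dimensional stationary covariance $|\mathrm{Cov}_{\mu_i}(X_i(0)^2,X_i(t)^2)| \lesssim \gamma_i^4 e^{-\kappa_i t}$ (a standard fact for the one-dimensional canonical Zig-Zag) combined with the summability of $\sum_i \gamma_i^{4-4r}$ from Assumption \ref{ass:functional-smoothness}, enabling a dominated-convergence argument that identifies the $N \to \infty$ limit with the full sum.
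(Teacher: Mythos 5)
Your proposal is correct and follows essentially the same route as the paper: exploiting that with $\Phi=0$ the generator factorises into independent one-dimensional Zig-Zag processes, applying the scaling computation of Example~\ref{ex:asymptoticvariance-1dgaussian} coordinatewise to get $\gamma_i^{5-4r}\nu^2/a_i$, and invoking Lemma~\ref{lem:asymptoticvariance} for the switch rate. The paper states the factorisation argument without detail, so your additional truncation/dominated-convergence justification of the interchange of the $T\to\infty$ limit with the infinite sum is a welcome refinement of the same argument rather than a different approach.
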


\begin{proof}
Note that the generator of the Zig-Zag process can be interpreted as a sum of infinitely many one-dimensional Zig-Zag processes.
By this factorization property and Example~\ref{ex:asymptoticvariance-1dgaussian}, the asymptotic variance decomposes as
\[ \sigma_r^2 = \sum_{i=1}^{\infty} \gamma_i^{5-4r} \nu^2/a_i.\]
The proportionality factor of the number of switches is a direct consequence of the second statement of Lemma~\ref{lem:asymptoticvariance}.
\end{proof}

Suppose now that we have a fixed computational budget of (approximately) $N$ operations, and we wish to minimize the approximate standard error $\epsilon_{r,T} = \sigma_r/\sqrt{T}$, where $T$ is the length of the time interval we will simulate within $N$ operations. We may then phrase the question of choosing optimal velocities $(a_i)$ as follows:
\[
    \text{minimize} \quad  \frac 1 T \nu^2 \sum_{i=1}^{\infty} \gamma_i^{5-4r}  \quad 
    \text{subject to} \quad  T \sum_{i=1}^{\infty} a_i/\gamma_i \leq N.
\]
Resolving the constraint by setting $T$ at its maximal possible value, $T = \frac{N}{\sum_{i=1}^{\infty} a_i/\gamma_i}$, the problem transforms into the minimization problem

\[ \text{minimize} \quad \frac{\nu^2}{N} \left(\sum_{i=1}^{\infty} \gamma_i^{5-4r}/a_i \right) \left(\sum_{i=1}^{\infty} a_i/\gamma_i \right).\]
Minimization with respect to $a_i$ (using a Cauchy-Schwarz argument) yields
\[ a_i = C\gamma_i^{3-2r}\] for some constant $C$, where the choice of the constant $C$ does not affect the minimization objective. 
% Indeed, by Cauchy-Schwartz,
% \[  \left(\sum_{i=1}^{\infty} \gamma_i^{5-4r}/a_i \right) \left(\sum_{i=1}^{\infty} a_i/\gamma_i \right) \ge \left(\sum_{i=1}^{\infty} \gamma_i^{2-2r} \right)^2,\]
% and equality holds for the specified choice of $a_i$. We put $C = 1$ for simplicity. 

We find that, for this choice of $(a_i)$, the expected number of switches is finite,
\[ N = T \sum_{i=1}^{\infty} a_i/\gamma_i = T \sum_{i=1}^{\infty} \gamma_i^{2-2r}  \le T \left( \max_j \gamma_j^{2r}\right) \sum_{i=1}^{\infty} \gamma_i^{2-4r}<\infty \]
by Assumption~\ref{ass:functional-smoothness}.
Furthermore the Zig-Zag process is well-defined using Theorem~\ref{thm:ZZnonGausscase} since
\[ \| \Sigma^{-1} v \|^2 = \sum_{i=1}^{\infty} \frac{a_i^2}{\gamma_i^4} = \sum_{i=1}^{\infty} \gamma_i^{2-4r} < \infty \]
by Assumption~\ref{ass:functional-smoothness}.

Note that, for large $r$, the functional $x \mapsto \|x\|_r^2$ is sensitive to the high frequency components of $x$ (i.e. the large indices in the expansion $x = \sum_{i=1}^{\infty} x_i e_i$). Also, for large $r$, the high frequency velocities $a_i = \gamma_i^{3-2r}$ are relatively larger, so that the high frequency components are explored more.

We have shown how, for a particular example of a functional $x \mapsto \|x\|_r^2$ relative to a Gaussian target we may choose velocities to minimize the statistical error when using the IDZZ as a sampling method, for a fixed computational budget. It is an interesting research problem, but beyond the scope of this work, how this may be extended to other functionals, general target distributions and other sampling methods.

\subsection{Bouncy Particle Sampler in infinite dimensional Hilbert spaces}\label{sec:BPS}

 In this section we introduce  an extension of the BP algorithm which is well-defined in the infinite-dimensional Hilbert space 
 \begin{equation*}
 \cZ=\cH\times \cVBP.
 \end{equation*} 
 As with IDZZ the velocity component needs to be smoother than the position component, set
 \begin{equation*}
     \cVBP = \{v\in \h: \C^{-1}v\in \h\}.
 \end{equation*}
 We will design the IDBP to have invariant measure $\mu$ defined by \eqref{eq:defofmu} where
\begin{equation}\label{def:muvBPS}
 \mu_0= \pi_0 \times \nu_0, \,\,\, \mbox{ where } \,\,\, \pi_0 \sim \mathcal N(0,\C), \,\, 
\nu_0 \sim \mathcal N(0,\C^{\zeta})
\end{equation}
and $\zeta\geq 2$.
Note that as $\zeta\geq 2$ then $\nu_0(\cVBP)=1$ therefore $\mu$ can be viewed as a probability measure on $\h\times\cVBP$.

 The generator of IDBP is defined, on a set of  suitably regular functions, as  
\beg\label{Linfbouncy}
\cL f(x,v) = \langle v, \nabla_x f(x,v) \rangle + \lambda(x,v) [ f(x,R(x) v) - f(x,v)]  + \lambda_{\refr} \int_{\cVBP}\left[f(x,w)-f(x,v)\right] \nu_0(dw), 
\ee
where 
\begin{equation}\label{paraminfbouncy}
\lambda(x,v) := \langle v, \nabla \Psi \rangle^+,  \quad R(x)v := v - \frac {2 \langle v,   \nabla \Psi(x) \rangle}{\|\C^{\zeta/2} \nabla \Psi(x) \|^2}  \C^{\zeta} \nabla \Psi(x),
\end{equation}
$\lambda_{\refr}>0$ is a positive constant, $\Psi$ is been defined by
\begin{equation}\label{def:Psi}
\Psi(x) := \Phi(x)+\frac{1}{2}\langle\C^{-1}x,x\rangle \,.
\end{equation}
\begin{remark}\label{note:psinotdefined}
Note that $\Psi(x)$ is not well defined for all $x\in \h$ since $\C^{-1}$ is an unbounded operator however this is to be viewed as just a formal definition. Indeed, the IDBPS does not rely on $\Psi$ itself but only on $\nabla\Psi$, which always appears with a term to smooth it. In Proposition \ref{propIDBP} we show that all the terms that appear are well defined for all $x\in \h$ and $v\in \cVBP$.
\end{remark}

We will show in Proposition \ref{propIDBP} that the resulting algorithm generates a continuous-time dynamics $(X_t,V_t) \in \h \times \cVBP$ with invariant distribution  $\mu$. Here well-posedness is in the sense of Proposition \ref{lem:choiceofalpha}  and Proposition \ref{propIDBP}  below. 

% \begin{remark}
% \label{rem:bps-alternative}
% Setting $w=\CC^{\eta}v$ the generator can be written alternatively as 
% \begin{equation}\label{eq:bsp-alternative}
% \widetilde L f(x,w) = \langle w, \nabla_x f(x,w) \rangle + \widetilde \lambda(x,w) [ f(x,\widetilde R(x) w) - f(x,w)]  + \lambda_{ref} \left[(\widetilde Qf)(x,w)-f(x,w)\right], 
% \end{equation}
% where, for $\zeta = 1 + 2 \eta$,
% \begin{align*}
%     \widetilde \lambda(x,w) & =  \langle w, \nabla \Psi \rangle_+, \\
%     \widetilde R(x) w & = w - 2 P(x) w, \quad \text{where} \quad P(x) w = \frac{\langle w, \C^{\zeta} \nabla \Psi \rangle_{\zeta}}{\| \C^{\zeta} \nabla \Psi\|_{\zeta}^2} \C^{\zeta} \nabla \Psi , \\
%     \widetilde Q f(x,w) & = \int f(x,\cdot) d \mathcal N(0, \C^{\zeta}).
% \end{align*}
% Note that $P(x)$ denotes the orthogonal projection in the direction $\C^{\eta} \nabla \Psi$ with respect to the $\langle \cdot, \cdot\rangle_{\eta}$ inner product.
% In case $\Psi(x) = U(x) + \|x\|^2_1/2$, we have
% \[ \nabla \Psi(x) = \nabla U(x) + \C^{-1} x.\]
% \end{remark} 
% \joris{After the notational change to $\zeta$ this remark is not correct anymore. Perhaps repaired by writing above "... $\C^{\zeta} \nabla \Psi$ with respect to the $\langle \cdot, \cdot\rangle_{\zeta}$ inner product" instead.}

\begin{prop}\label{lem:choiceofalpha}
 Let Assumption \ref{ass:1} hold.  For any $\zeta \geq 4$ %appearing in \eqref{Linfbouncy} is such that 
% \begin{equation}\label{ass:eta}
% \zeta \geq 1%\frac{3}{2} - \alpha
% \end{equation}
 and for every $x \in \h$, $v\in \cVBP$, we have
\begin{enumerate}
\item the intensity $\lambda(x,v)$ is well defined (in the sense that
  the scalar product  $\langle v,\nabla \Psi(x)\rangle$ is finite);
\item $R(x)v \in \cVBP$.
\end{enumerate}
\end{prop}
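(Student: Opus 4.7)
The plan is to take the formal identity $\nabla \Psi(x) = \nabla \Phi(x) + \C^{-1} x$ seriously as a symbolic shorthand, and verify that every occurrence of $\nabla \Psi(x)$ in \eqref{paraminfbouncy} can be given rigorous meaning in $\cH$ by transferring the unbounded part $\C^{-1}$ onto a factor capable of absorbing it. As noted in Remark~\ref{note:psinotdefined}, $\nabla \Psi(x)$ is not in $\cH$ in general; the definition of $\cVBP$, namely $\{v \in \cH : \C^{-1} v \in \cH\}$, is tailor-made for exactly this maneuver.

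For part (1), using self-adjointness of $\C^{-1}$ on its natural domain, I will define
\[ \langle v, \nabla \Psi(x) \rangle := \langle v, \nabla \Phi(x) \rangle + \langle \C^{-1} v, x \rangle. \]
The first inner product is finite by Cauchy-Schwarz together with the bound $\|\nabla \Phi(x)\| \lesssim 1 + \|x\|$ from Assumption~\ref{ass:1}(2); the second is finite by Cauchy-Schwarz using $\C^{-1} v \in \cH$ (since $v \in \cVBP$) and $x \in \cH$.

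For part (2), I must show $\C^{-1} R(x) v \in \cH$. Applying $\C^{-1}$ formally to \eqref{paraminfbouncy} yields
\[ \C^{-1} R(x) v \,=\, \C^{-1} v \,-\, \frac{2 \langle v, \nabla \Psi(x) \rangle}{\| \C^{\zeta/2} \nabla \Psi(x) \|^2} \, \C^{\zeta - 1} \nabla \Psi(x), \]
so the task reduces to verifying that $\C^a \nabla \Psi(x) := \C^a \nabla \Phi(x) + \C^{a-1} x$ is a well-defined element of $\cH$ for $a = \zeta/2$ and $a = \zeta - 1$. Since $\zeta \geq 4$ we have $\zeta/2 \geq 2$ and $\zeta - 1 \geq 3$, so both $a$ and $a-1$ are nonnegative and $\C^a, \C^{a-1}$ are bounded operators on $\cH$ by the spectral theorem. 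Combined with $\nabla \Phi(x), x \in \cH$ this places both $\C^{\zeta/2} \nabla \Psi(x)$ and $\C^{\zeta - 1} \nabla \Psi(x)$ in $\cH$; part (1) then gives finiteness of the numerator, so $\C^{-1} R(x) v \in \cH$ as required. The degenerate case $\| \C^{\zeta/2} \nabla \Psi(x) \| = 0$ is handled by noting that injectivity of $\C^{\zeta/2 - 1}$ (its eigenvalues are $\gamma_j^{\zeta - 2} > 0$) forces $x + \C \nabla \Phi(x) = 0$, which when substituted into the rewriting from part (1) shows that $\langle v, \nabla \Psi(x) \rangle = 0$ too, so that the convention $R(x) v := v$ is consistent and trivially lies in $\cVBP$.

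The proof involves no hard analytic estimate; the substance is essentially bookkeeping. The conceptual point is that $\nabla \Psi(x)$ has no meaning in $\cH$ on its own and every occurrence must be split as $\nabla \Phi(x) + \C^{-1} x$ before any Hilbert-space manipulation is performed. The hypothesis $\zeta \geq 4$ ensures the smoothing operators $\C^{\zeta/2}$ and $\C^{\zeta}$ strip enough $\C^{-1}$-factors so that one more $\C^{-1}$ may still be applied to $R(x) v$ within $\cH$.
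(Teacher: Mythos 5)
Your proposal is correct and follows essentially the same route as the paper: split $\nabla\Psi(x)$ as $\nabla\Phi(x)+\C^{-1}x$, pair the unbounded part with $\C^{-1}v\in\cH$ for part (1), and absorb it into the positive powers $\C^{\zeta/2}$ and $\C^{\zeta-1}$ for part (2). The only differences are cosmetic: the paper first fixes $x$ with $\C^{-1}x\in\cH$ and extends by continuity (its computation in fact only uses $\zeta\geq 2$), whereas you define the split directly and additionally treat the degenerate case $\|\C^{\zeta/2}\nabla\Psi(x)\|=0$, which the paper leaves implicit.
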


\begin{proof}
    The proof is deferred to Section \ref{sec:proofs_bps}.
\end{proof}

\begin{remark}
To explain the choice of scaling of the covariance operator in \eqref{Linfbouncy}-\eqref{paraminfbouncy} and at the same time show properties of IDBP, let us consider the following (formal) generator
\bee\label{Linfbouncyp}
\cL f(x,v) = \langle v, \nabla_x f(x,v) \rangle + \lambda(x,v) [ f(x,R(x) v) - f(x,v)]  + \lambda_{ref} \left[(Qf)(x,v)-f(x,v)\right], 
\eee
with 
\bee\label{paraminfbouncyp}
\lambda(x,v) := \langle \C^{\eta} v, \nabla \Psi \rangle^+,  \quad R(x)v := v - \frac {2 \langle v,  \C^{\epsilon} \nabla \Psi \rangle}{\| \C^{\gamma} \nabla \Psi \|^2}  \C^{\beta} \nabla \Psi,
\eee
where $\delta, \eta, \epsilon, \gamma, \beta$ are arbitrary positive parameters. Then in order to have that $R(x)$ is an involution, the measure $\nu_0$ is invariant under $R(x)$ and that $\mu$ is an invariant measure it is necessary that $\beta=2\gamma$, $\epsilon=\eta=0$, which gives \eqref{Linfbouncy}. 
\end{remark}

\begin{prop}\label{propIDBP}
With the notation introduced above, 
\begin{enumerate}
\item The reflection operator is involutive, i.e. 
$R(x)[R(x)v]=v$, for every $x\in \h,v \in \cVBP$; 
\item The reflection operator satisfies the following property
\bee\label{step2}
\lan  R(x)v, \nabla \Psi \ran = - \lan  v, \nabla \Psi \ran \,.
\eee
\item The Gaussian measure is invariant under reflections, i.e. a centered measure $\nu_0$ with covariance $\C^\zeta$ and the measure $\nu_R:=\nu_0 \circ R(x)$ coincide on $\h$; 
\item The measure $\mu$ is invariant for the operator $\cL$ in \eqref{Linfbouncyp}-\eqref{paraminfbouncyp}. 
\end{enumerate}
\end{prop}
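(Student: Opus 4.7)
The plan is to verify the four claims in sequence, each building on the previous. Throughout I fix $x\in\cH$ and set $u:=\C^\zeta\nabla\Psi(x)$ and $c_x:=2/\|\C^{\zeta/2}\nabla\Psi(x)\|^2$, so that $R(x)v = v - c_x\langle v,\nabla\Psi(x)\rangle\,u$. Although $\nabla\Psi$ is only formal (see Remark~\ref{note:psinotdefined}), both $u = \C^\zeta\nabla\Phi(x) + \C^{\zeta-1}x$ and the bilinear pairing $\langle v,\nabla\Psi(x)\rangle := \langle v,\nabla\Phi(x)\rangle + \langle\C^{-1}v, x\rangle$ are well defined for $v\in\cVBP$ by Proposition~\ref{lem:choiceofalpha}. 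Claims (1) and (2) are then direct algebraic identities: using $\langle u,\nabla\Psi\rangle = \|\C^{\zeta/2}\nabla\Psi\|^2$, a single line gives $\langle R(x)v,\nabla\Psi\rangle = \langle v,\nabla\Psi\rangle - c_x\langle v,\nabla\Psi\rangle\|\C^{\zeta/2}\nabla\Psi\|^2 = -\langle v,\nabla\Psi\rangle$, which is (2); then $R(x)R(x)v = R(x)v - c_x\langle R(x)v,\nabla\Psi\rangle\,u = v$ giving (1).

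For (3), I recognise $R(x)$ as the Householder reflection in the Cameron-Martin inner product $\langle\cdot,\cdot\rangle_{\C^{-\zeta}}:=\langle\C^{-\zeta}\cdot,\cdot\rangle$ of $\nu_0$: indeed $\langle v,u\rangle_{\C^{-\zeta}} = \langle v,\nabla\Psi\rangle$ and $\langle u,u\rangle_{\C^{-\zeta}} = \|\C^{\zeta/2}\nabla\Psi\|^2$, so $R(x)v = v - 2\langle v,u\rangle_{\C^{-\zeta}}/\langle u,u\rangle_{\C^{-\zeta}}\,u$. Since $R(x)$ is a bounded linear operator on $\cH$ fixing $0$, it preserves the centred Gaussian $\nu_0\sim\cN(0,\C^\zeta)$ if and only if $R(x)\C^\zeta R(x)^*=\C^\zeta$. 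Writing $R(x) = I - c_x\,u\otimes\nabla\Psi$ with $(a\otimes b)w := \langle b,w\rangle a$, a short rank-one computation gives $R(x)\C^\zeta = \C^\zeta - c_x\,u\otimes u$ (since $\C^\zeta\nabla\Psi=u$) and then $R(x)\C^\zeta R(x)^* = \C^\zeta - 2c_x\,u\otimes u + c_x^2\|\C^{\zeta/2}\nabla\Psi\|^2\,u\otimes u = \C^\zeta$ by the defining choice of $c_x$.

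For (4), rather than invoke Theorem~\ref{thm:invmeas} abstractly, I compute $\int\cL f\,d\mu$ directly on a suitable class of test functions, with $\mu\propto e^{-\Phi}\pi_0\otimes\nu_0$. Gaussian integration by parts in the $x$-variable, valid because $v\in\cVBP$ lies in the Cameron-Martin space of $\pi_0$, gives
\[ \int\langle v,\nabla_x f\rangle\,d\mu \;=\; \int f\bigl(\langle v,\nabla\Phi\rangle + \langle\C^{-1}v,x\rangle\bigr)\,d\mu \;=\; \int f\,\langle v,\nabla\Psi\rangle\,d\mu. \]
The refreshment contribution vanishes because $\mu$ is a product measure and the inner $\nu_0$-integral reproduces $\int f\,d\mu$. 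For the reflection contribution I change variables $v\mapsto R(x)v$ inside $\int\lambda(x,v)f(x,R(x)v)\,d\mu$; this is legal because $\nu_0$ is $R(x)$-invariant by (3) and $e^{-\Phi(x)}$ is unaffected since $\Phi$ depends only on $x$. Combined with $\lambda(x,R(x)v) = (-\langle v,\nabla\Psi\rangle)^+ = \langle v,\nabla\Psi\rangle^-$ from (2), the reflection term equals $\int(\langle v,\nabla\Psi\rangle^- - \langle v,\nabla\Psi\rangle^+)f\,d\mu = -\int f\,\langle v,\nabla\Psi\rangle\,d\mu$, exactly cancelling the transport contribution.

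The main obstacle I anticipate is the rigorous handling of the formal expression $\nabla\Psi = \nabla\Phi + \C^{-1}x$: every scalar product of this object with an element of $\cVBP$ must be consistently reinterpreted via $\langle v,\C^{-1}x\rangle = \langle\C^{-1}v,x\rangle$, and every occurrence of $\C^\zeta\nabla\Psi$ via $\C^\zeta\nabla\Phi+\C^{\zeta-1}x$, both of which are allowed by $\zeta\ge 4$ and Assumption~\ref{ass:1}. A related delicate point is that the Gaussian integration by parts used in (4) must hold on a class of test functions large enough to pin down $\mu$ as invariant; this is not proved here but will be addressed by the core result of Section~\ref{sec:core}.
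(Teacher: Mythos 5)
Your proposal is correct and follows essentially the same route as the paper: the same one-line algebra gives (1)--(2), your covariance identity $R(x)\C^\zeta R(x)^*=\C^\zeta$ is exactly what the paper's Fourier-transform computation for (3) reduces to, and your direct evaluation of $\int \cL f\,d\mu$ --- Gaussian integration by parts producing $\langle v,\nabla\Phi\rangle+\langle\C^{-1}v,x\rangle$, the change of variables $v\mapsto R(x)v$ justified by (3), and $\lambda(x,R(x)v)=\langle v,\nabla\Psi\rangle^-$ from (2) --- is the same cancellation the paper obtains by checking the condition of Example~\ref{ex:detrefl} via Theorem~\ref{thm:invmeas}. Like the paper, this yields only formal invariance on a class of test functions, a caveat you correctly flag.
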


\begin{proof}[Proof of Proposition \ref{propIDBP}]
    The proof is deferred to Section \ref{sec:proofs_bps}.
\end{proof}

% \begin{remark} The proof of Proposition \ref{propIDBP} -- points (3) and (4) in particular -- would formally be more correct if it was carried out in $\ha$.  On a formal level, rewriting the proof in $\ha$ would simply mean rescaling all the arguments, but the results is the same. For example the Cameron-Martin space, which plays an important role in the proof, doesn't depend on the choice of space $\ha$, and indeed $\bar\cH= \C^{1/2} (\cH) = \C^{(1-\alpha)/2}(\ha)$ (remember that $\pi_0$ can be regarded as having covariance $\C$ in $\cH$ or, equivalently, covariance $\C^{1-\alpha}$ in $\ha$). Note however that the parameters appearing in \eqref{Linfbouncyp}- \eqref{paraminfbouncyp} would depend on $\alpha$, while in the way we present things in this section the dependence of such parameters ($\eta$, and consequently all the others) on $\alpha$ is studied in Proposition \ref{lem:choiceofalpha}).One of the reasons why we work in $\cH$ is  because it is customary that, when we write e.g. $\nabla_x f$, the derivative is taken in the coordinate directions of $\cH$ and we prefer writing the generator keeping with this convention. 
% \end{remark}

\begin{remark}
    We have shown that if IDBPS is well posed then it has the correct invariant measure. However as commented in Note \ref{note:ZZSorBPSwellposed} the conditions given in Assumption \ref{ass:general} are not satisfied and hence we can not apply Theorem \ref{thm:normedspace} in this case. Therefore it remains to show that IDBPS is well posed, in particular one needs to show that it is not possible to have an infinite number of reflections in a finite time interval.  This problem can be alleviated by replacing the reflection operator $R(x)$ by $R(x) v= -v$, i.e. to introduce pure reflections in the IDBPS. Establishing wellposedness for $R(x)$ as given by~\eqref{paraminfbouncy} is beyond the scope of this work; however we will give here a heuristic argument to show why we can expect IDBPS to exist under Assumption \ref{ass:1}.

    We estimate the expected number of reflections per unit time by considering the rate in stationarity, that is we will bound
    \begin{equation*}
        \mathbb{E}_\mu[\lambda(X,V)] = \mathbb{E}_\mu[\langle V, \nabla \Phi(X)+\C^{-1}X \rangle^+] .
    \end{equation*}
    By Assumption \ref{ass:1} we have that $\nabla\Phi$ grows at most linearly and therefore we have the bound 
     \begin{equation*}
        \mathbb{E}_\mu[\lambda(X,V)] \leq \mathbb{E}_\mu[\lVert V\rVert (1+\lVert X\rVert)+\lVert \C^{-1}V\rVert \lVert X\rVert ] .
    \end{equation*}
    By independence of $X,V$ and using that $\C^{-1}V$ is Gaussian with covariance $\C^{\zeta-2}$, these terms are bounded. 
\end{remark}

\subsection{Boomerang Sampler}\label{sec:Boomerang}
	
	We now introduce the Boomerang Sampler which differs from IDZZ and IDBP by having circular deterministic dynamics instead of linear motion. %Observe that circular deterministic dynamics preserves Gaussian measures whereas any measure  preserved by linear dynamics must be translation invariant and therefore does not exist in infinite dimensions. In our context this means that the Boomerang Sampler 
As with IDZZ and IDBP we will extend the state space $\h$ to include a velocity component and we define the IDB on the infinite dimensional Hilbert space
\begin{equation*}
    \cZ=\h\times \h.
\end{equation*}
In contrast to the IDZZ and IDBP settings we may allow the velocity to belong to the same space as the position component. We design the IDB to have invariant measure given by \eqref{eq:defofmu} where $\mu_0=\pi_0\times \nu_0$ for some Gaussian measures $\pi_0=\mathcal{N}(0,\Sigma_x), \nu_0=\mathcal{N}(0,\Sv)$.
	
	%The problem we wish to consider is how to sample from a given measure $\pi$ defined by \eqref{targetmeasure} on a Hilbert space $\cH$. We will build our sampling algorithm by first introducing an auxiliary variable $v$ which we may think of as a velocity variable. Fix a bounded linear positive self-adjoint operator on $\cH$, $\Sv$ also of trace class and impose that $v$ will have invariant measure $\nu_0=\mathcal{N}(0,\Sv)$. Now we will construct a process to sample from $\mu:=\pi\times \nu_0$ which is a probability measure on $\cH\times \cH$.
	
\subsubsection{Definition and Well Posedness of the Boomerang Sampler}\label{subsec:wellposed}
	
	The dynamics of the Boomerang Sampler are composed of three different mechanisms: the deterministic flow which is chosen to be a Hamiltonian flow, designed to preserve the reference measure $\mu_0$; reflections which will preserve the same Hamiltonian as the deterministic flow; and refreshment which will happen at a constant rate. First we describe the deterministic dynamics. Fix two bounded positive self -adjoint linear operators $\sX, \sP$ and take $\varphi$ to be the flow map associated to the Hamiltonian dynamics described by
		\begin{equation}\label{eq:Hamdyn}
		\frac{d}{dt}\varphi_t(x,v) = \left(\begin{array}{cc}
		0 & \sP\\
		-\sX & 0
		\end{array}\right)\varphi_t(x,v).
		\end{equation}
Recall that $\varphi_t$ takes values in $\cH\times \cH$ so we can view $\varphi_t(x,v)$ as two dimensional vector whose components belong to $\cH$. These dynamics correspond to the Hamiltonian 
		\begin{equation}
			\Ham(x,v):=\langle \sX x, x\rangle_{\cH} + \langle \sP v, v\rangle_{\cH}.   \label{eq:Hamdef}
		\end{equation}
		There are two natural choices for $\Ham(x,v)$, one is the the $\cH\times \cH$ norm which corresponds to taking $\sX=\sP=1$ and we will see that this requires that $\Sigma_x=\Sv$. Another natural choice is to take $\sX=\Sigma_x^{-1}$ and $\sP=\Sv^{-1}$ however the Hamiltonian is then only defined on the Cameron Martin Space which has measure zero with respect to $\mu$. This gives many technical difficulties and even well-posedness of the process may not hold (see Example \ref{ex:weaksoln})  so we shall restrict ourselves to the case where $\sX,\sP$ are bounded operators which implies that the Hamiltonian is everywhere defined.
		Define the operators 
		\begin{align}
		\LX  f(x,v) &= \left.\frac{d}{dt}\right\rvert_{t=0}f(\varphi_t(x,v))=\left\langle \left(\begin{array}{c}
		\sP v\\
		-\sX x
		\end{array}\right),\left(\begin{array}{c}
		\nabla_x f(x,v)\\
		\nabla_v f(x,v)
		\end{array}\right)\right\rangle_{\cH^2}\label{eq:L0def}\\
		\Lref f(x,v) &= \lref \int_{\cH} [f(x,w)-f(x,v)] \nu_0(dw)\label{eq:Lrefdef}\\
		\Lswitch f(x,v) &= \sum_{i=1}^\infty \lambda_i(x,v) [f(x,R_i(x)v)-f(x,v)].\label{eq:Lswdef}
		\end{align}
		Here $\lref$ is a positive constant, $\lambda_i:\cH\times \cH\to [0,\infty)$ for each $i\in \mathbb{N}$ and for each $i\in \mathbb{N}, x\in \cH$, $R_i(x)$ is a bounded linear operator. We formally summarise the Markov process $(X_t,V_t)$ by the generator
		\begin{equation*}
		\cL = \LX  +\Lref+\Lswitch.
		\end{equation*}
		Such a process exists by Proposition \ref{lem:noexplosion} provided the following Hypothesis \ref{hyp:WPass} holds. This process is a PDMP defined on $\mathcal Z=\cH\times \cH$ with characteristics 
		\begin{align}\label{eq:characteristics}
		\mathfrak X  & : 
		\begin{pmatrix}x \\ v \end{pmatrix}\mapsto \begin{pmatrix} 
		\sP v\\
		-\sX x
		\end{pmatrix}, \\
		\nonumber \lambda(x,v) & := \sum_{i=1}^{\infty} \lambda_i(x,v)+\lref, \\ 
		\nonumber Q((x,v), A)& :=\frac{1}{\sum_j\lambda_j(x,v)+\lref}\sum_{i=1}^\infty\lambda_i(x,v)\delta_{(x, R_i(x)v)}(A) + \frac{\lref}{\sum_j\lambda_j(x,v)+\lref}(\delta_x \times \nu_0)(A).
		\end{align}

	From now on we will assume the following hypothesis holds.
	\begin{hypothesis}\label{hyp:WPass}
		Assume that:
		\begin{enumerate}
			\item The intensities $\lambda_n$ and $\lambda$ are continuous and bounded by the Hamiltonian $\Ham(x,v)$ (recall this is defined by \eqref{eq:Hamdef}), that is
			\begin{align}
			\sum_{n=1}^\infty\lambda_n(x,v) &\leq C(1+\Ham(x,v)).\label{eq:lambdaupperbound}
			\end{align}
			\item The flow map $\varphi_t$ is given by the solution of \eqref{eq:Hamdyn} for some bounded linear operators $\sX, \sP$ which are self-adjoint and commute;
			\item Each reflection operator $R_i(x)$ is a linear bounded operator for any $x\in\cH$, and satisfies
			\begin{align}
			\langle \sP R_i(x)v, R_i(x)v\rangle_{\cH} \leq \langle \sP v, v\rangle_{\cH},& & \text{ for all } x,v\in \cH.\label{eq:RpreservesHam}
			\end{align}
			Moreover, assume that $(x,v)\mapsto R_i(x)v$ is continuous. 
		\end{enumerate}
	\end{hypothesis}

\begin{remark}\label{note:wellposed} We shall now comment on each of these assumptions in turn.
	\begin{enumerate}
		%\item By assuming the rate functions $\lambda_n$ are uniformly continuous on bounded sets we ensure that $s\mapsto \sum_n\lambda_n(\varphi_s(x,v))$ is a continuous map from $[0,\infty)$ to $\R$ for each $x,v$, in particular this map is integrable. If $\lambda_n$ are zero for all $n\geq N$ for some $N$ then we can weaken the assumption to requiring $\lambda_n$ to be continuous for all $1\leq n \leq N$.
		\item Since $\sX, \sP$ are bounded $\varphi$ is well defined and for each initial condition $(x,v)$ the map $t\mapsto \varphi_t(x,v)$ is smooth.
		\item Requiring that the reflections $R_i$ do not increase the Hamiltonian allows us to prove that the process is non-explosive, i.e. $\sup_i T^i =\infty$ almost surely. The strategy for showing this relies on the fact that except for refreshment jumps the dynamics is bounded by the Hamiltonian $\Ham(x,v)$ therefore the jump rates are bounded since \eqref{eq:lambdaupperbound} holds. 
		\item Under Hypothesis \ref{hyp:WPass} we have that Assumption \ref{ass:general} also holds, in particular the Boomerang Sampler is well-defined by Theorem \ref{thm:normedspace}. Since $\sP,\sX$ are bounded we can work with the seminorm $\Ham$ in which case Assumption \ref{ass:general} \ref{ass:reflnormpreserving} follows from \eqref{eq:RpreservesHam} and Assumption \ref{ass:general} \ref{ass:flowbound} is satisfied with $c$ constant since the flow $\varphi_t$ preserves the Hamiltonian. 
	\end{enumerate}
\end{remark}

	\begin{definition}\label{def:Boomerang}
		Given a Hilbert space $\cH$, a covariance operator $\Sigma_x$ and a function $\Phi$ we say that the process $\{X_t,V_t\}_{t\geq 0}$ is a {\em Boomerang Sampler with characteristics } $(\Sv,\sX,\sP,\{\lambda_i\}_i,\lref,\{R_i\}_i)$ if Hypothesis \ref{hyp:WPass} is satisfied and $(X_t,V_t)$ is the PDMP constructed in Section \ref{sec:construction} with characteristics given by \eqref{eq:characteristics}. The generator of this process acts on sufficiently smooth functions by
		\begin{align*}
		\cL f(x,v) &= \langle \sP v , \nabla_x f(x,v)\rangle_{\cH} -\langle \sX x , \nabla_v f(x,v)\rangle_{\cH} +  \lref \int_{\cH} f(x,w)-f(x,v) \, \nu_0(dw)\\
		& + \sum_{i=1}^\infty\lambda_i(x,v) [f(x,R_i(x)v)-f(x,v)]
		\end{align*} 
		
		We call the Boomerang Sampler with characteristics $(\Sigma_x,1,1,(\langle \nabla_x\Phi(x),v\rangle_{\cH})_+,\lref,v\mapsto-v)$ the {\em pure reflection Boomerang Sampler}, here $\lambda_i=0$ for $i>1$ so we have not included them in the notation.
	\end{definition}
	
The following example shows that if we drop the assumption that $\sX,\sP$ are bounded then we can still define a weak solution to the dynamics. For simplicity we will take $\sX=\sP=\Sigma_x^{-1}$.

\begin{example}\label{ex:weaksoln}
	Suppose that $\sX$ is an (unbounded) self-adjoint positive operator which is diagonalisable, that is there exists an orthonormal basis of eigenvectors. Then there is a process $(X_t,V_t)$ such that
	\begin{eqnarray}
	X_t-X_0&=\sX\int_0^t V_s \, ds\\
	V_t-V_0&=-\sX\int_0^t X_s \, ds.
	\end{eqnarray}
	We will construct this process by converting the problem into an infinite system of two dimensional ODEs.	Fix an orthonormal basis $\{e_k\}_{k=1}^{\infty}$ of eigenvectors of $\sX$ and  let $\gamma_k^2$ be the eigenvalue associated to $e_k$. 
		
		Consider the system
		\begin{align*}
		\frac{d}{dt}X_t^i &= \gamma_i^2V_t^i, \quad
		\frac{d}{dt}V_t^i = -\gamma_i^2X_t^i
		\end{align*}
		with initial condition $X_0^i=x^i, V_0^i=v^i$.
		The solution of this system is given by
		\begin{equation}\label{eq:componentwisesoln}
		\left(\begin{array}{c}
		X_t^i\\V_t^i
		\end{array}\right) = \left(\begin{array}{c}
		x\cos(\gamma_i^2 t) + v\sin(\gamma_i^2 t)\\-x\sin(\gamma_i^2 t)+v\cos(\gamma_i^2 t)
		\end{array}\right).
		\end{equation}
		We also have that
		$$\lvert X_t^i\rvert^2+ \lvert V_t^i\rvert^2 = \lvert x^i\rvert^2+ \lvert v^i\rvert^2 $$
		In particular, this gives us that $\sum_{i=1}^NX_t^i e_i$ and $\sum_{i=1}^NV_t^i e_i$ converge to some $X_t,V_t\in\cH$ for each $t\geq 0$. So we have
		\begin{equation*}
		X_t - X_0 = \lim_{N\to \infty} \sum_{i=1}^N (X_t^i-x^i) e_i = \lim_{N\to \infty} \sum_{i=1}^N \int_0^t  V_s^i \gamma_ie_i \, ds = \lim_{N\to \infty} \sX\left(\sum_{i=1}^N \int_0^t  V_s^i  e_i \, ds\right)
		\end{equation*}
		Since $\sX$ is a closed operator $ \int_0^t  V_s  \, ds, \int_0^t  X_s  \, ds \in D(\sX)$ and 
		$$
		X_t-X_0 = \sX \int_0^t V_sds, \quad 
		V_t-V_0 = \sX \int_0^t X_sds.
		$$
 Therefore we have weak solution to \eqref{eq:Hamdyn} but a weak solution is the most we can expect. To have a strong solution we require that $X_t\in D(\sX)$ and $V_t\in D(\sP)$. However $X_t,V_t\in D(\sX)$ if and only if
	$$
	\sum_{i=1}^N \gamma_i^2(\lvert X_t^i\rvert^2+\lvert V_t^i\rvert^2)<\infty.
	$$ 
	Using the expressions for $X_t^i,V_t^i$ given by \eqref{eq:componentwisesoln} we can rewrite this as
	$$
	\sum_{i=1}^N \gamma_i^2(\lvert X_t^i\rvert^2+\lvert V_t^i\rvert^2) = \sum_{i=1}^N \gamma_i^2(\lvert x^i\rvert^2+\lvert v^i\rvert^2).
	$$
	Therefore $X_t,V_t\in D(\sX)$ if and only if $x,v\in D(\sX)$. Now as $\sX$ is a closed densely defined operator we have that $D(\sX)=\cH$ if and only if $\sX$ is bounded.
\end{example}

\subsubsection{Invariant measure}

In the previous section we constructed the $\PDMP$ and showed that the construction was well defined for all $t\geq 0$, now we show that it has the desired invariant measure. Recall the definition of the semigroup was given by \eqref{eq:semigroupdef}, here we take $\cZ=\cH\times\cH$ and $Z_t=(X_t,V_t)$.

	\begin{hypothesis}\label{hyp:IMass}
	We shall assume that:
	\begin{enumerate}
		\item Hypothesis \ref{hyp:WPass} holds;
		\item The operators $\sX,\sP$ satisfy the consistency condition:
		\begin{equation}\label{eq:cons}
		\sP \Sv = \Sigma_x\sX.
		\end{equation}
		\item The function $\Phi$ is satisfies \ref{ass:1} and the following relation is satisfied:
		\begin{equation}\label{eq:switchrel}
		\sum_{n=1}^\infty(\lambda_n(x,v)-\lambda_n(x,R_n(x)v))= \langle \nabla_x\Phi(x), \sP v\rangle_\cH;
		\end{equation}
		\item The reflection operators $R_i(x)$ are involutive (i.e. $R_i(x)R_i(x)v=v$ for all $x,v\in\cH$) and invariant under the measure $\nu_0$, that is
		\begin{align}\label{eq:Rinvnu0}
		\int_{\cH} f(R_i(x)v) \nu_0(dv) = \int_{\cH} f(v) \nu_0(dv),& & \text{ for all } f\in L_{\nu_0}^2, x\in \cH, i\geq 1.
		\end{align}
	\end{enumerate}
\end{hypothesis}

\begin{prop}\label{prop:IM}
	Fix a Hilbert space $\cH$, a covariance operator $\Sigma_x$ and a potential function $\Phi$ and let $\{X_t,V_t\}_{t\geq 0}$ be a Boomerang Sampler with characteristics $(\Sv,\sX,\sP,\{\lambda_i\}_i,\lref,\{R_i\}_i)$. Assume that Hypothesis \ref{hyp:IMass} holds then $\mu$ is formally an invariant measure for $\{X_t,V_t\}_{t\geq 0}$ in the sense that \eqref{eq:Lmu=0onC0} for all $f$ sufficiently smooth.
\end{prop}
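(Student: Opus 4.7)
The plan is to establish $\int \cL f \, d\mu = 0$ for smooth cylindrical $f$ by decomposing $\cL = \LX + \Lref + \Lswitch$ and handling the three contributions separately, then showing they cancel. First, I would dispose of the refreshment term. Since $\mu$ factorises as $Z^{-1} e^{-\Phi(x)} \pi_0(dx)\nu_0(dv)$ and $\Lref f$ only involves averaging in $v$ against $\nu_0$, Fubini immediately gives $\int \Lref f\, d\mu = 0$.

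For the reflection term, I would exploit Hypothesis~\ref{hyp:IMass}(4): each $R_i(x)$ is involutive and preserves $\nu_0$. Performing the change of variables $v \mapsto R_i(x) v$ inside each integral $\int \lambda_i(x,v) f(x, R_i(x) v)\, d\mu$ (valid because $\pi \otimes \nu_0$ is invariant under $(x,v) \mapsto (x, R_i(x) v)$), then summing in $i$ and invoking the switching identity \eqref{eq:switchrel}, yields
\beg\label{eq:planLswcontr}
\int \Lswitch f \, d\mu \;=\; -\int f(x,v)\, \langle \nabla_x \Phi(x), \sP v\rangle_{\cH}\, d\mu(x,v).
\ee
The exchange of summation and integration is justified by the bound $\sum_i \lambda_i \les 1 + \Ham$ from \eqref{eq:lambdaupperbound}, combined with $f$ bounded and $\Ham \in L^1(\mu)$ (since $\mu_0$ is Gaussian and $\sX,\sP$ are bounded).

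The central computation is the Hamiltonian flow piece. I would apply Gaussian integration by parts against $\pi_0$ in the $x$-direction and against $\nu_0$ in the $v$-direction. For cylindrical $f$ both $\nabla_x f$ and $\nabla_v f$ lie in finite-dimensional subspaces, so the infinite-dimensional integration by parts reduces to finitely many one-dimensional Gaussian identities applied mode-by-mode in a common eigenbasis of $\sX, \sP, \Sigma_x, \Sv$. This produces
\beg
\int \LX f \, d\mu \;=\; \int f\bigl[\langle \sP v, \nabla_x\Phi\rangle + \langle \sP v, \Sigma_x^{-1} x\rangle - \langle \sX x, \Sv^{-1} v\rangle\bigr] d\mu.
\ee
The second and third terms cancel: the consistency condition $\sP \Sv = \Sigma_x \sX$ together with the commutativity and self-adjointness of $\sP, \sX, \Sigma_x, \Sv$ gives $\Sigma_x^{-1} \sP = \sX \Sv^{-1}$, so $\langle \sP v, \Sigma_x^{-1} x\rangle = \langle \sX x, \Sv^{-1} v\rangle$ mode by mode. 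Adding the surviving term $\int f \langle \sP v, \nabla_x \Phi\rangle \, d\mu$ to \eqref{eq:planLswcontr} and to $\int \Lref f \, d\mu = 0$ yields $\int \cL f \, d\mu = 0$.

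The main subtlety is the integration-by-parts step for the flow: a $\pi_0$-typical $x$ does not lie in the Cameron--Martin space of $\pi_0$, so the quantity $\Sigma_x^{-1} x$ that appears formally is not well-defined on the support of $\pi_0$, and the same issue arises for $\Sv^{-1} v$. The resolution is that for cylindrical test functions only finitely many Fourier modes of $\Sigma_x^{-1} x$ and $\Sv^{-1} v$ are actually probed (those paired with the fixed vectors $h_j, k_j$ entering the definition of $f$), so both the Gaussian integration by parts and the subsequent cancellation via the consistency condition hold rigorously in the finite-dimensional sense on each mode. This is precisely the reason why the proposition asserts only \emph{formal} invariance on a restricted class of smooth $f$; upgrading to a proper invariance statement requires exhibiting cylindrical functions as a core for $\cL$, which is the purpose of Section~\ref{sec:core}.
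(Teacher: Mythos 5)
Your decomposition, and your treatment of the refreshment and reflection terms, are fine and agree in substance with the paper (the change of variables $v\mapsto R_i(x)v$ plus \eqref{eq:switchrel} is exactly how the paper's $Q^*$ computation works). The gap is in the central step, the Gaussian integration by parts for the flow term. For a cylindrical $f$ built on directions $h_1,\dots,h_n$ (in $x$) and $k_1,\dots,k_m$ (in $v$), $\nabla_x f$ lies in $\mathrm{span}\{h_j\}$ and $\nabla_v f$ in $\mathrm{span}\{k_l\}$, so the integration by parts only moves derivatives in those finitely many directions; what actually comes out is the \emph{projected} identity $\int \LX f\,d\mu=\int f\bigl[\langle P_n\sP v,\nabla_x\Phi\rangle+\langle \Sigma_x^{-1}P_n\sP v,x\rangle-\langle \Sv^{-1}Q_m\sX x,v\rangle\bigr]d\mu$, with $P_n,Q_m$ the orthogonal projections onto $\mathrm{span}\{h_j\}$ and $\mathrm{span}\{k_l\}$, not the unprojected one you assert. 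The unprojected Gaussian terms would indeed cancel via \eqref{eq:cons}, but the projected ones do not cancel each other pointwise, and the surviving potential term is $\langle P_n\sP v,\nabla_x\Phi\rangle$, which does not match the $-\langle \sP v,\nabla_x\Phi\rangle$ delivered by the reflection term. The computation can still be closed, but only with additional mode-by-mode work (a further integration by parts in $x_i$ for modes where $f$ depends on $v_i$ but not $x_i$, and zero-mean/independence arguments for the remaining modes), none of which appears in your argument. Two further problems with your justification: Hypothesis \ref{hyp:IMass} does not give a common eigenbasis of $\sX,\sP,\Sigma_x,\Sv$ (only $\sX,\sP$ are assumed to commute; the algebraic cancellation $\Sigma_x^{-1}\sP=\sX\Sv^{-1}$ needs no commutativity, but your mode-by-mode reduction does); and the claim that "only finitely many Fourier modes of $\Sigma_x^{-1}x$ are probed" is correct only for the projected expressions and only if the $h_j,k_l$ are taken in the eigenbases of $\Sigma_x,\Sv$ — cylindrical functions as defined in \eqref{eq:cylindricalfunction} allow arbitrary $h_j\in\h$, for which even $\langle \Sigma_x^{-1}h_j,x\rangle$ need not exist in $L^2(\pi_0)$.

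The paper sidesteps all of this: it first shows that the Hamiltonian flow $\varphi_t$ preserves $\mu_0$ by computing characteristic functionals, using only the consistency condition \eqref{eq:cons} (Lemma \ref{lem:varphiinvundermu0}); this gives skew-adjointness of $\LX$ in $L^2_{\mu_0}$ (Lemma \ref{lem:L0skewsym}), hence $e^{\Phi}\LX^*e^{-\Phi}=\LX\Phi=\langle\nabla_x\Phi,\sP v\rangle$ without ever writing $\Sigma_x^{-1}x$ or $\Sv^{-1}v$; it then feeds this and $Q^*$ into the general criterion \eqref{eq:absinvmeasurecond} of Theorem \ref{thm:invmeas}, where \eqref{eq:switchrel} finishes the cancellation. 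To repair your route, either restrict to cylindrical $f$ along a basis diagonalising $\Sigma_x$ and $\Sv$ and carry out the residual projected cancellations explicitly, or replace the integration-by-parts step by the flow-invariance/skew-adjointness argument.
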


\begin{proof}[Proof of Proposition \ref{prop:IM}]
    The proof is deferred to Section \ref{sec:proofs_Boomerang}.
\end{proof}

\subsubsection{Pure Reflection Boomerang Sampler}\label{sec:Boomerang_PR}

In this section we take $\lambda_n=0$ for $n>1$ and consider choices of $\lambda=\lambda_1$ and $R=R_1$. Observe that we may always choose $R(x)v=-v$, due to the symmetry of $\nu_0$ and the Hamiltonian $\Ham$ we have that this choice preserves both $\nu_0$ and $\Ham$. With this choice of $R$ we must take $\lambda$ to be
\begin{equation*}
\lambda(x,v) = (\langle \nabla_x\Phi(x), \sP v\rangle_{\cH})^+ + \gamma(x,v)
\end{equation*}
where $\gamma(x,v)=\gamma(x,-v)$ for all $x,v\in \cH$ and is non-negative. Let us consider another choice of $R(x)$. To ensure that $R(x)$ preserves $\nu_0$ it is sufficient to test \eqref{eq:Rinvnu0} for any $f_q$ of the form $f_q(v)=\exp(i\langle v,q\rangle_{\cH})$ for every $q\in \cH$. In which case, $f_q(R(x)v) =\exp(i\langle R(x)v,q\rangle_{\cH}) = f_{R(x)^*q}(v)$, i.e.
\begin{equation*}
\hat{\nu_0}(q) = \hat{\nu_0}(R(x)^*q)
\end{equation*}
where $\hat{\nu_0}$ denotes the Fourier transform of $\nu_0$. Using that $\nu_0$ is normally distributed with covariance operator $\Sv$ we can rewrite this as
\begin{equation*}
\exp\left(-\frac{1}{2}\langle \Sv q,q\rangle_{\cH}\right) = \exp\left(-\frac{1}{2}\langle \Sv R(x)^*q,R(x)^*q\rangle_{\cH}\right).
\end{equation*}
Therefore the measure $\nu_0$ is invariant under $R(x)$ if and only if
\begin{equation}\label{eq:invarianceforreflcond}
\langle \Sv q,q\rangle_{\cH} = \langle R(x)\Sv R(x)^* q,q\rangle_{\cH}
\end{equation}

\begin{remark}
	In finite dimensions we may take 
\begin{equation}\label{eq:findimref}
R(x)v = v-2\frac{\langle \nabla_x \Phi(x), \sP v\rangle}{\langle \Sv \sP\nabla_x\Phi(x), \sP\nabla_x\Phi(x)\rangle} \Sv\sP\nabla_x\Phi(x).
\end{equation}
Indeed, for this choice we have
$$
R(x)^*v = v -2 \frac{\langle \Sv\sP \nabla_x\Phi(x),v\rangle}{\langle \Sv \sP\nabla_x\Phi(x), \sP\nabla_x\Phi(x)\rangle} \sP\nabla_x\Phi(x).
$$
Now we can check that \eqref{eq:invarianceforreflcond} holds.
\begin{align*}
\langle \Sv R(x)^* v,R(x)^*v\rangle_{\cH} &= \left\langle \Sv v -2 \frac{\langle \Sv \sP\nabla_x\Phi(x), v\rangle}{\langle \Sv\sP \nabla_x\Phi(x),\sP\nabla_x\Phi(x)\rangle}\Sv  \sP\nabla_x\Phi(x), \right. \\
&\left. v -2 \frac{\langle \Sv \sP\nabla_x\Phi(x), v\rangle}{\langle \Sv\sP \nabla_x\Phi(x), \sP\nabla_x\Phi(x)\rangle} \sP\nabla_x\Phi(x)\right\rangle_{\cH}\\
&=\langle \Sv v, v\rangle_{\cH} -4 \frac{\langle \Sv\sP \nabla_x\Phi(x), v\rangle}{\langle \Sv\sP \nabla_x\Phi(x), \sP\nabla_x\Phi(x)\rangle}\langle\Sv  \sP\nabla_x\Phi(x), v\rangle_{\cH}\\
& +4 \frac{\langle \Sv\sP \nabla_x\Phi(x), v\rangle^2}{\langle \Sv \sP\nabla_x\Phi(x),\sP \nabla_x\Phi(x)\rangle^2} \langle\Sv\sP\nabla_x\Phi(x), \sP\nabla_x\Phi(x)\rangle_{\cH}\\
&=\langle \Sv v, v\rangle_{\cH}.
\end{align*}
However in infinite dimensional space \eqref{eq:RpreservesHam} does not hold. Instead \eqref{eq:RpreservesHam} holds in finite dimensions when the inner product is taken to be the Cameron Martin space inner product which is equivalent to the original inner product since in this case $\Sigma_x^{-1}, \Sv^{-1}$ are bounded.
\end{remark}

\subsubsection{Factorised Boomerang Sampler}\label{sec:Boomerang_factorised}

For this section we consider an alternate choice for $\lambda_n,R_n$. As in the pure reflection Boomerang Sampler we take $\sX=\sP=1, \Sigma_x=\Sv=\Sigma$. Let $\{e_i\}_{i=1}^\infty$ be an orthonormal basis of eigenvectors of $\Sigma$ and set $R_i(x)v= v-2\langle v,e_i\rangle_{\cH}e_i$, and set $$\lambda_i(x,v)=(v_{i}\partial_{x_i}\Phi(x))^++\gamma_i(x,v)$$ where $v_i=\langle v,e_i\rangle_{\cH}$ and $\gamma:\h\times\h\to[0,\infty)$ satisfies $\gamma_i(x,v)=\gamma_i(x,R_iv)$. 

Assume that $\Phi\in L^1_{\pi_0}$ is smooth and $\nabla_x\Phi$ is globally Lipschitz then Hypothesis \ref{hyp:IMass} is satisfied and we have that $\mu$ is an invariant measure by Proposition \ref{prop:IM}.

\section{Core for the generator of Boomerang Sampler}\label{sec:core}

In this section we will give conditions under which we have a core for the generator $\cL$ in $L_\mu^2$. Even in finite dimensional situations it is difficult to determine when a set is core for the generator of a PDMP, conditions are given in \cite{durmus2018piecewise} for the set $C_c^1$ to be a core in $C_0(\R^d)$. In particular they verify these conditions for BPS and in \cite{andrieu2019peskun} they verify the conditions of ZZS. In both cases, the strategy is to first consider the case of a smooth intensity rate $\lambda$ and show that $\cP_t(C_c^1(\R^d))\subseteq C_c^1(\R^d)$ then extend these results to the canonical intensity rate. See \cite[Section 4.1]{ADW} for a discussion of cores for PDMP in a finite dimensional setting. 

For the remaining sections we will only consider the Boomerang Sampler and will concentrate on the Pure Reflection Boomerang Sampler and Factorised Boomerang Sampler. Let us first introduce a framework which will include both Pure Reflection Boomerang Sampler and Factorised Boomerang Sampler. For this section we restrict to the Boomerang Sampler with characteristics $(\C,1,1,\{\lambda_i\}_i,\lref,\{R_i\}_i)$ where 
\begin{align} \label{eq:lambdaBSframework}
    %Q((x,v),d(y,w))&=\lref \nu_0(dw)\delta_x(dy)+\sum_{i=1}^\infty \lambda_i(x,v)\delta_{(x,R_iv)}(dy,dw),\\
    %\lambda(x,v) &= \lref+\sum_{i=1}^\infty\lambda_i(x,v)\\
    \lambda_i(x,v)&=\frac{1}{2}(\langle\nabla_x\Phi(x),v-R_nv\rangle_{\cH})^+.
\end{align}
Here $R_i$ is a linear operator on $\h$, with $R_n^2=1$, $\lVert R_nv\rVert=\lVert v\rVert$ for any $v\in \cH$, satisfies \eqref{eq:invarianceforreflcond} and 
\begin{equation}
    \sum_{n=1}^\infty \lVert (1-R_n)v\rVert^2=4\lVert v\rVert^2. \label{eq:Rassump4}
\end{equation}
In this setting the generator acts on smooth functions by
\begin{align}
\cL f(x,v) &= \langle v,\nabla_xf(x,v)\rangle_{\cH} - \langle x,\nabla_vf(x,v)\rangle_{\cH} + \lref \int_{\cH} [f(x,w)-f(x,v)]\nu_0(dw) \nonumber\\
&+ \sum_{n=1}^\infty\lambda_n(x,v)[f(x,R_nv)-f(x,v)].\label{eq:simgen}
\end{align}

Observe that here $\lambda$ is the canonical rate, that is there is no $\gamma_i$ terms, this is to simplify the exposition however the same results hold for any bounded function $\gamma_i$ with $\gamma(x,R_i(v))=\gamma_i(x,v)$.

\begin{lemma}\label{lem:wellposedframework}
    Suppose that $\Phi$ satisfies Assumption \ref{ass:1}. Let $(X_t,V_t)$ be the Boomerang Sampler with  characteristics $(\C,1,1,\{\lambda_i\}_i,\lref,\{R_i\}_i)$ where $\lambda_i$ are given by \eqref{eq:lambdaBSframework}, and let $R_i$ satisfy the above conditions. Then Hypothesis \ref{hyp:IMass} holds, hence $(X_t,V_t)$ is well-defined.
\end{lemma}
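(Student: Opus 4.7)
The plan is to verify Hypothesis \ref{hyp:IMass} item by item. The key structural observation is that, under the stated conditions on $R_n$, the operators $P_n := \tfrac{1}{2}(1-R_n)$ are self-adjoint orthogonal projections and the family $\{P_n\}$ resolves the identity in the sense of a Parseval-type frame. Indeed, expanding $R_n^2 = (1-2P_n)^2 = 1$ forces $P_n^2 = P_n$; the isometry property $\|R_n v\| = \|v\|$ combined with polarisation yields $R_n^* R_n = I$, so that $R_n^* = R_n^{-1} = R_n$ (using $R_n^2=1$), which gives $P_n^* = P_n$. Finally, the identity $\sum_n \|(1-R_n)v\|^2 = 4\|v\|^2$ rewrites as $\sum_n \|P_n v\|^2 = \|v\|^2$, and a polarisation argument combined with $P_n^2 = P_n = P_n^*$ promotes this to the weak identity $\sum_n P_n v = v$ (tested against arbitrary $w\in\cH$).

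First I would verify Hypothesis \ref{hyp:WPass}. Parts (2) and (3) are essentially immediate: since $\sX = \sP = 1$ the flow is a rotation, the consistency, boundedness, self-adjointness and commutativity of $\sX,\sP$ are trivial; each $R_i$ is linear and continuous by hypothesis, and $\langle \sP R_i v, R_i v\rangle = \|R_i v\|^2 = \|v\|^2 = \langle \sP v, v\rangle$. For part (1), continuity of each $\lambda_n$ follows from Assumption \ref{ass:1} (continuity of $\nabla\Phi$). To bound $\sum_n \lambda_n$, I would write
\[ \lambda_n(x,v) \;\le\; \tfrac{1}{2}\bigl|\langle \nabla\Phi(x), (1-R_n)v\rangle\bigr| \;=\; \bigl|\langle \nabla\Phi(x), P_n v\rangle\bigr| \;=\; \bigl|\langle P_n\nabla\Phi(x), P_n v\rangle\bigr| \]
using $P_n^* = P_n$ and $P_n^2 = P_n$. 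Summing and applying Cauchy--Schwarz at the level of the sequence indexed by $n$ gives
\[ \sum_{n=1}^\infty \lambda_n(x,v) \;\le\; \Bigl(\sum_n \|P_n \nabla\Phi(x)\|^2\Bigr)^{1/2}\!\Bigl(\sum_n \|P_n v\|^2\Bigr)^{1/2} \;=\; \|\nabla\Phi(x)\|\cdot\|v\|. \]
Combining with Assumption \ref{ass:1}\eqref{item:derivativesofPhi} (linear growth of $\nabla\Phi$) and the fact that $\Ham(x,v) = \|x\|^2 + \|v\|^2$ in this setting gives $\sum_n \lambda_n(x,v) \lesssim 1 + \Ham(x,v)$, establishing \eqref{eq:lambdaupperbound}.

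Next I would verify the remaining items of Hypothesis \ref{hyp:IMass}. The consistency condition \eqref{eq:cons} reduces to $\C = \C$ since $\sX=\sP=1$ and $\Sigma_x=\Sv=\C$. For the switching relation \eqref{eq:switchrel}, set $a_n := \langle \nabla\Phi(x), (1-R_n)v\rangle$. Using $(1-R_n)R_n v = -(1-R_n)v$, we get $\lambda_n(x,R_n v) = \tfrac{1}{2}(-a_n)^+ = \tfrac{1}{2}a_n^-$, so
\[ \lambda_n(x,v) - \lambda_n(x, R_n v) = \tfrac{1}{2}(a_n^+ - a_n^-) = \tfrac{1}{2} a_n = \langle \nabla\Phi(x), P_n v\rangle. \]
Summing and using the weak identity $\sum_n P_n v = v$ yields $\sum_n(\lambda_n(x,v)-\lambda_n(x,R_n v)) = \langle \nabla\Phi(x), v\rangle = \langle \nabla\Phi(x), \sP v\rangle$, as required. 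Finally, involutivity of $R_n$ is given and its $\nu_0$-invariance is exactly the hypothesis \eqref{eq:invarianceforreflcond}, so item (4) holds.

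The main technical point is the bound on $\sum_n \lambda_n$ and the manipulation of the signed sum in the switching relation: both rest on the hidden projection structure behind the conditions on $R_n$ and on upgrading the Parseval identity $\sum\|P_n v\|^2 = \|v\|^2$ to weak summability $\sum P_n v = v$. Once this is in place, everything else reduces to algebraic verifications, and the well-posedness of $(X_t,V_t)$ then follows from Proposition \ref{prop:IM} and Theorem \ref{thm:normedspace} (via Remark \ref{note:wellposed}).
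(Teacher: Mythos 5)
Your proof is correct, and it follows the same overall scheme as the paper (verify Hypothesis \ref{hyp:IMass}, with the only real work being the intensity bound \eqref{eq:lambdaupperbound}), but the key estimate is obtained by a genuinely different and in fact sharper route. The paper bounds each term by $\tfrac12\lVert\nabla_x\Phi(x)\rVert\,\lVert(1-R_n)v\rVert$ and then passes from $\sum_n\lVert(1-R_n)v\rVert$ to $\bigl(\sum_n\lVert(1-R_n)v\rVert^2\bigr)^{1/2}$ so as to invoke \eqref{eq:Rassump4}; as written this step compares an $\ell^1$ sum with an $\ell^2$ sum in the wrong direction (in the factorised case $\lVert(1-R_n)v\rVert=2|v_n|$, so the intermediate quantity can be infinite). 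Your argument avoids this entirely: you observe that the Section~\ref{sec:core} conditions on $R_n$ (involution plus isometry) force $P_n=\tfrac12(1-R_n)$ to be a self-adjoint idempotent, write $\lambda_n\le|\langle P_n\nabla_x\Phi(x),P_nv\rangle|$, and apply Cauchy--Schwarz at the level of the sequence index, so that \eqref{eq:Rassump4} (i.e.\ $\sum_n\lVert P_nv\rVert^2=\lVert v\rVert^2$) gives $\sum_n\lambda_n\le\lVert\nabla_x\Phi(x)\rVert\,\lVert v\rVert$ directly, after which Assumption \ref{ass:1} yields the bound by $1+\Ham$. This buys a rigorous proof of exactly the estimate the paper asserts. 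You also supply an argument the paper treats as immediate: the switching relation \eqref{eq:switchrel} needs $\sum_n P_nv=v$, which in Section~\ref{sec:core} is not assumed (only \eqref{eq:Rassump1} in Section~\ref{sec:expconv} states it), and your polarisation upgrade of the Parseval identity $\sum_n\lVert P_nv\rVert^2=\lVert v\rVert^2$ to weak summability, using $P_n=P_n^*=P_n^2$ so that $\langle P_nv,w\rangle=\langle P_nv,P_nw\rangle$ and the series converges absolutely, closes that gap cleanly; absolute convergence also justifies the term-by-term manipulation $\lambda_n(x,v)-\lambda_n(x,R_nv)=\tfrac12\langle\nabla_x\Phi(x),(1-R_n)v\rangle$. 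The remaining verifications (consistency, \eqref{eq:RpreservesHam}, involutivity and $\nu_0$-invariance via \eqref{eq:invarianceforreflcond}) match the paper's, so your write-up can be read as a corrected and slightly strengthened version of the published proof.
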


In this section we aim to show the set $\cylfunccomp(\h\times\h)$ is a core for the generator $\cL$ as given by \eqref{eq:simgen}. The proof that $\cylfunccomp(\h\times\h)$ is a core is split into 4 steps:
\begin{itemize}
        \item \textbf{ Step 1: }We consider a PDMP $(\tilde{X}_t,\tilde{V}_t)$ with smooth intensities and no refreshments i.e. $(\tilde{X}_t,\tilde{V}_t)$ is a Boomerang Sampler with characteristics $(\C,1,1,\{\lt_i\}_i,0,\{R_i\}_i)$
where
\begin{align}
    \tilde{\lambda}_n(x,v) &= -\log(\phi(\exp(-\langle\nabla_x\Psi(x),v-R_nv\rangle_{\cH}))), \quad \phi(r) = r(1+r)^{-1}.\label{eq:smoothlambda}
\end{align}
It is a simple calculation to verify that Hypothesis \ref{hyp:WPass} is satisfied so we have that $(\tilde{X}_t,\tilde{V}_t)$ admits a strongly continuous semigroup $\cPt_t$ on the space $C_0(\h\times\h)$ and with generator given by $(\cLt,D(\cLt))$. Moreover for sufficiently smooth $f\in D(\cLt)$ we have
\begin{equation}\label{eq:smoothenedgen}
    \cLt f(x,v) = \LX f(x,v)+ \sum_{n=1}^\infty\tilde{\lambda}_n(x,v)[f(x,R_nv)-f(x,v)]. 
\end{equation} This choice of $\tilde{\lambda}_n$ is motivated by \cite{andrieu2019peskun} where they use an analogous choice to obtain a smooth and strictly positive intensity for a finite dimensional ZZS. The advantage of working with $(\tilde{X}_t,\tilde{V}_t)$ is that the semigroup is differentiable since the intensity is differentiable, and as there are no refreshments the process remains on a sphere in $\h\times\h$. For this PDMP we prove that the set 
\begin{equation}\label{eq:core}
	\core = \{f\in  C_b^1(\cH\times\cH):  \exists r>0 \text{ with } \mathrm{supp}(f)\subseteq B_r\}
	\end{equation}
	 is a core for $\cLt$, see Theorem \ref{thm:core}.
	 
	 \item \textbf{ Step 2: } We show that since $\core$ is a core for $(\cLt,D(\cLt))$ we also have that $\core$ is a core for $(\cL,\DCo)$. This is proven in Corollary \ref{cor:core} and the proof follows from showing that the difference between $\cLt$ and $\cL$ is a bounded operator.
	 
	 \item \textbf{ Step 3: } Prove that $\mu$ is an invariant measure for $(X_t,V_t)$ and that we can extend $\cP_t$ to a strongly continuous semigroup (which as an abuse of notation we still denote as $\cP_t$) on $L_\mu^2$, and we denote its generator by $(\cL, \DL)$.
	 
	 \item \textbf{ Step 4: } We show that $\cylfunccomp(\h\times\h)$ is a core for $(\cL, \DL)$.
	 
\end{itemize}
	
% We will first show that the setis a core for the generator $\cL$ and then show that $\cylfunccomp(\h\times\h)$ is also a core. First we consider a PDMP with smooth intensity rates and no refreshments, 
 %We will determine a core for $\tilde \cL$, and subsequently establish that this set remains a core for $\cL$. The next theorem will be instrumental for determining a core for $\tilde \cL$.

\begin{theorem}\label{thm:smoothcore}
    Let $\{\cPt_t\}$ be the semigroup corresponding to the generator \eqref{eq:smoothenedgen}, assume that $\Phi$ satisfies Assumption \ref{ass:1} and that $\phi$ is twice continuously differentiable with $\nabla_x^2\Phi$ bounded on bounded sets. %$\Phi$ is twice continuously differentiable with both $\nabla_x\Phi,\nabla_x^2\Phi$ bounded on bounded sets.
    For any $f\in C_b^1(\cH\times\cH)$ with $\mathrm{supp}(f)\subseteq B_r$ for some $r>0$ then there exists a constant $C(r)$ which depends on $r$ such that
\begin{align}\label{eq:derivativebound}
	  \lVert\nabla \cPt_tf\rVert_{\infty}& \leq \lVert\nabla f \rVert_{\infty}+\lVert f\rVert_{\infty}C(r)t^{\frac{1}{2}}.
	\end{align}
	Moreover, $\core$ is a core for $(\cLt,D(\cLt))$.
\end{theorem}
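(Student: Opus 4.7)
The strategy is to prove the gradient bound \eqref{eq:derivativebound} first and then deduce that $\core$ is a core by appealing to the standard invariance criterion: a $\cPt_t$-invariant subspace of $D(\cLt)$ that is dense in the underlying Banach space is a core (see e.g.\ Ethier--Kurtz, Proposition~1.3.3). The key structural observation underlying both steps is that, in the absence of refreshments, the Hamiltonian $\|x\|^2+\|v\|^2$ is preserved along trajectories: the flow $\varphi_t$ is a rotation on each Hilbert factor, and each reflection $R_n$ preserves $\|v\|$ while fixing $x$. Hence, if $z\in B_r$ then the entire trajectory $\tilde{Z}_t^z$ stays in $B_r$ almost surely, and in particular $\mathrm{supp}(f)\subseteq B_r$ implies $\mathrm{supp}(\cPt_tf)\subseteq B_r$. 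On $B_r$ the intensities $\tilde\lambda_n(z)=\log(1+e^{a_n(z)})$ with $a_n(z)=\langle\nabla_x\Psi(x),(1-R_n)v\rangle$ and their $z$-derivatives are summable in $n$ by the identity $\sum_n\|(1-R_n)v\|^2=4\|v\|^2\le 4r^2$ together with the assumption that $\nabla_x^2\Phi$ is bounded on bounded sets.

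For the derivative bound I would use the Duhamel expansion
\begin{equation*}
 \cPt_tf(z)= e^{-\Lambda_t(z)}f(\varphi_t(z))+\int_0^t e^{-\Lambda_s(z)}\sum_{n=1}^{\infty}\tilde\lambda_n(\varphi_s(z))\,\cPt_{t-s}f(\Pi_n\varphi_s(z))\,ds,
\end{equation*}
where $\Lambda_t(z)=\int_0^t\tilde\lambda(\varphi_r(z))\,dr$ and $\Pi_n(x,v):=(x,R_nv)$. The gradient of the first term is bounded by $\|\nabla f\|_\infty$ because $\nabla\varphi_t$ is an isometry. The gradient of the second term is estimated by separating the contribution of $\nabla_z f$ (controlled by $\|\nabla f\|_\infty$ times an integrated rate) from that of $\nabla_z\tilde\lambda_n$ and $\nabla_z\Lambda_s$ (which involve only $\|f\|_\infty$). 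Setting $\phi(t):=\|\nabla\cPt_tf\|_\infty$, the resulting integral inequality together with Gr\"onwall yields \eqref{eq:derivativebound}; the $t^{1/2}$ factor arises from a Cauchy--Schwarz step applied to $\sum_n\int_0^t (\nabla\tilde\lambda_n)(\varphi_s)\,ds$, exploiting the summability of $\tilde\lambda_n$ and its derivatives on $B_r$.

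Given \eqref{eq:derivativebound}, invariance $\cPt_t\core\subseteq\core$ is immediate: $\cPt_tf$ is supported in $B_r$ (by the trapping property) and lies in $C_b^1$ (by the bound). The inclusion $\core\subseteq D(\cLt)$ follows from the summability estimates on $B_r$, which show that the jump series in the formula for $\cLt f$ converges absolutely on $B_r$, together with the fact that the series vanishes outside $B_r$ (since both $f(x,v)$ and $f(x,R_nv)$ are zero there, $R_n$ being a $\|\cdot\|$-isometry in $v$). Density of $\core$ in $C_0(\h\times\h)$ is obtained by standard truncation-and-smoothing arguments on cylindrical approximants. The invariance criterion then delivers the core property.

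The main technical obstacle is the derivative bound. Differentiating $\cPt_tf$ in the initial condition $z$ is delicate because both the jump times and the jump indices depend on $z$, and there are countably many reflection operators $R_n$. The argument relies critically on the trapping property $\tilde{Z}_t^z\in B_r$, which keeps all relevant quantities uniformly summable; rigorously justifying the interchange of the $z$-derivative with the infinite sum over reflections is the crux of the proof.
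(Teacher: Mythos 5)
Your reduction of the core property to the gradient bound is essentially the paper's: trapping of trajectories in $B_r$ (no refreshments, norm-preserving flow and reflections), the Feller property, and the invariance criterion \cite[Proposition 3.3]{ethier}. The genuine gap is in how you propose to obtain \eqref{eq:derivativebound}. A deterministic Duhamel-plus-Gr\"onwall estimate on $t\mapsto\lVert\nabla\cPt_tf\rVert_\infty$ cannot produce the $t^{1/2}$ growth: the quantity you single out, $\sum_n\int_0^t(\nabla\tilde\lambda_n)(\varphi_s)\,ds$, has an integrand that is (at best) bounded on $B_r$, so any supremum-norm estimate of it is of order $t$; Cauchy--Schwarz in $n$ only addresses summability over the reflections, not the time exponent, and Gr\"onwall moreover tends to replace the additive bound (with coefficient $1$ in front of $\lVert\nabla f\rVert_\infty$) by a multiplicative $e^{Ct}$. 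The $\sqrt t$ in the paper comes from a cancellation invisible to sup-norm arguments: one first proves, by induction on the number of jumps (Proposition \ref{prop:derformula}), the representation $\nabla\cPt_tf(x,v)=\E[C_t\nabla f(X_t,V_t)]+\E[\xi_t f(X_t,V_t)]$, where $C_t$ is a random orthogonal operator (whence the exact coefficient $1$ on $\lVert\nabla f\rVert_\infty$) and $\xi_t$ is a \emph{local martingale} obtained by compensating the jump measure; then $\sup_{(x,v)\in B_r}\E\lVert\xi_t\rVert^2\le C(r)^2t$ (Proposition \ref{prop:coresmoothcase}) follows from the compensator identity $\E\lVert\int g\,dp\rVert^2=\E\int\lVert g\rVert^2d\tilde p$, which reduces everything to bounding $\sum_j\lvert\nabla\tilde\lambda_j\rvert^2/\tilde\lambda_j$ on $B_r$; this is exactly where the choice $\phi(r)=r/(1+r)$ enters, via $\psi'(s)^2/(-\log\phi(e^{-s}))\le\tfrac12$, together with $\sum_n\lVert(1-R_n)v\rVert^2=4\lVert v\rVert^2$. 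Without this martingale structure you would at most prove an $O(t)$ (or $e^{Ct}$) bound, which is enough for the core statement but not for \eqref{eq:derivativebound} as stated.

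A second, related problem is that your plan leans on summability in $n$ of the intensities $\tilde\lambda_n$ and of $\lVert\nabla\tilde\lambda_n\rVert$ on $B_r$. This is not available in general in this framework: $\tilde\lambda_n=\log\bigl(1+e^{a_n}\bigr)\to\log 2$ whenever $a_n\to0$ (e.g.\ the factorised choice $R_nv=v-2v_ne_n$), and the gradients $\nabla\tilde\lambda_n$ are controlled only in the square-summable sense through $\sum_n\lVert(1-R_n)v\rVert^2$. The paper's estimate needs only the weighted sum $\sum_j\lvert\nabla\tilde\lambda_j\rvert^2/\tilde\lambda_j$, which is finite on $B_r$, precisely because the compensated (martingale) formulation never requires absolute convergence of $\sum_n\tilde\lambda_n$ or $\sum_n\lVert\nabla\tilde\lambda_n\rVert$. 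Finally, you correctly flag the interchange of $\nabla$ with the jump expansion as the crux, but your outline provides no mechanism for it; the paper supplies this through the explicit inductive formula for $\nabla F_{n,t}$ conditioned on $n$ jumps, which is the step your Duhamel/Gr\"onwall scheme would also have to reproduce.
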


\begin{proof}[Proof of Theorem \ref{thm:smoothcore}]
    First we show that \eqref{eq:derivativebound} holds. Fix $f\in \core$. In order to prove that $\cPt_tf$ is differentiable we will condition on the number of jumps. We can then express the law of $(X_t,V_t)$ explicitly and differentiate this directly. For this approach we need to be able to differentiate the intensity rate $\lambda_i$ and the reflection operator $R_iv$. For this reason we replaced $\lambda_i$ with the differentiable rate $\lt_i$. Since $R_i$ and $\fX$ are linear operators we have that $(x,v)\mapsto R_iv$ and $(x,v)\mapsto\varphi_t(x,v)$ are both differentiable. Define the operator $\cB_i$ to be the reflection operation and $\cB_i'$ as the derivative of $\cB_i$, that is,
    \begin{align*}
        \cB_i g(t,x,v)= g(t,x,R_iv)\\
        \cB'_i = \left(\begin{array}{cc}
        1 & 0\\
        0 & R_i
\end{array}\right).
\end{align*}
Recall we denote by $T_i$ the $i$-th event time. We shall set $J_i=n$ if at $T_i$ the velocity changes according to $R_n$. Note that the probability of more than one event occurs at some time $T_i$ is zero so we don't allow this event to occur.
Define the processes $\xi_t$, $C_t$ by
\begin{align}\label{eq:defofxi}
\xi_t&=\sum_{m=1}^{N_t}C_{T_m}\nabla \log\left(\lambda_{J_m}\right)(X_{T_m},V_{T_m-})-\sum_{i=1}^\infty\left(\int_0^tC_s \nabla_x\lt_i(X_s,V_s)ds\right)  \\
C_t&=\nabla \varphi_{T_1}(x,v)\cB_{J_1}'\cdots \nabla \varphi_{T_{N_t}-T_{N_t-1}}(X_{T_{N_t-1}},V_{T_{N_t-1}})\cB_{J_{N_t}}'\nabla\varphi_{t-T_{N_t}}(X_{T_{N_t}},V_{T_{N_t}})\label{eq:defofC}
\end{align}
and note that $C_t$ is a random orthogonal matrix. Note here if $t<T_1$ then we set $C_t=\nabla \varphi_{t}(x,v)$. With this notation we can write the derivative of the semigroup as
	\begin{align}\label{eq:derivativeofBS}
\nabla \cPt_tf(x,v)&=\mathbb{E}[C_t\nabla f(X_t,V_t)]+\mathbb{E}\left[\xi_t f(X_t,V_t)\right].
\end{align}
The proof of \eqref{eq:derivativeofBS} is deferred to Proposition \ref{prop:derformula}. %Observe that $C_t\leq e^{Kt}$ for some $K>0$, this follows since $\nabla_x\varphi_t(x,v)\leq e^{Kt}$ and $\lVert\cB_i'\rVert\leq 1$.
Observe that since $C_t$ is an orthogonal matrix we can bound the first term by the supremum-norm of $\nabla f$. Therefore to show \ref{eq:derivativebound} it remains to find a bound for $\xi_t$, i.e. it suffices to show
\begin{equation}\label{eq:boundonxi}
    \sup_{x,v\in B_r}\mathbb{E}\left[\lVert \xi_t\rVert_{\cH\times\cH}^2\right] \leq C(r)^2t
\end{equation}
where $r>0$ is such that $\mathrm{supp}(f)\subseteq B_r$. The proof of this bound is deferred to Proposition \ref{prop:coresmoothcase}. 

In order to prove $\core$ is a core for $(\cLt,D(\cLt))$ it is sufficient to show that the semigroup $\cPt_t$ preserves $\core$ then by \cite[Proposition 3.3]{ethier} we have that $\core$ is a core for $\cLt$. We have that $\cPt_t$ preserves $C_b(\cH\times\cH)$ by Lemma \ref{lem:Feller}, and since there are no refreshments the process preserves the norm $\lVert\cdot\rVert_\cH$ hence the semigroup also preserves the property of having bounded support. It remains to show that $\cPt_tf\in C_b^1(\h\times\h)$ for any $f\in \core$. From \eqref{eq:derivativeofBS}, $C_t$ is orthogonal, the support of $f$ is contained in $B_r$, and Cauchy Schwartz we have
	\begin{align}\label{eq:estimatefornablaf}
	  \lVert\nabla \cPt_tf\rVert_{\infty}&\leq \lVert\nabla f \rVert_{\infty}+\lVert f\rVert_{\infty}\sup_{x,v\in B_r}\mathbb{E}_{x,v}\left[\lVert\xi_t\rVert_{\h\times\h}^2\right]^{\frac{1}{2}} \leq \lVert\nabla f \rVert_{\infty}+\lVert f\rVert_{\infty}C(r)t^{\frac{1}{2}}. 
	\end{align}
	Therefore $\core$ is a core for $(\cLt,\DCo)$.
\end{proof}

\begin{corollary}\label{cor:core}
    Suppose the assumptions of Theorem \ref{thm:smoothcore} hold. Then $\core$ is a core for $(\cL, \DCo)$.
\end{corollary}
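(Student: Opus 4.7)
The plan is to express $\cL$ as a bounded perturbation of $\cLt$ and then invoke the standard fact that a core for the generator of a $C_0$-semigroup remains a core after a bounded perturbation. Concretely, writing
\begin{equation*}
\cL = \cLt + \cE, \qquad \cE f = \lref\!\int_{\cH}[f(x,w)-f(x,v)]\,\nu_0(dw) + \sum_{n=1}^{\infty}(\lambda_n - \tilde\lambda_n)(x,v)\,[f(x,R_n v)-f(x,v)],
\end{equation*}
the task reduces to showing that $\cE$ extends to a bounded linear operator on $C_0(\cH\times\cH)$. Once this is established, the standard bounded-perturbation theorem for strongly continuous semigroups yields that $D(\cL) = D(\cLt) = \DCo$ and that any core for $\cLt$ remains a core for $\cL$; combined with Theorem~\ref{thm:smoothcore} this gives the claim.

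The refreshment component of $\cE$ is bounded by inspection, since $\bigl\|\lref\!\int[f(\cdot,w)-f(\cdot,v)]\,\nu_0(dw)\bigr\|_\infty \leq 2\lref\|f\|_\infty$. For the rate-correction component, I would first obtain a pointwise estimate of $|\lambda_n - \tilde\lambda_n|$ from the explicit form of the smoothing: with $\phi(r)=r/(1+r)$ the smoothed intensity admits the closed form $\tilde\lambda_n = \log(1+e^{a_n})$, where $a_n$ denotes the relevant inner product appearing in \eqref{eq:smoothlambda} and \eqref{eq:lambdaBSframework}, whereas the canonical rate equals $\tfrac12 a_n^+$, so the difference $|\tilde\lambda_n-\lambda_n|$ is uniformly bounded by $\log 2$ and moreover decays exponentially when $|a_n|$ is large. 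Combining this with the norm-preservation $\|R_n v\|=\|v\|$ (which keeps $f(x,R_n v)-f(x,v)$ supported in $B_r$ whenever $f\in\core$ has support in $B_r$) and with the completeness identity \eqref{eq:Rassump4}, $\sum_n\|v-R_n v\|^2 = 4\|v\|^2$, the bound $\|\cE f\|_\infty \lesssim \|f\|_\infty$ follows via Cauchy--Schwarz.

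The main obstacle is controlling the infinite series in the rate-correction operator uniformly in $(x,v)\in\cH\times\cH$: the termwise bound $|\tilde\lambda_n - \lambda_n|\leq\log 2$ does not sum by itself, so one must carefully interpolate between the regime where $|a_n|$ is large (making $|\tilde\lambda_n - \lambda_n|$ exponentially small) and the regime where $\|v - R_n v\|$ is small (making the oscillation of $f$ small via its $C^1$-regularity on the compact support $B_r$). With this boundedness in hand, the bounded-perturbation theorem applied to Theorem~\ref{thm:smoothcore} gives that $\core$ is a core for $(\cL,\DCo)$.
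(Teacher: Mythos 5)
Your decomposition is exactly the paper's: its proof of this corollary also writes the difference $\cL-\cLt$ as the refreshment operator plus the rate-correction jump operator, asserts that this difference is bounded and dissipative (dissipativity coming from viewing it as the generator of another Boomerang-type jump semigroup), and then applies \cite[Theorem 2.7]{Engel} to conclude $D(\cL)=D(\cLt)$ and transfer the core obtained in Theorem~\ref{thm:smoothcore}. So the skeleton of your argument is the same as the paper's; the problem lies in your justification of the boundedness of $\cE$, which is the step the whole perturbation argument rests on (and which the paper itself only asserts).

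The perturbation theorem needs an operator-norm bound $\lVert\cE f\rVert_\infty\leq C\lVert f\rVert_\infty$ for \emph{all} $f$ in the Banach space $C_0(\h\times\h)$, not just for $f\in\core$. Your interpolation invokes the $C^1$-regularity of $f$ and its compact support; at best this yields an estimate of the form $\lVert\cE f\rVert_\infty\lesssim\lVert f\rVert_\infty+\lVert\nabla f\rVert_\infty$ on $\core$, i.e.\ a graph-norm-type bound on a dense subspace, which is not boundedness of $\cE$ on $C_0(\h\times\h)$ and cannot be fed into the bounded-perturbation theorem. The Cauchy--Schwarz step does not close either: pairing against \eqref{eq:Rassump4} requires $\sum_n(\tilde\lambda_n-\lambda_n)^2$ to be finite uniformly in $(x,v)$, but in the factorised case $a_n=\langle\nabla_x\Phi(x),(1-R_n)v\rangle_{\cH}\to 0$ as $n\to\infty$ for fixed $(x,v)$, so $\tilde\lambda_n-\lambda_n\to\log 2$ and neither the $\ell^1$ nor the $\ell^2$ sum of the coefficients converges; the exponential smallness you rely on is available only where $\lvert a_n\rvert$ is large, which is precisely not the tail regime. (There is also a normalisation mismatch you silently corrected: \eqref{eq:lambdaBSframework} carries the factor $\tfrac12$ and \eqref{eq:smoothlambda} is written with $\Psi$ rather than $\Phi$, so with the paper's literal definitions the pointwise difference is not even bounded by $\log 2$.) The genuine gap is therefore the uniform $C_0$-boundedness of the rate-correction operator when infinitely many $R_n$ are nontrivial; an estimate valid only for compactly supported $C^1$ functions, or one requiring square-summable rate differences, does not supply it.
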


\begin{proof}
Note that 
		\begin{align}
		(\cLt-\cL)f(x,v) &=   \lref \int_{\cH} [f(x,w)-f(x,v)]\nu_0(dw) \nonumber+ \sum_{i=1}^\infty(\lt_i(x,v)-\lambda_i(x,v))[f(x,R_i v)-f(x,v)].
		\end{align}
		Observe that $(\cLt-\cL)f$ corresponds to a Boomerang Sampler with characteristics $(\C,0,0,\{\lt_i-\lambda_i\}_i,0,\{R_i\}_i)$ and hence corresponds to a strongly continuous contraction semigroup, in particular by the Hille-Yosida theorem $\cLt-\cL$ is dissapitive.
		Now since $\cLt-\cL$ is a bounded and dissipative operator, by \cite[Theorem 2.7]{Engel} we have that $(\cLt, \DCo)$ generates a contraction semigroup. Since $\core\subseteq D(\tilde{\cL})$ is a core for $(\tilde{\cL},D(\cLt))$ and $\core\subseteq \DCo$ then using that $\cL$ is closed we see that $D(\tilde{\cL})\subseteq D(\cL)$. Therefore the semigroup generated by $(\cLt,\DCo)$ is an extension of the semigroup generated by $(\cLt,D(\cLt))$ and therefore must coincide and in particular we have $D(\cL)=D(\cLt)$. Therefore we have that $\core$ is a core for $(\cL, \DCo)$.
\end{proof}

\begin{corollary}\label{cor:invmeas}
    Suppose that the assumptions of Theorem \ref{thm:smoothcore} hold, the assumptions of Theorem \ref{thm:invmeas} hold then $\mu$ is an invariant measure for $\cP_t$. Moreover, there is an extension of $\cP_t$ to $L_\mu^2$ which is strongly continuous and has generator given by $(\cL, \DL)$.
\end{corollary}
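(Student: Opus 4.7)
The plan is to address the corollary in three stages: establish invariance of $\mu$, extend $\cP_t$ to a contraction semigroup on $L^2_\mu$ and verify strong continuity, and finally identify the $L^2_\mu$-generator with $\cL$.

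\textbf{Invariance.} First I would apply Theorem \ref{thm:invmeas} on the core $\core$. Every $f \in \core$ is $C_b^1$ with support in some ball, hence Fr\'echet differentiable, so $\core \subseteq D(\LX) \cap \DCo$ (using Corollary \ref{cor:core} for the second inclusion). Theorem \ref{thm:invmeas} therefore yields $\int \cL f \, d\mu = 0$ for every $f \in \core$. Since $\core$ is a core for $(\cL,\DCo)$, for each $f \in \DCo$ there is a sequence $f_n \in \core$ with $f_n \to f$ and $\cL f_n \to \cL f$ in sup norm; as $\mu$ is a probability measure, this implies $L^1_\mu$-convergence, so $\int \cL f \, d\mu = 0$ for all $f \in \DCo$. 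For $f \in \DCo$ one has $\cP_s f \in \DCo$ and $\tfrac{d}{ds} \cP_s f = \cL \cP_s f$ in $C_0(\cZ)$; integrating and applying Fubini gives $\mu(\cP_t f) = \mu(f)$. This extends to $f \in C_0(\cZ)$ by density of $\DCo$ in $C_0(\cZ)$ (from strong continuity of $\cP_t$ on $C_0$), and then to bounded measurable $f$ by a standard monotone class/bounded convergence argument.

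\textbf{Extension to $L^2_\mu$.} By invariance and Jensen's inequality, $\|\cP_t f\|_{L^2_\mu} \leq \|f\|_{L^2_\mu}$ for $f \in B_b(\cZ) \cap L^2_\mu$. Since $B_b \cap L^2_\mu$ is dense in $L^2_\mu$, $\cP_t$ extends uniquely to a contraction on $L^2_\mu$. For strong continuity at $t = 0$, note that for $f \in \core \subseteq \DCo$ we have $\cP_t f \to f$ in sup norm, hence in $L^2_\mu$ (since $\mu$ is a probability measure). Combined with the contraction property and the density of $\core$ in $L^2_\mu$ (addressed below), a standard $\varepsilon/3$-argument gives $\cP_t g \to g$ in $L^2_\mu$ for every $g \in L^2_\mu$, and the semigroup property transfers $t=0$ continuity to all $t \geq 0$.

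\textbf{Identification of the generator.} Let $(\cL^{(2)}, D(\cL^{(2)}))$ denote the $L^2_\mu$-generator of the extended semigroup. For $f \in \DCo$, $\tfrac{\cP_t f - f}{t} \to \cL f$ uniformly, hence in $L^2_\mu$, so $\DCo \subseteq D(\cL^{(2)})$ and $\cL^{(2)} f = \cL f$ on $\DCo$. With $(\cL, \DL) := (\cL^{(2)}, D(\cL^{(2)}))$, this is the claimed $L^2_\mu$-generator, consistent with the $C_0(\cZ)$-generator on $\DCo$.

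\textbf{Main obstacle.} The least routine point is the density of $\core$ in $L^2_\mu$, since elements of $\core$ must have support in a Hilbert-space ball (not a cylinder). The plan is: given $g \in L^2_\mu$, truncate via $g_R = g\,\1_{B_R}$, which converges to $g$ in $L^2_\mu$ by dominated convergence; approximate $g_R$ in $L^2_\mu$ by cylindrical functions from $\cylfunccomp(\cZ)$ (dense in $L^2_\mu$ for the Gaussian-based measure $\mu$); and finally multiply each cylindrical approximant by a $C_b^1$ radial cutoff $\chi(\|\cdot\|_\cZ)$ supported in $B_{R+1}$ to produce an element of $\core$ close to $g_R$ in $L^2_\mu$. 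The cutoff construction is legitimate since $\|\cdot\|_\cZ^2$ is smooth on $\cZ$.
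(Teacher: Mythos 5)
Your argument is correct and lands on the same conclusions, but it does differ from the paper's proof in how the intermediate steps are handled, and in places it fills in what the paper handles by citation or bare assertion. For invariance the paper does not apply Theorem \ref{thm:invmeas} directly but its Boomerang-specific form, Proposition \ref{prop:IM}, with Hypothesis \ref{hyp:IMass} checked via Lemma \ref{lem:wellposedframework}; you instead invoke Theorem \ref{thm:invmeas} (legitimate, since the corollary assumes its hypotheses) and then upgrade the formal identity $\mu(\cL f)=0$ on $\core$ to genuine invariance $\mu(\cP_t f)=\mu(f)$ through the core property, differentiation of the semigroup on $\DCo$, and a monotone class extension — a chain the paper leaves implicit. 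For the passage to $L_\mu^2$, the paper cites the proof of Da Prato's Theorem 5.8, reducing everything to the density of $C_0(\cH\times\cH)$ in $L_\mu^2$ (obtained from $\core\subseteq\DCo$ together with the asserted, unproved density of $\core$ in $L_\mu^2$); you reprove the extension by hand (Jensen plus invariance gives the $L_\mu^2$-contraction, then an $\varepsilon/3$ argument gives strong continuity), and you also supply the truncation/cylindrical-approximation/radial-cutoff argument for the density of $\core$ in $L_\mu^2$, which is a genuine addition since the paper only asserts it. Your use of Corollary \ref{cor:core} to get $\core\subseteq\DCo$ also short-circuits the paper's separate verification of this inclusion via the extended generator formula and the local bound on $\sum_n\lambda_n$.

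The one point where you are too quick is the inclusion $\core\subseteq D(\LX)$: in Theorem \ref{thm:invmeas}, $D(\LX)$ is the domain of $\LX$ as an operator on $L_{\mu_0}^2$, so pointwise Fr\'echet differentiability of $f$ is not by itself sufficient; one needs the difference quotients $t^{-1}\bigl(f\circ\varphi_t-f\bigr)$ to converge to $\LX f$ in $L_{\mu_0}^2$. This is exactly what the paper proves via Taylor's theorem and dominated convergence, the quotient being dominated by $\lVert\nabla f\rVert_\infty\,\lVert z\rVert$ because the rotation flow preserves the norm. It is a one-line repair rather than a gap, but you should include it.
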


\begin{proof}[Proof of Corollary \ref{cor:invmeas}]
By Proposition \ref{prop:IM} if Hypothesis \ref{hyp:IMass} holds then \eqref{eq:Lmu=0onC0} for all $f\in \DCo\cap D(\LX)$, therefore it is remains to show that $(X_t,V_t)$ satisfies Hypothesis \ref{hyp:IMass} and that $\core \subseteq D(\LX)$. Hypothesis \ref{hyp:IMass} is verified in Lemma \ref{lem:wellposedframework}. To see that $\core\subseteq D(\LX)$, fix $f\in \core$ and observe that by Taylor's theorem for any $t>0$ there exists some $s\in (0,t)$
\begin{equation*}
    \frac{1}{t}(f(e^{t\fX}x)-f(x))-\LX f(x)= \frac{\partial}{\partial t}f(e^{t\fX}x)\rvert_{t=s}-\LX f(x) =\langle \fX(e^{s\fX}x), \nabla f( e^{s\fX}x)\rangle -\langle \fX(x), \nabla f(x)\rangle
\end{equation*}
Note that the right hand side is bounded and converges to zero as $t\to 0$ therefore by the dominated convergence theorem
\begin{equation*}
    \lim_{t\to 0}\lVert\frac{1}{t}(f(e^{t\fX}x)-f(x))-\LX f(x)\rVert_{L_{\mu_0}^2} =0.
\end{equation*}
Hence $\mu$ is an invariant measure for $\cP_t$.

To see that $\cP_t$ extends to a $C_0$-semigroup on $L_\mu^2$ follows from the proof of \cite[Theorem 5.8]{DaPrato}. The only difference is that \cite[Theorem 5.8]{DaPrato} is stated for semigroups defined on $C_b(\cH\times \cH)$. However it is clear that the proof still holds for semigroups defined on $C_0(\cH\times \cH)$ provided $C_0(\cH\times \cH)$ is dense in $L_\mu^2$. We show that $C_0(\cH\times \cH)$ by showing that $\core \subseteq \DCo$. Fix $f\in \core$ then by Theorem \ref{thm:extgen} we have
\begin{equation*}
    \cP_tf(x,v)-f(x,v)=\int_0^t \cP_s\cL f(x,v) ds.
\end{equation*}
Therefore to show that $\cP_tf-f$ converges to zero as $t\to 0$ it remains to show $\cL f(x,v)$ is bounded uniformly in $(x,v)$. Let $K>0$ be such that $\mathrm{supp}(f)\subseteq B_K$ then 
\begin{equation*}
    \lVert \cL f\rVert_\infty \leq 2K\lVert \nabla f\rVert_\infty  + 2\lref \lVert f\rVert_\infty + 2\lVert f\rVert_\infty\sup_{(x,v)\in B_K}\sum_{n=1}^\infty\lambda_n(x,v).
\end{equation*}
By Hypothesis \ref{hyp:WPass} which holds by Lemma \ref{lem:wellposedframework} we have that $\sum \lambda_n$ is bounded on $B_k$ so we have that $\lVert \cL f\rVert_\infty <\infty$. Therefore $\core\subseteq C_0(\h\times\h)$. Since $\core$ is dense in $L_\mu^2$ we have that $C_0(\h\times\h)$ is dense in $L_\mu^2$. As $\{\cP_t\}_{t\geq 0}$ is strongly continuous on $L_\mu^2$ it has a generator given by $(\cL,\DL)$ which is an extension of $(\cL,\DCo)$.
\end{proof}

\begin{theorem}\label{thm:core}
    Suppose that the assumptions of Corollary \ref{cor:invmeas} hold then $\cylfunccomp(\h\times\h)$ is a core for $(\cL, \DL)$.
\end{theorem}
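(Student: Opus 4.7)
The plan is to bootstrap from the $C_0$-core established in Corollary~\ref{cor:core} to an $L_\mu^2$-core for $(\cL,\DL)$, and then to approximate each element of $\core$ by compactly supported cylindrical functions in the $L_\mu^2$-graph norm. The first step is to upgrade $\DCo$, and hence $\core\subseteq\DCo$, to a core for $(\cL,\DL)$. Since $\mu$ is a probability measure the $C_0$-graph norm dominates the $L_\mu^2$-graph norm, so the $C_0$-core property of $\core$ gives $\DCo\subseteq\overline{\core}^{L_\mu^2\text{-graph}}$. By Corollary~\ref{cor:invmeas} the $L_\mu^2$-semigroup $\cP_t$ extends the $C_0$-semigroup and therefore satisfies $\cP_t\DCo\subseteq\DCo$; combined with density of $\DCo$ in $L_\mu^2$ (which follows from $\core$ containing all $C_b^1$ functions with bounded support, after standard Gaussian smoothing and cutoff arguments) the classical invariance criterion for cores \cite[Proposition~3.3]{ethier} yields that $\DCo$, and in turn $\core$, is a core for $(\cL,\DL)$.

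It remains to approximate each $f\in\core$ by cylindrical $f_{n,\epsilon}\in\cylfunccomp$ in the $L_\mu^2$-graph norm. Fix the eigenbasis $\{e_k\}$ of $\C$, let $P_n$ be the orthogonal projection onto $\mathrm{span}\{e_1,\ldots,e_n\}$, and set $\tilde f_n(x,v):=f(P_nx,P_nv)$. Then $\tilde f_n$ is cylindrical in the sense of \eqref{eq:cylindricalfunction}: writing $\tilde f_n(x,v)=g_n(\langle x,e_1\rangle,\ldots,\langle x,e_n\rangle,\langle v,e_1\rangle,\ldots,\langle v,e_n\rangle)$, the function $g_n(y,w)=f(\sum_i y_i e_i,\sum_i w_i e_i)$ lies in $C_c^1(\R^{2n})$ because $f$ has bounded support in $\h\times\h$. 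Standard mollification in $\R^{2n}$ yields $g_n^\epsilon\in C_c^\infty(\R^{2n})$ converging to $g_n$ in the $C^1(\R^{2n})$ norm, so the corresponding $f_{n,\epsilon}(x,v):=g_n^\epsilon(\langle x,e_1\rangle,\ldots,\langle v,e_n\rangle)$ lies in $\cylfunccomp(\h\times\h)$. The containment $\cylfunccomp\subseteq\DL$ itself follows from the explicit action of $\cL$ on cylindrical functions combined with the $L_\mu^2$-bounds below, via the extended-generator characterisation of Theorem~\ref{thm:extgen}.

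To close the argument one shows $f_{n,\epsilon}\to f$ and $\cL f_{n,\epsilon}\to\cL f$ in $L_\mu^2$ as $\epsilon\downarrow 0$ followed by $n\to\infty$. Pointwise convergence is immediate from continuity of $f$ and $\nabla f$. The transport term $\LX f_{n,\epsilon}$ is dominated by $\|\nabla f\|_\infty(\|x\|+\|v\|)$ and the refreshment term by $2\lref\|f\|_\infty$; both are in $L_\mu^2$ thanks to the Gaussian moment bounds on $\pi_0,\nu_0$. The delicate part is the infinite reflection sum: combining $|f_{n,\epsilon}(x,R_i v)-f_{n,\epsilon}(x,v)|\leq\|\nabla f\|_\infty\|v-R_i v\|$ with \eqref{eq:lambdaBSframework} gives the uniform domination
\[
\sum_{i=1}^\infty\lambda_i(x,v)\,|f_{n,\epsilon}(x,R_i v)-f_{n,\epsilon}(x,v)|\leq\tfrac{1}{2}\|\nabla f\|_\infty\|\nabla\Phi(x)\|\sum_{i=1}^\infty\|v-R_i v\|^2=2\|\nabla f\|_\infty\|\nabla\Phi(x)\|\|v\|^2,
\]
where the last equality uses \eqref{eq:Rassump4}. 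By Assumption~\ref{ass:1}(2), $\|\nabla\Phi(x)\|\lesssim 1+\|x\|$, so this dominator lies in $L_\mu^2$. Dominated convergence then delivers the claimed $L_\mu^2$-graph convergence, and together with the first step this shows $\cylfunccomp$ is a core. The main obstacle throughout is controlling this reflection sum uniformly in $n$ and $\epsilon$; the combination of the orthogonality identity \eqref{eq:Rassump4} with the linear growth of $\nabla\Phi$ is precisely what provides both the domination here and the $L_\mu^2$-integrability required for $\cylfunccomp\subseteq\DL$.
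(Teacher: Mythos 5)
Your argument is correct, and its final bootstrap coincides with the paper's: you invoke the invariance criterion of \cite[Proposition~3.3]{ethier} for $\DCo$ (using $\cP_t\DCo\subseteq\DCo$ and density), together with the observation that sup-norm convergence implies $L_\mu^2$-convergence, exactly as in the paper's last two sentences. Where you genuinely diverge is in how elements of $\core$ are absorbed into the $L_\mu^2$-graph closure of $\cL$ restricted to $\cylfunccomp(\h\times\h)$. The paper does this abstractly: it uses that $e^{t\LX}$ preserves $\cylfunccomp$, so cylindrical functions are a core for $\LX$ in $L^4_{\mu_0}$, picks $f_n\in\cylfunccomp$ converging to $f$ in that graph norm, and then controls the jump part by Cauchy--Schwarz using $\sum_i\lambda_i\in L_\mu^4$ (from \eqref{eq:lambdaframeworkupperbound}) and invariance of $\mu_0$ under $R_i$. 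You instead build the approximants explicitly by Galerkin projection onto the eigenbasis followed by mollification, and verify graph convergence by dominated convergence, the key dominator $2\lVert\nabla f\rVert_\infty\lVert\nabla_x\Phi(x)\rVert\,\lVert v\rVert^2\lesssim(1+\lVert x\rVert)\lVert v\rVert^2\in L_\mu^2$ coming from \eqref{eq:Rassump4} and Assumption~\ref{ass:1}(2). Your route is more elementary and self-contained (no $L^4$ density or $L^4$-core input, an explicit approximating sequence), at the cost of a slightly longer verification; the paper's route is shorter because the transport term is handled softly and only fourth moments of $\lambda$ are needed. Two small points to tighten: (i) state that projection and mollification do not increase $\lVert\nabla f\rVert_\infty$, since the dominated-convergence bound must be uniform in $(n,\epsilon)$; (ii) the step $\cylfunccomp(\h\times\h)\subseteq\DL$ with $\cL$ acting by \eqref{eq:simgen} is not automatic from Theorem~\ref{thm:extgen} alone --- one should check that the local martingale $C_t^f$ is a true martingale and then pass the integrated Dynkin formula through strong continuity of $\cP_t$ on $L_\mu^2$; this is the same point the paper leaves implicit, so it is not a defect of your approach specifically, but it deserves a sentence.
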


\begin{proof}[Proof of Theorem \ref{thm:core}]
    	Note that $\cylfunc(H)$ and $\cylfunccomp$ are both dense in $L_\mu^2$ (and $L_{\mu_0}^4$), see \cite[Lemma 2.2]{DaPratoLunardi}.%, provided $\Phi$ is convex, lower semicontinuous, and bounded from below. Let us first show that $\cylfunccomp(\h\times\h)$ is a core for $\LX $ in the space $L_{\mu_0}^4$. Here $(\LX ,D_{L_{\mu_0}^4}(\LX ))$ is the generator of the $C_0$-semigroup, $\{e^{t\LX }\}_{t\geq 0}$ on $L_{\mu_0}^4$ which is given by
    \begin{equation*}
        e^{t\LX }f(x,v)=f(\varphi_t(x,v))%=f(x\cos(t)-v\sin(t),x\sin(t)+v\cos(t)).
    \end{equation*}
    Since $e^{t\LX }$ preserves the set $\cylfunccomp(\h\times\h)$ we have by \cite[Proposition 3.3]{ethier} that this set is a core for $\LX $. 
    
    Fix $f\in \core$, it is immediate to check that $f\in D_{L_{\mu_0}^4}(\LX )$ so there exists $f_n\in \cylfunccomp(\h\times\h)$ such that $f_n\to f$ and $\LX f_n\to \LX f$ in $L_{\mu_0}^4$. As $\Phi$ is bounded from below we also have that $f_n\to f$ and $\LX f_n\to \LX f$ in $L_\mu^2$ and that $f_n\to f$ in $L_\mu^4$. Now
    \begin{align*}
        \lVert \cL f-\cL f_n\rVert_{L_{\mu}^2} &\leq \lVert \LX  f-\LX f_n\rVert_{L_{\mu}^2}+\lVert \sum_{i=1}^\infty\lambda_i (f(\cdot,R_i\cdot)-f_n(\cdot,R_i\cdot))\rVert_{L_{\mu}^2}\\
        &+\lVert \sum_{i=1}^\infty\lambda_i (f-f_n)\rVert_{L_{\mu}^2} + 2\lref \lVert f- f_n\rVert_{L_\mu^2}.
    \end{align*}
    Using Cauchy-Schwartz, and that $\mu_0$ is invariant under $R_i$  we can bound this by
    \begin{align*}
        \lVert \cL f-\cL f_n\rVert_{L_{\mu}^2} \leq \lVert \LX  f-\LX f_n\rVert_{L_{\mu}^2}+2\lVert \sum_{i=1}^\infty\lambda_i\rVert_{L_{\mu}^4}^{\frac{1}{2}}\lVert (f-f_n)\rVert_{L_{\mu}^4}^{\frac{1}{2}}+2\lref \lVert f-f_n\rVert_{L_\mu^2}.
    \end{align*}
    Note here $\lambda\in L_\mu^4$ by \eqref{eq:lambdaframeworkupperbound}.
    Letting $n$ tend to $\infty$ we have that $\cL f_n\to \cL f$ in $L_\mu^2$. That is the closure $\overline{\cylfunccomp(\h\times\h)}$ of $\cylfunccomp(\h\times\h)$ in the graph norm of $\cL$ with respect to the $L_\mu^2$ topology contains $\core$. Now $\core$ is a core for $(\cL,\DCo)$ and since convergence with the supremum norm implies convergence in $L_\mu^2$-norm we have that $\DCo\subseteq \overline{\cylfunccomp(\h\times\h)}$. Again by \cite[Proposition 3.3]{ethier} we have that $\DCo$ is a core for $(\cL,\DL)$  and hence $\cylfunccomp(\h\times\h)$ is a core for $(\cL,\DL)$. 
\end{proof}

\section{Exponential convergence to equilibria}\label{sec:expconv}

In this section we will employ the strategy of \cite{Andrieu,Dolbeault} to give conditions under which we have exponential convergence to equilibria. Let $(X_t,V_t)$ be the Boomerang Sampler with  characteristics $(\C,1,1,\{\lambda_i\}_i,\lref,\{R_i\}_i)$ where $\lambda_i$ are given by \eqref{eq:lambdaBSframework}, and let $R_i$ satisfy the following conditions. 
\begin{hypothesis}
Suppose $R_i$ is a linear operator on $\h$, with $R_n^2=1$, $\lVert R_nv\rVert=\lVert v\rVert$ for any $v\in \cH$, satisfies \eqref{eq:invarianceforreflcond} and \eqref{eq:Rassump4}. Moreover suppose that for any $v\in \h$
\begin{align} 
\sum_{n=1}^\infty (1-R_n)v &= 2v, \label{eq:Rassump1}\\ 
\frac{1}{4}\sum_{n=1}^\infty (1-R_n)^*\Sigma (1-R_n)&=\Sigma,\label{eq:Rassump2}\\
(1-R_n)^*\Sigma (1-R_m)&=0 \text{ for } m\neq n,\label{eq:Rassump3}
\end{align}
Here $(1-R_n)^*$ denotes the adjoint of $(1-R_n)$ as a linear operator from $\cH$ to $\cH$.
\end{hypothesis}

% We assume that $R_n(x)=R_n$ is independent of $x$, $R_n^2=1$, $\lVert R_nv\rVert=\lVert v\rVert$ for any $v\in \cH$ and that for any $v\in\cH$
%  Set 
% $$
% \lambda_n(x,v)=\frac{1}{2}(\langle\nabla_x\Phi(x),v-R_nv\rangle_{\cH})^+.
% $$ 
For simplicity we have canonical rates, that is we have set $\gamma_i=0$ where $\gamma_i$ is as in Section \ref{sec:Boomerang_PR} and \ref{sec:Boomerang_factorised}. Note that if $\Phi$ satisfies Assumption \ref{ass:1} then Hypothesis \ref{hyp:IMass} is satisfied and hence Hypothesis \ref{hyp:WPass} also, see Lemma \ref{lem:wellposedframework}. %Indeed, $\lambda_n$ is chosen to ensure that \eqref{eq:switchrel} is satisfied. 

\begin{example}[Pure Reflection Boomerang Sampler]
We can express the Pure Reflection Boomerang Sampler in the above notation by setting $R_1=-1, R_n=1$ for any $n>1$. 
\end{example}

\begin{example}[Factorised Boomerang Sampler]
Alternatively we can also express the Factorised Boomerang Sampler by taking $R_nv=v-2\langle v,e_n\rangle_{\cH}e_n$. Indeed in this case \eqref{eq:Rassump3} follows immediately as $e_n,e_m$ are orthogonal for $n\neq m$ and diagonalise $\Sigma$, moreover we have
\begin{equation*}
    (1-R_n)^*\Sigma(1-R_m)v=4v_n\gamma_n^2e_n\delta_{n,m}.
\end{equation*}
Taking $n=m$ and summing over $n$ gives
\begin{equation*}
    \sum_{n=1}^\infty(1-R_n)^*\Sigma(1-R_n)v=4\sum_{n=1}^\infty v_ne_n\gamma_n^2 = 4\Sigma v.
\end{equation*}
Finally checking \eqref{eq:Rassump1}
\begin{equation*}
    \sum_{n=1}^\infty (1-R_n)v = 2\sum_{n=1}^\infty v_ne_n=2v.
\end{equation*}
\end{example}

The conditions we will require for exponential convergence can be expressed in terms of the second order differential operator defined on smooth functions $g:\cH\to\R$ by
\begin{equation}\label{eq:defofA}
\cA g(x) = \Tr(\Sigma\nabla^2_x g(x))-\langle x,\nabla_x g(x) \rangle_{\cH} -\left\langle \nabla_x\Phi(x),\Sigma \nabla_x g(x) \right\rangle_{\cH}.
\end{equation}
Note that this is an operator only on the $x$ variable.
% It will be convenient to restrict to a core $\core$ which we describe in the following assumption.
% \begin{assumption}\label{ass:core}
% Define
% \begin{equation*}
% 	\core = \{f\in W^{1,2}_\mu \cap  C_b(\cH\times\cH):  \exists R>0 \text{ with } \mathrm{supp}(f)\subseteq B_R\}
% 	\end{equation*}
% 	then we assume that $\core$ is a core for $(\cL,\DL)$ as an operator on $L_\mu^2$. 
% \end{assumption}

\begin{theorem}\label{thm:Hypocoecivity}
 Let $\cP_t$ be the semigroup corresponding to the Boomerang Sampler which is described by the generator \eqref{eq:simgen}. Assume that Assumption \ref{ass:1} is satisfied, and that $\Phi$ is twice continuously differentiable with bounded Hessian, and there exist $c_2\geq 0, c_1>0$ such that
	\begin{equation}\label{eq:boundonLaplacianU}
	\cA \Phi(x) \leq c_2-c_1 \lVert \Sigma^\frac{1}{2}\nabla_x\Phi(x)\rVert_\cH^2.
	\end{equation}
	Also assume that $e^{-\Phi}\in L^1(\pi_0)$ where $\pi_0=\nu_0=\mathcal{N}(0,\Sigma)$, and set $\pi$ to be the measure defined by \eqref{targetmeasure} and $\mu=\pi\times \nu_0$. Then there exist constants $C, \kappa>0$ such that
	\begin{equation}\label{eq:expdecay}
	\lVert \cP_t f -\mu(f)\rVert_{L_\mu^2}\leq C e^{-\kappa t} \lVert f-\mu(f)\rVert_{L_\mu^2}
	\end{equation}
	for all $f\in L_\mu^2$.
\end{theorem}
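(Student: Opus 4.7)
The plan is to apply the Dolbeault--Mouhot--Schmeiser (DMS) hypocoercivity scheme in $L_\mu^2$, following \cite{Dolbeault,Andrieu}, adapted to the infinite-dimensional setting by restricting all preliminary computations to the core $\cylfunccomp(\h\times\h)$ afforded by Theorem~\ref{thm:core} and then passing to the limit by density. Decompose the generator as $\cL = T + S$, where $T$ is antisymmetric and $S$ symmetric in $L_\mu^2$. Using the invariance of $\nu_0$ under each $R_n$, the fact that $\nu_0$ is symmetric under $v\mapsto -v$, and the identity $\lambda_n(x,v)-\lambda_n(x,R_nv)=\tfrac12\langle\nabla_x\Phi(x),v-R_nv\rangle_\cH$, one finds
\begin{equation*}
Sf(x,v) = \lref\bigl(\Pi f(x)-f(x,v)\bigr) + \tfrac12\sum_{n=1}^\infty \bigl(\lambda_n(x,v)+\lambda_n(x,R_nv)\bigr)\bigl(f(x,R_nv)-f(x,v)\bigr),
\end{equation*}
while $T$ collects $\LX$ together with the remaining antisymmetric part of $\Lswitch$. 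Here $\Pi$ denotes the orthogonal projection in $L_\mu^2$ onto $v$-averaged functions, $\Pi f(x):=\int f(x,w)\,\nu_0(dw)$.

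\textbf{The four hypocoercivity inequalities.} Introduce the auxiliary operator $B=(I+(T\Pi)^*(T\Pi))^{-1}(T\Pi)^*$ and the modified norm $\Norm{f}_\varepsilon^2 := \Norm{f}_{L_\mu^2}^2 + 2\varepsilon\langle Bf,f\rangle_{L_\mu^2}$. For $f\in\cylfunccomp$ with $\mu(f)=0$ I would check: \emph{(H1) microscopic coercivity} $-\langle Sf,f\rangle\geq \lref \Norm{(I-\Pi)f}_{L_\mu^2}^2$, immediate from the refreshment term since the symmetric switching part is also nonpositive; \emph{(H2) macroscopic coercivity} $\Norm{T\Pi f}_{L_\mu^2}^2\geq \Lambda_M\Norm{\Pi f}_{L_\mu^2}^2$, which using \eqref{eq:Rassump1}--\eqref{eq:Rassump2} reduces (on cylindrical $g=\Pi f$) to $T\Pi f = \langle v,\nabla_x g\rangle_\cH$, hence to a $\Sigma$-weighted Poincar\'e inequality
\begin{equation*}
\int \Norm{\Sigma^{1/2}\nabla_x g}^2_\cH\,d\pi \ \geq\ \Lambda_M\int (g-\pi(g))^2\,d\pi;
\end{equation*}
\emph{(H3) boundedness of $BT(I-\Pi)$ and $BS$}, obtained from the spectral identity $B=(T\Pi)^*(I+(T\Pi)(T\Pi)^*)^{-1}$ and the orthogonality relation \eqref{eq:Rassump3} which diagonalises the $(I-\Pi)$-sector. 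Once (H1)--(H3) hold, the standard DMS algebra -- choose $\varepsilon>0$ so that the cross terms $\varepsilon\langle (TB+(TB)^*)f,f\rangle$, $\varepsilon\langle BT(I-\Pi)f,f\rangle$ and $\varepsilon\langle BSf,f\rangle$ are absorbed by (H1) plus (H2) -- yields a differential inequality $\tfrac{d}{dt}\Norm{\cP_tf-\mu(f)}_\varepsilon^2\leq -2\kappa\Norm{\cP_tf-\mu(f)}_\varepsilon^2$. Since $\Norm{\cdot}_\varepsilon$ and $\Norm{\cdot}_{L_\mu^2}$ are equivalent for small $\varepsilon$, this gives \eqref{eq:expdecay} on $\cylfunccomp$, and extension to all of $L_\mu^2$ follows from Theorem~\ref{thm:core} and the contractivity of $\cP_t$.

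\textbf{Main obstacle.} The genuinely hard step is the $\Sigma$-weighted Poincar\'e inequality for $\pi$ underlying (H2). The operator $\cA$ in \eqref{eq:defofA} is precisely the generator of the infinite-dimensional overdamped Langevin dynamics on $\h$ with invariant measure $\pi$, and the assumption \eqref{eq:boundonLaplacianU}, $\cA\Phi\leq c_2-c_1\Norm{\Sigma^{1/2}\nabla_x\Phi}^2_\cH$, is a Lyapunov/Hasminski\u{\i}-type condition for $\cA$ with Lyapunov function $\Phi$ itself. Combining this with the Gaussian Poincar\'e inequality for $\pi_0$ (which holds with sharp constant $1$ in the $\Sigma^{1/2}$-weighted gradient) and the boundedness of $\nabla_x^2\Phi$, the inequality is obtained via a perturbation argument carried out on cylindrical functions and then extended by density -- the delicate point being that $\Sigma^{-1}$ is unbounded, so all integrations by parts must be done along Cameron--Martin directions only. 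Granting this Poincar\'e inequality, the remaining verifications of (H1) and (H3) are routine algebraic manipulations on $\cylfunccomp$, and assembling the DMS differential inequality and passing to $L_\mu^2$ completes the proof.
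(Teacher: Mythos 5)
Your overall framework (DMS hypocoercivity in $L_\mu^2$ with the symmetric/antisymmetric split, the auxiliary operator $B$, microscopic coercivity from the refreshment term, macroscopic coercivity via a $\Sigma$-weighted Poincar\'e inequality, all carried out on the cylindrical core from Theorem~\ref{thm:core}) is the same as the paper's, and your identification of $S$, the projection $\Pi$, and the reduction of macroscopic coercivity to a Poincar\'e inequality for $\pi$ all match. However, there is a genuine gap in where the work actually lies and in how the hypothesis \eqref{eq:boundonLaplacianU} is used. In the paper the Poincar\'e inequality is \emph{not} obtained from \eqref{eq:boundonLaplacianU} as a Lyapunov condition: it follows from the structural assumption in Assumption~\ref{ass:1} that $\Phi=\Phi_1+\Phi_2$ with $\Phi_1$ convex and $\Phi_2$ bounded, via the weighted Poincar\'e inequality of \cite{eisenhuth2021hypocoercivity} for convex potentials together with a Holley--Stroock bounded-perturbation argument. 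Your proposed Lyapunov route is unsubstantiated in this infinite-dimensional setting (there is no local Poincar\'e inequality on balls to combine with a Lyapunov function, since there is no reference Lebesgue measure), and in any case it is unnecessary given the convexity structure you are already assuming.

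The step you dismiss as ``routine algebraic manipulations'' --- the boundedness of $BS$ and $BA(1-\Pi)$ --- is precisely where \eqref{eq:boundonLaplacianU} is needed and where the substantive analysis sits. Because $\nabla_x\Phi$ is only assumed to grow linearly, the symmetric jump rates $\lambda_n^e$ and the off-projection part of $A^2$ involve the unbounded factor $\nabla_x\Phi$, so bounding $\lVert BSf\rVert$ and $\lVert BA(1-\Pi)f\rVert$ by $\lVert(1-\Pi)f\rVert$ requires: (i) a second-order regularity estimate $\lVert \nabla_x\Sigma\nabla_x u_f\rVert_{L_\pi^2}\lesssim\lVert \Pi f\rVert_{L_\pi^2}$ for the resolvent equation $(1-\cA)u_f=\Pi f$ (Proposition~\ref{prop:resest}, \eqref{eq:d2Resolventbound}, which rests on integration-by-parts identities from \cite{eisenhuth2021essential} and the bounded Hessian of $\Phi$), and (ii) the weighted estimate \eqref{eq:innerprodresolventbound} controlling $\lVert\,\lVert\Sigma^{1/2}\nabla_x u_f\rVert_\cH\,\Sigma^{1/2}\nabla_x\Phi\,\rVert_{L_\pi^2}$, whose proof is exactly where \eqref{eq:boundonLaplacianU} enters (see also Remark~\ref{note:boundedresest}: this hypothesis can be dropped only if $\nabla_x\Phi$ is bounded). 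On top of that, reducing the operator norms of $BS$ and $BA(1-\Pi)$ to these resolvent estimates requires nontrivial Gaussian moment computations (Lemma~\ref{lem:isserlis}) and the structural relations \eqref{eq:Rassump2}--\eqref{eq:Rassump4} on the reflections, not just the orthogonality relation \eqref{eq:Rassump3}; this is the content of Proposition~\ref{prop:SAestintermsofresolvent}. As written, your argument would not close: the DMS differential inequality cannot be assembled without these bounds, and your proposal neither proves them nor explains how \eqref{eq:boundonLaplacianU} would be invoked for them.
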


\begin{corollary}\label{cor:hypocoercive}
Let $\Phi$ be a twice continuously differentiable function satisfying Assumption~\ref{ass:1}, has bounded Hessian and is such that $\langle \nabla \Phi(x), x \rangle \ge C$ for some $C \in \R$. Then $\Phi$ satisfies the assumptions of Theorem \ref{thm:Hypocoecivity}. In particular, this is satisfied if $\Phi$ is convex, bounded from below and has bounded Hessian.%In particular, if $\cP_t$ is the semigroup associated to either the pure reflection Boomerang Sampler or the factorised Boomerang Sampler then \eqref{eq:expdecay} holds.
\end{corollary}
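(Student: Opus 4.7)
The plan is to verify, one by one, the hypotheses of Theorem \ref{thm:Hypocoecivity}. Assumption \ref{ass:1} and the bounded Hessian condition hold by assumption, so it remains to establish the drift inequality \eqref{eq:boundonLaplacianU} and the integrability $e^{-\Phi}\in L^1(\pi_0)$.

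For the integrability, note that part (1) of Assumption \ref{ass:1} gives $\Phi(x)\geq 0$, hence $e^{-\Phi(x)}\leq 1$, and since $\pi_0$ is a probability measure this yields $e^{-\Phi}\in L^1(\pi_0)$ immediately.

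For the drift inequality, I write out $\cA\Phi$ using the definition \eqref{eq:defofA}:
\[
\cA\Phi(x)=\Tr(\Sigma\nabla_x^2\Phi(x))-\langle x,\nabla_x\Phi(x)\rangle_\cH-\lVert \Sigma^{1/2}\nabla_x\Phi(x)\rVert_\cH^2.
\]
The third term already produces the desired negative quantity with $c_1=1$. For the first term, since $\Sigma$ is a positive trace-class operator and $\nabla_x^2\Phi$ is bounded in operator norm by some constant $M$, the inequality $|\Tr(\Sigma\nabla_x^2\Phi(x))|\leq M\,\Tr(\Sigma)$ holds uniformly in $x$. For the middle term, the hypothesis $\langle\nabla\Phi(x),x\rangle\geq C$ gives $-\langle x,\nabla_x\Phi(x)\rangle_\cH\leq -C$. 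Combining yields
\[
\cA\Phi(x)\leq \bigl(M\Tr(\Sigma)-C\bigr)-\lVert \Sigma^{1/2}\nabla_x\Phi(x)\rVert_\cH^2,
\]
which is \eqref{eq:boundonLaplacianU} with $c_2=M\Tr(\Sigma)-C$ and $c_1=1$. Theorem \ref{thm:Hypocoecivity} then gives the exponential decay.

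For the ``In particular'' clause, I verify that convexity together with boundedness from below implies the lower bound on $\langle\nabla\Phi(x),x\rangle$. If $\Phi$ is convex and Fr\'echet differentiable then the subgradient inequality at $x$ applied at $0$ reads
\[
\Phi(0)\geq \Phi(x)+\langle\nabla\Phi(x),0-x\rangle_\cH,
\]
so that $\langle\nabla\Phi(x),x\rangle_\cH\geq \Phi(x)-\Phi(0)\geq \inf_{y\in\cH}\Phi(y)-\Phi(0)=:C\in\R$. The remaining hypotheses (bounded Hessian, Assumption \ref{ass:1}) are inherited by assumption. I do not expect any real obstacle here; the only mild point is the verification of the trace-class bound on $\Tr(\Sigma\nabla_x^2\Phi)$, which is standard once one recalls that $\Sigma$ is trace class and the Hessian is bounded in operator norm.
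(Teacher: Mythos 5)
Your proposal is correct and follows essentially the same route as the paper: decompose $\cA\Phi$, bound the trace term using the bounded Hessian and trace-class $\Sigma$, use the lower bound on $\langle \nabla\Phi(x),x\rangle$ (with $c_1=1$ coming from the $\lVert\Sigma^{1/2}\nabla_x\Phi\rVert^2$ term), and derive that lower bound from convexity plus boundedness below via the first-order convexity inequality, which is equivalent to the paper's Taylor-expansion argument. Your explicit check of $e^{-\Phi}\in L^1(\pi_0)$ and the quantitative trace bound $|\Tr(\Sigma\nabla_x^2\Phi)|\le M\Tr(\Sigma)$ are fine additions but not a different method.
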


\begin{proof}[Proof of Corollary \ref{cor:hypocoercive}]
	To show that \eqref{eq:boundonLaplacianU} holds, observe that
	\begin{equation*}
	\cA \Phi(x) = \Tr(\Sigma\nabla^2_x \Phi(x))-\langle x,\nabla_x \Phi(x) \rangle_{\cH} -\left\langle \nabla_x\Phi(x),\Sigma \nabla_x \Phi(x) \right\rangle_{\cH}.
	\end{equation*}
	since $\Phi$ has bounded Hessian we have that $\Tr(\Sigma\nabla^2_x \Phi(x))$ is bounded so it is sufficient that $\langle x,\nabla_x \Phi(x) \rangle_{\cH} \geq C$ for some $C\in \R$. We now show this holds if $\Phi$ is convex, and bounded from below. Fix $x\in \cH$, by Taylor's theorem there exists $z(x)$ such that
	\begin{equation*}
	\Phi(0) = \Phi(x) - \langle\nabla_x \Phi(x), x \rangle_{\cH} + \frac{1}{2} \langle \nabla_x^2\Phi(z(x)) x, x\rangle_{\cH}.
	\end{equation*}
	Using that $\Phi$ is convex
	\begin{equation*}
	\Phi(0) \geq \Phi(x) - \langle\nabla_x \Phi(x), x \rangle_{\cH} .
	\end{equation*}
	Finally rearranging we have
	\begin{equation*}
\langle\nabla_x \Phi(x), x \rangle_{\cH} \geq \Phi(x) - 	\Phi(0) 
	\end{equation*}
	which is bounded from below as $\Phi$ is bounded from below.
\end{proof}

\begin{example}\label{ex:diffusionbridge}

Fix $T>0$ and consider the one-dimensional SDE with initial and terminal condition
\begin{equation}\label{eq:diffusionbridge}
\begin{cases}	
dY_t=b(Y_t)dt+ dW_t, & t\in (0,T)\\
Y_0=Y_T=0.
\end{cases}
\end{equation}
Here $W_t$ is a one-dimensional Brownian motion, $b:\R\to\R$ is a smooth function and $\sigma$ is some positive constant. We shall assume that $b\in C_b^3(\R)$, i.e. that $b$ is bounded and has bounded first and second order derivatives. Denote by $\pi$ the law of $Y$ defined on the space $L^2([0,T])$, the aim is to sample from $\pi$. We shall do so by using a Girsanov transformation to write the measure $\pi$ as a Radon-Nikyodym derivative of the measure $\pi_0$, where $\pi_0$ is the law of the one-dimensional Brownian Bridge on $[0,T]$.  Set $\cH=L^2([0,T])$ and define the operator
\begin{equation}
	\begin{cases}
	Ax(\xi)=\frac{1}{2}x''(\xi), & x\in D(A),\\
	D(A) = H^2(0,1)\cap H_0^1(0,1),
	\end{cases}
\end{equation}
where $H^k$ are the usual Sobolev spaces and
\begin{equation*}
	H_0^1(0,T)=\{x\in H^1(0,T): x(0)=x(T)=0\}.
\end{equation*}
Let $X_t$ be a one-dimensional Brownian bridge, then set
\begin{equation*}
\mathcal{E}_t=\exp\left(\int_0^t b(X_s) dX_s - \frac{1}{2}\int_0^t b(X_s)^2ds\right).
\end{equation*}
By Girsanov's theorem the process
\begin{equation*}
 W_t := X_t-\int_0^tb(X_s)ds
\end{equation*}
is a Brownian motion under $\pi$ where $\frac{d\pi}{d\pi_0}=\mathcal{E}_T$. Note that by rearranging we have that $X_t$ solves \eqref{eq:diffusionbridge} under $\pi$. Now let $B=\int_0^t b(y) dy$, then by It\^o's formula we have
\begin{equation*}
B(X_T)-B(X_0) - \frac{1}{2}\int_0^T b'(X_s)ds = \int_0^T b(X_s) dX_s.
\end{equation*}
Recall that $X_T=X_0=0$ so we can now write $\mathcal{E}_T$ as
\begin{equation*}
\mathcal{E}_T=\exp\left(- \frac{1}{2}\int_0^T b'(X_s)ds - \frac{1}{2}\int_0^t b(X_s)^2ds\right).
\end{equation*}
Therefore we have that
\begin{align*}
\frac{d\pi}{d\pi_0}(x) &= \exp(-\Phi(x))\\
\Phi(x)&=\frac{1}{2}\int_0^T b'(X_s)ds + \frac{1}{2}\int_0^T b(X_s)^2ds
\end{align*}
Note that $\Phi$ is smooth and
\begin{align*}
\nabla_x\Phi(x)(t) &= \frac{1}{2}b''(x_t) + b(x_t)b'(x_t)\\
\nabla_x^2\Phi(x)(t) &= \frac{1}{2}b'''(x_t) + b(x_t)b''(x_t) + 2b'(x_t)^2.
\end{align*}
%By Corollary \ref{cor:hypocoercive} if $\Phi$ is convex and has bounded second order derivative then the Boomerang Sampler converges exponentially fast to equilibrium, so it is sufficient to show that
%\begin{align}\label{eq:convexityimplicationsondrift}
%0\leq \frac{1}{2}b'''(y) + b(y)b''(y) + 2b'(y)^2 \leq C
%\end{align}
%for some $C>0$. 
%
%We shall consider several choices for $b$:
%\begin{itemize}
%	\item Linear drift, if $b(y)=\alpha y + \beta$ then \eqref{eq:convexityimplicationsondrift} holds with $C=2\alpha^2$;
%	\item If $b(y)=\sin(y)$ then $\Phi$ has bounded second order derivative but is not convex;
%	\item If $b(y)=\alpha x^2$ then $\Phi$ is convex but has unbounded second order derivative; 
%\end{itemize}
As $b\in C_b^3(\R)$ we have that $\Phi$ is bounded with bounded first and second order derivatives, therefore by Remark \ref{note:boundedresest} below 
all the assumptions of Theorem \ref{thm:Hypocoecivity} are satisfied so the Boomerang Sampler converges exponentially in the sense of \eqref{eq:expdecay}.

To keep calculations short we are only working in 1-d, but similar arguments can be applied in higher dimensions. In \cite[Equation 1.5]{Beskos} it is shown that if we consider the SDE
\begin{equation*}
dX_t=f(X_t)dt+B dW_t
\end{equation*} 
where $f=-BB^T\nabla_xV$ then the corresponding $\Phi$ is given by
\begin{equation*}
\Phi(x) = \int_0^t \frac{1}{2}\lvert B^{-1}f(x_t) \rvert^2 + \frac{1}{2}(\nabla_x\cdot f)(x_t) dt.
\end{equation*}

\end{example}

% \begin{example}[Inverse problem for a Poisson equation]
% Consider the inverse problem for determining an unknown forcing term $f$ from observations of the solutions of the PDE with domain $\R^m$
% \begin{equation*}
%     \Delta u-\nabla u=f.  
% \end{equation*}
% Suppose we are given observations $y_1=u(z_1),\ldots,y_n=u(z_n)$ from the function $u$ and we wish to recover $f$. Define the operator
% \begin{equation*}
%     J_z(f) = \int_0^\infty\int_{\R^m} f(y)K_t(z-y)dy dt, \quad  K_t(y)=\frac{1}{\sqrt{2\pi t}} \exp\left(-\frac{y^2}{2t}\right)
% \end{equation*}
% where $K_t$ denotes the kernel corresponding to the $m$-dimensional Ornstein-Uhlenbeck equation.
% Then we set
% \begin{equation*}
%     \Phi(f) = \frac{1}{2} \sum_{j=1}^q \left\lvert y_j-J_{z_j}(f) \right\rvert^2.
% \end{equation*}
% Note that
% \begin{equation*}
%     \nabla \Phi(f)(y) = -\sum_{j=1}^q \left( y_j-J_{z_j}(f) \right)\int_0^\infty K_t(z_j-y) dt.
% \end{equation*}
% \end{example}

\subsection{Hypocoercivity}\label{sec:hypocoercivity}

We shall prove Theorem \ref{thm:Hypocoecivity} by applying the Abstract Hypocoercivity Theorem introduced in \cite{Dolbeault}. We shall use the formulation from \cite{eisenhuth2021hypocoercivity} which has been developed for infinite dimensional stochastic differential equations. %Indeed Theorem \ref{thm:Hypocoecivity} will follow from Theorem \ref{thm:abstractHypocoecivity} with $\sH=\{f\in L_\mu^2: \mu(f)=0\}$, $S=(\cL+\cL^*)/2$, $T=(\cL-\cL^*)/2$. The assumptions of Theorem \ref{thm:abstractHypocoecivity} are satisfied by Lemma \ref{lem:coercivityofS}, Proposition \ref{prop:PI} and Proposition \ref{prop:SAestintermsofresolvent}.

\begin{assumption}\label{ass:Hypo_data}
\begin{enumerate}[(\textbf{D\arabic*})]
    \item Set $H=L^2_\mu$ and let $(L,D(L))$ be a linear operator on $H$ generating a $C_0$-semigroup $\{P_t\}_{t\geq 0}$. Where $\{P_t\}_{t\geq 0}$ is conservative, i.e. $P_t1=1$ for all $t\geq 0$ and has invariant measure $\mu$.\label{ass:hypo_setup}
    \item Let $\core\subseteq D(L)$ be a dense subspace of $H$ which is a core for the operator $(L,D(L))$.\label{ass:hypo_core}
    \item Let $(S,D(S))$ be symmetric and let $(A,D(A))$ be a closed and antisymmetic operator on $H$ such that $\core\subseteq D(S)\cap D(A)$ as well as $L\rvert_\core=S-A$.\label{ass:hypo_decomp}
    \item Let $\Pi:H\to H$ be an orthogonal projection which satisfies $\Pi(H)\subseteq D(S), S\Pi=0$ as well as $\Pi(\core)\subseteq D(A),A\Pi(\core)\subseteq D(A)$. \label{ass:hypo_projection}
\end{enumerate}
\end{assumption}

\begin{assumption}\label{ass:Hypo_hypo}
\begin{enumerate}[(\textbf{H\arabic*})]
\item Assume that $\Pi A\Pi\rvert_\core=0$. \label{ass:hypo_APA}
\item \label{ass:hypo_Scoercive}There exists $\lambda_m>0$ such that
\begin{equation*}
    -\langle Sf,f\rangle_H\geq \lambda_m \lVert (1-\Pi) f\rVert^2 \text{ for all } f\in \core.
\end{equation*}
\item \label{ass:hypo_coercive_PI}Define $G=\Pi A^2\Pi$ on $\core$. Assume that $(G,D)$ is essentially self-adjoint on $H$ and assume that there exists $\lambda_M$ such that
\begin{equation*}
    \lVert A\Pi f\rVert_H^2 \geq \lambda_M \lVert \Pi f\rVert_H^2 \text{ for all } f\in \core.
\end{equation*}
\item \label{ass:hypo_remainder} Define $B$ to be the unique extension to a bounded linear operator on $H$ of the operator $(B,D(A\Pi)^*)$ where
\begin{equation*}
    B=(I+(A\Pi)^*(A\Pi))^{-1}(A\Pi)^* \text { on } D((A\Pi)^*).
\end{equation*}
Assume that the operators $(BS,\core)$ and $(BA(1-\Pi),\core)$ are bounded and there exist constants $c_1,c_2<\infty$ such that for all $f\in\core$
\begin{equation*}
    \lVert BSf\rVert_H\leq c_1\lVert (1-\Pi)f\rVert_H, \quad \text{ and } \quad \lVert BA(1-\Pi)f\rVert_H\leq c_2\lVert (1-\Pi)f\rVert_H.
\end{equation*}
\end{enumerate}
\end{assumption}

\begin{theorem}\label{thm:abstractHypocoecivity}
Assume that Assumption \ref{ass:Hypo_data} and \ref{ass:Hypo_hypo} holds. Then there exist positive constants $C, \kappa>0$ such that for any function $f\in L^2_\mu$, and $t\geq 0$,
	\begin{equation*}
	\lVert P_t f -\mu(f)\rVert_{\sH} \leq Ce^{-\kappa t} \lVert f -\mu(f)\rVert_\sH
	\end{equation*}
	where $\mu(f) = \int f d\mu$.
\end{theorem}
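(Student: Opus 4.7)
The plan is to follow the standard Dolbeault--Mouhot--Schmeiser hypocoercivity argument in the Hilbert space formulation adapted to this infinite-dimensional setting, as in \cite{eisenhuth2021hypocoercivity}. First, since $P_t$ is conservative with invariant measure $\mu$, it preserves the mean, so it suffices to prove exponential decay of $\lVert P_t f\rVert_H$ on the closed subspace $H_0 := \{f \in H : \mu(f)=0\}$. On $H_0$ the projection $\Pi$ still satisfies \ref{ass:hypo_APA}--\ref{ass:hypo_remainder}, and we may replace $f$ by $f - \mu(f)$ everywhere below.

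The core idea is to introduce a \emph{modified entropy} (Lyapunov functional) of the form
\begin{equation*}
 \mathcal{F}_\varepsilon[f] := \tfrac{1}{2}\lVert f\rVert_H^2 + \varepsilon\,\langle Bf, f\rangle_H,
\end{equation*}
for a small parameter $\varepsilon \in (0,1)$. Since $B = (I+(A\Pi)^*(A\Pi))^{-1}(A\Pi)^*$ has operator norm at most $1/2$ (this is a purely operator-theoretic fact following from the spectral calculus applied to the self-adjoint operator $(A\Pi)^*(A\Pi)$), we get the norm equivalence
\begin{equation*}
 \tfrac{1-\varepsilon}{2}\lVert f\rVert_H^2 \le \mathcal{F}_\varepsilon[f] \le \tfrac{1+\varepsilon}{2}\lVert f\rVert_H^2.
\end{equation*}
Hence exponential decay of $\mathcal{F}_\varepsilon[P_t f]$ is equivalent to exponential decay of $\lVert P_t f\rVert_H$.

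The main computation is the time derivative along the semigroup. Using \ref{ass:hypo_decomp} and the antisymmetry of $A$,
\begin{equation*}
 -\tfrac{d}{dt}\mathcal{F}_\varepsilon[P_t f]\;=\; \mathcal{D}_\varepsilon[P_t f],
\end{equation*}
where, writing $g = P_t f$,
\begin{equation*}
 \mathcal{D}_\varepsilon[g] = -\langle Sg,g\rangle_H + \varepsilon\,\langle BA\Pi g,g\rangle_H - \varepsilon\,\langle BAR g,g\rangle_H - \varepsilon\,\langle BSg,g\rangle_H - \varepsilon\,\langle \Pi AB g,(I-\Pi)g\rangle_H + (\text{sym. counterpart}),
\end{equation*}
with $R := I-\Pi$. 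The terms split into three classes: (i) the good dissipation term $-\langle Sg,g\rangle_H \ge \lambda_m\lVert Rg\rVert_H^2$ by \ref{ass:hypo_Scoercive}; (ii) the macroscopic term $\varepsilon\langle BA\Pi g,g\rangle_H$, which by the definition of $B$ and \ref{ass:hypo_coercive_PI} (plus \ref{ass:hypo_APA} so that $\Pi A\Pi=0$) satisfies
\begin{equation*}
 \langle BA\Pi g,g\rangle_H = \langle (I+(A\Pi)^*(A\Pi))^{-1}(A\Pi)^*(A\Pi)\Pi g,\Pi g\rangle_H \ge \tfrac{\lambda_M}{1+\lambda_M}\lVert \Pi g\rVert_H^2;
\end{equation*}
and (iii) the cross/error terms $\varepsilon\langle BAR g,g\rangle_H$, $\varepsilon\langle BSg,g\rangle_H$ and the $(I-\Pi)AB$ terms, which by \ref{ass:hypo_remainder} (and the fact that $B$ has range in $\Pi(H)$, giving boundedness of $(I-\Pi)AB$ via the identity $(I-\Pi)AB = ((BA(I-\Pi))^*)^*$ or a direct estimate) are all bounded by a constant times $\lVert Rg\rVert_H$ times $\lVert g\rVert_H$.

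Choosing $\varepsilon$ sufficiently small, a Cauchy--Schwarz/Young splitting of the cross terms lets us absorb them into the two good terms, producing
\begin{equation*}
 \mathcal{D}_\varepsilon[g] \ge \tfrac{\lambda_m}{2}\lVert Rg\rVert_H^2 + \varepsilon\,\tfrac{\lambda_M}{2(1+\lambda_M)}\lVert \Pi g\rVert_H^2 \ge \kappa_0\lVert g\rVert_H^2,
\end{equation*}
for some $\kappa_0>0$. Combining with the norm equivalence yields $-\tfrac{d}{dt}\mathcal{F}_\varepsilon[P_tf] \ge \kappa\,\mathcal{F}_\varepsilon[P_tf]$ for $\kappa = 2\kappa_0/(1+\varepsilon)$, so Grönwall's inequality gives $\mathcal{F}_\varepsilon[P_tf] \le e^{-\kappa t}\mathcal{F}_\varepsilon[f]$, which by the norm equivalence yields \eqref{eq:expdecay} with $C = \sqrt{(1+\varepsilon)/(1-\varepsilon)}$. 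The main obstacle — and the place where care beyond the finite-dimensional case is needed — is justifying the formal time differentiation: since we only a priori have $g \in D(L)$, not $g \in D(S)\cap D(A)$, we first perform the computation for $f\in\core$ (which is a core by \ref{ass:hypo_core}, and on which the decomposition $L=S-A$ is valid by \ref{ass:hypo_decomp}), and then extend by density using the boundedness statements in \ref{ass:hypo_remainder} together with the uniform bound on $\mathcal{F}_\varepsilon$.
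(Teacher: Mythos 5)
The paper does not prove this theorem at all: it is imported as a black box from the Dolbeault--Mouhot--Schmeiser framework in the Hilbert-space formulation of \cite{grothaus2014hypocoercivity} and \cite{eisenhuth2021hypocoercivity}, and your sketch is exactly the standard argument underlying those references (modified entropy $\mathcal{F}_\varepsilon[f]=\tfrac12\lVert f\rVert_H^2+\varepsilon\langle Bf,f\rangle_H$, norm equivalence from $\lVert B\rVert\le \tfrac12$, coercivity of $BA\Pi$ via spectral calculus and \ref{ass:hypo_coercive_PI}, absorption of the cross terms via \ref{ass:hypo_remainder}, Gr\"onwall). So in approach you are aligned with what the paper relies on.

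The one point where your write-up falls short of a complete proof is the step you yourself flag: justifying $-\tfrac{d}{dt}\mathcal{F}_\varepsilon[P_tf]=\mathcal{D}_\varepsilon[P_tf]$. Your proposed fix, ``perform the computation for $f\in\core$ and then extend by density,'' does not work as stated, because the obstruction is not the initial datum but the evolved function: $P_t$ need not map $\core$ into $\core$, and for $g=P_tf\in D(L)$ one cannot a priori apply the decomposition $L=S-A$, nor the identities $B=\Pi B$, $\Pi A\Pi=0$, $(BA(1-\Pi))^*$-boundedness, to $g$. (Extending the \emph{final} inequality from a dense set by continuity is fine; establishing the differential inequality on that dense set is the problem.) This is precisely the technical content added by the rigorous Hilbert-space formulation of \cite{grothaus2014hypocoercivity}, which replaces the formal derivative computation by a regularized one (resolvent mollification of $B$ and careful domain bookkeeping, using essential self-adjointness of $G=\Pi A^2\Pi$ from \ref{ass:hypo_coercive_PI}) before passing to the limit. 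As it stands your argument reproduces the formal DMS computation correctly but leaves this regularization gap open, which is exactly why the paper cites the abstract theorem rather than reproving it.
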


Then Theorem \ref{thm:Hypocoecivity} follows from Theorem \ref{thm:abstractHypocoecivity} once we verify these conditions for the Boomerang Sampler. Before we verify these conditions we state the following lemma which is quoted from \cite[Lemma 1]{eisenhuth2021hypocoercivity}.

% Before we verify these conditions we need two results which are from \cite{eisenhuth2021hypocoercivity}. These are regularity results for the elliptic operator $g-\cA g=h$ and a Poincar\'e inequality for the operator $\cA$ where $\cA$ is defined as the $L_\pi^2$-closure of the operator defined on $g\in \cylfunccomp(\h)$ by
% \begin{equation*}
%     \cA g(x) =\Tr(\Sigma \nabla_x g(x)) - \langle x,\nabla_x g(x)\rangle_\h -\langle \nabla_x \Phi(x),\Sigma \nabla_x g(x)\rangle_\h.
% \end{equation*}
% We will see that in the Boomerang Sampler setting we can write that $\Pi A^2\Pi f=\cA \Pi f$ for $f\in \cylfunccomp(\h\times\h)$ and thus we require properties on $\cA$ in order to verify Assumption \ref{ass:Hypo_hypo} \ref{ass:hypo_coercive_PI} and \ref{ass:hypo_remainder}. 

%The following lemma can be in \cite[Lemma 1]{eisenhuth2021hypocoercivity}.
\begin{lemma}\label{lem:isserlis}
    For the Gaussian measure $\nu_0$ and $v_1,v_2,v_3,v_4\in \cH$ we have that
    \begin{align*}
        \int_{\cH} \langle u,v_1\rangle_{\cH}\langle u,v_2\rangle_{\cH}\nu_0(du) &= \langle \C v_1,v_2\rangle_{\cH},\\
        \int_{\cH} \langle u,v_1\rangle_{\cH}\langle u,v_2\rangle_{\cH}\langle u,v_3\rangle_{\cH}\langle u,v_4\rangle_{\cH}\nu_0(du) &= \langle \C v_1,v_2\rangle_{\cH}\langle \C v_3,v_4\rangle_{\cH}\\
        &+\langle \C v_1,v_3\rangle_{\cH}\langle \C v_2,v_4\rangle_{\cH}+\langle \C v_1,v_4\rangle_{\cH}\langle \C v_2,v_3\rangle_{\cH}.
    \end{align*}
\end{lemma}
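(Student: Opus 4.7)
The plan is to reduce both identities to classical Isserlis/Wick formulas for finite-dimensional jointly Gaussian vectors. The key observation is that $\nu_0 = \mathcal{N}(0,\Sigma)$ is a centred Gaussian measure on $\h$ with covariance operator $\Sigma$, which by definition means that the characteristic functional satisfies
\begin{equation*}
\int_{\h} e^{i\langle u, v\rangle_\h}\,\nu_0(du) = \exp\left(-\tfrac{1}{2}\langle \Sigma v, v\rangle_\h\right), \qquad v\in\h.
\end{equation*}
Consequently, for any finite collection $v_1,\dots,v_n \in \h$, the random vector $(\langle u,v_1\rangle,\dots,\langle u,v_n\rangle)$ (viewed on the probability space $(\h,\mathcal{B}(\h),\nu_0)$) is centred jointly Gaussian with covariance matrix $K_{ij} = \langle \Sigma v_i, v_j\rangle_\h$. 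This is seen by computing the joint characteristic function $\mathbb{E}\exp(i\sum_k t_k \langle u,v_k\rangle) = \exp(-\tfrac{1}{2}\langle \Sigma w, w\rangle)$ with $w=\sum_k t_k v_k$ and expanding.

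For the first identity, set $n=2$; the $(1,2)$-entry of the covariance matrix is $\langle \Sigma v_1, v_2\rangle_\h$, and by definition this equals $\mathbb{E}[\langle u,v_1\rangle \langle u,v_2\rangle]$. This gives the claimed formula directly.

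For the second identity, set $X_k := \langle u, v_k\rangle$ for $k=1,\dots,4$ and apply Isserlis' theorem (Wick's formula) to the centred Gaussian vector $(X_1,X_2,X_3,X_4)$: the fourth joint moment equals the sum over all pair-partitions of $\{1,2,3,4\}$ of the products of pairwise covariances, namely
\begin{equation*}
\mathbb{E}[X_1 X_2 X_3 X_4] = \mathbb{E}[X_1X_2]\mathbb{E}[X_3X_4] + \mathbb{E}[X_1X_3]\mathbb{E}[X_2X_4] + \mathbb{E}[X_1X_4]\mathbb{E}[X_2X_3].
\end{equation*}
Substituting $\mathbb{E}[X_iX_j] = \langle \Sigma v_i, v_j\rangle_\h$ gives the three terms in the statement. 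Alternatively, one may derive this by differentiating the characteristic functional $\exp(-\tfrac{1}{2}\langle \Sigma w, w\rangle_\h)$ with $w = \sum_k t_k v_k$ four times in the parameters $t_1,\dots,t_4$ and evaluating at zero, which produces precisely the three pairings.

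There is no real obstacle: the lemma is purely an encoding of finite-dimensional Gaussian moment computations, and the only infinite-dimensional input is the characterisation of the covariance operator $\Sigma$ through the characteristic functional of $\nu_0$. The formulas are independent of any further structure on $\h$ and hold for arbitrary $v_1,\dots,v_4 \in \h$ since the corresponding linear functionals are bounded.
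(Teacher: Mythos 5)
Your proof is correct. Note, however, that the paper does not prove this lemma at all: it is quoted verbatim from \cite[Lemma 1]{eisenhuth2021hypocoercivity}, so there is no internal argument to compare against. What you supply is the standard self-contained derivation: the characteristic functional $\int_{\h} e^{i\langle u,v\rangle_\h}\,\nu_0(du)=\exp(-\tfrac{1}{2}\langle \Sigma v,v\rangle_\h)$ identifies $(\langle u,v_1\rangle,\dots,\langle u,v_n\rangle)$ as a centred Gaussian vector with covariance $K_{ij}=\langle \Sigma v_i,v_j\rangle_\h$ (your computation with $w=\sum_k t_k v_k$ is exactly the right reduction), and then the two identities are the second-moment formula and Isserlis' theorem for $(X_1,\dots,X_4)$. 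Integrability of the products is automatic since each $X_k$ is a real Gaussian variable and hence has moments of all orders, so the argument is complete; the only thing your write-up adds beyond the citation route is that it makes explicit that the lemma needs no structure on $\h$ beyond the covariance-operator characterisation of $\nu_0$, which is a perfectly acceptable and arguably more transparent way to establish the statement.
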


% \begin{theorem}
%     Suppose that $\Phi\in W^{1,2}_\pi$, is convex, bounded from below and lower semicontinuous with $\lVert \nabla_x \Phi\rVert_\h\in L_\pi^p$ for some $p>2$. Then for $g\in \cylfunc(\h)$ and $h=g-\cA g$ we have
%     \begin{align*}
%         \int_\h g^2 +\langle \Sigma \nabla_xg,\nabla_xg\rangle_\h d\pi \leq \int_\h h^2  d\pi,\\
%         \int_\h \tr[(\Sigma\nabla_x^2 g)^2]+\lVert \Sigma^{\frac{1}{2}} 
%         \nabla_x g\rVert_\h^2 d\pi \leq 4 \int_h h^2 d\pi.
%     \end{align*}
% \end{theorem}

\begin{proof}[Proof of Theorem \ref{thm:Hypocoecivity}] To simplify notation we will only consider $f$ such that $\mu(f)=0$, the result then follows by setting $\tilde{f}=f-\mu(f)$. Let us first consider Assumption \ref{ass:Hypo_data}, note that \ref{ass:hypo_setup} is immediate, and \ref{ass:hypo_core} follows from Theorem \ref{thm:core}. 
 
Define $(S, D(S))$ and $(A,D(A))$ as the closure of the operators $(S,\cylfunc(\h\times\h)), (A,\cylfunc(\h\times\h))$ where
\begin{align}
Sf(x,v)&:=\frac{1}{2}(\cL+\cL^*)f(x,v)= \sum_{n=1}^\infty\lambda_n^e(x,v)[f(x,R_n v)-f(x,v)]+\Lref f(x,v), \quad f\in \cylfunc(\h\times\h)\label{eq:defofS}\\
Af(x,v)&:=\frac{1}{2}(\cL-\cL^*)f(x,v)= \LX f(x,v) + \sum_{n=1}^\infty\lambda_n^o(x,v)[f(x,R_nv)-f(x,v)],\quad f\in \cylfunc(\h\times\h). \label{eq:defofT}
\end{align}
 Here
\begin{align}
\lambda_n^e(x,v) &= \frac{1}{2}(\lambda_n(x,v)+\lambda_n(x,R_nv))= \frac{1}{4}\lvert \langle\nabla_x\Phi(x),v-R_nv\rangle_{\cH}\rvert+\gamma_n(x,v),\\
\lambda_n^o(x,v) &= \frac{1}{2}(\lambda_n(x,v)-\lambda_n(x,R_nv))=\frac{1}{4} \langle\nabla_x\Phi(x),v-R_nv\rangle_{\cH}.\label{eq:defofolambdao}
\end{align}
For $f\in L_\mu^2$ set
\begin{equation}\label{eq:defofPi}
    \Pi(f)(x) = \int_{\h} f(x,v) \nu_0(dv).
\end{equation}
Note that since $(S,\cylfunc(\h\times\h))$ and $(A,\cylfunc(\h\times\h))$ are the restriction of the operators $(S^*,D(S^*))$ and $(-A^*, D(A^*))$ to the set $\cylfunc(\h\times\h)$ which is dense we have that $(S,\cylfunc(\h\times\h))$ and $(A,\cylfunc(\h\times\h))$ are closable. It is immediate to verify that $\Pi$ is an orthogonal projection on $L_\mu^2$ therefore all of these operators are well-defined and we have that \ref{ass:hypo_decomp} is satisfied. 

Let us now verify \ref{ass:hypo_projection}. Fix $g\in \Pi(L_\mu^2)$ and take $g_n\in \cylfunccomp(\h\times \h)$ such that $g_n\to g$ which exists since $\cylfunccomp(\h\times\h)$ is dense in $L_\mu^2$. Then $\Pi g_n\in \cylfunc(\h\times\h)$, and we have $S(g_n) =0$. Since $(S,D(S))$ is closed we have that $\Pi(L_\mu^2)\subseteq D(S)$ and $S\Pi=0$. Fix $f\in \cylfunc(\h\times\h)$ then $\Pi f\in \cylfunc(\h\times\h)\subseteq D(A)$ and
\begin{equation}\label{eq:TPi}
    A\Pi f = \langle v,\nabla_x\Pi f(x,v)\rangle_{\cH}.  
\end{equation}
It is straightforward to check that $A\Pi f\in \cylfunc(\h\times\h)$ and therefore $A\Pi \cylfunc(\h\times\h)\subseteq D(A)$.

Let us now verify Assumption \ref{ass:Hypo_hypo}. Note that Assumption \ref{ass:Hypo_hypo} \ref{ass:hypo_APA} is verified immediately. Assumption \ref{ass:Hypo_hypo} \ref{ass:hypo_Scoercive} follows from Lemma \ref{lem:coercivityofS} below. 

In order to verify Assumption \ref{ass:Hypo_hypo} \ref{ass:hypo_coercive_PI} we must first derive an expression for the operator $G$. Fix $f\in \cylfunc(\h\times\h)$ and let $g=\Pi f$ then we by applying $A$ to \eqref{eq:TPi} we have
\begin{align}
(A^2g)(x,v)&=  \langle v, \nabla_x\left\langle 
v, \nabla_x g(x) \right\rangle_{\cH}\rangle_{\cH} -\langle x,\nabla_xg(x) \rangle_{\cH}-\sum_{n=1}^\infty\lambda_n^o(x,v)\left\langle (1-R_n)v, \nabla_x g(x) \right\rangle_{\cH}\\
&=\left\langle v, \nabla^2_x g(x)  v\right\rangle_{\cH}-\langle  x,\nabla_x g(x) \rangle_{\cH}-\sum_{n=1}^\infty\lambda_n^o(x,v)\left\langle (1-R_n)v, \nabla_x g(x) \right\rangle_{\cH}\label{eq:T2Pi}
\end{align}
Using \eqref{eq:defofolambdao} and Lemma \ref{lem:isserlis} we have
\begin{align*}
\Pi[\lambda_n^o(x,v)\left\langle (1-R_n)v, \nabla_x\Pi f(x) \right\rangle_{\cH}]&=\frac{1}{4}\Pi[\langle \nabla_x\Phi,(1-R_n)v\rangle_{\cH}\left\langle (1-R_n)v, \nabla_x\Pi f(x) \right\rangle_{\cH}]\\
&=\frac{1}{4}\left\langle (1-R_n)\nabla_x\Phi(x),\Sigma(1-R_n)\nabla_x\Pi f(x) \right\rangle_{\cH}.
\end{align*}
Here we have used that $\Pi$ denotes integration with respect to $\nu_0$ and $\nu_0$ has mean zero and covariance operator $\Sigma$. Summing over $n$ and using \eqref{eq:Rassump2} we have
\begin{align*}
\sum_{n=1}^\infty\Pi[\lambda_n^o(x,v)\left\langle (1-R_n)v, \nabla_x\Pi f(x) \right\rangle_{\cH}]=\left\langle \nabla_x\Phi(x),\Sigma\nabla_x\Pi f(x) \right\rangle_{\cH}.
\end{align*}

% Take $\{e_k\}$ to be an orthonormal basis for $\cH$ then we have
% \begin{align*}
% \left\langle  v, \nabla^2_x\Pi f(x) v\right\rangle_{\cH} = \sum_{i,j=1}^\infty\langle  v,e_i\rangle\langle v,e_j\rangle\left\langle e_i,  \nabla^2_x\Pi f(x)  e_j\right\rangle_{\cH}.
% \end{align*}
By applying $\Pi$ to the first term in \eqref{eq:T2Pi} we find
\begin{align*}
\Pi\left\langle v, \nabla_x^2\Pi f(x)  v\right\rangle_{\cH} %&= \sum_{i,j=1}^\infty\langle  e_i,\Sigma e_j\rangle\left\langle e_i, \nabla_x^2\Pi f(x) e_j\right\rangle_{\cH}\\
%&=\sum_{j=1}^\infty\left\langle e_j, \Sigma \nabla_x^2\Pi f(x) e_j\right\rangle_{\cH}\\
&=\Tr(\Sigma\nabla^2_x\Pi f(x)).
\end{align*}
Therefore
\begin{equation*}
\Pi A^2\Pi f = \Tr(\Sigma\nabla^2_x\Pi f(x))-\langle x,\nabla_x\Pi f(x) \rangle_{\cH}-\left\langle \nabla_x\Phi(x),\Sigma \nabla_x\Pi f(x) \right\rangle_{\cH}.
\end{equation*}
Therefore, we have $Gf=\Pi T^2\Pi f=\cA g$ for any $f\in \cylfunc(\h\times\h)$.

From \cite[Proposition 4]{eisenhuth2021hypocoercivity} we have that $(G,
\cylfunc(\h))$ satisfies the Poincar\'e inequality for all $g\in \cylfunc(\h)$
\begin{equation*}
    \int_\h (g(x)-\pi(g))^2 \pi(dx) \leq \lambda_1^{-1} \int_\h \langle \Sigma \nabla_x g,\nabla_xg\rangle_\h \pi(dx) =-\langle \cA g,g\rangle_{L^2_\pi}.
\end{equation*}
Note that \cite{eisenhuth2021hypocoercivity} requires that the function $\Phi$ is convex however if $\Phi=\Phi_1+\Phi_2$ with $\Phi_1$ convex and $\Phi_2$ bounded then the Poincar\'e inequality still holds, this can be seen by first applying \cite[Proposition 4]{eisenhuth2021hypocoercivity} for $\Phi_1$ and then by the same argument as in \cite[Proposition 4.2.7]{Bakry} the Poincar\'e inequality holds for $\Phi$.

Therefore taking $g=\Pi f$ for $f\in L_\mu^2$ we have that Assumption \ref{ass:Hypo_hypo} \ref{ass:hypo_coercive_PI} holds.

As explained in \cite[Remark 2.17]{grothaus2014hypocoercivity} we can replace Assumption \ref{ass:Hypo_hypo} \ref{ass:hypo_remainder} with
    \begin{equation}\label{eq:remaindertermsinnerproductform}
        \langle B(S-A(1-\Pi))f,\Pi f\rangle \leq c_3\lVert (1-\Pi)f\rVert \lVert \Pi f\rVert
    \end{equation}
    for some constant $c_3\geq 0$ and for all $f\in \cylfunc(\cH\times\cH)$.
    Therefore it suffices to prove that \eqref{eq:remaindertermsinnerproductform} holds which we defer to Proposition \ref{prop:SAestintermsofresolvent}.

\end{proof}

%This can be found as either \cite[Theorem 2]{Dolbeault} or \cite[Theorem 4]{Andrieu}. We shall apply this theorem with $\sH=\{f\in L_\mu^2: \int f d\mu=0\}$ .

% In this case $\cL=T+S$ which generates a strongly continuous semigroup $\{\cP_t\}_{t\geq 0}$ on $\sH$ by Lemma \ref{lem:exttoL2}. Also we have that the kernel of $S$ is the set of functions $f(x,v)=\int f(x,\cdot) d\nu_0$ so 
% \begin{equation}\label{eq:defofoPi}
% \Pi f(x) = \int_{\cH} f(x,v) \nu_0(dv).
% \end{equation}
% In order to prove Theorem \ref{thm:Hypocoecivity} it remains to verify each of the assumptions \ref{ass:Microcoercive} - \ref{ass:boundaux} of Theorem \ref{thm:abstractHypocoecivity}. Let us consider each assumption in turn.
\begin{lemma}\label{lem:coercivityofS}
	Assume that the assumptions of Theorem \ref{thm:Hypocoecivity} hold and let $S$ and $\Pi$ be the operators defined by \eqref{eq:defofS} and \eqref{eq:defofPi} respectively . Then for any $f\in D(S)$ we have
	\begin{equation*}
	-\langle Sf,f\rangle_{L_\mu^2} \geq \lref \lVert (1-\Pi)f \rVert_{L_\mu^2}^2
	\end{equation*}
	In particular, we have that Assumption \ref{ass:Hypo_hypo} \ref{ass:hypo_Scoercive} of Theorem \ref{thm:abstractHypocoecivity} holds.
\end{lemma}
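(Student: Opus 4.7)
The strategy is to show that the refreshment part of $S$ produces exactly the claimed coercivity bound, while the reflection part contributes a non-positive quadratic form and can simply be discarded. By density of $\cylfunc(\h\times\h)$ in $D(S)$ (in the graph norm of $S$) and continuity of both sides, it suffices to prove the estimate for $f\in\cylfunc(\h\times\h)$.

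First I would handle the refreshment term. Since $\Lref f(x,v)=\lref[\Pi f(x)-f(x,v)]$, a direct computation using $\mu=\pi\times\nu_0$ and the fact that $\Pi f$ is independent of $v$ yields
\begin{equation*}
-\langle \Lref f, f\rangle_{L_\mu^2}
=\lref\bigl(\lVert f\rVert_{L_\mu^2}^2-\lVert \Pi f\rVert_{L_\mu^2}^2\bigr)
=\lref\,\lVert (1-\Pi)f\rVert_{L_\mu^2}^2,
\end{equation*}
where the last identity uses that $\Pi$ is an orthogonal projection.

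Next I would show that the reflection term contributes a non-positive quadratic form, i.e.
\begin{equation*}
\sum_{n=1}^\infty\int \lambda_n^e(x,v)\bigl[f(x,R_n v)-f(x,v)\bigr]f(x,v)\,\mu(dx,dv)\leq 0.
\end{equation*}
The key observations are: (i) the weight $\lambda_n^e(x,v)=\tfrac{1}{4}|\langle\nabla_x\Phi(x),v-R_n v\rangle_\cH|+\gamma_n(x,v)$ is even under $v\mapsto R_n v$ because $R_n^2=\mathrm{Id}$ flips the sign of $v-R_nv$; and (ii) $\nu_0$ is invariant under $R_n$ by Hypothesis \ref{hyp:IMass}(4). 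Changing variables $v\mapsto R_n v$ in the integral of $\lambda_n^e f(x,R_n v)^2$ against $\nu_0$ and applying Cauchy--Schwarz gives, for each $n$ and each fixed $x$,
\begin{equation*}
\int_\h \lambda_n^e(x,v)\,f(x,R_n v)\,f(x,v)\,\nu_0(dv)\leq \int_\h \lambda_n^e(x,v)\,f(x,v)^2\,\nu_0(dv),
\end{equation*}
so the integrand $\lambda_n^e[f(\cdot,R_n\cdot)-f]f$ has non-positive $\nu_0$-average for each $n$. Summing in $n$ (the sum is termwise non-positive, so Fubini/monotone arguments apply at the cylindrical level) and integrating against $\pi(dx)$ proves the claim.

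Combining the two estimates yields $-\langle Sf,f\rangle_{L_\mu^2}\geq \lref\lVert(1-\Pi)f\rVert_{L_\mu^2}^2$. Extending from $\cylfunc(\h\times\h)$ to $D(S)$ uses that this subspace is a core for $S$ (by definition $S$ was obtained as the closure from $\cylfunc$). The main obstacle is purely technical: one must verify that the series in $Sf$ is absolutely convergent in $L^2_\mu$ for cylindrical $f$ so that rearrangement and Fubini are legitimate; this follows from the bound $\sum_n\lambda_n^e\leq C(1+\Ham)$ used throughout Section~\ref{sec:Boomerang} together with the fact that $\Ham\in L^p_\mu$ for all $p$, since $\mu$ is absolutely continuous with respect to a Gaussian with bounded density factor. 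The stated inequality is exactly Assumption \ref{ass:Hypo_hypo}\ref{ass:hypo_Scoercive} with $\lambda_m=\lref$.
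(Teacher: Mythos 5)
Your proof is correct and follows essentially the same route as the paper: decompose $S$ into refreshment and reflection parts, use the invariance of $\nu_0$ and $\lambda_n^e$ under $R_n$ to show the reflection quadratic form is non-positive, and identify the refreshment contribution with $\lref\lVert (1-\Pi)f\rVert_{L_\mu^2}^2$ via the orthogonality of $\Pi$. The only cosmetic difference is that you get the non-positivity from Cauchy--Schwarz plus a change of variables, whereas the paper expands $\int \lambda_n^e(x,v)[f(x,R_nv)-f(x,v)]^2\,\nu_0(dv)\geq 0$; your extra remarks on density and summability are sound but add nothing beyond what the definition of $S$ as a closure already gives.
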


\begin{proof}[Proof of Lemma \ref{lem:coercivityofS}]
	Note that
	\begin{align*}
	-\langle Sf,f\rangle_{L_\mu^2} &= -\sum_{n=1}^\infty\int_{\cH}\int_\cH \left(\lambda_n^e(x,v)[f(x,R_nv)-f(x,v)]+\Lref f(x,v)\right)f(x,v) \nu_0(dv)\pi(dx).
	\end{align*}
	Since $\nu_0$ and $\lambda_n^e$ are both invariant under $R_n$ we have
	\begin{align*}
	\int_{\cH} \lambda_n^e(x,v)[f(x,R_nv)-f(x,v)]^2 \nu_0(dv) &= \int_{\cH} 2\lambda_n^e(x,v)f(x,v)^2 -2\lambda_n^e(x,v)f(x,R_nv)f(x,v) \nu_0(dv)\\
	&= -2\int_{\cH} \lambda_n^e(x,v)[f(x,R_nv) -f(x,v)]f(x,v) \nu_0(dv).
	\end{align*} 
	Therefore we have for any $n\in\mathbb{N}$
	\begin{equation*}
	\langle \lambda^e(x,v)[f(x,R_nv)-f(x,v)],f\rangle_{L_\mu^2} \leq 0.
	\end{equation*}
	Which gives
	\begin{align*}
	-\langle Sf,f\rangle_{L_\mu^2} &\geq  -\langle\Lref f,f\rangle_{L_\mu^2} \\
	&=   \lref\langle (1-\Pi)f,f\rangle_{L_\mu^2}
	\end{align*}	
	The result follows since $\Pi$ is an orthogonal projection.
\end{proof}

\begin{prop}\label{prop:resest}
	Let $\cP_t$ be the semigroup corresponding to the Boomerang Sampler which is described by the generator \eqref{eq:simgen}. Assume that $\Phi$ is twice continuously differentiable with bounded Hessian and there exist $c_2\geq 0, c_1>0$ such that \eqref{eq:boundonLaplacianU} holds. Also assume that $e^{-\Phi}\in L^1(\pi_0)$ where $\pi_0=\nu_0=\mathcal{N}(0,\Sigma)$, and set $\pi$ to be the measure defined by \eqref{targetmeasure} and $\mu=\pi\times \nu_0$. Fix $g\in \cylfunc(\cH)$ and set $h=g-\cA g$ then we have
	\begin{align}
	\lVert \Sigma^{\frac{1}{2}}\nabla_xg\rVert_{L_\pi^2} &\leq  \lVert h  \rVert_{L_\pi^2}, \label{eq:d1Resolventbound}\\
	\lVert \nabla_x\Sigma\nabla_xg\rVert_{L_\pi^2} &\leq \kappa_1\lVert h  \rVert_{L_\pi^2}, \label{eq:d2Resolventbound}\\
	\left\lVert \langle \Sigma^\frac{1}{2}\nabla_xg , \Sigma^\frac{1}{2}\nabla_x\Phi\rangle_{\cH}\right\rVert_{L_\pi^2} &\leq \left\lVert \lVert \Sigma^\frac{1}{2}\nabla_xg \rVert_{\cH} \Sigma^\frac{1}{2}\nabla_x\Phi\right\rVert_{L_\pi^2(\cH;\cH)}  \leq \kappa_2\lVert h \rVert_{L_\pi^2}. \label{eq:innerprodresolventbound}
	\end{align}
	Here $\kappa_1^2=2+c_\Phi$, $-c_\Phi\leq \nabla_x^2\Phi(x)$ for all $x\in\cH$ and $\kappa_2^2=\frac{4\kappa_1^2}{C_1^2}+\frac{2C_2}{C_1}$.
\end{prop}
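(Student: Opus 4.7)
The plan is to exploit the fact that $\cA$ is the generator of a (preconditioned) reversible Langevin diffusion with invariant measure $\pi$, so that the Dirichlet form identity
\[ \int \phi\,\cA\psi\, d\pi \;=\; -\int \langle \Sigma^{1/2}\nabla\phi,\Sigma^{1/2}\nabla\psi\rangle_{\cH}\, d\pi \]
holds for all $\phi,\psi\in\cylfunc(\cH)$. Since $g$ is cylindrical this reduces to a finite-dimensional Gaussian integration by parts against $\pi_0$, with the Radon--Nikodym weight $e^{-\Phi}$ producing precisely the drift $-\langle\Sigma\nabla\Phi,\nabla\cdot\rangle$ appearing in $\cA$; in particular $\cA$ is symmetric on $L_\pi^2$.

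For \eqref{eq:d1Resolventbound}, pair $h=g-\cA g$ with $g$ in $L_\pi^2$: the Dirichlet identity yields $\int hg\,d\pi = \lVert g\rVert_{L_\pi^2}^2 + \lVert \Sigma^{1/2}\nabla g\rVert_{L_\pi^2}^2$, and Cauchy--Schwarz on the left-hand side gives both $\lVert g\rVert_{L_\pi^2}\leq\lVert h\rVert_{L_\pi^2}$ and $\lVert \Sigma^{1/2}\nabla g\rVert_{L_\pi^2}^2\leq\lVert h\rVert_{L_\pi^2}^2$. Squaring $h = g - \cA g$ and integrating analogously yields $\int (\cA g)^2\,d\pi \leq \lVert h\rVert_{L_\pi^2}^2$, and a direct Bakry--Emery $\Gamma_2$ computation on the cylindrical representation of $g$ gives
\[ \int (\cA g)^2\,d\pi \;=\; \int \lVert \Sigma^{1/2}\nabla^2 g\,\Sigma^{1/2}\rVert_{\mathrm{HS}}^2\,d\pi + \lVert \Sigma^{1/2}\nabla g\rVert_{L_\pi^2}^2 + \int \langle \Sigma\nabla g,\nabla^2\Phi\,\Sigma\nabla g\rangle_{\cH}\,d\pi. \]
The lower bound $\nabla_x^2\Phi \geq -c_\Phi I$ controls the last integrand from below by $-c_\Phi\lVert\Sigma\nabla g\rVert^2$; combining with \eqref{eq:d1Resolventbound} and boundedness of $\Sigma$ then gives \eqref{eq:d2Resolventbound} with the stated $\kappa_1$.

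For \eqref{eq:innerprodresolventbound}, the first inequality is immediate from pointwise Cauchy--Schwarz. For the second, integrate hypothesis \eqref{eq:boundonLaplacianU} multiplied by $\lVert \Sigma^{1/2}\nabla g\rVert^2$ against $\pi$ to obtain
\[ c_1 \int \lVert \Sigma^{1/2}\nabla g\rVert^2 \lVert \Sigma^{1/2}\nabla\Phi\rVert^2\,d\pi \;\leq\; c_2\lVert \Sigma^{1/2}\nabla g\rVert_{L_\pi^2}^2 - \int \lVert \Sigma^{1/2}\nabla g\rVert^2\,\cA\Phi\,d\pi. \]
Using symmetry of $\cA$, the last term equals $2\int\langle \Sigma\nabla\Phi,\nabla^2 g\,\Sigma\nabla g\rangle\,d\pi$. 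Bound this pointwise by Cauchy--Schwarz and Young's inequality with parameter $\varepsilon = c_1/2$, namely
\[ 2\lvert\langle \Sigma\nabla\Phi,\nabla^2 g\,\Sigma\nabla g\rangle\rvert \;\leq\; \tfrac{c_1}{2}\lVert \Sigma^{1/2}\nabla\Phi\rVert^2 \lVert \Sigma^{1/2}\nabla g\rVert^2 + \tfrac{2}{c_1}\lVert \Sigma^{1/2}\nabla^2 g\,\Sigma^{1/2}\rVert_{\mathrm{HS}}^2. \]
The first term is absorbed into the left-hand side of the previous display, while the integral of the second is controlled by \eqref{eq:d2Resolventbound}; combined with \eqref{eq:d1Resolventbound}, rearranging produces \eqref{eq:innerprodresolventbound} with $\kappa_2^2 = 4\kappa_1^2/c_1^2 + 2c_2/c_1$.

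The main technical obstacle is justifying the infinite-dimensional Dirichlet identity and the $\Gamma_2$ computation. Although the cylindrical structure of $g$ reduces every step to a finite-dimensional Gaussian calculation, one must carefully track how $\Sigma$ interacts with both the Gaussian reference $\pi_0$ and the perturbation by $e^{-\Phi}$ in order to extract the correct algebraic form of the $\Gamma_2$ integrand and thereby the explicit constants $\kappa_1,\kappa_2$.
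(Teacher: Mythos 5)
Your argument is correct and follows essentially the same route as the paper: the first-order identity $\int gh\,d\pi=\lVert g\rVert^2_{L^2_\pi}+\lVert\Sigma^{1/2}\nabla_x g\rVert^2_{L^2_\pi}$ gives \eqref{eq:d1Resolventbound}, the second-order (Bochner-type) identity together with $\nabla_x^2\Phi\ge -c_\Phi$ gives \eqref{eq:d2Resolventbound}, and testing \eqref{eq:boundonLaplacianU} against the weight $\lVert\Sigma^{1/2}\nabla_x g\rVert^2$ with Young's inequality at $\varepsilon=c_1/2$ gives \eqref{eq:innerprodresolventbound} with exactly the paper's $\kappa_2$ (the paper only differs cosmetically, using $\varphi=\sqrt{\delta+\lVert\Sigma^{1/2}\nabla_x g\rVert^2}$ and letting $\delta\to0$). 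The only caveat is that the two integration-by-parts identities you propose to obtain by a cylindrical $\Gamma_2$ computation are not quite a purely finite-dimensional matter (since $\nabla_x\Phi$ is not cylindrical and only has linear growth); the paper imports precisely these identities from \cite[Theorem 3.7 and Remark 3.8]{eisenhuth2021essential} rather than proving them.
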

We defer the proof of Proposition \ref{prop:resest} to Section \ref{sec:proofsofhypocoercivity}.

\begin{remark}\label{note:boundedresest}
	In the context of Proposition \ref{prop:resest} if we assume that $\nabla_x\Phi$ is bounded then \eqref{eq:innerprodresolventbound} follows immediately from \eqref{eq:d1Resolventbound}. Therefore we can remove the assumption that $\eqref{eq:boundonLaplacianU}$ holds in Theorem \ref{thm:Hypocoecivity} if $\nabla_x\Phi$ is bounded.
\end{remark}

With these estimates to hand we can show that Assumption \ref{ass:Hypo_hypo} \ref{ass:hypo_remainder} of Theorem \ref{thm:abstractHypocoecivity} holds. 
These are based on \cite[Lemma 12 \& 13]{Andrieu}.
\begin{prop}\label{prop:SAestintermsofresolvent}
	Let $S,A,\Pi$ be defined by \eqref{eq:defofS}, \eqref{eq:defofT} and \eqref{eq:defofPi}. Define $B$ as in Theorem \ref{thm:abstractHypocoecivity}. Assume that estimates \eqref{eq:d1Resolventbound}, \eqref{eq:d2Resolventbound} and \eqref{eq:innerprodresolventbound} hold then we have for all $f\in \cylfunc(\h\times\h)$
	\begin{align}
	\lvert \langle BSf,\Pi f\rangle_{L_\mu^2}\rvert &\leq \left(\sqrt{3}\kappa_2+ \frac{1}{\sqrt{2}}\lref\right)\lVert (1-\Pi)f\rVert_{L_\mu^2} \lVert f \rVert_{L_\mu^2},\label{eq:BS}\\
	\lvert\langle BA(1-\Pi) f,\Pi f\rangle_{L_\mu^2}\rvert&\leq \sqrt{2}(\kappa_1+\kappa_2)\left\lVert  (1-\Pi) f\right\rVert_{L_\mu^2}\left\lVert   f\right\rVert_{L_\mu^2}.\label{eq:BA(1-Pi)}
	\end{align}
	Here $\kappa_2 =\sqrt{\frac{4\kappa_1^2}{c_1^2} + \frac{c_2}{c_1}}$ and $\kappa_1$ is defined as in Proposition \ref{prop:resest}. %In particular, under the assumptions of Theorem \ref{thm:Hypocoecivity} we have that Assumption \ref{ass:boundaux} of Theorem \ref{thm:abstractHypocoecivity} holds.
\end{prop}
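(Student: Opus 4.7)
The plan rests on a convenient representation of $B$. Since $A^* = -A$ and $\Pi^* = \Pi$, one has $(A\Pi)^* = -\Pi A$, hence $(A\Pi)^*(A\Pi) = -\Pi A^2 \Pi = -G$. The operator $I + (A\Pi)^*(A\Pi)$ therefore acts as the identity on $(1-\Pi)L^2_\mu$ and as $I - \cA$ on $\Pi L^2_\mu$, so that
\[ B = -(I-\cA)^{-1}\Pi A, \qquad \text{with range}(B) \subseteq \Pi L^2_\mu. \]
Throughout, set $g := (I-\cA)^{-1}\Pi f$, so $g$ is a function of $x$ alone satisfying $g - \cA g = \Pi f$. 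Using antisymmetry of $A$, self-adjointness of $S$, and the identity $\Pi A g = 0$, routine adjoint manipulations give
\begin{align*}
\langle BSf,\Pi f\rangle_{L^2_\mu} &= \langle f, SAg\rangle_{L^2_\mu}, \\
\langle BA(1-\Pi)f,\Pi f\rangle_{L^2_\mu} &= -\langle (1-\Pi)f, A^2 g\rangle_{L^2_\mu}.
\end{align*}
One checks that $\Pi S = 0$ (each summand of $S$ has mean zero in $v$, using that $\nu_0$ and $\lambda_n^e$ are $R_n$-invariant), so $SAg \in (1-\Pi)L^2_\mu$; and since $\Pi A^2 g = \cA g \in \Pi L^2_\mu$, the second display equals $-\langle (1-\Pi)f, (1-\Pi)A^2 g\rangle$. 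Cauchy-Schwarz then reduces both estimates to bounding $\|SAg\|_{L^2_\mu}$ and $\|(1-\Pi)A^2 g\|_{L^2_\mu}$ by suitable multiples of $\|\Pi f\|_{L^2_\mu} = \|h\|_{L^2_\pi}$.

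For \eqref{eq:BS} I would compute directly $SAg = -\sum_n \lambda_n^e \langle (1-R_n)v, \nabla_x g\rangle_{\cH} - \lref \langle v, \nabla_x g\rangle_{\cH}$ and treat the two pieces separately. For the refreshment piece, $\|\lref \langle v,\nabla_x g\rangle\|_{L^2_\mu} = \lref\|\Sigma^{1/2}\nabla_x g\|_{L^2_\pi}$; pairing the resolvent equation with $g$ in $L^2_\pi$ and integrating by parts gives $\|g\|^2 + \|\Sigma^{1/2}\nabla_x g\|^2 = \langle h, g\rangle$, and Young's inequality yields the sharper bound $\|\Sigma^{1/2}\nabla_x g\|_{L^2_\pi} \leq \tfrac{1}{\sqrt{2}} \|h\|_{L^2_\pi}$, giving the $\frac{1}{\sqrt{2}}\lref$ coefficient. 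For the reflection piece, write $\lambda_n^e = \tfrac{1}{4}|\langle \nabla_x\Phi,(1-R_n)v\rangle|$, apply Cauchy-Schwarz on the $n$-sum to get
\[ \Bigl|\sum_n \lambda_n^e \langle (1-R_n)v,\nabla_x g\rangle\Bigr|^2 \leq \tfrac{1}{16}\Bigl(\sum_n \langle \nabla_x\Phi, (1-R_n)v\rangle^2\Bigr)\Bigl(\sum_n\langle (1-R_n)v,\nabla_x g\rangle^2\Bigr), \]
and integrate in $v$ using the fourth-moment identity of Lemma~\ref{lem:isserlis}. The off-diagonal terms ($n\neq m$) vanish by \eqref{eq:Rassump3}, since $(1-R_n)\Sigma(1-R_m) = 0$ (recalling $R_n^* = R_n$); the diagonal terms collapse using $\sum_n (1-R_n)\Sigma(1-R_n) = 4\Sigma$ from \eqref{eq:Rassump2}. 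What survives is pointwise bounded by $3\|\Sigma^{1/2}\nabla_x\Phi\|^2 \|\Sigma^{1/2}\nabla_x g\|^2$, and integration in $\pi$ combined with \eqref{eq:innerprodresolventbound} yields $\sqrt{3}\kappa_2 \|h\|_{L^2_\pi}$. Combining by Minkowski produces the coefficient $\sqrt{3}\kappa_2 + \tfrac{1}{\sqrt{2}}\lref$.

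For \eqref{eq:BA(1-Pi)} I would first compute $A^2 g = \langle v,(\nabla_x^2 g)v\rangle - \langle x,\nabla_x g\rangle - \sum_n \lambda_n^o\langle (1-R_n)v, \nabla_x g\rangle$ and recall from the proof of Theorem~\ref{thm:Hypocoecivity} that $\Pi A^2 g = \cA g$, so that
\[ (1-\Pi)A^2 g = \bigl[\langle v,(\nabla_x^2 g)v\rangle - \Tr(\Sigma \nabla_x^2 g)\bigr] - \Bigl[\sum_n \lambda_n^o\langle (1-R_n)v,\nabla_x g\rangle - \langle \Sigma\nabla_x\Phi,\nabla_x g\rangle\Bigr]. \]
The $L^2_\mu$-norm of the first bracket is $(2\Tr((\Sigma\nabla_x^2 g)^2))^{1/2}$ by Isserlis, and \eqref{eq:d2Resolventbound} bounds this by $\sqrt{2}\kappa_1\|h\|_{L^2_\pi}$. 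For the second bracket, the same fourth-moment computation as before applies with $\lambda_n^o$ in place of $\lambda_n^e$; the mean (which is exactly $\langle \Sigma\nabla_x\Phi,\nabla_x g\rangle$ by \eqref{eq:Rassump2}) is now subtracted off, and what remains is controlled, via \eqref{eq:Rassump3} and \eqref{eq:innerprodresolventbound}, by $\sqrt{2}\kappa_2 \|h\|_{L^2_\pi}$. A final application of Minkowski and Cauchy-Schwarz yields $\sqrt{2}(\kappa_1+\kappa_2)\|(1-\Pi)f\|\,\|f\|$.

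The main obstacle is the fourth-moment computation in Step 2 and Step 3: without the orthogonality relation \eqref{eq:Rassump3} the double sum produced by Isserlis would retain uncontrollable cross-terms $\langle \Sigma(1-R_n)u_1, (1-R_m)u_2\rangle$ with $n\neq m$, and no combination of \eqref{eq:d1Resolventbound}--\eqref{eq:innerprodresolventbound} would suffice to close the estimate. The assumption \eqref{eq:Rassump2} plays the complementary role of collapsing the surviving diagonal contribution to the pointwise quantity $\|\Sigma^{1/2}\nabla_x g\|\,\|\Sigma^{1/2}\nabla_x\Phi\|$ that Proposition~\ref{prop:resest} was tailored to bound.
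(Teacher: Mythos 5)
Your plan follows essentially the same route as the paper: represent $B^{*}\Pi f = A u$ with $u=(1-\cA)^{-1}\Pi f$, reduce \eqref{eq:BS} and \eqref{eq:BA(1-Pi)} to bounding $\lVert SAu\rVert_{L_\mu^2}$ and $\lVert(1-\Pi)A^2u\rVert_{L_\mu^2}$, evaluate the velocity integrals via Lemma~\ref{lem:isserlis} with \eqref{eq:Rassump2}--\eqref{eq:Rassump3} killing the cross-terms, and close with the resolvent estimates of Proposition~\ref{prop:resest}, recovering the same constants $\sqrt{3}\kappa_2+\tfrac{1}{\sqrt{2}}\lref$ and $\sqrt{2}(\kappa_1+\kappa_2)$. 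The only (welcome) difference is that you make explicit the Young-inequality refinement $\lVert\Sigma^{1/2}\nabla_x u\rVert_{L_\pi^2}\le\tfrac{1}{\sqrt{2}}\lVert h\rVert_{L_\pi^2}$ behind the $\tfrac{1}{\sqrt{2}}\lref$ coefficient, which the paper uses implicitly.
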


\begin{proof}[Proof of Proposition \ref{prop:SAestintermsofresolvent}]
    The proof is deferred to Section \ref{sec:proofsofhypocoercivity}.
\end{proof}

\section{Finite Dimensional Approximation}
\label{sec:finite-dimensional-approximation}

In this section we wish to construct a finite dimensional approximation to the Boomerang Sampler. We shall achieve this by taking a finite dimensional approximation of $\Phi$ and then constructing a Boomerang sampler to sample from this measure. Let us assume we have a potential function $\Phi:\cH\to \R$  continuously differentiable and measures $\pi_0,\nu_0,\pi,\mu_0,\mu$ as in Section \ref{sec:PDMPforsampling}. 

Fix an orthonormal basis, $\{e_i\}_{i=1}^\infty$, of eigenvectors of $\Sigma_x$. Denote by $\Proj_N$ the orthogonal projection onto $\cH_N:=\mathrm{span}\{e_1,\ldots, e_N\}$, and define 
\begin{equation*}
\Phi_N(x) = \Phi(\Proj_N(x)).
\end{equation*}
Note that the projection of $\pi_0$ (respectively $\nu_0$) to the space $\cH_N$ is still a Gaussian measure as linear transformations preserve Gaussianity and is centered with covariance operator $\Sigma_{x,N} =\Proj_N\circ \Sigma_x$ (resp. $\Sigma_{v,N} =\Proj_N\circ \Sv$). Now we can consider the problem of how to sample from the measure $\mu_{N}$ defined by
\begin{equation*}
\frac{d\mu_N}{d\mu_{0,N}}(dx,dv)  = \frac{\exp(-\Phi_N(x))}{\int \exp(-\Phi_N(y)) \mu_{0,N}(dy,dw)}
\end{equation*}
where $\mu_{0,N}(dx,dv) = \pi_{0,N}(dx)\nu_{0,N}(dv)$, $\pi_{0,N}=\mathcal{N}(0,\Sigma_{x,N})$ and $\nu_{0,N}=\mathcal{N}(0,\Sigma_{v,N})$. Note that although these measures are supported on the finite dimensional space $\cH_N^2$ we can also view them as measures on $\cH^2$.

Let us introduce some more notation for any $x\in \cH$ we denote by
\begin{align*}
\xN&:=\Proj_Nx=\sum_{n=1}^N x_ne_n,\quad 
x_n:=\langle x,e_n\rangle,\\
\xperp&:=x-\xN =\sum_{n>N} x_ne_n ,
\end{align*}
and similarly we define $\vN:=\Proj_Nv, \vperp:=v-\vN$. 

Let $\{X_t,V_t\}$ denote the Boomerang Sampler whose generator is given by \eqref{eq:simgen}. We shall construct an $N$-dimensional process $\{X_t^N,V_t^N\}_{t\geq 0}$ which takes values in $\cH_N\times\cH_N$, although this process takes values in the space $\cH_N\times\cH_N$ it is convenient for the analysis to view it as a process taking values in $\cH\times\cH$. We first consider two examples before giving a more abstract framework, which is the pure reflection Boomerang Sampler and the Factorised Boomerang Sampler.

\begin{example}\label{ex:finitedimPurerefl}
	Let $\{X_t,V_t\}_{t\geq 0}$ denote the pure reflection Boomerang Sampler whose generator is given by \eqref{eq:simgen}
	\begin{align*}
	\cL_{PR} f(x,v) &= \langle v,\nabla_xf(x,v)\rangle_{\cH} - \langle x,\nabla_vf(x,v)\rangle_{\cH} + \lref \int_{\cH} [f(x,w)-f(x,v)]\nu_0(dw) \\
	&+ (\langle \nabla_x\Phi(x),v\rangle_{\cH})^+[f(x,-v)-f(x,v)].
	\end{align*}
	To construct a finite dimensional approximation we need to specify the deterministic dynamics, the reflections and the refreshments. For the deterministic dynamics act on the first $N$ components in the same way as the full process but leave all other components fixed. Reflections will only reflect the first $N$ components instead of the entire velocity and refreshments will refresh with the measure $\nu_{0,N}$. That is the process ${X_t^N,V_t^N}$ will have generator given by
	\begin{align*}
	\cL_{PR}^N f(x,v) &= \langle \vN,\nabla_xf(x,v)\rangle_{\cH} - \langle \xN,\nabla_vf(x,v)\rangle_{\cH} + \lref \int_{\cH} [f(x,w)-f(x,v)]\nu_{0,N}(dw) \\
	&+ (\langle \nabla_x\Phi_N(x),\vN\rangle_{\cH})^+[f(x,-\vN+\vperp)-f(x,v)].
	\end{align*}
	It is immediate to check that Hypothesis \ref{hyp:IMass} so this process is well posed and has $\mu_{N}$ as an invariant measure.% is satisfied and for $\Phi$ twice differentiable with bounded Hessian either (i) has bounded derivative or (ii) is convex then the conditions of Theorem \ref{thm:Hypocoecivity} are satisfied.
\end{example}

\begin{example}\label{ex:finitedimfactor}
	Let $\{X_t,V_t\}_{t\geq 0}$ denote the Factorised Boomerang Sampler whose generator is given by \eqref{eq:simgen}
	\begin{align*}
	\cL_{F} f(x,v) &= \langle v,\nabla_xf(x,v)\rangle_{\cH} - \langle x,\nabla_vf(x,v)\rangle_{\cH} + \lref \int_{\cH} [f(x,w)-f(x,v)]\nu_0(dw) \\
	&+ \sum_{n=1}^\infty(\partial_n\Phi(x)v_n)^+[f(x,R_nv)-f(x,v)].
	\end{align*}
	Here $R_nv=v-2\langle v,e_n\rangle_{\cH}e_n$. The finite dimensional approximation is constructed analogously to Example \ref{ex:finitedimPurerefl} except for the reflections. In this case the first $N$ reflections are the same as the Factorised Boomerang, that is the reflection operator $R_n^N$ changes the sign of the $n$-th component for $n\leq N$. All other reflections are set to the identity, that is we only have $N$ reflection operators.  The process ${X_t^N,V_t^N}$ will have generator given by
	\begin{align*}
	\cL_{PR}^N f(x,v) &= \langle \vN,\nabla_xf(x,v)\rangle_{\cH} - \langle \xN,\nabla_vf(x,v)\rangle_{\cH} + \lref \int_{\cH} [f(x,w)-f(x,v)]\nu_{0,N}(dw) \\
	&+ \sum_{n=1}^N(\partial_n\Phi_N(x)v_n)^+[f(x,R_nv)-f(x,v)].
	\end{align*}
	It is immediate to check that Hypothesis \ref{hyp:IMass} so this process is well posed and has $\mu_{N}$ as an invariant measure.
\end{example}

Now we introduce a framework which will allow us to tackle both the situations of Example \ref{ex:finitedimPurerefl} and Example \ref{ex:finitedimfactor} simultaneously. Let $\{X_t,V_t\}_{t\geq 0}$ denote the Boomerang Sampler with generator \eqref{eq:simgen}. We shall assume throughout this section that the assumptions and notation stated at the start of Section \ref{sec:expconv} hold. Let $\{X_t^N,V_t^N\}_{t\geq 0}$ be the Boomerang Sampler with characteristics $(\Sigma_{x,N},\Proj_N,\Proj_N,\{\lambda_N^n\},\lref,\{R_N^n\}_n)$ which samples from the measure $\mu_N$ on the space $\cH^2$. Here $\lambda_n^N$ and $R_N^n$ are approximations of $\lambda_n$ and $R_n$ respectively and we shall require that
\begin{align}
\lVert \sum_{n=1}^\infty(\lambda_n-\lambda_n^N)\rVert^2 &\leq C(\Phi)\sum_{n>N}\gamma^2_n\label{eq:lambdaapprox}\\
\lVert \sum_{n=1}^\infty\lambda_n(x,v) \lVert R_nv-R_n^Nv\rVert_\cH\rVert_{L_\mu^2}^2 &\leq C\lVert \Sigma\nabla_x\Phi\rVert_{L_\pi^2}^2\sum_{n>N}\gamma^2_n\label{eq:Rapprox}.
\end{align}
Moreover we shall assume that $R_n^N\vperp=\vperp$ and that $\lambda_n^N(x,v)=\lambda_n^N(\xN,\vN)$ so that the process $\{X_t^N,V_t^N\}$ only changes the first $N$ components. As in Section \ref{sec:core}, Hypothesis \ref{hyp:IMass} is satisfied for these choices with respect to potential function $\Phi_N$ and by Proposition \ref{prop:IM} we have that $\mu_N$ is an invariant measure for this process. Note this process corresponds to the generator
\begin{align}
\cL_Nf(x,v)&=\langle \vN , \nabla_x f(x,v)\rangle_{\cH} -\langle \xN , \nabla_v f(x,v)\rangle_{\cH} +  \lref \int_{\cH} f(x,w^N+\vperp)-f(x,v) \nu_{0,N}(dw) \nonumber\\
&+ \sum_{n=1}^\infty\lambda_n^N(x,v)[f(x,R_n^Nv)-f(x,v)]\label{eq:gendeffinitedim}
\end{align}
for $x,v\in\cH$ and $f\in D(\cL_N)$. Define $\cP_t^N$ as follows
\begin{equation}\label{eq:semigpdeffinitedim}
\cP_t^Nf(x,v) = \mathbb{E}[f(X_t^{N,x,v}, V_t^{N,x,v})]
\end{equation} 
and recall that $\cP_t$ is the semigroup associated to the process $\{X_t,V_t\}_{t\geq 0}$.

In order to prove convergence of $\cP_t^N$ to $\cP_t$ in a suitable topology we shall rely on the following estimate.

\begin{lemma}\label{lem:estimatediffofoperators}
		Let $\{\cP_t\}$ be semigroup corresponding to the generator \eqref{eq:simgen}. Assume that $\Phi$ is twice continuously differentiable, bounded from below and has bounded Hessian. Let $\{\cP_t^N\}_{t\geq 0}$ be the semigroup defined by \eqref{eq:semigpdeffinitedim} with generator $\cL_N$ defined by \eqref{eq:gendeffinitedim}. Then there exists a constant $C$ which may depend on $\Phi$ but is independent of $t$ and $N$ such for each $f\in C_b^1(\cH^2), N\in \mathbb{N}, t\geq 0$ we have
		\begin{equation*}
		\lVert(\cL_N-\cL)\cP_t^Nf \rVert_{L_\mu^2} \leq C \left( \lVert \nabla_x f \rVert_\infty + \lVert \nabla_v f \rVert_\infty +\lVert f\rVert_\infty\right)\left(\sum_{i=N+1}^\infty \gamma^2_i\right)^\frac{1}{2}.
		\end{equation*}
\end{lemma}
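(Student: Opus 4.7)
I would decompose
\[ (\cL_N - \cL)g = I_{\mathrm{flow}} + I_{\mathrm{ref}} + I_{\mathrm{switch}} \]
into flow, refreshment, and switching contributions, bound each piece in $L^2_\mu$ by $(\|\nabla g\|_\infty + \|g\|_\infty)\bigl(\sum_{i>N}\gamma_i^2\bigr)^{1/2}$, and then apply the estimate with $g = \cP_t^N f$. The uniformity in $t$ hinges on the observation that $\cP_t^N$ evolves only the $\cH_N$-components of $(x,v)$ and leaves $\xperp,\vperp$ fixed; consequently, whenever the decomposition only couples with $\cH_N^\perp$-gradients of $g = \cP_t^N f$, those reduce to expectations of the corresponding gradients of $f$, yielding $\|\Proj_{N^\perp}\nabla_{x,v}\cP_t^N f\|_\infty \leq \|\nabla_{x,v} f\|_\infty$ uniformly in $t$ and $N$. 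Combined with the contraction $\|\cP_t^N f\|_\infty \leq \|f\|_\infty$, this is what delivers a $t$-independent constant.

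For the flow piece, $I_{\mathrm{flow}} = -\langle \vperp, \nabla_x g\rangle_\cH + \langle \xperp, \nabla_v g\rangle_\cH$; because $\vperp,\xperp \in \cH_N^\perp$, each inner product pairs only with the $\cH_N^\perp$-component of the corresponding gradient. Taking the $L^2_\mu$-norm and using $\|\xperp\|_{L^2_\pi} \lesssim \|\xperp\|_{L^2_{\pi_0}}$ (valid because $d\pi/d\pi_0 \propto e^{-\Phi}$ is bounded when $\Phi$ is bounded from below), together with the Gaussian identity $\|\xperp\|_{L^2_{\pi_0}}^2 = \|\vperp\|_{L^2_{\nu_0}}^2 = \sum_{i>N}\gamma_i^2$, gives the desired estimate. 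For the refreshment piece, factoring $\nu_0 = \nu_{0,N}\otimes\nu_{0,N^\perp}$ one finds
\[ I_{\mathrm{ref}}(x,v) = \lref\int\!\!\int\bigl[g(x,w^N + \vperp) - g(x,w^N + w^\perp)\bigr]\,\nu_{0,N}(dw^N)\,\nu_{0,N^\perp}(dw^\perp), \]
and a mean-value argument in the $\cH_N^\perp$ direction together with the same moment estimate closes the bound.

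For the switching piece I split
\[ I_{\mathrm{switch}} = \sum_n(\lambda_n^N - \lambda_n)\bigl[g(x,R_n^N v) - g(x,v)\bigr] + \sum_n \lambda_n\bigl[g(x,R_n^N v) - g(x,R_n v)\bigr]. \]
The first sum is controlled pointwise by $2\|g\|_\infty \sum_n|\lambda_n^N - \lambda_n|$, and \eqref{eq:lambdaapprox} together with $\|\cP_t^N f\|_\infty \leq \|f\|_\infty$ yields its $L^2_\mu$-estimate. For the second sum, I verify (as happens in Examples~\ref{ex:finitedimPurerefl} and~\ref{ex:finitedimfactor}) that $R_n v - R_n^N v \in \cH_N^\perp$ for every $n$, so that $|g(x,R_n^N v)-g(x,R_n v)| \leq \|\Proj_{N^\perp}\nabla_v g\|_\infty\,\|R_n^N v - R_n v\|$, after which \eqref{eq:Rapprox} closes the bound.

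The main obstacle is precisely this last step: the argument works cleanly only when the approximation scheme is constructed so that $R_n v - R_n^N v$ lies in $\cH_N^\perp$, which holds in the running examples but should be made an explicit hypothesis on $\{R_n^N\}$ in the abstract framework. An honest $\cH_N$-gradient estimate for $\cP_t^N f$ uniform in $t$ does not follow from Theorem~\ref{thm:smoothcore}, whose bound grows like $t^{1/2}$ and moreover is stated only for the smoothed intensity; circumventing $\cH_N$-derivatives via the perpendicular structure of the approximation is what makes the argument go through. Under that structural assumption the three pieces combine to yield the lemma, with the constant $C$ depending on $\Phi$ only through $\|d\pi/d\pi_0\|_\infty$, $\lref$, and $\|\Sigma\nabla_x\Phi\|_{L^2_\pi}$ (the last being finite by the linear growth of $\nabla_x\Phi$ in Assumption~\ref{ass:1}).
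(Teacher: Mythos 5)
Your proof follows essentially the same route as the paper's: the same three-way decomposition into flow, refreshment and switching terms, the same key observation that $\partial_{x_i}\cP_t^N f=\cP_t^N(\partial_{x_i}f)$ and $\partial_{v_i}\cP_t^N f=\cP_t^N(\partial_{v_i}f)$ for $i>N$ (giving $t$- and $N$-uniform control of the perpendicular gradients), the same Gaussian tail estimate combined with the lower bound on $\Phi$, and the same use of \eqref{eq:lambdaapprox} and \eqref{eq:Rapprox} after splitting the switching term. Your remark that closing the second switching term requires $R_nv-R_n^Nv\in\cH_N^{\perp}$ (so that only the perpendicular gradient of $\cP_t^Nf$ is paired against it) is a fair observation: the paper's proof invokes \eqref{eq:Rapprox} without spelling this out, but the property holds in both Examples \ref{ex:finitedimPurerefl} and \ref{ex:finitedimfactor} and is the intended reading of the standing requirement that the approximate process only changes the first $N$ components.
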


\begin{proof}
	To simplify notation let us set $f_t^N(x,v)=\cP_t^Nf(x,v)$ and notice that as the process $(X_t^N,V_t^N)$ only lives in $\cH_N^2$ and does not depend on $\xperp$ or $\vperp$ we have for $i>N$
	\begin{align}
	\partial_{x_i} f_t^N(x,v) &= \langle \nabla_x f_t^N, e_i\rangle_{\cH}  =\cP_t^N(\partial_{x_i}f)(x,v),\label{eq:tailestderivofNsemigpx}\\
	\partial_{v_i} f_t^N(x,v) &= \langle \nabla_vf_t^N, e_i\rangle_{\cH}  = \cP_t^N(\partial_{v_i}f)(x,v).\label{eq:tailestderivofNsemigpp}
	\end{align}
	In particular , for $f$ with bounded derivatives we have the estimates
	\begin{equation}\label{eq:projnablafN}
	\lVert (1-\Proj_N)\nabla_x f_t^N(x,v)\rVert_{L_\mu^2(\cH^2;\cH)} \leq \lVert \nabla_x f \rVert_\infty, \quad \lVert (1-\Proj_N)\nabla_v f_t^N(x,v)\rVert_{L_\mu^2(\cH^2;\cH)} \leq \lVert \nabla_v f \rVert_\infty
	\end{equation}
	
	Fix $g\in C_b^1(\cH^2)$ then
	\begin{align*}
	\lvert \cL g(x,v)-\cL_Ng(x,v)\rvert  &\leq \lvert \langle \vperp , \nabla_x g(x,v)\rangle_{\cH}\rvert+\lvert \langle \xperp , \nabla_v g(x,v)\rangle_{\cH}\rvert \label{eq:derivterms}\tag{*}\\
	&+  \lref \left\lvert\int_{\cH} g(x,w) \nu_{0}(dw)-\int_{\cH} g(x,w^N+\vperp)\nu_{0,N}(dw_N)\right\rvert \label{eq:refreshterms}\tag{**}\\
	&+ \sum_{n=1}^\infty\lvert \lambda_n(x,v) [g(x,R_nv)-g(x,v)]-\lambda_n^N(x,v) [g(x,R_n^Nv)-g(x,v)]\rvert.\label{eq:reflectionterms}\tag{***}
	\end{align*}
	Replacing $g$ with $f_t^N$ in \eqref{eq:derivterms}-\eqref{eq:reflectionterms}, taking the $L_\mu^2$-norm and using the triangle inequality we have
	\begin{align*}
	&\lVert \cL f_t^N(x,v)-\cL_Nf_t^N(x,v)\rVert_{L_\mu^2}  \leq \lVert \langle \vperp , \nabla_x f_t^N(x,v)\rangle_{\cH}\rVert_{L_\mu^2}+\lVert \langle \xperp, \nabla_v f_t^N(x,v)\rangle_{\cH}\rVert_{L_\mu^2} \label{eq:derivtermsnorm}\tag{$\dagger$}\\
	&+  \lref \left\lVert\int_{\cH} f_t^N(x,w) \nu_{0}(dw)-\int_{\cH} f_t^N(x,w^N+\vperp)\nu_{0}(dw)\right\rVert_{L_\mu^2} \label{eq:refreshtermsnorm}\tag{$\dagger\dagger$}\\
	&+ \left\lVert\sum_{n=1}^\infty \left( \lambda_n(x,v) [f_t^N(x,R_nv)-f_t^N(x,v)]-\lambda_n^N(x,v) [f_t^N(x,R_n^Nv)-f_t^N(x,v)]\right)\right\rVert_{L_\mu^2}.\label{eq:reflectiontermsnrom}\tag{$\dagger\dagger\dagger$}
	\end{align*}
	
	First let us consider \eqref{eq:derivtermsnorm}, by expanding in terms of $\{e_i\}_{i=1}^\infty$ and using \eqref{eq:projnablafN} we have
	\begin{align}
	\lVert \langle \vperp , \nabla_x f_t^N(x,v)\rangle_{\cH}\rVert_{L_\mu^2}^2 &=\int_{\cH^2} \sum_{i=N+1}^\infty v_i^2 (\partial_{x_i}f_t^N)^2 \mu(dx,dv) \leq \lVert \nabla_xf \rVert_\infty^2 \sum_{i=N+1}^\infty \gamma^2_i,\\
	\lVert \langle \xperp , \nabla_v f_t^N(x,v)\rangle_{\cH}\rVert_{L_\mu^2}^2 &=\int_{\cH^2} \sum_{i=N+1}^\infty x_i^2 (\partial_{v_i}f_t^N)^2 \mu(dx,dv) \leq \lVert \nabla_vf \rVert_\infty^2\lVert  \xperp \rVert_{L_\pi^2}^2 \label{eq:estfordagger},
	\end{align}
	Since we are assuming that $\Phi$ is bounded from below we can estimate $\lVert  \xperp \rVert_{L_\mu^2(\cH;\cH)}$, let $C_\Phi=\inf_{x\in \cH}\Phi(x)$ then
	\begin{equation}\label{eq:Gaussiantailx}
	\lVert  \xperp\rVert_{L_\mu^2(\cH;\cH)}^2 \leq e^{-C_{\Phi}} \sum_{i=N+1}^\infty \gamma^2_i.
	\end{equation}
	Therefore applying \eqref{eq:Gaussiantailx} to \eqref{eq:estfordagger} we have
	\begin{align}\label{eq:estfordaggerfinal}
	&\lVert \langle \vperp , \nabla_x f_t^N(x,v)\rangle_{\cH}\rVert_{L_\mu^2}+\lVert \langle \xperp , \nabla_v f_t^N(x,v)\rangle_{\cH}\rVert_{L_\mu^2} \\
	&\leq \left( \lVert \nabla_x f \rVert_\infty + e^{-\frac{1}{2}C_\Phi} \lVert \nabla_v f \rVert_\infty\right)\left(\sum_{i=N+1}^\infty \gamma^2_i\right)^\frac{1}{2}.
	\end{align}
	
	By the Fundamental Theorem of Calculus for any $g\in C_b^1(\cH)$ we have
	\begin{equation}\label{eq:FTC}
	\lvert g(x,w)-g(x,w^N+\vperp)\rvert \leq \int_0^1 \lvert \langle\nabla_v g(x,sw^N_\perp+(1-s)\vperp+w^N), w^N_\perp-\vperp \rangle \rvert ds \leq \lVert (1-\Proj_N)\nabla_vg\rVert_\infty \lVert w^N_\perp-\vperp\rVert_\cH.
	\end{equation}
	If we apply this with the function $g$ replaced by $f_t^N$ then 
	\begin{equation*}
	\lvert f_t^N(x,w)-f_t^N(x,w^N+\vperp)\rvert \leq \lVert\nabla_vf\rVert_\infty \lVert w_\perp^N-\vperp\rVert_\cH.
	\end{equation*}
	Now integrating with respect to $\mu$ we obtain an estimate for \eqref{eq:refreshtermsnorm}
	\begin{align*}
	\lvert\eqref{eq:refreshtermsnorm}\rvert^2 &\leq \lref^2\int_\cH\int_\cH\lvert f_t^N(x,w)-f_t^N(x,w^N+\vperp)\rvert^2\mu(dx,dv)\nu_0(dw) \\
	&\leq \lref^2\lVert\nabla_vf\rVert_\infty^2 \int_\cH\int_\cH\lvert w^N_\perp-\vperp\rvert^2 \nu_0(dw)\nu_0(dv) \leq 2\lref^2\lVert\nabla_vf\rVert_\infty^2 \sum_{i=N+1}^\infty \gamma^2_i.
	\end{align*}
	
	It remains to consider \eqref{eq:reflectiontermsnrom}. 
	\begin{align}
	&\left\lVert \sum_{n=1}^\infty \left(\lambda_n(x,v) [f_t^N(x,R_nv)-f_t^N(x,v)]-\lambda_n^N(x,v) [f_t^N(x,R_n^Nv)-f_t^N(x,v)]\right)\right\rVert_{L_\mu^2}\nonumber\\
	&\leq\lVert  \sum_{n=1}^\infty(\lambda_n(x,v)-\lambda_n^N(x,v)) [f_t^N(x,R_n^Nv)-f_t^N(x,v)]\rVert_{L_\mu^2}\nonumber\\
	&+ \lVert \sum_{n=1}^\infty\lambda_n(x,v) [f_t^N(x,R_nv)-f_t^N(x,R_n^Nv)]\rVert_{L_\mu^2}\nonumber\\
	&\leq 2\lVert f\rVert_\infty\left\lVert \sum_{n=1}^\infty( \lambda_n-\lambda_n^N) \right\rVert_{L_\mu^2}\\
	&+ \left\lVert \sum_{n=1}^\infty\lambda_n(x,v) [f_t^N(x,R_nv)-f_t^N(x,R_n^Nv)]\right\rVert_{L_\mu^2}.
	\end{align}
We can bound these terms using \eqref{eq:lambdaapprox} and \eqref{eq:Rapprox}.
	
	So combining all these estimates we have
	\begin{align*}
	&\lVert \cL f_t^N(x,v)-\cL_Nf_t^N(x,v)\rVert_{L_\mu^2}  \leq C\left( \lVert f\rVert_\infty+\lVert \nabla_x f \rVert_\infty + \lVert \nabla_v f \rVert_\infty + \lVert\nabla_vf\rVert_\infty\right)\left(\sum_{i=N+1}^\infty \gamma^2_i\right)^\frac{1}{2}.
	\end{align*}
\end{proof}

Now we will prove for every $f\in C_b^1(\cH\times \cH)$ that $\cP_t^Nf$ converges to $\cP_tf$ in $L_\mu^2$, uniformly for $t$ in compact intervals. 

\begin{theorem}\label{thm:finitedimconvfiniteT}
	Let $\{\cP_t\}$ be semigroup corresponding to the generator \eqref{eq:simgen}.  Assume that $\Phi$ is twice continuously differentiable, convex, bounded from below and has bounded Hessian. Let $\{\cP_t^N\}_{t\geq 0}$ be defined by \eqref{eq:semigpdeffinitedim}. Then for each $f\in C_b^1(\cH^2)$, $\cP_t^Nf$ converges to $\cP_t$ uniformly in time, that is for any $T>0$ and $f\in C_b^1(\cH^2)$ 
	\begin{equation*}
	\lim_{N\to \infty} \sup_{t\in[ 0,T]} \left\lVert \cP_tf-\cP_tf^N \right\rVert_{L_\mu^2} =0.
	\end{equation*}
	Moreover, fix $T>0$ then for any $f\in C_b^1(\cH^2)$ there is a constant $C=C(f,\Phi)$ such that for $N$ sufficiently large
	\begin{align*}
	\sup_{0\leq t\leq T}\lVert \cP_tf-\cP_t^Nf \rVert_{L_\mu^2}&\leq C(f,\Phi) T \left(\sum_{i=N+1}^\infty \gamma^2_i\right)^{\frac{1}{2}}
	\end{align*}
\end{theorem}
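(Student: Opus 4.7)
The plan is to apply a standard Duhamel (variation-of-constants) argument. Fix $t>0$ and consider the curve $s\mapsto\cP_s\cP_{t-s}^N f$ on $[0,t]$; it interpolates between $\cP_t^N f$ (at $s=0$) and $\cP_t f$ (at $s=t$). Differentiating in $s$ and using $\cP_s\cL=\cL\cP_s$ on the domain of $\cL$ gives
\[
\frac{d}{ds}\bigl[\cP_s\cP_{t-s}^N f\bigr]=\cP_s\cL\cP_{t-s}^N f-\cP_s\cL_N\cP_{t-s}^N f=\cP_s(\cL-\cL_N)\cP_{t-s}^N f,
\]
so that integration in $s$ will yield the key identity
\[
\cP_t f-\cP_t^N f=\int_0^t \cP_s(\cL-\cL_N)\cP_{t-s}^N f\,ds
\]
in $L^2_\mu$.

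From this identity the estimate is immediate. By Corollary \ref{cor:invmeas}, $\mu$ is invariant for $\cP_t$, so each $\cP_s$ is a contraction on $L^2_\mu$; taking norms inside the integral gives
\[
\|\cP_t f-\cP_t^N f\|_{L^2_\mu}\le\int_0^t\|(\cL-\cL_N)\cP_{t-s}^N f\|_{L^2_\mu}\,ds,
\]
and Lemma \ref{lem:estimatediffofoperators} controls the integrand uniformly in $s$ by $C(\|\nabla_x f\|_\infty+\|\nabla_v f\|_\infty+\|f\|_\infty)\bigl(\sum_{i>N}\gamma_i^2\bigr)^{1/2}$. This produces
\[
\|\cP_t f-\cP_t^N f\|_{L^2_\mu}\le t\,C(f,\Phi)\Bigl(\sum_{i>N}\gamma_i^2\Bigr)^{1/2},
\]
and taking $\sup_{t\in[0,T]}$ is exactly the quantitative bound claimed. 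Uniform convergence on $[0,T]$ then follows because $\Sigma$ is trace class, so the tail $\sum_{i>N}\gamma_i^2\to0$ as $N\to\infty$.

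The main obstacle is making the Duhamel identity rigorous in $L^2_\mu$: although Lemma \ref{lem:estimatediffofoperators} guarantees that $(\cL-\cL_N)\cP_{t-s}^N f\in L^2_\mu$, the formal derivative calculation above requires $\cP_{t-s}^N f$ to lie in $D(\cL)$ in an appropriate sense. My plan is to first verify the identity for $f\in\cylfunccomp(\cH\times\cH)$, which by Theorem \ref{thm:core} is a core for $(\cL,\DL)$ and, by an entirely parallel argument (the finite-dimensional dynamics $(X^N,V^N)$ being a standard PDMP on $\cH_N\times\cH_N$ with trivial flow in the orthogonal complement), also a core for $\cL_N$; for such $f$ the curve $s\mapsto\cP_s\cP_{t-s}^N f$ is strongly $L^2_\mu$-differentiable with the computed derivative and the identity holds on the nose. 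Extension to general $f\in C_b^1(\cH\times\cH)$ then proceeds by approximating $f$ in $L^2_\mu$ by $f_n\in\cylfunccomp$, obtained by Galerkin truncation in the eigenbasis $\{e_i\}$ of $\Sigma$ followed by mollification, chosen so that $\|\nabla f_n\|_\infty+\|f_n\|_\infty$ stays uniformly controlled by $\|\nabla f\|_\infty+\|f\|_\infty$; the bound above applied to each $f_n$ then passes to the limit using $L^2_\mu$-contractivity of $\cP_t$ on the left and the uniform $C_b^1$-bound of Lemma \ref{lem:estimatediffofoperators} on the right.
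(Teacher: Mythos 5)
Your proposal is correct and follows essentially the same route as the paper: the Duhamel identity $\cP_t f-\cP_t^N f=\int_0^t \cP_s(\cL-\cL_N)\cP_{t-s}^N f\,ds$ (the paper writes it as $\int_0^t\partial_s\cP_{t-s}\cP_s^N f\,ds$, the same thing after $s\mapsto t-s$), followed by $L^2_\mu$-contractivity of $\cP_s$ and Lemma \ref{lem:estimatediffofoperators} applied uniformly in the time variable. Your additional core/approximation discussion to justify the differentiation is a welcome extra layer of rigor that the paper's proof leaves implicit, but it is not a different argument.
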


\begin{proof}[Proof of Theorem \ref{thm:finitedimconvfiniteT}]
Fix $f\in C_b^1(\cH^2)$, we can write the difference of the semigroups in terms of their generators using as follows:
\begin{align*}
\lVert \cP_tf-\cP_t^Nf \rVert_{L_\mu^2} &= \lVert \int_0^t \partial_s\cP_{t-s}\cP_s^Nf ds \rVert_{L_\mu^2}\\
&\leq  \int_0^t \lVert\cP_{t-s}(\cL_N-\cL)\cP_s^Nf \rVert_{L_\mu^2}ds 
\end{align*}
Now since $\cP_t$ is a contraction semigroup on $L_\mu^2$ we have
\begin{align*}
\lVert \cP_tf-\cP_t^Nf \rVert_{L_\mu^2} &\leq  \int_0^t \lVert(\cL_N-\cL)\cP_s^Nf \rVert_{L_\mu^2}ds 
\end{align*}
By Lemma \ref{lem:estimatediffofoperators} we bound this with
\begin{align*}
\lVert \cP_tf-\cP_t^Nf \rVert_{L_\mu^2} &\leq  C t\left( \lVert \nabla_x f \rVert_\infty + \lVert \nabla_p f \rVert_\infty +\lVert f\rVert_\infty\right)\left(\sum_{i=N+1}^\infty \gamma^2_i\right)^\frac{1}{2}.
\end{align*}
\end{proof}

\begin{example}\label{ex:explicitrateex}
	Set $\cH=\ell^2$ to be the space of square integrable sequences and fix $s>1$ and define the operator 
	\begin{equation*}
	\Sigma = (n^{-s}x_n)_{n=1}^\infty.
	\end{equation*}
	Note this is a bounded, self-adjoint, positive operator of Trace class. Set $\pi_0=\mathcal{N}(0,\Sigma)$ and define $\Phi(x)=\lVert x\rVert_{\ell^2}^2/2$. Then the measure $\pi$ is given by
	\begin{equation*}
	\frac{d\pi}{d\pi_0}(x) = e^{-\frac{1}{2}\lVert x\rVert_{\ell^2}^2}\prod_{n=1}^\infty (1+n^{-s}).
	\end{equation*}
	 It is immediate to check that $\Phi$ is smooth, bounded below and has bounded Hessian so the conditions of Theorem \ref{thm:finitedimconvfiniteT} are satisfied. Therefore we can construct the pure reflection Boomerang sampler for this setting and a finite dimensional approximation with the canonical basis which converges on compact time intervals. As $\Phi$ is convex we will also obtain convergence uniform in time by Theorem \ref{thm:uniformfinitedimconv}. The rate of convergence is determined by 
	\begin{align*}
	  \sum_{i=N+1}^\infty \frac{1}{i^{s}} =: \zeta(s,N+1).
	\end{align*}
	Here $\zeta$ is the Hurwitz zeta function. To leading order $\zeta(s,N+1) \approx N^{1-s}/(1-s)$ so we may choose $s$ to obtain any polynomial rate. 
\end{example}

So far we have proven convergence for any finite time, but as we are interested in convergence to equilibria uniform in time estimates are more helpful. In order to prove these estimates we shall use the Hypocoercivity of Section \ref{sec:expconv}. The following theorem is inspired by \cite{CDO} in which exponential decay of the derivatives of a semigroup are used to prove uniform in time convergence of the weak error between an Euler scheme approximation and an SDE. This technique is also used in \cite{BDOZ} to show uniform in time convergence between a SDE on a fast dynamical network and an averaged SDE. In both of these papers the results rely upon obtaining exponential decay of the derivatives of the semigroup, conditions for such an estimate are given in \cite{CrisanOttobre}. In contrast for a PDMP the semigroup need not be even differentiable in all directions so we can not hope to have an estimate which decays exponentially. Instead of the derivative estimate we shall use the exponential convergence from the Hypocoercivity theory established in Section \ref{sec:expconv}. This has the advantage that we don't require derivative estimates however we instead need control on the error between the approximate invariant measure $\mu_N$ and the invariant measure for the infinite dimensional process $\mu$.    

\begin{theorem}\label{thm:uniformfinitedimconv}
	Let $\{\cP_t\}$ be semigroup corresponding to the generator \eqref{eq:simgen}. Assume that $\Phi$ is twice continuously differentiable, convex (or bounded with bounded derivative), bounded from below and has bounded Hessian. Let $\{\cP_t^N\}_{t\geq 0}$ be the corresponding to the finite dimensional approximation constructed and suppose that
	\begin{equation}\label{eq:invmeasureconverges}
	 \lim_{N\to\infty}\left\lVert \frac{d\mu}{d\mu_N}-1\right\rVert_{L_{\mu_N}^2} =0.
	\end{equation}
	Then for each $f\in C_b^1(\cH^2)$, $\cP_t^Nf$ converges to $\cP_t$ uniformly in time, that is for any $f$ bounded and measurable
	\begin{equation*}
	\lim_{N\to \infty} \sup_{t\geq 0} \left\lVert \cP_tf-\cP_tf^N \right\rVert_{L_\mu^2} =0.
	\end{equation*}
	Moreover, for any $f\in C_b^1(\cH^2)$ there is a constant $C=C(f,\Phi)$ such that for $N$ sufficiently large
	\begin{align*}
	\lVert \cP_tf-\cP_t^Nf \rVert_{L_\mu^2}&\leq C(f,\Phi)\left(\sum_{i=N+1}^\infty \gamma^2_i\right)^{\frac{1}{2}} +  \left\lVert \frac{d\mu}{d\mu_N}-1\right\rVert_{L_{\mu_N}^2}.
	\end{align*}
\end{theorem}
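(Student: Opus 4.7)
The proof combines the finite-time estimate of Theorem~\ref{thm:finitedimconvfiniteT} on a short time window with hypocoercive exponential convergence to equilibrium (Theorem~\ref{thm:Hypocoecivity}) beyond that window.

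First I would verify that the hypocoercivity result of Theorem~\ref{thm:Hypocoecivity} applies to $\cP_t^N$ with constants $(C,\kappa)$ that can be chosen independently of $N$. The truncated potential $\Phi_N(x)=\Phi(\Proj_N x)$ inherits convexity (or the bounded-with-bounded-derivative decomposition), boundedness from below, and boundedness of the Hessian from $\Phi$, uniformly in $N$, and the covariance operators $\Sigma_{x,N}$, $\Sigma_{v,N}$ are just the restrictions of $\Sigma_x,\Sigma_v$. In particular the Poincar\'e constant appearing in the application of~\cite[Proposition 4]{eisenhuth2021hypocoercivity} to $\cA_N$ dominates that of $\cA$, and the resolvent estimates of Proposition~\ref{prop:resest} go through uniformly in $N$. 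This yields, for every bounded measurable $g$,
\[\lVert \cP_t^N g - \mu_N(g)\rVert_{L_{\mu_N}^2} \le C\, e^{-\kappa t} \lVert g - \mu_N(g)\rVert_{L_{\mu_N}^2}.\]

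Next, for any $t \ge 0$, I would decompose
\[\cP_t f - \cP_t^N f \;=\; \bigl[\cP_t f - \mu(f)\bigr] \;-\; \bigl[\cP_t^N f - \mu_N(f)\bigr] \;+\; \bigl[\mu(f) - \mu_N(f)\bigr]\]
and control each term in $L_\mu^2$. The first is bounded by $Ce^{-\kappa t}\lVert f - \mu(f)\rVert_{L_\mu^2}$ by Theorem~\ref{thm:Hypocoecivity}. The third satisfies $|\mu(f) - \mu_N(f)| \le \lVert f\rVert_{L_{\mu_N}^2}\,\lVert d\mu/d\mu_N - 1\rVert_{L_{\mu_N}^2}$ by Cauchy--Schwarz. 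For the middle term, changing measure via $d\mu = (d\mu/d\mu_N)\,d\mu_N$ and using the uniform bound $|\cP_t^N f - \mu_N(f)| \le 2\lVert f\rVert_\infty$ yields
\[\lVert \cP_t^N f - \mu_N(f)\rVert_{L_\mu^2}^2 \le 2\lVert f\rVert_\infty\,\lVert \cP_t^N f - \mu_N(f)\rVert_{L_{\mu_N}^2}\,\lVert d\mu/d\mu_N\rVert_{L_{\mu_N}^2},\]
which decays like $e^{-\kappa t/2}$ by the $L_{\mu_N}^2$ hypocoercivity from Step~1, with prefactor uniformly bounded in $N$ thanks to~\eqref{eq:invmeasureconverges}.

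Finally I would combine the two regimes. For a threshold $T_0=T_0(N)$, Theorem~\ref{thm:finitedimconvfiniteT} controls $\sup_{t\le T_0}$ by $C(f,\Phi)\,T_0\bigl(\sum_{i>N}\gamma_i^2\bigr)^{1/2}$, while the three-term decomposition above controls $\sup_{t>T_0}$ by $C_1 e^{-\kappa T_0/2} + C_2\lVert d\mu/d\mu_N - 1\rVert_{L_{\mu_N}^2}$. Choosing $T_0 \sim -(2/\kappa)\log\bigl(\sum_{i>N}\gamma_i^2\bigr)^{1/2}$ balances the exponential against the polynomial bound and delivers the claimed qualitative uniform-in-time convergence; the quantitative bound stated in the theorem follows after absorbing the resulting logarithmic factor into $C(f,\Phi)$ for $N$ sufficiently large, or alternatively by a Duhamel-type argument that uses $\mu(\cL\cP_s^N f)=0$ to gain a factor $e^{-\kappa(t-s)}$ inside the integrated error estimate of Lemma~\ref{lem:estimatediffofoperators}. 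The main obstacle is the uniform-in-$N$ hypocoercivity step, together with the conversion between $L_\mu^2$ and $L_{\mu_N}^2$ norms in a way that remains quantitative as $N\to\infty$; assumption~\eqref{eq:invmeasureconverges} is precisely what powers both of these points.
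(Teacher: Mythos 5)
Your time-splitting argument does deliver the \emph{qualitative} statement: the three-term decomposition $\cP_tf-\cP_t^Nf=[\cP_tf-\mu(f)]-[\cP_t^Nf-\mu_N(f)]+[\mu(f)-\mu_N(f)]$ for $t>T_0$, combined with Theorem~\ref{thm:finitedimconvfiniteT} for $t\le T_0$ and $T_0(N)\sim-\tfrac{2}{\kappa}\log\bigl(\sum_{i>N}\gamma_i^2\bigr)^{1/2}$, gives $\sup_{t\ge 0}\lVert\cP_tf-\cP_t^Nf\rVert_{L^2_\mu}\to 0$, since $|\log\epsilon_N|\,\epsilon_N\to 0$. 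This is a genuinely different route from the paper, and your explicit discussion of why the hypocoercivity constants for $\cP_t^N$ can be taken uniform in $N$ (inheritance of convexity, lower bound and bounded Hessian by $\Phi_N$, monotonicity of the Poincar\'e constant under projection) is actually more detailed than the paper, which simply asserts this uniformity.

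However, there is a genuine gap for the \emph{quantitative} bound. Your splitting produces the term $C(f,\Phi)\,T_0(N)\bigl(\sum_{i>N}\gamma_i^2\bigr)^{1/2}$ on $[0,T_0]$, and since $T_0(N)\to\infty$ as $N\to\infty$, the logarithmic factor cannot be ``absorbed into $C(f,\Phi)$'': the constant in the theorem must be independent of $N$, so your argument only yields the weaker bound with an extra $\bigl|\log\sum_{i>N}\gamma_i^2\bigr|$ factor. The alternative you mention in one clause --- the Duhamel identity $\cP_tf-\cP_t^Nf=\int_0^t\cP_{t-s}(\cL_N-\cL)\cP_s^Nf\,ds$ with an $e^{-\kappa(t-s)}$ gain --- is indeed the paper's proof, but the step you would need to carry out is missing: $\cP_{t-s}$ only contracts exponentially on mean-zero functions, and $\mu\bigl((\cL_N-\cL)\cP_s^Nf\bigr)=\mu(\cL_N\cP_s^Nf)$ does \emph{not} vanish (only $\mu(\cL\cP_s^Nf)=0$ and $\mu_N(\cL_N\cP_s^Nf)=0$ do). The paper subtracts this mean, then controls $|\mu(\cL_N\cP_s^Nf)|=|\mu_N\bigl((\cL_N\cP_s^Nf)(\tfrac{d\mu}{d\mu_N}-1)\bigr)|\le\lVert\cP_s^N\cL_Nf\rVert_{L^2_{\mu_N}}\lVert\tfrac{d\mu}{d\mu_N}-1\rVert_{L^2_{\mu_N}}$, and finally invokes the uniform-in-$N$ hypocoercivity of $\cP_s^N$ on the mean-zero function $\cL_Nf$ to make the time integral of this bias term converge; this is exactly where the second summand $\lVert\tfrac{d\mu}{d\mu_N}-1\rVert_{L^2_{\mu_N}}$ in the stated estimate originates. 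Without that step, the claimed bound without logarithmic loss is not established.
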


\begin{proof}[Proof of Theorem \ref{thm:uniformfinitedimconv}]
Fix $f\in C_b^1(\cH^2)$, we can write the difference of the semigroups in terms of their generators as follows:
\begin{align*}
\lVert \cP_tf-\cP_t^Nf \rVert_{L_\mu^2} &= \lVert \int_0^t \partial_s\cP_{t-s}\cP_s^Nf ds \rVert_{L_\mu^2}\\
&\leq  \int_0^t \lVert\cP_{t-s}(\cL_N-\cL)\cP_s^Nf \rVert_{L_\mu^2}ds 
\end{align*}
We wish to use hypocoecivity to control this, to which end we add and subtract $\mu((\cL_N-\cL)\cP_s^Nf )$. Note that $\mu(\cL\cP_s^Nf) =0$ since $\mu$ is an invariant measure for $\cP_t$.
\begin{align*}
\lVert \cP_tf-\cP_t^Nf \rVert_{L_\mu^2}\leq  \int_0^t \lVert\cP_{t-s}(\cL_N-\cL)\cP_s^Nf  - \mu(\cL_N\cP_s^Nf)\rVert_{L_\mu^2} +\lvert\mu(\cL_N\cP_s^Nf)\rvert  ds
\end{align*}

 By Theorem \ref{thm:Hypocoecivity} we have
\begin{align*}
\lVert \cP_tf-\cP_t^Nf \rVert_{L_\mu^2}&\leq  \int_0^t  Ce^{-\kappa (t-s)}\lVert(\cL_N-\cL)\cP_s^Nf - \mu(\cL_N\cP_s^Nf) \rVert_{L_\mu^2} + \lvert\mu(\cL_N\cP_s^Nf)\rvert ds \\
&\leq   \int_0^t Ce^{-\kappa (t-s)}\lVert(\cL_N-\cL)\cP_s^Nf \rVert_{L_\mu^2} + (C+1)\lvert\mu(\cL_N\cP_s^Nf)\rvert ds.
\end{align*}

Let us consider the term $\lvert\mu(\cL_N\cP_s^Nf)\rvert$.
\begin{align*}
\lvert\mu(\cL_N\cP_s^Nf)\rvert &= \lvert\mu(\cL_N\cP_s^Nf) - \mu_N(\cL_N\cP_s^Nf)\rvert\rvert\\
& = \mu_N\left((\cL_N\cP_s^Nf)\left(\frac{d\mu}{d\mu_N}-1\right)\right)\\
&\leq \lVert \cL_N\cP_s^Nf \rVert_{L_{\mu_N}^2} \lVert \frac{d\mu}{d\mu_N}-1\rVert_{L_{\mu_N}^2}
\end{align*}
Under our assumptions we have that $\cP_t^N$ is hypocoercive with constants independent of $N$, therefore
\begin{align*}
\lvert\mu(\cL_N\cP_s^Nf)\rvert \leq Ce^{-\rho t}\lVert \cL_Nf \rVert_{L_{\mu_N}^2} \lVert \frac{d\mu}{d\mu_N}-1\rVert_{L_{\mu_N}^2}.
\end{align*}
Hence
\begin{align*}
\lVert \cP_tf-\cP_t^Nf \rVert_{L_\mu^2}&\leq   \int_0^t Ce^{-\kappa (t-s)}\lVert(\cL_N-\cL)\cP_s^Nf \rVert_{L_\mu^2}ds + \frac{C(C+1)}{\rho}\lVert \cL_Nf \rVert_{L_{\mu_N}^2} \lVert \frac{d\mu}{d\mu_N}-1\rVert_{L_{\mu_N}^2}.
\end{align*}

It remains to show that 
\begin{equation*}
\lim_{N\to \infty}\sup_{t\geq 0}\int_0^t e^{-\kappa (t-s)}\lVert(\cL_N-\cL)\cP_s^Nf \rVert_{L_\mu^2}ds=0.
\end{equation*}
This follows since we can bound the integrand using Lemma \ref{lem:estimatediffofoperators} which gives
\begin{align*}
\int_0^t e^{-\kappa (t-s)}\lVert(\cL_N-\cL)\cP_s^Nf \rVert_{L_\mu^2}ds&\leq \int_0^t e^{-\kappa (t-s)}ds C \left( \lVert \nabla_x f \rVert_\infty + \lVert \nabla_v f \rVert_\infty +\lVert f\rVert_\infty\right)\left(\sum_{i=N+1}^\infty \gamma^2_i\right)^\frac{1}{2}\\
&=\frac{1}{\kappa}(1-e^{-\kappa t}) C \left( \lVert \nabla_x f \rVert_\infty + \lVert \nabla_v f \rVert_\infty +\lVert f\rVert_\infty\right)\left(\sum_{i=N+1}^\infty \gamma^2_i\right)^\frac{1}{2}
\end{align*}
\end{proof}

\begin{example}[Example \ref{ex:explicitrateex} continued]
We showed in Example \ref{ex:explicitrateex} how to construct an example with an arbitrarily chosen polynomial rate of convergence. We wish to continue this example to show in this case that we get the same order of convergence uniform in time. Note that as $\Phi$ is convex we have that all the conditions of Theorem \ref{thm:uniformfinitedimconv} are satisfied provided \eqref{eq:invmeasureconverges} holds. If we show that
\begin{equation*}
    \lVert \frac{d\mu}{d\mu_N} -1\rVert_{L_{\mu_N}^2}^2 \leq C\sum_{i=N+1}^\infty\gamma_i^2
\end{equation*}
holds then \eqref{eq:invmeasureconverges} is satisfied and the rate of convergence of $\cP_t^Nf$ is the same as in Example \ref{ex:explicitrateex}. Note that if we don't require the rate of convergence then it is immediate to check that \eqref{eq:invmeasureconverges} holds since $d\mu/d\mu_N$ is bounded uniformly in $N$ by $2$ so the results follows by the dominated convergence theorem. For this example 
\begin{equation*}
    \frac{d\mu}{d\mu_N} = \frac{d\pi}{d\pi_N} = \frac{\exp(-\lVert \xperp\rVert_\cH^2/2)}{\pi_0(\exp(-\lVert \xperp\rVert_\cH^2/2))}.
\end{equation*}
Then we can write 
\begin{equation*}
    \lVert \frac{d\mu}{d\mu_N} -1\rVert_{L_{\mu_N}^2}^2 =\int_{\cH} \left\lvert \frac{\exp(-\lVert \xperp\rVert_\cH^2/2)}{\pi_0(\exp(-\lVert \xperp\rVert_\cH^2/2))} -1\right\rvert^2 \frac{\exp(-\lVert \xN\rVert_\cH^2/2)}{\pi_0(\exp(-\lVert \xN\rVert_\cH^2/2))} \pi_0(dx) . 
\end{equation*}
Under $\pi_0$ $\xN$ is independent of $\xperp$ so this simplifies to
\begin{equation*}
    \lVert \frac{d\mu}{d\mu_N} -1\rVert_{L_{\mu_N}^2}^2 =\int_{\cH} \left\lvert \frac{\exp(-\lVert \xperp\rVert_\cH^2/2)}{\pi_0(\exp(-\lVert \xperp\rVert_\cH^2/2))} -1\right\rvert^2 \pi_0(dx) =\int_{\cH}  \frac{\exp(-\lVert \xperp\rVert_\cH^2)}{\pi_0(\exp(-\lVert \xperp\rVert_\cH^2/2))^2} \pi_0(dx)-1   . 
\end{equation*}
Again using the independence structure of $\pi_0$ we can rewrite this as
\begin{equation*}
    \lVert \frac{d\mu}{d\mu_N} -1\rVert_{L_{\mu_N}^2}^2 =\prod_{i>N} \left[ \frac{\pi_0(\exp(-x_i^2))}{\pi_0(\exp(-x_i^2/2))^2} \right]-1   . 
\end{equation*}
As $\pi_0$ is a Guassian measure we can evaluate this integrals and obtain
\begin{equation*}
    \lVert \frac{d\mu}{d\mu_N} -1\rVert_{L_{\mu_N}^2}^2 =\prod_{i>N} \left[ \frac{\gamma_i^2+1}{\sqrt{2\gamma_i^2+1}} \right]-1  \sim \frac{1}{2}\sum_{i>N} \gamma_i^4 . 
\end{equation*}
\end{example}

\subsection{Finite dimensional approximation of Pure reflection Boomerang Sampler}

In this section we verify that \eqref{eq:lambdaapprox} and \eqref{eq:Rapprox} both hold for the approximation of the  Pure reflection Boomerang Sampler given in Example \ref{ex:finitedimPurerefl}.

%Consider the Pure Reflection Boomerang Sampler, then we can construct the following finite dimensional approximation. Recall in this case $R_1=R=-1$, $\lambda_1(x,v)=\lambda(x,v)=(\langle\nabla_x\Phi(x),v\rangle_\cH)_+$ and $\lambda_n=0$ for $n\geq 2$. Set 
%\begin{align*}
%R^Nv &= \vperp-v_N\\
%\lambda^N(x,v) &= (\langle\nabla_x\Phi(x_N),v_N\rangle_\cH)_+
%\end{align*}

\begin{lemma}
	Let $R_N$ and $\lambda_N$ be as in Example \ref{ex:finitedimPurerefl} then \eqref{eq:lambdaapprox} and \eqref{eq:Rapprox} hold.
\end{lemma}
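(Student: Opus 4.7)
In the pure reflection setting the sums in~\eqref{eq:lambdaapprox} and \eqref{eq:Rapprox} collapse to a single term, with $\lambda_1(x,v)=(\langle\nabla_x\Phi(x),v\rangle_\cH)^+$, $R_1 v=-v$, and finite-dimensional approximants $\lambda_1^N(x,v)=(\langle\nabla_x\Phi_N(x),v^N\rangle_\cH)^+$, $R_1^N v=-v^N+v_\perp^N$. The plan is to bound each quantity directly, exploiting (i) the elementary inequality $|a^+-b^+|\le|a-b|$, (ii) the global Lipschitz property of $\nabla\Phi$ (with some constant $L$) which follows from the standing bounded-Hessian assumption, (iii) the product structure $\mu_0=\pi_0\otimes\nu_0$ together with independence of $v^N$ and $v_\perp^N$ under $\nu_0$, and (iv) absolute continuity of $\pi$ with respect to $\pi_0$ with density $e^{-\Phi}/Z\le 1/Z$ since $\Phi\ge 0$ by Assumption~\ref{ass:1}.

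For \eqref{eq:lambdaapprox} the inequality above yields
\[
|\lambda_1-\lambda_1^N|\le |\langle\nabla\Phi(x),v_\perp^N\rangle_\cH|+|\langle\nabla\Phi(x)-\nabla\Phi_N(x),v^N\rangle_\cH|.
\]
Since $v^N\in\cH_N$ and $\nabla\Phi_N(x)=\mathrm{Proj}_N\nabla\Phi(x^N)$, the second inner product equals $\langle\nabla\Phi(x)-\nabla\Phi(x^N),v^N\rangle_\cH$, bounded by $L\|x_\perp^N\|\|v^N\|$. Taking the $L_\mu^2$-norm, the first piece contributes $\mathbb{E}_\pi[\sum_{i>N}\gamma_i^2|\partial_i\Phi|^2]\le \mathbb{E}_\pi[\|\nabla\Phi\|^2]\sum_{i>N}\gamma_i^2$ after integrating the Gaussian $v_\perp^N$; the finiteness of $\mathbb{E}_\pi[\|\nabla\Phi\|^2]$ follows from the linear growth of $\nabla\Phi$ and the finite second moment of $\pi$ (another consequence of the density bound). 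The second piece contributes $L^2\,\mathbb{E}_\pi[\|x_\perp^N\|^2]\,\mathbb{E}_{\nu_0}[\|v^N\|^2]\le L^2 Z^{-1}\,\mathrm{Tr}(\Sigma)\sum_{i>N}\gamma_i^2$, using $\mathbb{E}_{\pi_0}[\|x_\perp^N\|^2]=\sum_{i>N}\gamma_i^2$.

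For \eqref{eq:Rapprox} the direct computation $R_1 v-R_1^N v=-v-(-v^N+v_\perp^N)=-2v_\perp^N$ reduces the task to bounding $4\mathbb{E}_\mu[\lambda_1^2\|v_\perp^N\|^2]$. Estimate $\lambda_1^2\le|\langle\nabla\Phi,v\rangle|^2$, split $v=v^N+v_\perp^N$, and expand the square; independence of $v^N$ and $v_\perp^N$ under $\nu_0$ kills the cross term. The $v^N$ part integrates to $\mathbb{E}_\pi[\langle\Sigma\,\mathrm{Proj}_N\nabla\Phi,\nabla\Phi\rangle]\sum_{i>N}\gamma_i^2$, and the $v_\perp^N$ part is handled by the Gaussian fourth-moment identity of Lemma~\ref{lem:isserlis}, yielding terms of the form $\mathbb{E}_\pi[\langle(I-\mathrm{Proj}_N)\Sigma\nabla\Phi,\nabla\Phi\rangle]\sum_{i>N}\gamma_i^2+2\mathbb{E}_\pi[\sum_{i>N}\gamma_i^4|\partial_i\Phi|^2]$. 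The last sum is controlled using $\gamma_i^4|\partial_i\Phi|^2=\gamma_i^2((\Sigma^{1/2}\nabla\Phi)_i)^2\le\gamma_i^2\|\Sigma^{1/2}\nabla\Phi\|^2$, whence everything is dominated by a multiple of $\mathbb{E}_\pi[\|\Sigma^{1/2}\nabla\Phi\|^2]\sum_{i>N}\gamma_i^2$, which in turn is $\le\|\Sigma\|_{op}\mathbb{E}_\pi[\|\nabla\Phi\|^2]\sum_{n>N}\gamma_n^2$ and thus fits within a constant multiple of $\|\Sigma\nabla\Phi\|_{L_\pi^2}^2\sum_{n>N}\gamma_n^2$ with the operator-norm factors absorbed. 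No single step is truly difficult; the main care lies in the bookkeeping of projections and in extracting a clean Gaussian bound from the Isserlis expansion in \eqref{eq:Rapprox}.
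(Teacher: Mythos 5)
Your proof follows essentially the same route as the paper's: for \eqref{eq:lambdaapprox} the contraction $|a^+-b^+|\le |a-b|$, the Lipschitz bound on $\nabla_x\Phi$ from the bounded Hessian, and the Gaussian tail estimate \eqref{eq:Gaussiantailx}; for \eqref{eq:Rapprox} the identity $R_1v-R_1^Nv=-2\vperp$ followed by the Gaussian moment computation of Lemma~\ref{lem:isserlis}. On \eqref{eq:lambdaapprox} you are in fact more careful than the paper: you retain the term $\langle\nabla_x\Phi(x),\vperp\rangle_\cH$ arising from the difference $\langle\nabla_x\Phi(x),v\rangle_\cH-\langle\nabla_x\Phi_N(x),\vN\rangle_\cH$, which the paper's chain of inequalities silently drops, and you control it correctly via independence of $x$ and $v$, the linear growth of $\nabla_x\Phi$, and the density bound $e^{-\Phi}/Z\le 1/Z$.

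The one step I would not accept as written is the final absorption in \eqref{eq:Rapprox}: from $\lVert\Sigma\rVert_{op}\,\mathbb{E}_\pi[\lVert\nabla_x\Phi\rVert^2]\sum_{n>N}\gamma_n^2$ you cannot pass to a constant multiple of $\lVert\Sigma\nabla_x\Phi\rVert_{L_\pi^2}^2\sum_{n>N}\gamma_n^2$, because $\Sigma^{-1}$ is unbounded and hence $\mathbb{E}_\pi[\lVert\nabla_x\Phi\rVert^2]$ is not dominated by any multiple of $\lVert\Sigma\nabla_x\Phi\rVert_{L_\pi^2}^2$. What your Isserlis computation actually yields (and what the paper's own intermediate display yields as well, before an analogous $\Sigma$ versus $\Sigma^{1/2}$ slip in its last line) is a bound of the form $C\,\lVert\Sigma^{1/2}\nabla_x\Phi\rVert_{L_\pi^2}^2\sum_{n>N}\gamma_n^2=C\int_\cH\langle\Sigma\nabla_x\Phi,\nabla_x\Phi\rangle_\cH\,\pi(dx)\,\sum_{n>N}\gamma_n^2$. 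Since \eqref{eq:Rapprox} is only used in Lemma~\ref{lem:estimatediffofoperators} to produce a finite $\Phi$-dependent constant, this correct weaker form suffices for every downstream use; simply state the bound with $\Sigma^{1/2}$ rather than claiming the $\lVert\Sigma\nabla_x\Phi\rVert_{L_\pi^2}$ form.
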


\begin{proof}
	First consider \eqref{eq:lambdaapprox}. In this case the left hand side of \eqref{eq:lambdaapprox} simplifies to
	\begin{align*}
	\lVert\sum_{n=1}^\infty (\lambda_n-\lambda_n^N)\rVert  &= \lVert (\langle\nabla_x\Phi(x),v\rangle_\cH)_+ - (\langle\nabla_x\Phi(x_N),v_N\rangle_\cH)_+\rVert \\
	&\leq \lVert \langle\nabla_x\Phi(x)- \nabla_x\Phi(x_N),v\rangle_\cH\rVert\\
	&\leq \sqrt{\Tr(\Sigma)}\lVert \nabla_x\Phi(x)-\nabla_x\Phi(x_N)\rVert\\
	&\leq  \sqrt{\Tr(\Sigma)}\lVert \nabla_x^2\Phi\rVert_\infty\lVert x-x_N\rVert.
	\end{align*}
	Then \eqref{eq:lambdaapprox} follows by \eqref{eq:Gaussiantailx}. 
	
	Now consider \eqref{eq:Rapprox} the left hand side of which is
	\begin{align*}
	\sum_{n=1}^\infty\lVert \lambda_n(x,v) \lVert R_nv-R_n^Nv\rVert_\cH\rVert_{L_\mu^2}&=2\lVert (\langle\nabla_x\Phi(x),v\rangle_\cH)_+ \lVert \vperp\rVert_\cH\rVert_{L_\mu^2}\\
	&\leq 2\left(\sum_{i=1}^\infty\sum_{j>N}\int_{\cH^2} (\partial_{x_i}\Phi(x))^2v_i^2 v_j^2\mu(dx,dv)\right)^{\frac{1}{2}}.
	\end{align*}
	Now since $e_i$ are eigenvectors of $\Sigma$ we have that $\int v_i^2v_j^2d\nu=\delta_{ij} 3\gamma_j^4+\gamma^2_i\gamma^2_j(1-\delta_{ij})$ so 
	\begin{align*}
	&\sum_{n=1}^\infty\lVert \lambda_n(x,v) \lVert R_nv-R_n^Nv\rVert_\cH\rVert_{L_\mu^2}\\
	&= 2\left(\sum_{j>N}\gamma^2_j\sum_{\substack{i=1\\i\neq j}}^\infty\gamma^2_i\int_{\cH^2} (\partial_{x_i}\Phi(x))^2\pi(dx)+\sum_{j>N}3\gamma_j^4\int_{\cH^2} (\partial_{x_j}\Phi(x))^2\pi(dx)\right)^{\frac{1}{2}}\\
	&\leq 4\lVert \Sigma\nabla_x\Phi\rVert_{L_\pi^2}\left(\sum_{j>N}\gamma^2_j\right)^{\frac{1}{2}}
	\end{align*}
\end{proof}

\subsection{Finite dimensional approximation of Factorised Boomerang Sampler}

In this section we verify that \eqref{eq:lambdaapprox} and \eqref{eq:Rapprox} both hold for the approximation of the  Factorised Boomerang Sampler given in Example \ref{ex:finitedimfactor}..
%TConsider the Factorised Boomerang Sampler, then we can construct the following finite dimensional approximation. Recall in this case $R_nv=v-2\langle v,e_n\rangle_\cH e_n$, $\lambda_n(x,v)=(\partial_n\Phi(x)v^n)_+$. Set 
%\begin{align*}
%R_n^Nv &= R_n \quad \text{ for } n\leq N\\
%R_n^N v&=1 \quad \text{ for } n\geq N\\
%\lambda_n^N(x,v) &= \lambda(x_N,v_N) \quad \text{ for } n\leq N\\
%\lambda_n^N(x,v) & =0  \quad \text{ for } n\geq N.
%\end{align*}

\begin{lemma}
	Let $R_N$ and $\lambda_N$ be as in Example \ref{ex:finitedimfactor} then \eqref{eq:lambdaapprox} and \eqref{eq:Rapprox} hold.
\end{lemma}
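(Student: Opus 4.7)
The plan is to mirror the argument given just above for the pure reflection case, exploiting two simplifications afforded by the factorised structure. First, since $R_n v = v - 2\langle v, e_n\rangle_\cH e_n$ acts only on the $n$-th coordinate, and in Example~\ref{ex:finitedimfactor} the finite-dimensional approximation sets $R_n^N = R_n$ for $n \leq N$ and $R_n^N = I$ for $n > N$, we get the key cancellation $R_n v - R_n^N v = 0$ for every $n \leq N$. Second, $\lambda_n^N = 0$ for $n > N$, so only the low-frequency discrepancy $\partial_n\Phi(x) - \partial_n\Phi(\Proj_N x)$ and the high-frequency tail of $\nabla_x\Phi$ contribute to \eqref{eq:lambdaapprox}.

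For \eqref{eq:lambdaapprox}, I would decompose
$$\sum_{n=1}^\infty (\lambda_n - \lambda_n^N)(x,v) = \sum_{n=1}^N \big[(\partial_n\Phi(x) v_n)^+ - (\partial_n\Phi(\Proj_N x) v_n)^+\big] + \sum_{n > N} (\partial_n\Phi(x) v_n)^+.$$
Applying $|a^+ - b^+| \le |a - b|$, then Cauchy--Schwarz in the index $n$ (once on each of the two sums), and invoking the uniform Hessian bound from Assumption~\ref{ass:1}, the right-hand side is bounded pointwise by
$$\lVert \nabla_x^2\Phi\rVert_\infty\,\lVert \xperp\rVert_\cH\,\lVert v\rVert_\cH + \lVert \nabla_x\Phi(x)\rVert_\cH\,\lVert \vperp\rVert_\cH.$$
Taking $L_\mu^2$-norms, using independence of $x$ and $v$ under $\mu = \pi \otimes \nu_0$, the Gaussian identities $\E_{\nu_0}\lVert v\rVert^2 = \Tr\Sv = \sum_n \gamma_n^2$ and $\E_{\nu_0}\lVert\vperp\rVert^2 = \sum_{n > N}\gamma_n^2$, the tail estimate \eqref{eq:Gaussiantailx}, and the finiteness of $\lVert\lVert\nabla_x\Phi\rVert\rVert_{L_\pi^2}$ (from the linear growth in Assumption~\ref{ass:1} and the Gaussian moments of $\pi$) then yields the required $\le C(\Phi)\sum_{n > N}\gamma_n^2$.

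For \eqref{eq:Rapprox}, the cancellation leaves only the tail: for $n > N$ we have $\lVert R_n v - R_n^N v\rVert_\cH = 2|v_n|$, so
$$\sum_{n=1}^\infty \lambda_n(x,v) \lVert R_n v - R_n^N v\rVert_\cH = 2\sum_{n > N}(\partial_n\Phi(x) v_n)^+ |v_n| \le 2\sum_{n > N}|\partial_n\Phi(x)|\,v_n^2.$$
Squaring, integrating against $\mu$, and applying Lemma~\ref{lem:isserlis} to evaluate the Gaussian fourth moments $\E_{\nu_0}[v_n^2 v_m^2] = \gamma_n^2\gamma_m^2 + 2\gamma_n^4\delta_{nm}$ reduces the estimate to bounding the diagonal sum $12\sum_{n > N}\gamma_n^4 \E_\pi[(\partial_n\Phi)^2]$ and the off-diagonal sum $4\sum_{n \ne m,\,n,m > N}\E_\pi[|\partial_n\Phi||\partial_m\Phi|]\gamma_n^2\gamma_m^2$. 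The diagonal is controlled by $12\gamma_{\max}^2\big(\sum_{n > N}\gamma_n^2\big)\E_\pi[\lVert\nabla_x\Phi\rVert^2]$, and the off-diagonal by $\big(\sum_{n > N}\gamma_n^2\lVert\partial_n\Phi\rVert_{L_\pi^2}\big)^2 \le \big(\sum_{n > N}\gamma_n^2\big)\lVert\Sigma^{1/2}\nabla_x\Phi\rVert_{L_\pi^2}^2$, via two applications of Cauchy--Schwarz (first on the $\pi$-integral, then on the $\gamma_n^2$-weighted series). Both terms are of the form $C(\Phi)\sum_{n > N}\gamma_n^2$ and can be absorbed into the stated bound $C\lVert\Sigma\nabla_x\Phi\rVert_{L_\pi^2}^2\sum_{n > N}\gamma_n^2$ after the finite spectral prefactors ($\gamma_{\max}$, $\lVert\Sigma^{1/2}\rVert_{\mathrm{op}}$) and $\pi$-moments of $\nabla_x\Phi$ are lumped into the constant.

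No substantive obstacle is anticipated: both bounds reduce to Cauchy--Schwarz and routine Gaussian moment computations, and the central structural simplification is the cancellation $R_n - R_n^N = 0$ for $n \leq N$, which localises the reflection error in \eqref{eq:Rapprox} entirely to the high-frequency modes. If anything, the factorised case is cleaner than the pure reflection one treated above, where the single global reflection mixed all coordinates simultaneously.
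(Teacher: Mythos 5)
Your proof is correct and follows essentially the same route as the paper's: split the index at $N$, use $|a^+-b^+|\le|a-b|$ together with the bounded-Hessian (Lipschitz) control of $\nabla_x\Phi$ and the Gaussian tail bound \eqref{eq:Gaussiantailx} for \eqref{eq:lambdaapprox}, and exploit the cancellation $R_n^N=R_n$ for $n\le N$ plus Gaussian fourth moments and Cauchy--Schwarz for \eqref{eq:Rapprox}. The only differences are organisational (you apply Cauchy--Schwarz pointwise in $n$ before integrating, rather than expanding the square into a double sum over three index regimes), and your explicit absorption of the spectral prefactors into the constant is, if anything, slightly more careful than the paper's final $\Sigma$-versus-$\Sigma^{1/2}$ step.
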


\begin{proof}
	First consider \eqref{eq:lambdaapprox}. In this case the left hand side of \eqref{eq:lambdaapprox} simplifies to
	\begin{align*}
	\lVert \sum_{n=1}^\infty\lvert\lambda_n-\lambda_n^N\rvert\rVert^2  &=\int_{\cH^2} \sum_{n,m} \lvert \lambda_n(x,v)-\lambda_n^N(x,v)\rvert \lvert \lambda_m(x,v)-\lambda_m^N(x,v)\rvert\mu(dx,dv)\\
	&=\int_{\cH^2} \sum_{n,m\leq N} \lvert \lambda_n-\lambda_n^N\rvert \lvert \lambda_m-\lambda_m^N\rvert d\mu+\int_{\cH^2} \sum_{n,m>N} \lvert \lambda_n-\lambda_n^N\rvert \lvert \lambda_m-\lambda_m^N\rvert d\mu\\
	&+2\int_{\cH^2} \sum_{n\leq N,m>N} \lvert \lambda_n-\lambda_n^N\rvert \lvert \lambda_m-\lambda_m^N\rvert d\mu\\
		&\leq\int_{\cH^2} \sum_{n,m\leq N} \lvert v_nv_m\rvert \lvert \partial_n\Phi(x)-\partial_n\Phi(x_N)\rvert \lvert \partial_m\Phi(x)-\partial_m\Phi(x_N)\rvert d\mu\\
		&+\int_{\cH^2} \sum_{n,m> N} \lvert v_nv_m\rvert \lvert \partial_n\Phi(x)\rvert \lvert \partial_m\Phi(x)\rvert d\mu\\
		&+2\int_{\cH^2} \sum_{n\leq N,m> N} \lvert v_nv_m\rvert \lvert \partial_n\Phi(x)-\partial_n\Phi(x_N)\rvert \lvert \partial_m\Phi(x)\rvert d\mu
%\\	&\sum_{n=1}^N \lVert (\partial_n\Phi(x)v^n)_+ - (\partial_n\Phi(x_N)v^n)_+\rVert+ \sum_{n=N+1}^\infty \lVert (\partial_n\Phi(x)v^n)_+ \rVert \\
%	&\leq\sum_{n=1}^N \lVert (\partial_n\Phi(x) - \partial_n\Phi(x_N))v^n\rVert+ \sum_{n=N+1}^\infty \lVert \partial_n\Phi(x)v^n\rVert\\
%	&\leq\sum_{n=1}^N \gamma^2_n^\frac{1}{2}\lVert  \partial_n\Phi(x) - \partial_n\Phi(x_N)\rVert+ \sum_{n=N+1}^\infty \gamma^2_n^{\frac{1}{2}}\lVert \partial_n\Phi(x) \rVert\\
%	&\leq \lVert \langle\nabla_x\Phi(x)- \nabla_x\Phi(x_N),v\rangle_\cH\rVert\\
%	&\leq \sqrt{\Tr(\Sigma)}\lVert \nabla_x\Phi(x)-\nabla_x\Phi(x_N)\rVert\\
%	&\leq  \sqrt{\Tr(\Sigma)}\lVert \nabla_x^2\Phi\rVert_\infty\lVert x-x_N\rVert.
	\end{align*}
	Note that $v$ is independent of $x$ under $\mu$ and $\nu_0(\lvert v_nv_m\rvert) \leq \sqrt{\gamma^2_n\gamma^2_m}$. 
		\begin{align*}
	&\lVert \sum_{n=1}^\infty\lvert\lambda_n-\lambda_n^N\rvert\rVert^2  
	\leq\int_\cH \sum_{n,m\leq N} \sqrt{\gamma^2_n\gamma^2_m} \lvert \partial_n\Phi(x)-\partial_n\Phi(x_N)\rvert \lvert \partial_m\Phi(x)-\partial_m\Phi(x_N)\rvert d\pi\\
	&+\int_\cH \sum_{n,m> N} \sqrt{\gamma^2_n\gamma^2_m} \lvert \partial_n\Phi(x)\rvert \lvert \partial_m\Phi(x)\rvert d\pi+2\int_\cH \sum_{n\leq N,m> N} \sqrt{\gamma^2_n\gamma^2_m} \lvert \partial_n\Phi(x)-\partial_n\Phi(x_N)\rvert \lvert \partial_m\Phi(x)\rvert d\pi\\
	&\leq\left(\sum_{n\leq N}  \sqrt{\gamma^2_n} \lVert \partial_n\Phi(x)-\partial_n\Phi(x_N)\rVert_{L_\pi^2}  \right)^2\\
	&+\left( \sum_{n> N} \sqrt{\gamma^2_n} \lVert \partial_n\Phi(x)\rVert_{L_\pi^2}  \right)^2+2 \sum_{n\leq N,m> N} \sqrt{\gamma^2_n\gamma^2_m} \lVert \partial_n\Phi(x)-\partial_n\Phi(x_N)\rVert_{L_\pi^2} \lVert \partial_m\Phi(x)\rVert_{L_\pi^2} 	\end{align*}
	Now use the Mean value theorem
			\begin{align*}
	\lVert \sum_{n=1}^\infty\lvert\lambda_n-\lambda_n^N\rvert\rVert^2  
	&\leq\left(\sum_{n\leq N}  \sqrt{\gamma^2_n} \lVert\nabla_x\partial_n\Phi\rVert_\infty\lVert x-x_N\rVert_{L_\pi^2}  \right)^2+\left( \sum_{n> N} \sqrt{\gamma^2_n} \lVert \partial_n\Phi(x)\rVert_{L_\pi^2}  \right)^2\\
	&+2 \sum_{n\leq N,m> N} \sqrt{\gamma^2_n\gamma^2_m} \lVert\nabla_x\partial_n\Phi\rVert_\infty\lVert x-x_N\rVert_{L_\pi^2} \lVert \partial_m\Phi(x)\rVert_{L_\pi^2} 	\end{align*}
	Using Cauchy-Schwartz
	\begin{align*}
	\lVert \sum_{n=1}^\infty\lvert\lambda_n-\lambda_n^N\rvert\rVert^2  
	&\leq\left(\sum_{n\leq N}  \gamma^2_n\right)\left(\sum_{n\leq N} \lVert\nabla_x\partial_n\Phi\rVert_\infty^2\right)\lVert x-x_N\rVert_{L_\pi^2} ^2+\left( \sum_{n> N} \gamma^2_n  \right)\left( \sum_{n> N} \lVert \partial_n\Phi(x)\rVert_{L_\pi^2}^2  \right)\\
	&+2 \lVert x-x_N\rVert_{L_\pi^2}\sqrt{\sum_{n\leq N} \gamma^2_n}\sqrt{\sum_{n\leq N}\lVert\nabla_x\partial_n\Phi\rVert_\infty^2}\sqrt{\sum_{m> N}\gamma^2_m }\sqrt{\sum_{m> N} \lVert \partial_m\Phi(x)\rVert_{L_\pi^2}^2} 	\\
	&\leq\Tr(\Sigma) \lVert\nabla_x^2\Phi\rVert_\infty^2\lVert x-x_N\rVert_{L_\pi^2} ^2+\left( \sum_{n> N} \gamma^2_n  \right)\lVert \nabla\Phi(x)\rVert_{L_\pi^2}^2 \\
	&+2 \lVert x-x_N\rVert_{L_\pi^2}\sqrt{\Tr(\Sigma)}\lVert\nabla_x^2\Phi\rVert_\infty\sqrt{\sum_{m> N}\gamma^2_m } \lVert \nabla_x\Phi(x)\rVert_{L_\pi^2}
	\end{align*}
	
	Then \eqref{eq:lambdaapprox} follows by \eqref{eq:Gaussiantailx}. 
	
	Now consider \eqref{eq:Rapprox} the left hand side of which is
	\begin{align*}
	&\lVert \sum_{n=1}^\infty\lambda_n(x,v) \lVert R_nv-R_n^Nv\rVert_\cH\rVert_{L_\mu^2}^2\\
	&=\int_{\cH^2} \sum_{n=1}^\infty\sum_{m=1}^\infty\lambda_n(x,v) \lambda_m(x,v)\lVert R_nv-R_n^Nv\rVert_\cH\lVert R_mv-R_m^Nv\rVert_\cH\pi(dx)\nu_0(dv).
	\end{align*}
	Now if $n\leq N$ then $R_n^N=R_n$ so all terms vanish except when $n>N$ and $m>N$. In that case $R_n^N=1$ so $R_nv-R_n^Nv=-2v_ne_n$, thus we have
	\begin{align*}
\lVert \sum_{n=1}^\infty\lambda_n(x,v) \lVert R_nv-R_n^Nv\rVert_\cH\rVert_{L_\mu^2}^2&=4\int_{\cH^2} \sum_{n>N}\sum_{m>N}(\partial_n\Phi(x)v_n)^+ (\partial_m\Phi(x)v_m)^+\lvert v_n\rvert\lvert v_m\rvert\pi(dx)\nu_0(dv),\\
&\leq 4\int_{\cH^2} \sum_{n>N}\sum_{m>N}\lvert\partial_n\Phi(x)\rvert \lvert\partial_m\Phi(x)\rvert\lvert v_n\rvert^2\lvert v_m\rvert^2\pi(dx)\nu_0(dv).
\end{align*}
Now $\int v_i^2v_j^2d\nu=\delta_{ij} 3\gamma_j^4+\gamma^2_i\gamma^2_j(1-\delta_{ij})$, in particular $\int v_i^2v_j^2d\nu\leq  3\gamma^2_i\gamma^2_j$. 
\begin{align*}
\lVert \sum_{n>N}\lambda_n(x,v) \lVert R_nv-R_n^Nv\rVert_\cH\rVert_{L_\mu^2}^2&\leq 12\int_{\cH} \sum_{n>N}\sum_{m>N}\gamma^2_n\gamma^2_m\lvert\partial_n\Phi(x)\rvert \lvert\partial_m\Phi(x)\rvert\pi(dx)\\
&\leq 12\int_{\cH} \left(\sum_{n>N}\gamma^2_n\lvert\partial_n\Phi(x)\rvert \right)^2\pi(dx)\\
&\leq 12\left(\sum_{n>N}\gamma^2_n\right)\int_{\cH} \left(\sum_{n>N}\gamma^2_n\lvert\partial_n\Phi(x)\rvert^2 \right)\pi(dx)\\
&\leq 12\lVert\Sigma\nabla_x\Phi(x)\rVert_{L_\pi^2}^2\left(\sum_{n>N}\gamma^2_n\right) .
\end{align*}
\end{proof}

\section{Proofs}\label{sec:proof}
\subsection{Proofs of Section \ref{sec:PDMP-examples}}\label{sec:proofsofexamples}
\subsubsection{Proofs of Section \ref{sec:ZZ}}\label{sec:proofs_zzs}

\begin{proof}[Proof of Theorem \ref{thm:ZZnonGausscase}]
%Verification of Assumption~\ref{ass:general} follows the same lines as the proof of Theorem~\ref{thm:zzgaussian}, with the following additional observations.
To prove well-posedness of the piecewise deterministic Markov process we shall apply Theorem \ref{thm:normedspace}, to this end we show that Assumption \ref{ass:general} holds. 
\begin{itemize}
    \item[(i,viii)] %It suffices to check that the mapping $(x,v) \mapsto \lambda_i(x,v)$ is uniformly continuous for all $i \in \N$, as this implies $\lambda(x,v)$ is continuous. %\joris{Is continuity of $\lambda_i$ sufficient? We are summing infinitely many of them. To check (i), I find that necessarily
    %\[ \int_0^T \sum_{i=1}^{\infty} v_i \left( \frac{x_i + v_it}{\gamma_i^2} + \partial_i \Phi(x+vt) \right) \, d t < \infty,\] resulting at least in the condition that $\sum_{i=1}^{\infty} a_i^2/\gamma_i^2 < \infty$.} 
    Using Fr\'echet differentiability of $\Phi$, this follows from the estimate
\[ |\lambda_i(x,v) - \lambda_i(y,v)| \leq \frac{\lvert v_i\rvert}{\gamma_i^2} |x_i - y_i| + \lvert v_i\rvert |\partial_i \Phi(x) - \partial_i \Phi(y)|.\]
Now summing over $i$ and using the Cauchy-Schwartz inequality we have
\begin{align*}
    \lvert \lambda(x,v)-\lambda(y,v)\rvert \leq \lVert \C^{-1}v\rVert \|x - y\| + \lVert v\rVert \lVert \nabla_x\phi(x)-\nabla_x\phi(y)\rvert.
\end{align*}
Therefore $x\mapsto\lambda(x,v)$ is continuous, continuity in the $v$ component is similar.
%We may bound the first term by $\lVert \C^{-1}v\rVert \|x - y\|$, and $\partial_i \Phi(x)$ is locally Lipschitz continuous by Assumption \ref{ass:1} (\ref{item:derivativesofPhi}). 
\item[(ii)] This is immediate from the definition of $(Q_i)$.
\item[(iii)] Is immediate since the absolute value of velocities are preserved by reflections.
\item[(iv)] There are no refreshments.
%\item[(iii)] We have
%\begin{align*} \frac{d}{dt} \left( \|x + vt \|_{1}^2 / 2 + U(x+vt) \right) &= \langle x+vt, v \rangle_1 + \langle \nabla U(x+vt), v  \rangle \\
%& = \sum_{i=1}^{\infty} \left\{ \frac{(x_i+v_i t) v_i}{\gamma_i^{2}} + v_i \partial_i U(x+vt) \right\} \\
%& \geq \sum_{i=1}^{\infty} \lambda_i(x+vt).
%\end{align*}
%By Assumption~\ref{ass:1} (2), it follows that $\lim_{\|x\|_1 \rightarrow \infty}  \|x\|_{1}^2 / 2 + U(x) = \infty$. Hence
%\[ \int_0^{\infty} \sum_{i=1}^{\infty} \lambda_i(x+vt,v) \, d t \leq \lim_{t \rightarrow \infty}  \|x + vt \|_{1}^2 / 2 + U(x+vt) = \infty.\]
\item[(v)] This follows from \eqref{eq:boundforlambdaZZ} and Assumption \ref{ass:1} (3).
%We estimate
%\begin{align*}
%\sum_{i=1}^{\infty} \lambda_i(x,v) & \leq \sum_{i=1}^{\infty}  \frac{\lvert v_i\rvert}{\gamma_i^2} |x_i| + \sum_{i=1}^{\infty} \lvert v_i\rvert |\partial_i U(x) |.
%\end{align*}
%For the first term
%\begin{align*} 
% \sum_{i=1}^{\infty} \frac{1}{\gamma_i^2} \lvert v_i\rvert \lvert x_i\rvert \leq  \|x\|_{\alpha}\lVert v\rVert_{2-\alpha}.
%    \end{align*}
%    Now to check that $(x,v) \mapsto \sum_i \lvert v_i\rvert |\partial_i \Phi(x)|$ is bounded on bounded sets, we have
%\begin{align*}
%    \sum_{i=1}^{\infty} \lvert v_i\rvert |\partial_i \Phi(x)| & = \sum_{i=1}^{\infty} (\gamma^{2\alpha-4}\lvert v_i\rvert) (\gamma^{4-2\alpha}|\partial_i \Phi(x)|)  
%     \leq  \|\nabla \Phi(x)\|_{\alpha-2} \lVert v\rVert_{2-\alpha}.
%\end{align*}
%Now note that $\nabla U(x) \in \mathcal H^{-\alpha} \subset \mathcal{H}^{-(2-\alpha)}$ for $0 \leq \alpha < 1$, so the result follows from Assumption \ref{ass:1} (3).
\item[(vi)] There are no refreshments.
\item[(vii)] The flow is given by $\varphi_t(x,v)=(x+tv,v)$ so we have that \ref{ass:flowbound} holds with $c_t$ growing linearly.
\end{itemize}
%\joris{Again there should be a proof of stationarity.}
To show that $\mu$ is an invariant measure we shall appeal to Theorem \ref{thm:invmeas}, so we must show that \eqref{eq:absinvmeasurecond} holds. In this case $\LX f=\langle v,\nabla_xf\rangle_\cH$ then by \cite[Proposition 10.20]{DaPrato} the adjoint of $\LX $ in $L_{\pi_0}^2$ is given by
\begin{equation*}
    \LX ^*f = -\langle \nabla_xf,v\rangle_\cH+\langle \Sigma^{-1}v,x\rangle_\cH f.
\end{equation*}
Note that $\Sigma^{-1}v\in \cH$ so the above expression is well defined. Therefore we have
\begin{equation}\label{eq:invmeascondZZ1}
    e^{\Phi(x)}\LX ^*(e^{-\Phi})(x,v) = \left(\langle \nabla_x\Phi(x),v\rangle_\cH+\langle \Sigma^{-1}v,x\rangle _\cH\right) .
\end{equation}
Next we have 
\begin{equation*}
    Q^*((x,v),dw)=\sum_{i=1}^\infty \frac{\lambda_i(x,F_iv)}{\lambda(x,F_iv)}\delta_{F_iv}(dw).
\end{equation*}
Indeed we can verify \eqref{eq:Qadjoint} holds, the left hand side of \eqref{eq:Qadjoint} is
\begin{align*}
    \int_{\cH} \int_\cH g(v,w) Q^*((x,v),dw)\mu_0(dx,dv) = \sum_{i=1}^\infty\int_{\cH} \int_\cH \frac{\lambda_i(x,F_iv)}{\lambda(x,F_iv)}  g(v,F_iv) \mu_0(dx,dv).
\end{align*}
As $F_i$ leave $\mu_0$ invariant we can do a change of variables $v\to F_iv$ for each $i$.
\begin{align*}
    \int_{\cH} \int_\cH g(v,w) Q^*((x,v),dw)\mu_0(dx,dv) &= \sum_{i=1}^\infty\int_{\cH} \int_\cH \frac{\lambda_i(x,v)}{\lambda(x,v)}  g(F_iv,v) \mu_0(dx,dv)\\
    &=\int_{\cH} \int_\cH g(w,v) Q((x,v),dw)\mu_0(dx,dv).
\end{align*}
Thus \eqref{eq:Qadjoint} holds. Now
\begin{align*}
    \int_\cH \lambda(x,w)Q^*((x,v),dw)-\lambda(x,v) = \sum_{i=1}^\infty \frac{\lambda_i(x,F_iv)}{\lambda(x,F_iv)}\lambda(x,F_iv)-\lambda(x,v)
\end{align*}
Now using the definition of $\lambda_i$ we have
\begin{align}\label{eq:invmeascondZZ2}
    \int_\cH \lambda(x,w)Q^*((x,v),dw)-\lambda(x,v) = -\langle v,\nabla_x\Phi(x)\rangle_\cH - \langle \Sigma^{-1}v,x\rangle _\cH. 
\end{align}
Finally combining \eqref{eq:invmeascondZZ1} and \eqref{eq:invmeascondZZ2} we have \eqref{eq:absinvmeasurecond} holds. Then by Theorem \ref{thm:invmeas} we have $\mu$ is formally an invariant measure for the Zig Zag process.
\end{proof}

\subsubsection{Proofs of Section \ref{sec:BPS}}\label{sec:proofs_bps}

\begin{proof}[Proof of Proposition \ref{lem:choiceofalpha}]
Fix $x\in \h$ with $\C^{-1}x\in \h$. Under our assumptions,  $\nabla\Psi$ is  
$$
\nabla \Psi= \nabla \Phi(x)+ \C^{-1}x\,.
$$
Therefore 
\begin{align}\label{sp1}
\langle v,\nabla\Psi\rangle = \langle
v ,\nabla \Phi \rangle+ \langle
 \C^{-1}v ,x\rangle \,.
\end{align}
By Assumption \ref{ass:1} the first addend on the right hand side of the above is well defined and since $\C^{-1}v\in \h$ the second addend is also well defined for all $x\in \h, v\in \cVBP$. Now by continuity we have that $\langle v,\nabla\Psi\rangle$ is well defined for all  $x\in \h, v\in \cVBP$, and hence $\lambda$ is also. 
%Looking at the second addend on the RHS of the above, we have
%\begin{align*}
%\sum_{j=1}^{\infty} v_j \gamma_j^{2\eta-2} x_j &\leq \sum_{j=1}^{\infty} \lv v_j \rv \gamma_j^{2\eta-2} \lv x_j \rv \\
%&\les \nora{v}^2 + \sum_{j=1}^{\infty} x_j^2 \gamma_j^{2 \alpha} \gamma_j^{4(\eta-1)}
%\end{align*}
%Because $x \in \ha$, if $2 \alpha+ 4(\eta-1)\geq -2\alpha$ (i.e. $\eta \geq 1-\alpha$) then the series on the RHS of the above is finite.  Using  our assumption on the gradient of $U$, the same condition on $\eta$ and $\alpha$ will ensure that the first series on the RHS of \eqref{sp1} is convergent as well. This proves that $\lambda(x,v)$ is a well-defined quantity.

In order to prove the statement (2) it suffices to show that $\C^{-1}\C^{\zeta} \nabla\Psi(x)\in\h$ for every $x \in \h$.  Indeed, if this is the case, then it is easy to see that also $\lVert\C^{\zeta/2}\nabla\Psi\rVert$ is finite. To show that $\C^{\zeta-1} \nabla\Psi(x)\in\h$ we act as above and write for every $x\in \h,v\in \cVBP$ with $\C^{-1}x\in\h$ that
\begin{align*}
\C^{\zeta-1}\nabla\Psi= \C^{\zeta-1}\nabla \Phi(x)+ \C^{\zeta-2}x 
\end{align*}
Imposing that $\zeta\geq 2$ we ensure this is well defined.
% Again we look at the Gaussian contribution to the gradient of $\Psi$ (which is the least regular term):
% \begin{align*}
% \sum_{j=1}^{\infty}
% \gamma_j^{2\eta+1} \frac{x_j^2}{\gamma_j^4}
% = \sum_{j=1}^{\infty}
% {x_j^2} \gamma_j^{2\eta-3} \,.
% \end{align*}
% Imposing $2\eta-3 \geq -2 \alpha$ gives precisely condition \eqref{ass:eta}. 
\end{proof}

\begin{proof}[Proof of Proposition \ref{propIDBP}]
To show (1), 
\begin{align*}
R(x)[R(x)v] & = R(x) \left[ v - \frac {2 \langle v,  \nabla \Psi \rangle}{\| \C^{\zeta/2} \nabla \Psi \|^2} \C^{\zeta} \nabla \Psi\right]\\
& = v - \frac {2 \langle v,   \nabla \Psi \rangle}
{\| \C^{\zeta/2} \nabla \Psi \|^2} \C^{\zeta} \nabla \Psi-
2 \frac{\C^{\zeta} \nabla \Psi}{\| \C^{\zeta/2} \nabla \Psi \|^2}
\left( \lan v , \nabla \Psi \ran - 2 \frac{\lan v,  \nabla \Psi \ran }{\|\C^{\zeta/2}\nabla \Psi\|^2}\lan \C^{\zeta} \nabla \Psi,   \nabla \Psi\ran\right)\\
& = v+ \left[-4 \frac { \langle v,   \nabla \Psi \rangle}
{\| \C^{\zeta/2} \nabla \Psi \|^2} + 4
\frac{\lan v, \nabla \Psi \ran}{\|\C^{\zeta/2} \nabla\Psi\|^4}\lan \C^{\zeta}\nabla\Psi, \nabla\Psi\ran
\right] \C^{\zeta} \nabla \Psi  \, =v.
\end{align*}
Therefore the claim follows. As for point (2):
\begin{align*}
\lan R(x)v, \nabla \Psi \ran & =
\lan v, \nabla \Psi \ran - \frac {2 \langle v,   \nabla \Psi \rangle}{\|\C^{\zeta/2} \nabla \Psi \|^2} \langle \C^{\zeta} \nabla \Psi , \nabla \Psi \ran \,.
\end{align*}
Hence, under the constraints on the parameters stated in point (2), \eqref{step2} holds. To prove the statement in point (3), recall that the Fourier transform $\hat{\nu}$ of a measure $\nu$ on $\h$ 
$$
\hat{\nu}(\xi)= \int_{\ha}e^{i \langle z,\xi \rangle}d\nu(z) 
$$
and that a measure $\nu_0$ is a centered  Gaussian with covariance operator $\C^\zeta$ on $\h$ if and only if 
$$
\hat{\nu}_0(\xi) = e^{-\frac{1}{2} \langle \C^\zeta \xi, \xi \rangle} \,.
$$
Because Fourier transforms characterize  measures (see \cite[Proposition 1.7]{DaPrato}), we just need to impose $\hat{\nu}_0=\hat{\nu}_R$. To this end, start by defining the formal adjoint operator $R_x'$ of $R(x)$, namely the operator 
$$
R_x'\xi: =\xi-2 \frac{\langle \xi, \C^{\zeta} \nabla \Psi \rangle}{\|\C^{\zeta/2} \nabla \Psi\|^2}  \nabla \Psi \,.
$$
With this definition one has
$$
\langle R(x) z, \xi \rangle = \langle z, R(x)'\xi\rangle \, .
$$
Let us now calculate $\hat{\nu}_R$:
\begin{align*}
\hat{\nu}_R(\xi) & = \int_{\h}
e^{i \langle z, \xi \rangle }d\nu_R(z) = 
\int_{\h} e^{i \langle R(x)z, \xi \rangle }d\nu_0(z)\\
&= \int_{\h} e^{i \langle z, R_x'\xi \rangle }d\nu_0(z) = e^{-\frac{1}{2}\langle \C^\zeta (R_x'\xi), (R_x'\xi) \rangle } \,
\end{align*}
where in the first equality we have used the involutivity of $R_x$, which gives the standard change of variables formula
$$
\int g(z) \, d (\nu \circ R_x)(z) = \int (g \circ R_x)(z) \,  d\nu(z) \,.
$$
In order to have $\hat{\nu}_R=\hat{\nu}_0$, we then need to have 
$$
\langle \C^\zeta (R_x'\xi), (R_x'\xi) \rangle = \langle \C^\zeta \xi , \xi \rangle, \mbox{ for every } \xi \in \h . 
$$
Because 
\begin{align*}
   \langle \C^\zeta (R_x'\xi), (R_x'\xi) \rangle = \langle \C^\zeta \xi , \xi \rangle -4  \frac{\langle \xi, \C^{\zeta} \nabla \Psi \rangle}{\|\C^{\zeta/2} \nabla \Psi\|^2}\langle \C^\zeta \xi, \nabla \psi\rangle
   +4 \frac{\left\vert\langle \xi, \C^{\zeta} \nabla \Psi \rangle \right\vert^2}{\|\C^{\zeta/2} \nabla \Psi\|^4}
   \langle \C^{\zeta}\nabla\Psi,\nabla\Psi \rangle =\langle \C^\zeta \xi , \xi \rangle , 
\end{align*}
thus the reflection invariance of $\nu_0$ does hold. 

To prove statement (4) we show that \eqref{eq:absinvmeasurecond} holds. Since IDBPS fits in the setting of Example \ref{ex:detrefl} it is sufficient to verify the conditions given in that example, note that the first two conditions are satisfied since the reflection operator only acts on the $v$ component and by statement (3) of this proposition so it sufficies to show that \eqref{eq:detswitchrelcond} holds. As in the proof of invariance for the IDZZS we have that \eqref{eq:invmeascondZZ1} holds so it remains to show that
\begin{equation}\label{eq:suffcondinvBPS}
\langle \nabla_x\Phi(x),v\rangle+\langle \Sigma^{-1}v,x\rangle+ \lambda(x,R(x)v)-\lambda(x,v)=0
\end{equation}
By \eqref{step2} we have that 
\begin{equation*}
    \lambda(x,R(x)v)-\lambda(x,v) = -\langle \C^\eta v, \nabla\Psi\rangle =-\langle \nabla_x\Phi(x),v\rangle-\langle \Sigma^{-1}v,x\rangle.
\end{equation*}
Therefore \eqref{eq:suffcondinvBPS} holds.

\end{proof}

\subsubsection{Proof of Section \ref{sec:Boomerang}}\label{sec:proofs_Boomerang}

\begin{proof}[Proof of Proposition \ref{prop:IM}]
	Note that refreshment clearly preserves the invariant measure so without loss of generality we may set $\lref=0$. We first prove that \eqref{eq:Lmu=0onC0} holds by applying Theorem \ref{thm:invmeas}. To do so we must determine $\LX ^*$ and  $Q^*$. First let us consider $\LX $. The set $\mathbb{C}:=C^1(\cH^2)\cap L_{\mu_0}^2$ is a core\footnote{This follows since $\mathbb{C}$ is invariant under $e^{t\LX }$, the semigroup of $\LX $, and is dense in $L_{\mu_0}^2$ as it contains the Fourier basis. Therefore $\mathbb{C}$ is a core for $\LX $ by \cite[Proposition B.1.10]{Lorenzi}.} for $\LX $ in $L_{\mu_0}^2$. By Lemma \ref{lem:L0skewsym} we have that $\mathbb{C}\subseteq D(\LX ^*)$ and $\LX ^*f=-\LX f$. By Hypothesis \ref{hyp:IMass} we have that $e^{-\Phi}\in \mathbb{C}$ and hence $\LX ^*e^{-\Phi}=-\LX e^{-\Phi}$.
	
	Similar to Example \ref{ex:finitedimzigzag} we have that
	\begin{equation*}
		Q^*(z,dy) = \sum_{i=1}^\infty\frac{\lambda_i(R_iz)}{\tilde{\lambda}(R_iz)} \delta_{R_iz}.
	\end{equation*}
	Now we can write the left hand side of \eqref{eq:absinvmeasurecond} as
	\begin{align}
	&e^{\Phi(z)}\LX ^*(e^{-\Phi(\cdot)})(z)+\int_H \tilde{\lambda}(y)e^{\Phi(z)-\Phi(y)}Q^*(z,dy) -\tilde{\lambda}(z) \\
	&= 	\LX (\Phi)(z)+\sum_{i=1}^\infty\lambda_i(R_iz)e^{\Phi(z)-\Phi(R_iz)} -\tilde{\lambda}(z)\label{eq:testinvmeasass}
	\end{align}
	Recall $\Phi(R_iz)=\Phi(x,R_iv)=\Phi(x)$ as $R_i$ only acts on the $p$ component. Therefore \eqref{eq:testinvmeasass} simplifies to
	\begin{equation*}
	\LX (\Phi)(z)+\tilde{\lambda}(R_iz) -\tilde{\lambda}(z).
	\end{equation*}
	This is equal to zero by \eqref{eq:switchrel} since $\LX \Phi= \langle\nabla_x\Phi,\sP p\rangle_{\cH}$. By Theorem \ref{thm:invmeas} we have that \eqref{eq:Lmu=0onC0} holds for all $f\in C_0(\cH^2)$. We can extend this to hold for all $f\in L_{\mu}^2$ by density.%, noting that $D(\cL^0)$ is a core for $(\cL,D(\cL))$ by \cite[Proposition B.1.10]{Lorenzi}.
\end{proof}

\subsection{Proofs of Section \ref{sec:core}}

\begin{proof}[Proof of Lemma \ref{lem:wellposedframework}]
In order to verify that Hypothesis \ref{hyp:IMass} holds. The only assumption which does not follow immediately is \eqref{eq:lambdaupperbound}. By Cauchy-Schwartz and \eqref{eq:Rassump4} we have
\begin{align*}
        \lambda(x)&=\sum_{n=1}^\infty \frac{1}{2}(\langle\nabla_x\Phi(x),v-R_nv\rangle_{\cH})^+ \\
        &\leq \frac{1}{2}\lVert \nabla_x\Phi(x)\rVert_{\h} \sum_{n=1}^\infty\lVert (1-R_n)v\rVert_{\h} \\
        &\leq \frac{1}{2}\lVert \nabla_x\Phi(x)\rVert_{\h} \sqrt{\sum_{n=1}^\infty\lVert (1-R_n)v\rVert_{\h}^2}
        \leq \lVert \nabla_x\Phi(x)\rVert_{\h}\lVert v\rVert_\h.
    \end{align*}
    Then by Assumption \ref{ass:1}
    \begin{equation}\label{eq:lambdaframeworkupperbound}
        \lambda(x) \lesssim  (1+\lVert x\rVert_{\h})\lVert v\rVert_\h
    \end{equation}
    and \eqref{eq:lambdaupperbound} follows.
\end{proof}

\begin{prop}\label{prop:derformula}
	Let $\{\cPt_t\}$ be the semigroup corresponding to the generator \eqref{eq:smoothenedgen} and assume that $\lt_i$ is continuously differentiable for all $i$. Then for any $t>0, f\in \core, x,v \in \cH$ we have that \eqref{eq:derivativeofBS} holds.
%
%Here $\xi_t$ and $C_t$ are defined by \eqref{eq:defofxi}-\eqref{eq:defofC}.
\end{prop}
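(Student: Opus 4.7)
The strategy is to condition on the full jump history to obtain a finite dimensional representation of $\tilde{\cP}_t f(x,v)$ and then differentiate it explicitly in $(x,v)$. Given $\{N_t = n,\, T_1 = t_1, \ldots, T_n = t_n,\, J_1 = j_1, \ldots, J_n = j_n\}$ the trajectory of $(\tilde X_t, \tilde V_t)$ is the deterministic composition
\[
\Psi_{t,\underline{t},\underline{j}}(x,v) := \varphi_{t-t_n} \circ \cB_{j_n} \circ \varphi_{t_n-t_{n-1}} \circ \cdots \circ \cB_{j_1} \circ \varphi_{t_1}(x,v),
\]
and the joint density of the jump history on $\{0<t_1<\cdots<t_n<t\}$ is
\[
p_{\underline{t},\underline{j}}(x,v) = \Big(\prod_{m=1}^n \tilde{\lambda}_{j_m}(\Psi_{t_m^-,\underline{t},\underline{j}}(x,v))\Big)\, \exp\!\Big(-\int_0^t \tilde{\lambda}(\Psi_{s,\underline{t},\underline{j}}(x,v))\,ds\Big).
\]
Consequently $\tilde{\cP}_t f(x,v)$ is expressed as a countable sum (over $n$ and $(j_1,\ldots,j_n)$) of integrals of $f(\Psi_{t,\underline{t},\underline{j}}(x,v))\, p_{\underline{t},\underline{j}}(x,v)$ over these simplices.

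Next I would differentiate each summand in $(x,v)$. Since the flow generated by \eqref{eq:Hamdyn} is linear and each reflection $\cB_i$ is linear with Jacobian $\cB_i'$, the chain rule gives
\[
\nabla_{(x,v)} f(\Psi_{t,\underline{t},\underline{j}}(x,v)) = C_t\, \nabla f(X_t, V_t),
\]
with $C_t$ exactly the product in \eqref{eq:defofC}. The smoothed intensities defined in \eqref{eq:smoothlambda} are everywhere smooth under the assumption that $\nabla_x^2 \Phi$ is locally bounded, so the same chain rule applied to $\log p_{\underline{t},\underline{j}}$ yields
\[
\nabla_{(x,v)} \log p_{\underline{t},\underline{j}}(x,v) = \sum_{m=1}^n C_{t_m}\, \nabla \log \tilde{\lambda}_{j_m}(\Psi_{t_m^-,\underline{t},\underline{j}}(x,v)) - \sum_{i=1}^\infty \int_0^t C_s\, \nabla \tilde{\lambda}_i(\Psi_{s,\underline{t},\underline{j}}(x,v))\,ds,
\]
which, upon averaging over the jump history, is precisely the process $\xi_t$ of \eqref{eq:defofxi}. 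Summing the two contributions reproduces \eqref{eq:derivativeofBS}.

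The main difficulty will be justifying the interchange of differentiation with expectation and with the infinite sum over $n$. For the outer sum I would invoke Proposition \ref{lem:noexplosion}, applied to $(\tilde X_t, \tilde V_t)$ (whose wellposedness is inherited from Hypothesis \ref{hyp:WPass}, since the smoothed intensities still satisfy \eqref{eq:lambdaupperbound} by construction), to obtain uniform tail bounds on $\mathbb{P}_{x,v}(N_t \geq n)$ for $(x,v)$ in bounded sets. Since $f \in \core$ is supported in some ball $B_r$ and the Hamiltonian $\Ham$ is preserved both by $\varphi_t$ and by each reflection $\cB_i$ (there are no refreshments in $\tilde{\cP}_t$), the trajectory stays on the level set $\{\Ham = \Ham(x,v)\}$, so the smooth intensities $\tilde{\lambda}_i$ and their gradients are uniformly controlled along the path. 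Combined with the smoothness of $\phi$ and of $\Phi$, this will furnish the integrable dominating functions needed to legitimate term-by-term differentiation and thereby deliver \eqref{eq:derivativeofBS}.
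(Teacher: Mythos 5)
Your proposal is correct in substance and reaches \eqref{eq:derivativeofBS} by the same underlying mechanism as the paper (condition on the jumps, use linearity of the flow and of each $\cB_i$ to produce the transport factor $C_t$ of \eqref{eq:defofC}, and take a log-derivative of the jump law to produce the score term $\xi_t$ of \eqref{eq:defofxi}), but the bookkeeping is genuinely different. You condition on the full jump history and differentiate the explicit $n$-jump path density over the simplex, whereas the paper never writes these simplex integrals: it sets $F_{n,t}=\mathbb{E}_{x,v}[\mathbbm{1}_{[T_n,T_{n+1})}(t)f(X_t,V_t)]$, observes the one-step recursion $F_{n,t}=QF_{n-1,\cdot}(t,x,v)$ through the first-jump operator \eqref{eq:Q}, and proves the gradient formula \eqref{eq:indhyp} by induction on $n$, the base case being the direct differentiation of $F_{0,t}=f(\varphi_t(x,v))e^{-\Lambda_t(x,v)}$. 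Your route makes the delicate points visible --- interchanging $\nabla$ with the expectation, with the sum over $n$, and with the countably many jump channels --- and the domination you propose (tail bounds from Proposition \ref{lem:noexplosion}, conservation of the norm/$\Ham$ in the absence of refreshments, the support of $f$ in $B_r$, local bounds on $\lt_i$ and $\nabla\lt_i$) is exactly the right input; the paper keeps this interchange largely implicit, at the price of organizing the computation so that each inductive step only involves a single time integral that can be differentiated routinely. Both arguments quietly require summability of $\sum_i\nabla\lt_i$ along trajectories (this is what makes the second term of $\xi_t$ meaningful), so your plan is no weaker than the paper's proof on that point; to complete it you would only need to write out the dominating bounds you indicate, which on bounded trajectories reduce to the estimates used later in Proposition \ref{prop:coresmoothcase}.
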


\begin{proof}[Proof of Proposition \ref{prop:derformula}]
Define the total rate of events to be
\begin{equation*}
\Lambda_t(x,v) = \sum_{i=1}^\infty\int_0^t\lt_i(\varphi_s(x,v))ds.
\end{equation*}
Set
\begin{equation*}
F_{n,t}=F_{n,t}(x,v)=\mathbb{E}_{x,v}[\mathbbm{1}_{[T_n,T_{n+1})}(t)f(X_t,V_t)]
\end{equation*} 
then we have
\begin{equation*}
\tilde \cP_tf(x,v)= \sum_{n=0}^{\infty} F_{n,t}.
\end{equation*}
Next we observe that for $n\geq 1$ $F_{n,t}=QF_{n-1,\cdot}(t,x,v)$ where
\begin{align}\label{eq:Q}
Qg(t,x,v) &=\mathbb{E}_{x,v}[\mathbbm{1}_{T_1 < t}g(t-T_1,X_{t},V_t)]\nonumber\\
&= \sum_{i=1}^\infty\int_0^t e^{-\Lambda_s(x,v)} \lt_i(\varphi_s(x,v))\cB_i g(t-s, \varphi_s(x,v))ds.
\end{align}

We shall make the following inductive hypothesis.
	\begin{align}\label{eq:indhyp}
\nabla F_{n,t}&=\mathbb{E}[\mathbbm{1}_{[T_n,T_{n+1})}(t) C_t\nabla f(X_t,V_t)]-\mathbb{E}\left[\left(\int_0^tC_s \nabla\lambda(X_s,V_s)ds\right)\mathbbm{1}_{[T_n,T_{n+1})}(t) f(X_t,V_t)\right] \\
&+\mathbb{E}\left[\sum_{m=1}^nC_{T_m-}\nabla \log\left(\lambda_{J_m}\right)(X_{T_m},V_{T_m-})\mathbbm{1}_{[T_n,T_{n+1})}(t) f(X_t,V_t)\right]
\end{align}

We shall first show that \eqref{eq:indhyp} holds for $n=0$. For $n=0$ we can write
\begin{equation*}
F_{0,t}=f(\varphi_t(x,v))e^{-\Lambda_t(x,v)}.
\end{equation*}
We can differentiate this to obtain
\begin{equation*}
\nabla F_{0,t} = \nabla \varphi_t(x,v)(\nabla f)(\varphi_t(x,v))e^{-\Lambda_t(x,v)} -f(\varphi_t(x,v))\nabla \Lambda_t(x,v)e^{-\Lambda_t(x,v)}.
\end{equation*}
Rewriting this in terms of expectations we have
\begin{equation*}
\nabla F_{0,t} = \mathbb{E}[\mathbbm{1}_{[T_0,T_{1})}(t)(\nabla \varphi_t(x,v)(\nabla f)(X_t,V_t)-f(X_t,V_t)\nabla \Lambda_t(x,v))].
\end{equation*}

Let us assume that \eqref{eq:indhyp} holds for some $n\geq 0$. We can differentiate \eqref{eq:Q} to find
\begin{align*}
\nabla Qg(t,x,v) &= \sum_{i=1}^\infty\int_0^t e^{-\Lambda_s(x,v)} \lt_i(\varphi_s(x,v))\nabla \varphi_s(x,v)(\nabla\cB_i g)(t-s, \varphi_s(x,v))ds \\
&-\sum_{i=1}^\infty\int_0^t \nabla \Lambda_s(x,v) e^{-\Lambda_s(x,v)} \lt_i(\varphi_s(x,v))\cB_i g(t-s, \varphi_s(x,v))ds\\
&+ \sum_{i=1}^\infty\int_0^t e^{-\Lambda_s(x,v)} \nabla\varphi_s(x,v)\nabla\lt_i(\varphi_s(x,v))\cB_i g(t-s, \varphi_s(x,v))ds
\end{align*}
We can rewrite these in terms of expectations using the definition of $Q, J_1$.
\begin{align*}
\nabla Qg(t,x,v) &= \mathbb{E}_{x,v}\left[\mathbbm{1}_{T_1 < t} \nabla \varphi_{T_1}(x,v)(\nabla\cB_{J_1} g)(t-T_1,X_{T_1},V_{T_1-})\right] \\
&-\mathbb{E}_{x,v}\left[\mathbbm{1}_{T_1 < t} \nabla\Lambda_{T_1}(x,v)  g(t-T_1, X_{T_1},V_{T_1})\right]\\
&+ \mathbb{E}_{x,v}\left[\mathbbm{1}_{T_1 < t} \nabla \varphi_{T_1}(x,v)(\nabla\log\lt_{J_1})(X_{T_1},V_{T_1-})g(t-T_1, X_{T_1},V_{T_1})\right].
\end{align*}
Now if $J_1=i$ for some $i\geq 1$ we have
\begin{equation*}
(\nabla\cB_{i} g)(s,y,w) = \nabla \left[ g(s,y,R_i w) \right] = \cB'_i (\nabla g)(s,y,R_nw) = \cB'_i \cB_i(\nabla g)(s,y,w) 
\end{equation*}
where 
\begin{equation*}
\cB'_i = \left(\begin{array}{cc}
1 & 0\\
0 & R_i
\end{array}\right).
\end{equation*}
% If $J_1=0$ we have
% \begin{equation*}
% (\nabla\cB_{0} g)(s,y,w) = \int_\cH \nabla_{x,v}(g(s,y,u)) \nu_0(du) = \cB'_0 \cB_0(\nabla g)(s,y,w)
% \end{equation*}
% where $\cB_0'=1$.
With this notation in mind we can write 
\begin{align*}
\nabla Qg(t,x,v) &= \mathbb{E}_{x,v}\left[\mathbbm{1}_{T_1 < t} \nabla \varphi_{T_1}(x,v)\cB_{J_1}' (\nabla g)(t-T_1,X_{T_1},V_{T_1})\right] \\
&-\mathbb{E}_{x,v}\left[\mathbbm{1}_{T_1 < t} \nabla\Lambda_{T_1}(x,v)  g(t-T_1, X_{T_1},V_{T_1})\right]\\
&+ \mathbb{E}_{x,v}\left[\mathbbm{1}_{T_1 < t} \nabla \varphi_{T_1}(x,v)(\nabla\log\lt_{J_1})(X_{T_1},V_{T_1-})g(t-T_1, X_{T_1},V_{T_1})\right].
\end{align*}

Now setting $g(t,x,v)=F_{n,t}(x,v)$ and using the inductive hypothesis \eqref{eq:indhyp} we have
\begin{align*}
&\nabla Qg(t,x,v) = \mathbb{E}_{x,v}\left[\mathbbm{1}_{[T_n,T_{n+1})}(t) C_t\nabla f(X_t,V_t)]\right]\\
&+\mathbb{E}_{x,v}\left[\sum_{m=2}^nC_{T_m-}\nabla \log\left(\lt_{J_m}\right)(X_{T_m},V_{T_m-})\mathbbm{1}_{[T_n,T_{n+1})}(t) f(X_t,V_t)\right] \\
&-\mathbb{E}_{x,v}\left[\mathbbm{1}_{[T_{n+1},T_{n+2})}(t) \nabla\Lambda_{T_1}(x,v)  f( X_t,V_t)\right]-\mathbb{E}_{x,v}\left[\left(\int_{T_1}^tC_{T_1}\nabla \varphi_s \nabla\lt(X_s,V_s)ds\right)\mathbbm{1}_{[T_{n+1},T_{n+2})}(t) f(X_t,V_t)\right]\\
&+ \mathbb{E}_{x,v}\left[\mathbbm{1}_{[T_1,T_{2})}(t) \nabla \varphi_{T_1}(x,v)(\nabla\log\lt_{J_1})(X_{T_1},V_{T_1-})g(t-T_1, X_{T_1},V_{T_1})\right].
\end{align*}
Combining these terms we have that \eqref{eq:indhyp} holds for $n+1$, therefore holds for all $n\geq 0$.
\end{proof}

\begin{prop}\label{prop:coresmoothcase}
	Let $\{\cPt_t\}$ be the semigroup corresponding to the generator \eqref{eq:smoothenedgen} and assume that $\Phi$ is twice continuously differentiable with both $\nabla_x\Phi,\nabla_x^2\Phi$ bounded on bounded sets. For any $f\in C_b^1(\cH\times\cH)$ with $\mathrm{supp}(f)\subseteq B_r$ for some $r>0$ then there exists a constant $C(r)$ which depends on $r$ and $\Phi$ such that \eqref{eq:boundonxi} holds.
% \begin{align}
% 	  \lVert\nabla \cPt_tf\rVert_{\infty}& \leq\lVert\nabla f \rVert_{\infty}+\lVert f\rVert_{\infty} \sqrt{\frac{C(r) t}{2}}.
% 	\end{align}
% 	In particular we have that the set
% 	\begin{equation*}
% 	\core = \{f\in  C_b^1(\cH\times\cH):  \exists r>0 \text{ with } \mathrm{supp}(f)\subseteq B_r\}
% 	\end{equation*}
% 	is a core for $(\cL,\DL)$.
\end{prop}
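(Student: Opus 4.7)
The plan is to recognise $\xi_t$ as an $\h\times\h$-valued local martingale obtained by compensating the jump terms, and then to exploit the resulting predictable quadratic variation together with conservation of the Hamiltonian to bound its second moment uniformly on $B_r$.

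For each $i\in\N$, denote by $N^i_t$ the counting process for reflections of type $i$, which admits compensator $\int_0^t\lt_i(X_s,V_s)\,ds$ under $\P_{x,v}$. Using the identity $\lt_i\,\nabla\log\lt_i=\nabla\lt_i$, I would rewrite the $i$th contribution to $\xi_t$ as
\[ \xi_t^{(i)} := \sum_{m\leq N_t,\, J_m=i} C_{T_m-}\nabla\log\lt_i(X_{T_m},V_{T_m-})-\int_0^t C_s\nabla\lt_i(X_s,V_s)\,ds = \int_0^t C_{s-}\nabla\log\lt_i(X_{s-},V_{s-})\,d\tilde N^i_s,\]
a stochastic integral against the compensated counting measure $\tilde N^i$. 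Since the integrand is predictable (because $C_{s-}$ and $(X_{s-},V_{s-})$ are), $\xi^{(i)}$ is a Hilbert-space valued local martingale with predictable quadratic variation $\int_0^t\|C_s\nabla\log\lt_i\|^2\lt_i\,ds$. Different jump types never fire simultaneously, so the $\xi^{(i)}$ are pairwise orthogonal; combining this with the fact that $C_s$ is orthogonal yields
\[ \E\bigl[\|\xi_t\|^2_{\h\times\h}\bigr]=\E\left[\int_0^t\sum_{i=1}^\infty\frac{\|\nabla\lt_i(X_s,V_s)\|^2}{\lt_i(X_s,V_s)}\,ds\right].\]

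It then remains to bound the integrand by $C(r)^2$ uniformly on $B_r$. Since $(\tilde X,\tilde V)$ has no refreshments, the Hamiltonian $\Ham$ is conserved by both the flow and the reflections (Hypothesis \ref{hyp:WPass}), so the trajectory stays in $B_r$ for all $s\geq 0$ whenever the initial condition does. With $u_i(x,v):=\langle\nabla_x\Phi(x),(I-R_i)v\rangle$, so that $\lt_i=\log(1+e^{u_i})$, the chain rule gives
\[ \frac{\|\nabla\lt_i\|^2}{\lt_i}=\frac{e^{2u_i}}{(1+e^{u_i})^2\log(1+e^{u_i})}\bigl(\|\nabla_x u_i\|^2+\|\nabla_v u_i\|^2\bigr),\]
in which the scalar prefactor is continuous and vanishes as $u_i\to\pm\infty$, hence is bounded by a universal constant. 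Each $R_i$ is an isometric involution and therefore self-adjoint, so \eqref{eq:Rassump4} yields $\sum_i\|(I-R_i)w\|^2=4\|w\|^2$ for every $w\in\h$. Applying this to $w=\nabla_x\Phi(x)$ and to $w=v$ gives
\[ \sum_{i=1}^\infty\|\nabla_v u_i\|^2 = 4\|\nabla_x\Phi(x)\|^2, \qquad \sum_{i=1}^\infty\|\nabla_x u_i\|^2 \leq 4\|\nabla_x^2\Phi(x)\|_{\mathrm{op}}^2\|v\|^2,\]
both bounded on $B_r$ by the regularity assumed on $\Phi$. Combining these estimates gives \eqref{eq:boundonxi}.

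The main technical obstacle is handling the infinite family of Hilbert-space valued compensated Poisson integrals: one must verify that the series $\sum_i\xi^{(i)}_t$ converges in $L^2$ and upgrade the a priori local-martingale identity to an honest $L^2$-isometry. Both are resolved by the uniform summability estimate above together with a standard localization argument using stopping times $\tau_k=T_{n_k}\wedge t$, exploiting that for $(x,v)\in B_r$ only finitely many jumps occur in time $t$ with high probability by Proposition \ref{lem:noexplosion}.
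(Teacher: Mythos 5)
Your argument is correct and follows essentially the same route as the paper: you identify $\xi_t$ as a compensated jump-measure martingale, use the $L^2$-isometry with the compensator to reduce \eqref{eq:boundonxi} to bounding $\sum_i \|\nabla\lt_i\|^2/\lt_i$, and then control this sum exactly as the paper does, via the logistic form of $\lt_i$ (whose scalar prefactor is uniformly bounded), the identity \eqref{eq:Rassump4} applied to $\nabla_x\Phi(x)$ and to $v$, conservation of the norm in the absence of refreshments, and local boundedness of $\nabla_x\Phi$ and $\nabla_x^2\Phi$. The only cosmetic difference is that you decompose $\xi_t$ into per-type compensated Poisson integrals and invoke their orthogonality, whereas the paper packages everything as a single integral against the jump measure of the extended PDMP $(X_t,V_t,C_t,J_{T_{N_t}})$ and cites the corresponding isometry.
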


\begin{proof}[Proof of Proposition \ref{prop:coresmoothcase}]
	We shall define an extended PDMP which will also keep track of component of the latest jump and the pre-multiplier $C_t$, 
	\begin{equation*}
	\tilde{\cZ}_t = (X_t,V_{t},C_t,J_{T_{N_t}})
	\end{equation*} 
	this is a PDMP with rate $\lt$, deterministic motion $\tilde{\varphi}_t$ and jumps according to the Markov kernel $\tilde{Q}$ where
	\begin{align*}
	\tilde{\varphi}_t (x,v,c,j)& = (\varphi_t(x,v),c\nabla \varphi_t(x,v),j)\\
	\tilde{Q}((x,v,c,j),(dy,dw,d\overline{c},di) &= \sum_{n=1}^\infty \frac{\lt_n(x,v)}{\sum_{m=1}^\infty \lt_m(x,v)} \delta_{x,R_nv,c\mathcal{B}_n',m}(dy,dw,d\overline{c},di).
	\end{align*}
	The random counting measure, $\hat{p}$,
	 determined by $\tilde{\cZ}_t$  is 
	 \begin{equation*}
	 \hat{p} = \sum_{n=1}^\infty \delta_{T_n,\tilde{\cZ}_{T_n}},
	 \end{equation*}
	 	where $\delta_z$ here represents the Dirac Delta measure which has $\delta_z(\{z\})=1$. By \cite[Equation (7.32)]{MR2189574} the random counting measure $\hat{p}$ has compensating measure $\tilde{p}$ which  is given by
	\begin{equation*}
	\tilde{p}(dt, d\tilde{z}) = \lt(\tilde{\cZ}_{t-}) \tilde{Q}(\tilde{\cZ}_{t-},d\tilde{z}) dt.
	\end{equation*}
	That is $p-\tilde{p}$ is a local Martingale (with filtration given by the natural filtration of $\tilde{\cZ}_t$).
	
	Notice that for integrable $g:[0,\infty)\times\cH\times \cH\times \cH^{2\times 2}\times \N \to \R$ we have
	\begin{equation*}
	\int g dp = \sum_{n=1}^\infty g(T_n,X_{T_n}, V_{T_n},C_{T_n},J_n) - \int_0^\infty \sum_{m=1}^\infty \lt_m(X_s,V_{s-}) g(s,X_s,R_mV_{s-}, C_{s-}\mathcal{B}_m',m)ds.
	\end{equation*}
	Fix $t>0$ and set $g_t$ to be
	\begin{equation*}
	g_t(s,x,v,c,j) =  c\mathcal{B}_j'\nabla \log\lt_j(x,R_jv) \mathbbm{1}_{s\leq t}.
	\end{equation*}
	With this choice of $g$ we have that
	\begin{align*}
	\int g_t dp &= \sum_{n=1}^{N_t} C_{T_n}\mathcal{B}_{J_n}'\nabla \log\lt_{J_n}(X_{T_n}, R_{J_n}V_{T_{n}}) - \int_0^t \sum_{m=1}^\infty \lt_m(X_s,V_{s-}) C_{s-}\mathcal{B}_m'\mathcal{B}_m'\nabla \log\lt_m(X_t, R_m^2V_{s-})  ds\\
	&=\sum_{m=1}^{N_t} C_{T_m-}\nabla \log\lt_{J_m}(X_{T_n}, V_{T_{n}-}) - \int_0^t  C_s\nabla \lt(X_s,V_{s}) ds =\xi_t
	\end{align*}
	is a local Martingale. Moreover, by \cite[Proposition 4.6.2]{MR2189574} we have
	\begin{equation*}
	\mathbb{E}\left[\left\lVert\int gdp\right\rVert_{\h\times\h}^2\right] = \mathbb{E}[\int \lVert g\rVert_{\h\times\h} ^2d\tilde{p}].
	\end{equation*}
	Therefore, for this choice of $g$ we have
	\begin{align}
	\mathbb{E}\left[\left\lVert\xi_t\right\rVert_{\h\times\h}^2\right] &= \mathbb{E}\left[\int_0^t \sum_{j=1}^\infty  \lt_j(X_s,V_s)\lvert C_s\nabla \log\lt_j(X_s,V_{s})\rvert^2 ds \right]\nonumber\\
	&\leq \mathbb{E}\left[\int_0^t \sum_{j=1}^\infty  \frac{\lvert \nabla \lt_j(X_s,V_{s})\rvert^2}{\lt_j(X_s,V_s)} ds \right] \label{eq:secondmomentofxi}
	\end{align}
	
	Recall $\lt_j$ is given by \eqref{eq:smoothlambda} so 
	\begin{align*}
	\frac{\lvert \nabla \lt_j(x,v)\rvert^2}{\lt_j(x,v)} = \frac{\psi'(\langle\nabla_x\Phi(x), v-R_jv\rangle_{\cH})^2\lvert\nabla \langle\nabla_x\Phi(x), v-R_jv\rangle_{\cH}\rvert^2}{-\log\left(\phi(\exp(-\langle\nabla_x\Phi(x), v-R_jv\rangle_{\cH}))\right)}
	\end{align*} 
	where
	\begin{equation*}
	\psi(s)=\log\left(\phi(\exp(-s))\right).
	\end{equation*}
	Note that for the choice of $\phi(r)=\frac{r}{1+r}$ we have that for all $s\in \R$
	\begin{equation*}
	0\leq \frac{\psi'(s)^2}{-\log(\phi(\exp(-s)))} \leq \frac{1}{2}.
	\end{equation*}
	Using this bound we obtain
	\begin{align*}
	\frac{\lvert \nabla \lt_j(x,v)\rvert^2}{\lt_j(x,v)} \leq \frac{1}{2} \lvert\nabla \langle\nabla_x\Phi(x), v-R_jv\rangle_{\cH}\rvert^2 =  \frac{1}{2} (\lvert\nabla_x^2\Phi(x)(v-R_jv)\rvert^2 + \lvert(1-R_j)\nabla_x\Phi(x)\rvert^2).
	\end{align*} 
	Summing over $j$ we have and using \eqref{eq:Rassump4} we have
	\begin{equation}\label{eq:summinglambda}
	\sum_{j=1}^\infty  \frac{\lvert \nabla \lt_j(x,v)\rvert^2}{\lt_j(x,v)} \leq 2(\lVert\nabla_x^2\Phi(x)\rVert^2\lVert v\rVert^2 + \lVert\nabla_x\Phi(x)\rVert^2).
	\end{equation}
	Therefore
	\begin{equation*}
	    \mathbb{E}\left[\left\lVert\xi_t\right\rVert_{\h\times\h}^2\right] \leq 2\mathbb{E}\left[\int_0^t  (\lVert\nabla_x^2\Phi(X_s)\rVert^2 \lVert V_s\rVert^2+ \lVert\nabla_x\Phi(X_s)\rVert^2) ds \right].
	\end{equation*}
	Since there are no refreshments, plus both $\varphi_t$ and $R_i$ preserve the norm we have that $\lVert (X_s,V_s)\rVert_{\h\times\h}=\lVert (x,v)\rVert_{\h\times\h}$. That is, the process always remains on the sphere centered at zero with radius $\lVert (x,v)\rVert_{\h\times\h}$. Moreover as $f$ has support contained within $B_r$ we need only consider $(x,v)\in B_r$ so $(X_s,V_s)\in B_r$. Since we have that $\nabla_x\Phi, \nabla_x^2\Phi$ and $V_s$ are bounded on bounded sets there exists a positive constant $C(r)$ such that
		\begin{equation}\label{eq:momentboundofxi}
	    \mathbb{E}\left[\left\lVert\xi_t\right\rVert_{\h\times\h}^2\right] \leq C(r)^2 t.
	\end{equation}

\end{proof}

\subsection{Proofs of Section \ref{sec:hypocoercivity}}\label{sec:proofsofhypocoercivity}

\begin{proof}[Proof of Proposition \ref{prop:resest}]
By \cite[Theorem 3.7]{eisenhuth2021essential} we have for any 
$g\in \cylfunc(\h)$ and $h=g-\cA g$ it holds 
\begin{align}
    &\int_\h g^2+\langle \C \nabla_x g,\nabla_x g\rangle_\h d\pi =\int_\h gh d\pi\label{eq:IBPforderivativesofA1}\\
    &\int_\h \lVert\C^{\frac{1}{2}}\nabla_x g\rVert_\h^2 +\Tr[(\C\nabla_x^2 g)^2] +\langle \nabla_x^2\Phi \C\nabla_xg,\C\nabla_x g\rangle_\h d\pi = \int_\h (\cA g)^2 d\pi.\label{eq:IBPforderivativesofA2}
\end{align}
As shown in \cite[Remark 3.8]{eisenhuth2021essential} \eqref{eq:IBPforderivativesofA1} implies
\begin{equation}\label{eq:firstorderboundresolvent}
    \int_\h g^2+\langle \C \nabla_x g,\nabla_x g\rangle_\h d\pi \leq \int_\h h^2 d\pi.
\end{equation}
Note that although \cite{eisenhuth2021essential} typically assumes $\Phi$ is convex the results we used above do not require convexity. Therefore \eqref{eq:d1Resolventbound} follows.

Since $\nabla_x^2\Phi$ is bounded there exists $c_\Phi\geq 0$ such that $\C\nabla_x^2\Phi(x)\geq -c_\Phi$ for all $x\in \h$. Then we can use this bound in \eqref{eq:IBPforderivativesofA2} to obtain
\begin{equation*}
    \int_\h \lVert\C^{\frac{1}{2}}\nabla_x g\rVert_\h^2 +\Tr[(\C\nabla_x^2 g)^2] d\pi \leq  \int_\h (\cA g)^2 d\pi+c_\Phi\int_\h\langle  \nabla_xg,\C\nabla_x g\rangle_\h d\pi.
\end{equation*}
It remains to bound the right hand side. For the first term we use that $\cA g=g-h$ and $\lVert g\rVert_{L_\pi^2}\leq \lVert h\rVert_{L_\pi^2}$. For the second term we use \eqref{eq:firstorderboundresolvent}. These give
\begin{equation*}
    \int_\h \lVert\C^{\frac{1}{2}}\nabla_x g\rVert_\h^2 +\Tr[(\C\nabla_x^2 g)^2] d\pi \leq  (2+c_\Phi)\int_\h  h^2 d\pi.
\end{equation*}

It remains to show that \eqref{eq:innerprodresolventbound} holds.

	Fix $\varphi\in C^2(\cH)$, and note that by Young's inequality we have
	\begin{equation*}
	2\langle \varphi (\Sigma^\frac{1}{2}\nabla_x) \Phi, (\Sigma^\frac{1}{2}\nabla_x)\varphi\rangle_{L_\pi^2} \leq \varepsilon^{-1}\lVert (\Sigma^\frac{1}{2}\nabla_x)\varphi\rVert_{L_\pi^2}^2 + \varepsilon \lVert \varphi (\Sigma^\frac{1}{2}\nabla_x)\Phi\rVert_{L_\pi^2}^2
	\end{equation*}
	where $\varepsilon>0$ will be chosen later. Also we have
	\begin{align*}
	2\langle \varphi (\Sigma^\frac{1}{2}\nabla_x) \Phi, (\Sigma^\frac{1}{2}\nabla_x)\varphi\rangle_{L_\pi^2} &= \langle (\Sigma^\frac{1}{2}\nabla_x) \Phi, (\Sigma^\frac{1}{2}\nabla_x)(\varphi^2)\rangle_{L_\pi^2}\\
	&=\langle -\cA\Phi, \varphi^2\rangle_{L_\pi^2}.
	\end{align*}	
	Now using \eqref{eq:boundonLaplacianU} we get
	\begin{align*}
	2\langle \varphi (\Sigma^\frac{1}{2}\nabla_x) \Phi, (\Sigma^\frac{1}{2}\nabla_x)\varphi\rangle_{L_\pi^2} &\geq C_1\lVert \varphi (\Sigma^\frac{1}{2}\nabla_x)\Phi\rVert_{L_\pi^2}^2 -C_2\lVert \varphi\rVert_{L_\pi^2}^2.
	\end{align*}
	Now by setting $\varepsilon=C_1/2$ we get
	\begin{equation}\label{eq:varphidUestimate}
	\lVert \varphi (\Sigma^\frac{1}{2}\nabla_x)\Phi\rVert_{L_\pi^2}^2  \leq \frac{4}{C_1^2}\lVert (\Sigma^\frac{1}{2}\nabla_x)\varphi\rVert_{L_\pi^2}^2 +\frac{2C_2}{C_1} \lVert \varphi\rVert_{L_\pi^2}^2.
	\end{equation}
	
	Choose $\varphi(x) = \sqrt{\delta+\langle (\Sigma^\frac{1}{2}\nabla_x)g,(\Sigma^\frac{1}{2}\nabla_x)g\rangle_{\h}}$ and observe that
	\begin{align*}
	    \lVert\Sigma^{\frac{1}{2}}\nabla_x\phi(x)\rVert_\cH &= \frac{\lVert\langle (\nabla_x\Sigma\nabla_x)g,(\Sigma^\frac{1}{2}\nabla_x)g\rangle_{\h}\rVert_{\cH}}{\sqrt{\delta+\langle (\Sigma^\frac{1}{2}\nabla_x)g,(\Sigma^\frac{1}{2}\nabla_x)g\rangle_{\h}}} \leq \lVert \nabla_x\Sigma\nabla_x g\rVert_{\h}\frac{\lVert\Sigma^\frac{1}{2}\nabla_xg\rVert_{\h}}{\sqrt{\delta+\langle (\Sigma^\frac{1}{2}\nabla_x)g,(\Sigma^\frac{1}{2}\nabla_x)g\rangle_{\h}}}\\
	    &\leq \lVert \nabla_x\Sigma\nabla_x g\rVert_{\h}.
	\end{align*}
	With this choice of $\varphi$ we have that \eqref{eq:varphidUestimate} gives
	\begin{equation*}
	\left\lVert \lVert (\Sigma^\frac{1}{2}\nabla_x)g \rVert_{\cH} (\Sigma^\frac{1}{2}\nabla_x)\Phi\right\rVert_{L_\pi^2}^2  \leq \frac{4}{C_1^2}\lVert \nabla_x\Sigma\nabla_x g\rVert_{L_\pi^2}^2 +\frac{2C_2}{C_1} \left\lVert (\Sigma^\frac{1}{2}\nabla_x)g \right\rVert_{L_\pi^2}^2+\frac{2C_2}{C_1}\delta.
	\end{equation*}
% 	In particular, using Cauchy-Schwartz this gives
% 	\begin{equation*}
% 	\left\lVert \lVert (\Sigma^\frac{1}{2}\nabla_x)u_g \rVert_{\cH} (\Sigma^\frac{1}{2}\nabla_x)\Phi\right\rVert_{L_\pi^2}^2 \leq \frac{4}{c_1^2}\lVert \nabla_x\Sigma\nabla_x g \rVert_{L_\pi^2}^2 +\frac{2c_2}{c_1} \lVert (\Sigma^\frac{1}{2}\nabla_x)g \rVert_{L_\pi^2}^2.
% 	\end{equation*}
	Finally by \eqref{eq:d1Resolventbound},\eqref{eq:d2Resolventbound} and taking $\delta\to 0$ we have 
	\begin{equation*}
	\left\lVert \lVert (\Sigma^\frac{1}{2}\nabla_x)u_g \rVert_{\cH} (\Sigma^\frac{1}{2}\nabla_x)\Phi\right\rVert_{L_\pi^2}^2  \leq \left(\frac{4\kappa_1^2}{C_1^2}+\frac{2C_2}{C_1} \right)\lVert g \rVert_{L_\pi^2}^2.
	\end{equation*}
\end{proof}

\begin{proof}[Proof of Proposition \ref{prop:SAestintermsofresolvent}]
    Let us first show that \eqref{eq:BS} holds. Fix $f\in \cylfunc(\h\times\h)$ and observe that
 	\begin{equation*}
 	\lvert \langle BSf,\Pi f\rangle_{L_\mu^2}\rvert  = \lvert \langle f,BA^*\Pi f\rangle_{L_\mu^2}\rvert.
 	\end{equation*}
Since $\Pi$ is an orthogonal projection onto the kernel of $S$ we can rewrite the right hand side of the above expression as
	\begin{equation*}
\lvert \langle BSf,\Pi f\rangle_{L_\mu^2}\rvert  = \lvert \langle (1-\Pi)f,SB^*\Pi f\rangle_{L_\mu^2}\rvert \leq \lVert (1-\Pi)f\rVert \lVert SB^*f\rVert_{L_\mu^2}.
\end{equation*}%we have
    % \begin{equation*}
    %     \lvert\langle BSf,\Pi f\rangle_{L_\mu^2}\rvert = \lvert \langle (1-\Pi)f,(BS)^*\Pi f\rangle_{L_\mu^2}\rvert \leq \lVert (1-\Pi)f\rVert_{L_\mu^2}\lVert(BS)^*\Pi f\rVert_{L_\mu^2}.
    % \end{equation*}
    %Here we have used that $\Pi S=0$ to include the $(1-\Pi)$ term. 
    Therefore \eqref{eq:BS} follows provided we have 
        \begin{align}
        \lVert (BS)^*g\rVert_{L_\mu^2}\leq c_4\lVert g\rVert_{L_\mu^2}.\label{eq:remainderformS}
    \end{align}
    %By \cite[Lemma 5]{eisenhuth2021hypocoercivity} it is sufficient to show that the following two inequalities for any $g=(1-G)f$, $f\in \cylfunc(\cH\times\cH)$ 
    % \begin{align}
    %     \lVert (BS)^*g\rVert_{L_\mu^2}\leq c_4\lVert g\rVert_{L_\mu^2},\label{eq:remainderformS}\\
    %     \lVert (BA)^*g\rVert_{L_\mu^2}\leq c_4\lVert g\rVert_{L_\mu^2}. \label{eq:remainderformA}
    % \end{align}
%    Indeed this implies the two inequalities in Assumption \ref{ass:Hypo_hypo} \ref{ass:hypo_remainder} holds with $c_1=c_3$ and $c_2=c_4$.
    
% 	First observe that
% 	\begin{equation*}
% 	\lvert \langle ASf,f\rangle_{L_\mu^2}\rvert  = \lvert \langle f,SA^*f\rangle_{L_\mu^2}\rvert.
% 	\end{equation*}
% 	Since $\Pi$ is an orthogonal projection onto the kernel of $S$ we can rewrite the right hand side of the above expression as
% 	\begin{equation*}
% 	\lvert \langle ASf,f\rangle_{L_\mu^2}\rvert  = \lvert \langle (1-\Pi)f,SA^*f\rangle_{L_\mu^2}\rvert \leq \lVert (1-\Pi)f\rVert \lVert SA^*f\rVert_{L_\mu^2}.
% 	\end{equation*}
	For $f\in \cylfunc(\cH\times\cH)$ let $u_f\in L^2_\pi(\cH)$ be the function that satisfies $(1-G)u_f=\Pi f$. This is an abuse of notation as we defined $u_g$ above for $g\in L_\pi^2$, as $\Pi f$ can be viewed as belonging to $L_\pi^2$ we should use the notation $u_{\Pi f}$ however to simplify the exposition we drop the $\Pi$ in the notation. Then $SB^*f = SAu_f$. As $u_f$ does not depend on $v$ we can write $SAu_f$ as
	\begin{equation*}
	SAu_f = S\left\langle 
	v, \nabla_xu_f(x) \right\rangle_{\cH} = -\sum_{n=1}^\infty\lambda_n^e(x,v) \left\langle 
	(1-R_n)v, \nabla_xu_f(x) \right\rangle_{\cH}+ \lref (\Pi-1)\left\langle 
	v, \nabla_xu_f(x) \right\rangle_{\cH}.
	\end{equation*}
	Recall that $\lambda_n^e(x,v)= \frac{1}{4}\lvert \langle (1-R_n)v,\nabla_x\Phi(x)\rangle_\cH\rvert+\gamma_n(x,v)$ and $\Pi$ acts by integration with respect to a centred Gaussian measure. This gives
	\begin{equation*}
	SAu_f= -\frac{1}{4}\sum_{n=1}^\infty\lvert \langle (1-R_n)v,\nabla_x\Phi(x)\rangle_\cH\rvert \left\langle 
	(1-R_n)v,  \nabla_xu_f(x) \right\rangle_{\cH} - \lref \left\langle 
	v, \nabla_xu_f(x) \right\rangle_{\cH}.
	\end{equation*}
	Taking the $L_\mu^2$ norm we have
	\begin{equation*}
	\lVert SAu_f \rVert_{L_\mu^2}\leq\left\lVert \frac{1}{4}\sum_{n=1}^\infty \langle (1-R_n)v,\nabla_x\Phi(x)\rangle_\cH\left\langle 
	(1-R_n)v, \nabla_xu_f(x) \right\rangle_{\cH}\right\rVert_{L_\mu^2} + \lref \lVert\left\langle 
	v, \nabla_xu_f(x) \right\rangle_{\cH}\rVert_{L_{\mu}^2}.
	\end{equation*}	
	Using that $\nu_0$ is a Gaussian measure with covariance $\Sigma$ we can write the second term (by Lemma \ref{lem:isserlis}) as
	\begin{equation*}
	\lVert\left\langle 
	v, \nabla_xu_f(x) \right\rangle_{\cH}\rVert_{L_{\mu}^2}^2 =  \int_{\cH}\langle 
	\Sigma\nabla_xu_f(x), \nabla_xu_f(x) \rangle_{\cH} \pi(dx) =  \lVert \Sigma^{\frac{1}{2}}\nabla_xu_f \rVert_{L_\pi^2}^2.
	\end{equation*}
	By Lemma \ref{lem:isserlis} the first term can be written as
	\begin{align*}
	&\lVert \frac{1}{4}\sum_{n=1}^\infty\langle v,(1-R_n)^*\nabla_x\Phi(x)\rangle_\cH\left\langle 
	v,(1-R_n)^* \nabla_xu_f(x) \right\rangle_{\cH}\rVert_{L_\mu^2}^2 \\
	&= \frac{1}{16}\!\!\sum_{n,m=1}^\infty\!\int_{\cH^2} \left[ \langle v,(1-R_n)^*\nabla_x\Phi(x)\rangle_\cH\!\left\langle 
	v,(1-R_n)^* \nabla_xu_f(x) \right\rangle_{\cH} \right. \\
	& \quad \quad \quad \quad \quad \quad \quad \left. \times \langle v,(1-R_m)^*\nabla_x\Phi(x)\rangle_\cH\!\left\langle 
	v,(1-R_m)^* \nabla_xu_f(x) \right\rangle_{\cH} \right] \mu(dx,dv) \\
	&=\frac{1}{16}\sum_{n,m=1}^\infty\int_{\cH}\langle \Sigma(1-R_n)^*\nabla_x\Phi(x),(1-R_n)^* \nabla_xu_f(x) \rangle_{\cH}\langle \Sigma(1-R_m)^*\nabla_x\Phi(x),(1-R_m)^* \nabla_xu_f(x) \rangle_{\cH}\pi(dx)\\
	&+\frac{1}{16}\sum_{n,m=1}^\infty\int_{\cH}\langle \Sigma(1-R_n)^*\nabla_x\Phi(x),(1-R_m)^*\nabla_x\Phi(x)\rangle_\cH\left\langle 
	\Sigma(1-R_n)^* \nabla_xu_f(x) ,(1-R_m)^* \nabla_xu_f(x) \right\rangle_{\cH}\pi(dx)\\
	&+\frac{1}{16}\sum_{n,m=1}^\infty\int_{\cH}\langle \Sigma(1-R_n)^*\nabla_x\Phi(x),(1-R_m)^* \nabla_xu_f(x) \rangle_{\cH}\langle 
	\Sigma(1-R_n)^* \nabla_xu_f(x) ,(1-R_m)^*\nabla_x\Phi(x)\rangle_\cH\pi(dx)\\
	&=\int_{\cH}\langle \Sigma\nabla_x\Phi(x), \nabla_xu_f(x) \rangle_{\cH}^2\pi(dx)\\
&+\frac{1}{16}\sum_{n=1}^\infty\int_{\cH}\langle \Sigma(1-R_n)^*\nabla_x\Phi(x),(1-R_n)^*\nabla_x\Phi(x)\rangle_\cH\left\langle 
\Sigma(1-R_n)^* \nabla_xu_f(x) ,(1-R_n)^* \nabla_xu_f(x) \right\rangle_{\cH}\pi(dx)\\
&+\frac{1}{16}\sum_{n=1}^\infty\int_{\cH}\langle \Sigma(1-R_n)^*\nabla_x\Phi(x),(1-R_n)^* \nabla_xu_f(x) \rangle_{\cH}^2\pi(dx)\\
	\end{align*}
	Now using Cauchy-Schwartz we have
	\begin{align*}
	&\lVert \frac{1}{4}\sum_{n=1}^\infty\langle v,(1-R_n)^*\nabla_x\Phi(x)\rangle_\cH\left\langle 
	v,(1-R_n)^* \nabla_xu_f(x) \right\rangle_{\cH}\rVert_{L_\mu^2}^2 \\
	&\leq \int_{\cH}\lVert \Sigma^{\frac{1}{2}}\nabla_x\Phi(x)\rVert_\cH^2 \lVert \Sigma^{\frac{1}{2}} \nabla_xu_f(x) \rVert_{\cH}^2\pi(dx)\\
	&+\frac{1}{8}\sum_{n=1}^\infty\int_{\cH}\lVert \Sigma^{\frac{1}{2}}(1-R_n)^*\nabla_x\Phi(x)\rVert_\cH^2\lVert 
	\Sigma^{\frac{1}{2}}(1-R_n)^* \nabla_xu_f(x)\rVert_\cH^2\pi(dx).
	\end{align*}
	
	Fix a function $F:\cH\to\cH$ then by \eqref{eq:Rassump2} we have the following bound
	\begin{align}
	\lVert \Sigma^{\frac{1}{2}}(1-R_n)^*F\rVert_\cH^2 &\leq \sum_{n=1}^\infty 	\lVert \Sigma^{\frac{1}{2}}(1-R_n)^*F\rVert_\cH^2\label{eq:Gestimate1}\\
	&=\sum_{n=1}^\infty 	\langle (1-R_n)\Sigma(1-R_n)^*F, F\rangle_\cH\label{eq:Gestimate2}\\
	&= 4\lVert \Sigma^{\frac{1}{2}}F\rVert_\cH^2\label{eq:Gestimate3}
	\end{align}
	
	By setting $F(x)=\nabla_x\Phi(x)$ in \eqref{eq:Gestimate1}-\eqref{eq:Gestimate3} we have $\lVert \Sigma^{\frac{1}{2}}(1-R_n)^*\nabla_x\Phi(x)\rVert_\cH^2 \leq 4\lVert \Sigma^{\frac{1}{2}}\nabla_x\Phi(x)\rVert_\cH^2$ which gives
	\begin{align*}
	&\lVert \frac{1}{4}\sum_{n=1}^\infty\langle v,(1-R_n)^*\nabla_x\Phi(x)\rangle_\cH\left\langle 
v,(1-R_n)^* \nabla_xu_f(x) \right\rangle_{\cH}\rVert_{L_\mu^2}^2 \\
&\leq \int_{\cH}\lVert \Sigma^{\frac{1}{2}}\nabla_x\Phi(x)\rVert_\cH^2 \lVert \Sigma^{\frac{1}{2}} \nabla_xu_f(x) \rVert_{\cH}^2\pi(dx)\\
&+\frac{1}{2}\sum_{n=1}^\infty\int_{\cH}\lVert \Sigma^{\frac{1}{2}}\nabla_x\Phi(x)\rVert_\cH^2\lVert 
\Sigma^{\frac{1}{2}}(1-R_n)^* \nabla_xu_f(x)\rVert_\cH^2\pi(dx)\\
&\leq 3\int_{\cH}\lVert \Sigma^{\frac{1}{2}}\nabla_x\Phi(x)\rVert_\cH^2 \lVert \Sigma^{\frac{1}{2}} \nabla_xu_f(x) \rVert_{\cH}^2\pi(dx).
	\end{align*}
	Therefore we have
	\begin{equation*}
	\lVert SAu_f \rVert_{L_\mu^2}\leq \sqrt{3}\left\lVert \left\lVert 
	\Sigma^\frac{1}{2}\nabla_xu_f\right\rVert_{\cH}\Sigma^\frac{1}{2}\nabla_x\Phi(x)\right\rVert_{L_\pi^2(\cH;\cH)}+ \lref \lVert \Sigma^{\frac{1}{2}}\nabla_xu_f \rVert_{L_\pi^2(\cH;\cH)}.
	\end{equation*}	
	Now by \eqref{eq:d1Resolventbound} and \eqref{eq:innerprodresolventbound} we can bound these terms by
	\begin{equation*}
	\lVert STu_f \rVert_{L_\mu^2}^2\leq \left(\sqrt{3}\left(\frac{4}{c_1^2}+\frac{c_2}{c_1} \right)^\frac{1}{2}+ \frac{1}{\sqrt{2}}\lref\right) \lVert f \rVert_{L_\mu^2}^2.
	\end{equation*}
	That is \eqref{eq:remainderformS} holds.
	
	Now we prove that \eqref{eq:BA(1-Pi)} holds. Observe that 	
	\begin{align*}
	\lvert\langle BA(1-\Pi) f,\Pi f\rangle_{L_\mu^2}\rvert = \lvert\langle (1-\Pi)f, (1-\Pi)AB^*\Pi f\rangle_{L_\mu^2}\rvert &= \lvert\langle (1-\Pi)f, (1-\Pi)A^2u_f\rangle_{L_\mu^2}\rvert \\
	&\leq  \lVert (1-\Pi)f\rVert_{L_{\mu}^2} \lVert (1-\Pi)A^2u_f\rVert_{L_\mu^2}.
	\end{align*}
	Recall from \eqref{eq:T2Pi} that
	\begin{align*}
	A^2u_f=\left\langle  v, \nabla^2_xu_f(x) v\right\rangle_{\cH}-\langle x,\nabla_xu_f(x) \rangle_{\cH}-\frac{1}{4}\sum_{n=1}^\infty \langle\nabla_x\Phi(x),(1-R_n)v\rangle_{\cH}\left\langle (1-R_n)v, \nabla_xu_f(x) \right\rangle_{\cH}.
	\end{align*}
	Applying $(1-\Pi)$ gives
	\begin{align*}
	&(1-\Pi)A^2u_f=\left\langle v, \nabla^2_xu_f(x)  v\right\rangle_{\cH}-\Tr(\Sigma\nabla^2_xu_f(x))\\
	&-\frac{1}{4}\sum_{n=1}^\infty \langle\nabla_x\Phi(x),(1-R_n)v\rangle_{\cH}\left\langle (1-R_n)v, \nabla_xu_f(x) \right\rangle_{\cH}+\frac{1}{4}\sum_{n=1}^\infty \left\langle\Sigma(1-R_n)^*\nabla_x\Phi(x), (1-R_n)^*\nabla_xu_f(x) \right\rangle_{\cH}.
	\end{align*}
	We can simplify this expression using \eqref{eq:Rassump2}
	\begin{align*}
	&(1-\Pi)A^2u_f=\left\langle v, \nabla^2_xu_f(x)  v\right\rangle_{\cH}-\Tr(\Sigma\nabla^2_xu_f(x))\\
	&-\frac{1}{4}\sum_{n=1}^\infty \langle\nabla_x\Phi(x),(1-R_n)v\rangle_{\cH}\left\langle (1-R_n)v, \nabla_xu_f(x) \right\rangle_{\cH}+\left\langle\Sigma\nabla_x\Phi(x), \nabla_xu_f(x) \right\rangle_{\cH}.
	\end{align*}
	
	Using that $\Pi$ is an orthogonal projection we can write
	\begin{align*}
	\lVert \left\langle v, \nabla^2_xu_f(x)  v\right\rangle_{\cH}-\Tr(\Sigma\nabla^2_xu_f(x)) \rVert_{L_{\mu}^2}^2=\int_{\cH^2} \left\langle v, \nabla^2_xu_f(x) v\right\rangle_{\cH}^2 - \Tr(\Sigma\nabla^2_xu_f(x))^2 \mu(dx,dv).
	\end{align*}
	By Lemma \ref{lem:isserlis} we have
	\begin{align*}
	\int_{\cH^2} \left\langle v, \nabla^2_xu_f(x) v\right\rangle_{\cH}^2\mu(dx,dp) =2 \int_{\cH}\Tr\left(\nabla^2_xu_f(x)\Sigma^2\nabla^2_xu_f(x)\right) \pi(dx)+\int_{\cH}\Tr(\Sigma\nabla^2_xu_f(x))^2\pi(dx).
	\end{align*}
	Thus
	\begin{align*}
	\lVert \left\langle v, \nabla^2_xu_f(x) v\right\rangle_{\cH}-\Tr(\Sigma\nabla^2_xu_f(x)) \rVert_{L_{\mu}^2}^2=2 \lVert \Sigma\nabla^2_xu_f(x)\rVert_{L_\pi^2(\cH;\cH\otimes\cH)}.
	\end{align*}
	We can treat the second term similarly, by Lemma \ref{lem:isserlis} we write the norm of this term as
	\begin{align*}
	&\lVert\frac{1}{4}\sum_{n=1}^\infty \langle\nabla_x\Phi(x),(1-R_n)v\rangle_{\cH}\left\langle (1-R_n)v, \nabla_xu_f(x) \right\rangle_{\cH}- \left\langle\Sigma\nabla_x\Phi(x), \nabla_xu_f(x) \right\rangle_{\cH} \rVert_{L_{\mu}^2}^2 \\
	&=\int_{\cH}\!\int_{\cH} \frac{1}{16}\sum_{n,m=1}^\infty \langle\nabla_x\Phi(x),(1-R_n)v\rangle_{\cH}\left\langle (1-R_n)v, \nabla_xu_f(x) \right\rangle_{\cH}\langle\nabla_x\Phi(x),(1-R_m)v\rangle_{\cH}\left\langle (1-R_m)v, \nabla_xu_f(x) \right\rangle_{\cH}\\
	&- \langle\Sigma\nabla_x\Phi(x), \nabla_xu_f(x) \rangle_{\cH}^2 \nu_0(dv)\pi(dx)\\
	&=\int_{\cH} \frac{1}{16}\sum_{n,m=1}^\infty \langle\Sigma(1-R_n)^*\nabla_x\Phi(x), (1-R_n)^*\nabla_xu_f(x) \rangle_{\cH}\langle\Sigma(1-R_m)^*\nabla_x\Phi(x), (1-R_m)^*\nabla_xu_f(x) \rangle_{\cH}\pi(dx)\\
	&+\int_{\cH}\frac{1}{16}\sum_{n,m=1}^\infty \langle \Sigma(1-R_n)^*\nabla_x\Phi(x),(1-R_m)^*\nabla_x\Phi(x)\rangle_{\cH}\langle \Sigma(1-R_n)^*\nabla_xu_f(x), (1-R_m)^*\nabla_xu_f(x)\rangle_{\cH}\pi(dx)\\
	&+\int_{\cH} \frac{1}{16}\sum_{n,m=1}^\infty \langle\Sigma(1-R_n)^*\nabla_x\Phi(x), (1-R_m)^*\nabla_xu_f(x) \rangle_{\cH}\langle  \Sigma(1-R_n)^*\nabla_xu_f(x) ,(1-R_m)^*\nabla_x\Phi(x)\rangle_{\cH}\pi(dx)\\
	&- \int_{\cH}\langle\Sigma\nabla_x\Phi(x), \nabla_xu_f(x) \rangle_{\cH}^2 \pi(dx)\\
	&=\int_{\cH}\frac{1}{16}\sum_{n=1}^\infty \langle \Sigma(1-R_n)^*\nabla_x\Phi(x),(1-R_n)^*\nabla_x\Phi(x)\rangle_{\cH}\langle \Sigma(1-R_n)^*\nabla_xu_f(x), (1-R_n)^*\nabla_xu_f(x)\rangle_{\cH}\pi(dx)\\
	&+\int_{\cH} \frac{1}{16}\sum_{n=1}^\infty \langle\Sigma(1-R_n)^*\nabla_x\Phi(x), (1-R_n)^*\nabla_xu_f(x) \rangle_{\cH}\langle  \Sigma(1-R_n)^*\nabla_xu_f(x) ,(1-R_n)^*\nabla_x\Phi(x)\rangle_{\cH}\pi(dx)\\
&\leq 2\int_{\cH} \lVert \Sigma^{\frac{1}{2}}\nabla_x\Phi(x)\rVert_{\cH}^2\lVert \Sigma^{\frac{1}{2}}\nabla_xu_f(x)\rVert_{\cH}^2\pi(dx).
	\end{align*}
	Thus,
	\begin{align*}
	\lVert(1-\Pi)T^2u_f\rVert_{L_\mu^2}&\leq\sqrt{2} \lVert \Sigma\nabla^2_xu_f\rVert_{L_\pi^2}+\sqrt{2}\left\lVert  \lVert 
		\Sigma^\frac{1}{2}\nabla_xu_f(x) \rVert_{\cH}\Sigma^\frac{1}{2}\nabla_x\Phi(x) \right\rVert_{L_\pi^2} .
	\end{align*}
	By \eqref{eq:d2Resolventbound} and \eqref{eq:innerprodresolventbound} we can bound this by
	\begin{align*}
	\lVert(1-\Pi)T^2u_f\rVert_{L_\mu^2}&\leq\sqrt{2}(\kappa_1+\kappa_2)\lVert \Pi f\rVert_{L_\mu^2}.
	\end{align*}
\end{proof}

\appendix

\section{Differentiability in the direction $\sX$}

Define the set $C$ to be the set of all $f:\cH^2\to \R$ for which the following limit is well-defined and bounded:
\begin{equation*}
\LX f(x,p):=\lim_{t\to 0}\frac{1}{t}[f(\varphi_t(x,v))-f(x,v)] = \frac{d}{dt}\lvert_{t=0} f( \varphi_t(x,v)).
\end{equation*}

\begin{lemma}\label{lem:varphiinvundermu0}
	Assume that \eqref{eq:cons} holds, then flow $\varphi_t$ is invariant under the probability measure $\mu_0$.
\end{lemma}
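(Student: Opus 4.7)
The plan is to exploit the fact that $\mu_0 = \pi_0 \times \nu_0$ is a centred Gaussian measure on $\cH\times\cH$ with covariance operator $\Sigma_0 = \operatorname{diag}(\Sigma_x,\Sigma_v)$, and that $\varphi_t = e^{tJ}$ is a bounded linear operator on $\cH\times\cH$ with infinitesimal generator
\[
J = \begin{pmatrix} 0 & \sP \\ -\sX & 0 \end{pmatrix}.
\]
Since $\sX,\sP$ are bounded (Hypothesis \ref{hyp:WPass}), $\varphi_t$ is a bounded invertible operator for every $t\in\R$. The pushforward $\varphi_t \#\mu_0$ is therefore again a centred Gaussian, with covariance $\varphi_t\,\Sigma_0\,\varphi_t^*$. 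Because a centred Gaussian measure on a separable Hilbert space is uniquely determined by its covariance operator, it suffices to show
\begin{equation}\label{eq:covpreserved-plan}
\varphi_t\,\Sigma_0\,\varphi_t^{*} = \Sigma_0 \qquad \text{for all } t\in\R.
\end{equation}

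To prove \eqref{eq:covpreserved-plan}, I will differentiate the left hand side in $t$. Using $\frac{d}{dt}\varphi_t = J\varphi_t = \varphi_t J$ and $\frac{d}{dt}\varphi_t^* = \varphi_t^* J^*$, one obtains
\[
\frac{d}{dt}\bigl(\varphi_t\,\Sigma_0\,\varphi_t^{*}\bigr) = \varphi_t\bigl(J\Sigma_0 + \Sigma_0 J^{*}\bigr)\varphi_t^{*}.
\]
Hence \eqref{eq:covpreserved-plan} reduces to the single algebraic identity $J\Sigma_0 + \Sigma_0 J^{*} = 0$. Since $\sX,\sP$ are self-adjoint, $J^* = \bigl(\begin{smallmatrix}0 & -\sX \\ \sP & 0\end{smallmatrix}\bigr)$, and a direct block computation gives
\[
J\Sigma_0 + \Sigma_0 J^{*} = \begin{pmatrix} 0 & \sP\Sv - \Sigma_x\sX \\ \Sv\sP - \sX\Sigma_x & 0 \end{pmatrix}.
\]
The consistency condition \eqref{eq:cons} kills the top-right block; taking adjoints of \eqref{eq:cons} (using self-adjointness of $\sX,\sP,\Sigma_x,\Sv$) gives $\Sv\sP = \sX\Sigma_x$, which kills the bottom-left block. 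Thus $J\Sigma_0+\Sigma_0 J^*=0$, establishing \eqref{eq:covpreserved-plan} and therefore $\varphi_t\#\mu_0 = \mu_0$.

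The argument has no real obstacle since everything is bounded, so no domain issues arise; the only subtlety is recognising that the consistency condition \eqref{eq:cons} is exactly the symmetrised statement $J\Sigma_0 + \Sigma_0 J^* = 0$, i.e.\ that $J$ is skew-adjoint with respect to the inner product $\langle \Sigma_0^{-1}\cdot,\cdot\rangle_{\cH\times\cH}$ on the Cameron--Martin space of $\mu_0$. This is the infinite-dimensional analogue of the classical statement that Hamiltonian flows preserve the canonical symplectic/Gaussian structure.
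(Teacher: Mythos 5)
Your proof is correct and follows essentially the same route as the paper: both arguments reduce invariance of the Gaussian $\mu_0$ under $\varphi_t$ to differentiating in $t$ and observing that the off-diagonal block $\sP\Sv-\Sigma_x\sX$ (and its adjoint) vanishes by the consistency condition \eqref{eq:cons}. The only difference is packaging — the paper differentiates the characteristic functional $\hat\mu_0(\varphi_t^*(y,w))$ on test exponentials, while you differentiate the pushforward covariance $\varphi_t\Sigma_0\varphi_t^*$ and phrase the condition as the Lyapunov identity $J\Sigma_0+\Sigma_0J^*=0$; the underlying computation is identical.
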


\begin{proof}[Proof of Lemma \ref{lem:varphiinvundermu0}]
	Fix $y,w\in \cH$ and define the function 
	$$
	\psi(x,v) = \exp(i\langle x,y\rangle_{\cH} + i\langle v,w\rangle_{\cH}).
	$$
	Then since $\varphi_t$ is a bounded linear operator on the space $\cH^2$ we have
	\begin{equation*}
	\psi(\varphi_t(x,v)) = \exp(i\left\langle \varphi_t^*(y,w),(x,v) \right\rangle_{\cH^2}).
	\end{equation*} 
	Here $\varphi_t^*$ denotes the $\cH^2$ adjoint of $\varphi_t$, when viewed as a linear operator.
	Now integrating with respect to $\mu_0$ we have
	\begin{equation*}
	\int_{\cH^2} \psi(\varphi_t(x,v)) \mu_0(dx,dv) = \hat{\mu_0}(\varphi_t^*(y,w))
	\end{equation*}
	where $\hat{\mu_0}$ denotes the Fourier transform of the measure $\mu_0$. As $\mu_0$ is a Gaussian measure we have
	\begin{equation*}
	\int_{\cH^2} \psi(\varphi_t(x,v)) \mu_0(dx,dv) = \exp(-\frac{1}{2}\langle \left(\begin{array}{cc}
	\Sigma_x & 0\\
	0 & \Sv
	\end{array}\right)\varphi_t^*(y,w),\varphi_t^*(y,w) \rangle_{\cH^2}).
	\end{equation*}
	Differentiating this expression we have
	\begin{align*}
	&\frac{d}{dt}\int_{\cH^2} \psi(\varphi_t(x,v)) \mu_0(dx,dv) = -\frac{1}{2}\left(\left\langle \left(\begin{array}{cc}
	\Sigma_x & 0\\
	0 & \Sv
	\end{array}\right)\frac{d}{dt}\varphi_t^*(y,w),\varphi_t^*(y,w) \right\rangle_{\cH^2} \right.\\
	&\left.+ \left\langle \left(\begin{array}{cc}
	\Sigma_x & 0\\
	0 & \Sv
	\end{array}\right)\varphi_t^*(y,w),\frac{d}{dt}\varphi_t^*(y,w) \right\rangle_{\cH^2}\right)\exp\left(-\frac{1}{2}\left\langle \left(\begin{array}{cc}
	\Sigma_x & 0\\
	0 & \Sv
	\end{array}\right)\varphi_t^*(y,w),\varphi_t^*(y,w) \right\rangle_{\cH^2}\right)\\
	&=-\frac{1}{2}\left(\left\langle \left(\begin{array}{cc}
	\Sigma_x & 0\\
	0 & \Sv
	\end{array}\right)\left(\begin{array}{cc}
	0 & -\sX\\
	\sP & 0
	\end{array}\right)\varphi_t^*(y,w),\varphi_t^*(y,w) \right\rangle_{\cH^2} \right.\\
	&\left.+ \left\langle \left(\begin{array}{cc}
	\Sigma_x & 0\\
	0 & \Sv
	\end{array}\right)\varphi_t^*(y,w),\left(\begin{array}{cc}
	0 & -\sX\\
	\sP & 0
	\end{array}\right)\varphi_t^*(y,w) \right\rangle_{\cH^2}\right)\exp\left(-\frac{1}{2}\left\langle \left(\begin{array}{cc}
	\Sigma_x & 0\\
	0 & \Sv
	\end{array}\right)\varphi_t^*(y,w),\varphi_t^*(y,w) \right\rangle_{\cH^2}\right)\\
	&=-\frac{1}{2}\left(\left\langle \left(\begin{array}{cc}
	0 & -\Sigma_x\sX\\
	\Sv\sP & 0
	\end{array}\right)\varphi_t^*(y,w),\varphi_t^*(y,w) \right\rangle_{\cH^2} \right.\\
	&\left.+ \left\langle \left(\begin{array}{cc}
	0 & \sP\Sv\\
	-\sX\Sigma_x & 0
	\end{array}\right)\varphi_t^*(y,w),\varphi_t^*(y,w) \right\rangle_{\cH^2}\right)\exp\left(-\frac{1}{2}\left\langle \left(\begin{array}{cc}
	\Sigma_x & 0\\
	0 & \Sv
	\end{array}\right)\varphi_t^*(y,w),\varphi_t^*(y,w) \right\rangle_{\cH^2}\right)\\
	&=-\left(\left\langle \left(\begin{array}{cc}
	0 & -\Sigma_x\sX\\
	\Sv\sP & 0
	\end{array}\right)\varphi_t^*(y,w),\varphi_t^*(y,w) \right\rangle_{\cH^2}\right)\exp\left(-\frac{1}{2}\left\langle \left(\begin{array}{cc}
	\Sigma_x & 0\\
	0 & \Sv
	\end{array}\right)\varphi_t^*(y,w),\varphi_t^*(y,w) \right\rangle_{\cH^2}\right)
	\end{align*}
	Now for any $x',v'\in\cH$ we have
	\begin{equation*}
	\left\langle \left(\begin{array}{cc}
	0 & -\Sigma_x\sX\\
	\Sv\sP & 0
	\end{array}\right)\left(\begin{array}{c}
	x' \\
	v'
	\end{array}\right),\left(\begin{array}{c}
	x' \\
	v'
	\end{array}\right) \right\rangle_{\cH^2} = \langle (\sP\Sv-\Sigma_x\sX)v',x'\rangle_{\cH}
	\end{equation*}
	which is equal to zero by \eqref{eq:cons}.
\end{proof}

\begin{lemma}\label{lem:L0skewsym}
	Assume \eqref{eq:cons} holds, then for any $f,g$ such that $t\mapsto f(\varphi_t(x,v))$ and $t\mapsto g(\varphi_t(x,v))$ are differentiable $\mu_0$-a.e. then
	\begin{equation*}
	\int_{\cH^2} \LX f(x,v) g(x,v) \mu_0(dx,dv) = - \int_{\cH^2} f(x,v) \LX g(x,v) \mu_0(dx,dv).
	\end{equation*}
\end{lemma}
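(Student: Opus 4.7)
The entire argument is a one-line consequence of the flow being measure-preserving once the product rule for $\LX$ is in place. The essential input is Lemma \ref{lem:varphiinvundermu0}, which, under the consistency condition \eqref{eq:cons}, asserts that $\varphi_t$ preserves $\mu_0$ for every $t\geq 0$ (and hence for every $t\in\R$, since $\varphi_t$ is invertible). Consequently, for every $t$,
\begin{equation*}
I(t) := \int_{\cH^2} f(\varphi_t(x,v))\, g(\varphi_t(x,v))\, \mu_0(dx,dv) = \int_{\cH^2} f(x,v)\,g(x,v)\, \mu_0(dx,dv),
\end{equation*}
so that $I$ is constant in $t$ and in particular $I'(0)=0$.

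The next step is to differentiate $I$ under the integral sign at $t=0$. For $\mu_0$-a.e.\ $(x,v)$ at which both $s\mapsto f(\varphi_s(x,v))$ and $s\mapsto g(\varphi_s(x,v))$ are differentiable at $s=0$, the classical product rule for real-valued functions of a single variable gives
\begin{equation*}
\frac{d}{dt}\Big|_{t=0} \bigl[\,f(\varphi_t(x,v))\, g(\varphi_t(x,v))\,\bigr] = \LX f(x,v)\, g(x,v) + f(x,v)\, \LX g(x,v).
\end{equation*}
Combining this with $I'(0)=0$ yields the desired identity
\begin{equation*}
\int_{\cH^2} \LX f(x,v)\, g(x,v)\, \mu_0(dx,dv) = -\int_{\cH^2} f(x,v)\, \LX g(x,v)\, \mu_0(dx,dv).
\end{equation*}

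\textbf{Main obstacle.} The only nontrivial point is justifying the interchange of $d/dt$ and $\int$. In the natural setting where $f,g,\LX f,\LX g$ are bounded (consistent with the definition of the class $C$ just before the lemma), this is handled by the dominated convergence theorem: writing the difference quotient
$\tfrac{1}{t}[(fg)(\varphi_t(x,v)) - (fg)(x,v)] = \tfrac{1}{t}[f(\varphi_t)-f]\,g(\varphi_t) + f(x,v)\,\tfrac{1}{t}[g(\varphi_t)-g]$,
both factors $g(\varphi_t)$ and $f$ are uniformly bounded, while the difference quotients can be bounded uniformly in $t$ using a mean-value bound of the form $|t^{-1}[f(\varphi_t(x,v))-f(x,v)]| \leq \sup_{s\in[0,t]}|\LX f(\varphi_s(x,v))| \leq \|\LX f\|_\infty$. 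This yields an integrable dominating function and allows the limit to be passed inside the integral; outside this bounded regime one would need an additional localization (e.g.\ cutoff arguments or explicit integrability hypotheses on $\LX f$, $\LX g$) to carry out the interchange.
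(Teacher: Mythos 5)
Your proof is correct and rests on the same key ingredient as the paper's: the invariance of $\mu_0$ under the flow (Lemma \ref{lem:varphiinvundermu0}). The paper implements it in a cosmetically different way—changing variables to move $\varphi_t$ from $f$ onto $g$ and then taking the difference quotient of $g(\varphi_{-t}(x,v))$—whereas you differentiate the constant function $t\mapsto\int f(\varphi_t)g(\varphi_t)\,d\mu_0$ via the product rule; this is essentially the same argument, and your explicit discussion of the dominated-convergence/interchange step is, if anything, more careful than the paper, which passes the limit through the integral without comment.
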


\begin{proof}
	Recall that $\LX $ is the directional derivative with respect to the integral curve $\varphi_t$ so we have
	\begin{equation*}
	\int_{\cH^2} \LX f(x,v) g(x,v) \mu_0(dx,dv) = \lim_{t\to 0}\frac{1}{t} \left(\int_{\cH^2} f(\varphi_t(x,v)) g(x,v) \mu_0(dx,dv)-\int_{\cH^2} f(x,v) g(x,v) \mu_0(dx,dv)\right).
	\end{equation*}
	Using Lemma \ref{lem:varphiinvundermu0}
	\begin{align*}
		\int_{\cH^2} \LX f(x,v) g(x,v) \mu_0(dx,dv) &= \lim_{t\to 0}\frac{1}{t} \left(\int_{\cH^2} f(x,v) g(\varphi_{-t}(x,v)) \mu_0(dx,dv)-\int_{\cH^2} f(x,v) g(x,v) \mu_0(dx,dv)\right)\\
		&=-\int_{\cH^2} f(x,v) \LX g(x,v) \mu_0(dx,dv).
\end{align*}	
\end{proof}

\begin{corollary}\label{cor:Loadjformu}
	Assume Hypothesis \ref{hyp:IMass} holds. Then for any $f,g$ such that $t\mapsto f(\varphi_t(x,v))$ and $t\mapsto g(\varphi_t(x,v))$ are differentiable $\mu_0$-a.e. then
	\begin{equation*}
	\int_{\cH^2} \LX f(x,v) g(x,v) \mu(dx,dv) = - \int_{\cH^2} f(x,v) \LX g(x,v) \mu(dx,dv)+ \int \langle \nabla_x\Phi(x),\sP p\rangle f(x,v)g(x,v) \mu(dx,dv).
	\end{equation*}
\end{corollary}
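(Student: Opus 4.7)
The plan is to deduce this directly from Lemma \ref{lem:L0skewsym} by absorbing the Radon--Nikodym derivative $d\mu/d\mu_0 \propto e^{-\Phi}$ into the test function. Since Hypothesis \ref{hyp:IMass} includes consistency \eqref{eq:cons}, Lemma \ref{lem:L0skewsym} is applicable, and since $\Phi \in C^1(\cH)$ by Assumption \ref{ass:1}, the map $t \mapsto e^{-\Phi(\varphi_t(x,v))}$ is differentiable so the composite $t\mapsto g(\varphi_t(x,v)) e^{-\Phi(\varphi_t(x,v))}$ remains differentiable $\mu_0$-a.e.\ whenever $t\mapsto g(\varphi_t(x,v))$ is.

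First I would write $\mu(dx,dv) = Z^{-1} e^{-\Phi(x)} \mu_0(dx,dv)$ and apply Lemma \ref{lem:L0skewsym} with the pair $(f, g e^{-\Phi})$ in place of $(f,g)$, giving
\begin{equation*}
\int_{\cH^2} \LX f(x,v)\, g(x,v)\, e^{-\Phi(x)}\mu_0(dx,dv) = -\int_{\cH^2} f(x,v)\, \LX\bigl(g(\cdot,\cdot) e^{-\Phi(\cdot)}\bigr)(x,v)\,\mu_0(dx,dv).
\end{equation*}
Next I would apply the Leibniz rule for the directional derivative $\LX$ along the flow, which is immediate from the definition $\LX h(x,v) = \tfrac{d}{dt}\big|_{t=0} h(\varphi_t(x,v))$: namely,
\begin{equation*}
\LX\bigl(g\, e^{-\Phi}\bigr)(x,v) = (\LX g)(x,v)\, e^{-\Phi(x)} + g(x,v)\, \LX(e^{-\Phi})(x,v).
\end{equation*}

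Then I would compute $\LX(e^{-\Phi})(x,v) = -e^{-\Phi(x)}\, \LX \Phi(x,v)$, and use the explicit form \eqref{eq:L0def} of $\LX$ combined with the fact that $\Phi$ depends on $x$ alone, so that $\nabla_v \Phi = 0$ and
\begin{equation*}
\LX \Phi(x,v) = \langle \sP v, \nabla_x \Phi(x)\rangle_{\cH}.
\end{equation*}
Substituting back, dividing both sides by the normalising constant $Z$, and reassembling the integrals with respect to $\mu$ yields the stated identity. There is no real obstacle here: the only thing to be slightly careful about is that the product $g e^{-\Phi}$ lies in the class of functions for which Lemma \ref{lem:L0skewsym} applies, which follows because $\Phi$ is Fr\'echet differentiable and the flow $\varphi_t$ is smooth in $t$ by Remark \ref{note:wellposed}.
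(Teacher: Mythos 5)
Your proposal is correct and is exactly the argument the paper intends: the corollary is left as an immediate consequence of Lemma \ref{lem:L0skewsym}, obtained by writing $\mu \propto e^{-\Phi}\mu_0$, applying the lemma to the pair $(f, g e^{-\Phi})$, and using the chain/Leibniz rule along the flow together with $\LX \Phi(x,v) = \langle \sP v, \nabla_x\Phi(x)\rangle_{\cH}$ (the $\sP p$ in the statement is $\sP v$). Your remarks on the differentiability of $t\mapsto g(\varphi_t(x,v))e^{-\Phi}$, guaranteed by Assumption \ref{ass:1} and the smoothness of the flow, are the right justification for invoking the lemma.
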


\subsection*{Acknowledgements}

We wish to thank Andrew Duncan, Michela Ottobre and Andreas Eberle for helpful discussions.  

This work is part of the research programme ‘Zigzagging through computational barriers’ with project number 016.Vidi.189.043, which is financed by the Dutch Research Council (NWO).

\bibliography{bibliography}
\bibliographystyle{plain}

\end{document}